\renewcommand*{\eqref}[1]{%
\hyperref[{#1}]{\textup{\tagform@{\!\!\ref*{#1}}}}%
}\makeatother 
\theoremstyle{plain}
\newtheorem{theorem}{Theorem}[section]
\newtheorem{lemma}[theorem]{Lemma}
\newtheorem{proposition}[theorem]{Proposition}
\newtheorem{corollary}[theorem]{Corollary}
\theoremstyle{definition}
\newtheorem{definition}[theorem]{Definition}
\newtheorem{remark}[theorem]{Remark}
\newtheorem{assumption}[theorem]{Assumption}
\def\supp{\mathop{\mathrm{supp}}\nolimits}
\def\Re{\mathop{\mathrm{Re}}\nolimits}
\def\Im{\mathop{\mathrm{Im}}\nolimits}
\def\sgn{\mathop{\mathrm{sgn}}\nolimits}
\def\R{{\mathbb{R}}}
\def\N{{\mathbb{N}}}
\def\k{{ \textbf{k}}}
\def\i{{\rm i}}
\def\d{{\rm d}}
\def\<{{\langle}}
\def\>{{\rangle}}
\title[$L^p$-boundedness of wave operators for higher order Schr\"odinger operator ]
{The $L^p$-boundedness of wave operators for  higher order Schr\"odinger operator with zero  singularities in low odd dimensions }
\author{Han Cheng, Avy Soffer, Zhao Wu and  Xiaohua Yao}
\address {Han Cheng, Institute of Applied Physics and Computational Mathematics, Beijing, 100088, People's Republic of China}
\email{chmathh@163.com}
\address{Avy Soffer, Mathematics Department, Rutgers University, New Brunswick, NJ, 08903, USA}
\email{soffer@math.rutgers.edu}
\address {Zhao Wu, Faculty of Mathematics and Statistics, Hubei University, Wuhan, 430062, People's Republic of China}
\email{wuzhao@hubu.edu.cn}
\address {Xiaohua Yao, School of Mathematics and Statistics, Key Laboratory of Nonlinear Analysis and Applications (Ministry of Education), and Hubei Key Laboratory of Mathematical Sciences, Central China Normal University, Wuhan, 430079, P.R. China}
\email{yaoxiaohua@ccnu.edu.cn}
\date{\today}
\keywords{$L^p$-boundedness, Wave operator, Zero resonance, Higher-order Schr\"odinger operator}
\begin{document}
\begin{abstract}

This paper investigates the $L^p$-bounds of wave operators for higher-order Schrödinger operators
$H = (-\Delta)^m + V$ on $\mathbb{R}^n$, with $m \ge 2$ and real-valued decaying potentials $V$.


In light of the recent works of Erdo\u{g}an and Green \cite{EG22, EG23} for the regular case with $n > 2m$, and of Erdo\u{g}an–Green–LaMarster \cite{EGL24} addressing the zero eigenvalue case when $n > 4m$, our main objective is to establish the sharp $L^p$-boundedness of the wave operators
$W_\pm(H; (-\Delta)^m)$ in the presence of all types of zero-resonance singularities, for all odd dimensions $1 \le n \le 4m - 1$.


Specifically, for odd $n$ with $1 \le n \le 4m - 1$, there exist $m_n$ types of zero resonances for $H$, along with a critical type $k_c$ (both depending on $n$ and $m$). If zero is a regular point of $H$ or a $\mathbf{k}$-th kind resonance with $1 \le \mathbf{k} \le k_c$, the wave operators
$W_\pm(H; (-\Delta)^m)$
are bounded on $L^p(\mathbb{R}^n)$ for all $1 < p < \infty$. If zero is a $\mathbf{k}$-th kind resonance with $k_c < \mathbf{k} \le m_n$, we show that the range of $p$-boundedness for $W_\pm(H; (-\Delta)^m)$ narrows to $1 < p < p_{\mathbf{k}}$, where
$$p_{\mathbf{k}} = \frac{n}{n - 2m + \mathbf{k} + k_c - 1}.$$
Additionally, if zero is an eigenvalue of $H$ (i.e., $\mathbf{k} = m_n + 1$), then
$W_\pm(H; (-\Delta)^m)$ are bounded  on $L^p(\mathbb{R}^n)$
for all
$1 < p < \frac{2n}{n - 1}$.


Furthermore, it is shown that the wave operators $W_\pm(H; (-\Delta)^m)$ are unbounded on $L^p(\mathbb{R}^n)$ for all $p_{\mathbf{k}} < p \le \infty$ if $k_c < \mathbf{k} \le m_n$, and for all $\frac{2n}{n - 1} < p \le \infty$ if zero is an eigenvalue of $H$ with a non-zero solution $\phi$ to
$H\phi = 0$ in $\bigcap_{s < -\frac{1}{2}} L^{2}_{s}(\mathbb{R}^n) \setminus L^2(\mathbb{R}^n)$
(referred to as a $p$-wave resonance). The key idea of the proof is to reduce the $L^p$-unboundedness to establishing the optimality of time-decay estimates for
$e^{itH}P_{ac}(H)$ in weighted $L^2$ spaces.


Finally, we emphasize that the $L^p$-boundedness of wave operators is closely tied to the asymptotic expansions of the resolvent of
$H = (-\Delta)^m + V$
near the zero-energy threshold. Since zero energy of $(-\Delta)^m$ is degenerate for all $m \ge 2$, with the degeneracy becoming stronger as $m$ increases, the classification of zero resonances becomes considerably more intricate compared to the case $m = 1$, requiring new analytical ideas.

\end{abstract}
\maketitle
\tableofcontents

\section{Introduction}
\setcounter{equation}{0}
\subsection{Backgrounds}
Let $m\ge 2$ be an integer,  $H=(-\Delta)^m+V(x)$ be the  higher order Schr\"{o}dinger operator on $\mathbb{R}^n$, and $V(x)$ be a real-valued bounded potential satisfying  $|V(x)|\lesssim\langle x\rangle^{-\beta}$ for some specific $\beta>0$ and certain regularity on $V$ (depending on $n,m$), where $\langle x\rangle=\sqrt{1+|x|^2}$.  

In this paper we study the $L^p$-bounds of the wave operators $W_\pm(H;(-\Delta)^m)$ associated with $H$, which are defined by the following strong limits in $L^2(\mathbb{R}^n)$:
\begin{equation}\label{eq-wave operator-defi}
W_\pm=W_\pm(H;(-\Delta)^m)=:s\text{-}\lim_{t\rightarrow\pm\infty}e^{itH}e^{-it(-\Delta)^m}. 
\end{equation}
Here, $e^{itH}$ and $e^{it(-\Delta)^m}$ are Schr\"odinger groups generated by  $H$ and $(-\Delta)^m$, respectively.   

As $\beta>1,$ it was known that $W_\pm (H;(-\Delta)^m)$ exist on $L^2(\mathbb{R}^n)$ as partial isometric operators and are asymptotically complete ( see e.g.  Kuroda \cite{Kuroda}). In particular,  the wave operators $W_\pm$ satisfy the identities
$W_\pm W_\pm^*=P_{ac}(H)$, $ W_\pm^*W_\pm=I$
and the following intertwining formula  (see e.g. Reed and Simon \cite{Reed-Simon-III}):
\begin{equation}\label{eq-intertwing identity}
f(H)P_{ac}(H)=W_\pm f((-\Delta)^m)W^*_\pm,
\end{equation}
where $f$  is  any Borel measurable function of  $\mathbb{R}$ and $P_{ac}(H)$ denotes the projection onto the absolutely continuous spectral subspace of $H.$ 
By the identity \eqref{eq-intertwing identity} and the $L^p-L^q$ estimates for the free operator $f((-\Delta)^m),$ the boundedness of $W_\pm(H;(-\Delta)^m)$ can immediately be used to establish the $L^p-L^q$ estimates for the perturbed operator $f(H)P_{ac}(H)$ by
\begin{equation}\label{eq-disperesive estimate}
  \|f(H)P_{ac}(H)\|_{L^p\rightarrow L^q}\le \|W_\pm\|_{L^q\rightarrow L^q}\|f((-\Delta)^m)\|_{L^p\rightarrow L^q}\|W_\pm^*\|_{L^p\rightarrow L^p}.
\end{equation}
In fact, under a suitable condition on $f,$ it is comparably easy to establish the $L^p-L^q$ estimates for the free operator $f((-\Delta)^m)$ by Fourier multiplier methods. Thus, in order to obtain the estimate \eqref{eq-disperesive estimate}, it suffices to prove the following possible boundedness of $W_\pm(H;(-\Delta)^m)$ and $W_\pm^*(H;(-\Delta)^m):$
\begin{equation}\label{eq-Lpbounds}
  \|W_\pm f\|_{L^q\rightarrow L^q}\lesssim\|f\|_{L^q},\ \ \ \ \|W_\pm^* f\|_{L^p\rightarrow L^p}\lesssim\|f\|_{L^p}.
\end{equation}
Due to such feature, the $L^p$ boundedness of wave operators for the Schr\"odinger operator $-\Delta + V$ (i.e. $m = 1$) has been extensively developed as a fundamental tool for studying various nonlinear dispersive equations, such as the non-linear Schr\"odinger and Klein-Gordon equations with potentials. As a result, it is natural and important to extend this study to more general higher-order elliptic operators $(-\Delta)^m + V(x)$, which has garnered increasing attention in the mathematics and mathematical physics communities.

Despite the recent emergence of many related works (see e.g. \cite{EG22, GG21, GY23, MWY22, MWY23-1, MWY23-2}), they remain relatively scarce compared to the case of the Schr\"odinger operator $-\Delta + V(x)$. In particular, to the best of the author's knowledge, there are no results on the $L^p$ boundedness of wave operators for higher-order Schr\"odinger operators $H = (-\Delta)^m + V$ on $\mathbb{R}^n$ when $1\leq n \leq 2m$ and $m \geq 3$, unlike the cases for $m = 1, 2$. More background and references can be found in Subsection \ref{subsec:1.3}.

Inspired by these findings, we turn our attention to the remaining cases. Notably, Erdo\u{g}an and Green \cite{EG22, EG23} established the $L^p$-boundedness of wave operators $W_\pm(H; (-\Delta)^m)$ for all $1 \leq p \leq \infty$ when $n > 2m$ in the zero regular case of $H$, and more recently, Erdo\u{g}an-Green-LaMarster \cite{EGL24} further considered the corresponding $L^p$-boundedness of $W_\pm(H; (-\Delta)^m)$ when zero is an eigenvalue of $H$ for $n > 4m$. Hence, the primary goal of this paper is to address the remaining cases. Specifically, we show that $W_\pm(H; (-\Delta)^m)$ are bounded on $L^p(\mathbb{R}^n)$, with a sharp range of $p$ depending on all possible types of zero-energy resonances of $H$ when $n$ is odd and $1 \leq n \leq 4m - 1$. The cases for even dimensions, $2 \leq n \leq 4m$, will be addressed in another work \cite{CWY24}.

The existing research works suggest that the $L^p$-behavior of wave operators can be broadly categorized into three scenarios: $n < 2m$, $n = 2m$, and $n > 2m$. When $n < 2m$, as first observed by Goldberg and Green \cite{GG21} for $(n, m) = (3, 2)$, the resolvent exhibits a singularity appearing in the stationary representation of the low-energy part of the wave operators. It is expected that wave operators are generically unbounded on $L^p$ for $p = 1, \infty$ in this scenario (cf. Mizutani-Wan-Yao \cite{MWY22, MWY23-1, MWY23-2} for $n = 1, 3$ and $m = 2$). 

On the other hand, when $n > 2m$, the singularity at zero energy is relatively mild, but the high-energy part becomes more complicated than in the case $n < 2m$ since the resolvent does not decay in the high-energy limit (cf. Erdo\u{g}an-Green \cite{EG22, EG23}). The case $n = 2m$ is critical, as it combines the difficulties of both the low-energy and the high-energy parts (see Galtbayar-Yajima \cite{GY23} for $(n, m) = (4, 2)$).

Finally, it is worth emphasizing that the $L^p$ range of wave operators is closely related to the zero resonance classification and the asymptotic expansions of the resolvent $R(z)$ of $H = (-\Delta)^m + V$ near the zero-energy threshold. Compared to the Schrödinger operator $-\Delta + V$ (i.e. $m = 1$) (cf. e.g. \cite{JK, J80, JN01}), the classification of resonances and the asymptotic expansions of $(-\Delta)^m + V$ are more challenging because the polynomial symbol $P(\xi) = |\xi|^{2m}$ of $(-\Delta)^m$ is degenerate at $\xi = 0$ for $m \geq 2$. Moreover, the degeneracy at zero energy increases as $m$ increases and $n$ remains smaller.

\subsection{The main results}
 In order to state our main results, we first need to introduce the definition of zero energy resonances for $H=(-\Delta)^{m}+V$. 
Recall that zero is an eigenvalue of $H$ if there exists some nonzero  $\phi\in L^2(\R^n)$ such that
\begin{equation}\label{equ1.2}
	((-\Delta)^{m}+V)\phi=0
\end{equation}
in distributional sense. However,  for some potentials,  it is well known that there may exist some non-trivial solutions satisfying \eqref{equ1.2} in the weighted spaces $$L_s^2(\R^n):=\{f\in L_{loc}^2(\mathbb{R}_n); \,\langle x\rangle ^sf\in L^2(\R^n)\}$$ for some $s<0$.  This is usually referred to as a zero resonance of $H$, which certainly leads to challenges in establishing dispersive estimates or  $L^p$-boundedness of wave operators associated with $H$.

Next, we introduce the concept of zero resonances for $H$. 

Let $m\ge 2$ and $n$ be an odd integer,  and set 
\begin{equation}\label{equ0.1}
	m_n:=\min\big\{m, \ 2m-\frac{n-1}{2} \big\}=
	\begin{cases}
		m,&\ \ \mbox{for}\ \  1\le n\le 2m-1,\\[0.3cm]
		\mbox{$2m-\frac{n-1}{2}$},&\ \ \mbox{for}\ \ 2m+1\le n\le 4m-1.
	\end{cases}
\end{equation}
Clearly, since $n$ is odd and $1\le n\le 4m-1$, $m_n$ is a positive integer that is  {\bf the total number of zero resonance types} of $H=(-\Delta)^{m}+V$ on $\R^n$ (see \cite{CHHZ-2024} and Subsection \ref{expansion} below for details).  

In particular, we observe that for all $1\leq n\leq 2m - 1$, the total number $m_n$ of zero resonance types is always a fixed value $m$. Moreover, when $2m + 1\leq n\leq 4m - 1$, $m_n$ is given by $m_n=2m-\frac{n - 1}{2}$, and it decreases as $n$ increases. Finally, until $n>4m$, the operator $H = (-\Delta)^{m}+V$ contains only a possible zero eigenvalue and has no zero resonance on $\mathbb{R}^n$ (cf. Feng et al. \cite{FSWY20}).

\begin{definition}\label{defi-resonance}
Let $1\le n\le 4m-1$ be odd and $m_n$ be the integer defined in \eqref{equ0.1}.  Given that $H = (-\Delta)^m + V$, where $V \in L^{\infty}(\mathbb{R}^n)$ is a real-valued potential. Then 
\vskip0.1cm
\indent(\romannumeral1) Zero is said to be the $\mathbf{k}$\text{-}th kind resonance of $H$ if $1\le \mathbf{k}\le m_n$ and the equation \eqref{equ1.2}  (i.e.  $H\phi=0$) has a non-trivial solution $\phi$ in $\bigcap_{s<-\frac{1}{2}-m_n+\mathbf{k}}L_{s}^2(\mathbb{R}^n)$, but doesn't have non-trivial solution $\phi$ in $\bigcap_{s<\frac{1}{2}-m_n+\mathbf{k}}L_{s}^2(\mathbb{R}^n)$.
\vskip0.1cm
\indent(\romannumeral2)  Zero is said to be the eigenvalue of $H$, if \eqref{equ1.2} has a non-trivial solution $\phi$ in $L^2(\R^n)$. 
\vskip0.1cm

\indent(\romannumeral3)  Zero is said to be the regular point of $H$, if zero is neither a resonance nor an eigenvalue. 

For convenience below, we also regard  {\it  the regular zero energy as  the $0$\text{-}th resonance of $H$ and the zero eigenvalue as the $(m_n+1)$\text{-}th kind resonance of $H$}. \end{definition}
\begin{remark} For all $n>2m$ and $m\ge 2$,  the zero resonance classification of $(-\Delta)^m+V$ was first found by  Feng et al. \cite{FSWY20}. Subsequently, the problem was further explored in \cite{EGT21, Green-Toprak-4D, LSY23, SWY22} for the cases where $n\leq 4$ and $m = 2$. These studies delved deeper into specific dimensions and orders of the operator. In particular, for all odd dimensions $n$ and $m\geq 2$, the zero resonance definition was introduced by \cite{CHHZ-2024}. The definition above  is equivalent to all previous ones (possibly in the different space).

It should be noted that zero resonance classification is closely linked to the asymptotic expansion of the resolvent $R(z)$ of $H=(-\Delta)^m+V$ near the zero threshold.  Compared with Schr\"odinger operator $-\Delta+V$ (i.e. $m=1$) (cf. \cite{JK, J80,JN01}),  as remarked before,  the zero resonance classification and the asymptotic expansions of $(-\Delta)^m+V$ are complicated, since $(-\Delta)^m$ is degenerate at zero energy when $m\ge 2$.  We refer readers to Theorem \ref{thm-M inverse-odd} for more details. 
\end{remark}

Throughout the paper, we make the following assumption.

\begin{assumption}\label{Assumption}
Let $m\ge 2$, $1\le n\le 4m-1$ be odd,  $0\le {\bf k}\le m_n+1$ and $H=(-\Delta)^m+V$ where $V$ is real valued. Additionally, we assume that the following three conditions hold:
\vskip0.1cm
\indent (\romannumeral1) Zero is the $\mathbf{k}$\text{-}th kind resonance of $H$ and $H$ has no embedded positive eigenvalues.
\vskip0.1cm
\indent (\romannumeral2) $\lvert V(x)\rvert\lesssim \langle x \rangle^{-\beta}$ for some $\beta$ satisfying
\begin{equation}\label{decayonV}
\beta>
\begin{cases}
4m-n+4\mathbf{k}+4, &\quad\text{if\ \ $1\le n\le 2m-1$},\\
n+4\mathbf{k}+5, &\quad\text{if\ \ $2m+1\le n\le 4m-1$}.
\end{cases}
\end{equation}

\indent (\romannumeral3) There exists a $\delta>0$  such that $\|\langle\cdot\rangle^{1+\delta}\langle\nabla \rangle ^{\delta} V(\cdot)\|_{L^2}<C$ when $n=4m-1$.
\end{assumption}
\begin{remark}
 We remark that the assumption about the absence of embedded positive eigenvalues is indispensable. The classical Kato theorem indicates that if $V(x)=o(|x|^{-1})$ as $|x|\to \infty$, $-\Delta +V$ has no positive eigenvalues. Compared to the second-order case,  there exists a $V$ in $C_0^{\infty}(\mathbb{R}^n)$ such that $H=(-\Delta)^m+V$ has a positive eigenvalue when $m$ is even (cf. Feng et al. \cite[Section 7.1]{FSWY20}). 

 Assumption (iii) is a smoothness condition specifically used to estimate the high-energy part, as in Erdo\u{g}an-Green \cite{EG22} when $n=4m-1$. Moreover, we note that the smoothness assumption of $V$ for $H$ is necessary when $n\ge4m$ by Erdo\u{g}an-Goldberg-Green\cite{EGG-JFA-2023}.
\end{remark}

Let $\mathbb{B}(X,Y)$ be the space of bounded operators from the Banach space $X$ to the Banach space $Y$, and $\mathbb{B}(X)=\mathbb{B}(X,X)$. Additionally, denote 
\begin{equation}\label{defkc}
k_c:=\max\{m-\frac {n-1}{2}, 0\}=\begin{cases}
m-\frac{n-1}{2},\ \ &\ \text{if}\ \ 1\le n\le2m-1,\\
0,\ \ & \ \text{if}\ \ 2m+1\le n\le 4m-1.
\end{cases}
\end{equation}
Note that when $1\leq n\leq 2m - 1$ is odd, $k_c$ is a positive integer, and $1\leq k_c\leq m_n$. 

In addition, as shown in the following theorem, $k_c$ represents the type of critical resonance of $H$. When $\k > k_c$, the $L^p$-boundedness range of the wave operators $W_\pm\big(H;(-\Delta)^m\big)$ shrinks from the full range $1 < p < \infty$ (excluding the endpoints $p = 1,\infty$) into a finite interval that depends on the resonance type $\k$.

\begin{theorem}\label{thm-main result}
Let $1\leq n\leq 4m - 1$ be odd, $0\leq \mathbf{k}\leq m_n + 1$, and let $H = (-\Delta)^m+V$ satisfy Assumption \ref{Assumption} (depending on the type $\mathbf{k}$ of zero resonance). Then the following statements hold:   

\vspace{5pt}
\indent\emph{(\romannumeral1)} If zero is the regular point of $H$  ( i.e. $\mathbf{k}=0$ ), then  $W_\pm\big(H;(-\Delta)^m\big)\in \mathbb{B}(L^p(\mathbb{R}^n))$ for all $1<p<\infty$;

\vspace{5pt}
\indent\emph{(\romannumeral2)} If zero is the $\bf{k}$-th kind resonance of $H$ and  $1\le \mathbf{k}\le k_c$, then $ W_\pm\big(H;(-\Delta)^m\big)\in \mathbb{B}(L^p(\mathbb{R}^n))$ for all $1<p<\infty;$

\vspace{5pt}
\indent\emph{(\romannumeral3)} If   zero is the $\bf{k}$-th kind resonance of  $H$ and  $k_c+1\le \mathbf{k}\le m_n$, then  $W_\pm\big(H;(-\Delta)^m\big)\in \mathbb{B}(L^p(\mathbb{R}^n))$ for all $1<p<\frac{n}{n-2m+\mathbf{k}+k_c-1}$;

\vspace{5pt}
\indent(\romannumeral4) If  zero is an eigenvalue  of $H$ ( i.e. $\mathbf{k}=m_n+1$ ), then $ W_\pm\big(H;(-\Delta)^m\big)\in \mathbb{B}(L^p(\mathbb{R}^n))$ for all $1<p<\frac{2n}{n-1}.$
\end{theorem}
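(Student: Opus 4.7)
\emph{Proof plan.} The starting point is the Kato--Kuroda stationary formula for $W_\pm$ combined with the symmetric resolvent identity $R_V(z) = R_0(z) - R_0(z) v M(z)^{-1} v R_0(z)$, where $v = |V|^{1/2}$, $U = \sgn V$, $V = Uv^2$, $R_0(z) = ((-\Delta)^m - z)^{-1}$ and $M(z) = U + v R_0(z) v$. After the change of variables $z = \lambda^{2m}$ and the identity $R_V V = R_0 v M^{-1} v$, one arrives at
\begin{equation*}
W_\pm - I = c_m \int_0^\infty \lambda^{2m-1}\, R_0^\mp(\lambda^{2m})\, v\, M^\mp(\lambda^{2m})^{-1}\, v \bigl[R_0^+(\lambda^{2m}) - R_0^-(\lambda^{2m})\bigr]\, d\lambda.
\end{equation*}
A smooth cutoff $\chi(\lambda) + \tilde\chi(\lambda) \equiv 1$ localized near $\lambda = 0$ then splits $W_\pm = W_\pm^{\mathrm{low}} + W_\pm^{\mathrm{high}}$.

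For the high-energy part $W_\pm^{\mathrm{high}}$, I would expand $M^{-1}$ as a finite Born series in $v R_0 v$ and bound the resulting oscillatory $\lambda$-integrals by repeated integration by parts, exploiting the pointwise decay and, when $n = 4m-1$, the Sobolev regularity of $V$ provided by Assumption \ref{Assumption}. This parallels the high-energy treatment of Erdo\u{g}an--Green \cite{EG22, EG23}, adapted to accommodate the stronger local singularity of $R_0$ that appears when $n \le 2m-1$.

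The heart of the argument is the low-energy piece. Here I would substitute the asymptotic expansion of $M^\pm(\lambda^{2m})^{-1}$ near $\lambda = 0$ afforded by Theorem \ref{thm-M inverse-odd}, which for a $\k$-th kind resonance has the schematic form $\sum_j \lambda^{-\alpha_j} Q_j + E(\lambda)$, where the exponents $\alpha_j$ are determined by $\k$, $m$, $n$ and the finite-rank operators $Q_j$ are built from projections onto the nested resonance subspaces. Carrying out the $\lambda$-integral termwise produces a finite family of integral operators whose kernels factor as a resonance-function piece (with pointwise fall-off dictated by the weighted $L^2$ space in Definition \ref{defi-resonance}) times a fractional-integral-type ``propagator'' kernel of order set by $\alpha_j$. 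Stein--Weiss and Hardy--Littlewood--Sobolev bounds then yield $L^p$-boundedness on the full range $1 < p < \infty$ when $\k \le k_c$ (the effective fractional order is non-positive and no $p$-restriction is forced), on the restricted range $1 < p < p_\k$ when $k_c < \k \le m_n$, and on $1 < p < 2n/(n-1)$ in the eigenvalue case $\k = m_n+1$, this last threshold being dictated by the sharp pointwise fall-off at infinity of an $L^2$ eigenfunction of $H$ combined with the smoothing contributed by one surviving factor of $R_0$.

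The principal obstacle will be the bookkeeping in the low-energy step: many of the singular terms in the expansion of $M^{-1}$ are not individually of fractional-integral type and reveal their true size only after cancellations with neighbouring terms via the orthogonality of the projections onto the nested resonance subspaces, and missing such a cancellation would inflate the effective singularity exponent and yield a strictly smaller range of $p$. A secondary delicacy is that in low odd dimensions the expansion of $R_0(z)$ mixes integer and half-integer powers of $\lambda$, so these cancellations must be tracked separately in each parity before the $\lambda$-integration can be carried out.
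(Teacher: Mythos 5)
Your overall skeleton (stationary formula, symmetric resolvent identity, low/high energy splitting, insertion of the expansion of $(M^\pm(\lambda))^{-1}$ from Theorem \ref{thm-M inverse-odd}) coincides with the paper's, and the high-energy sketch is broadly in the right spirit, though note that the remainder there is controlled not by a Born series alone but by weighted-$L^2$ derivative bounds for the full resolvent (Lemma \ref{lemma-high energy-second part-n<2m}) plus integration by parts.

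The genuine gap is in your low-energy plan. After inserting the expansion of $(M^\pm(\lambda))^{-1}$, the resulting kernels are oscillatory integrals in $\lambda$ with phases $e^{i\lambda(|x|\pm|y|)}$: the paper extracts these phases by writing $Q_jvR_0^\pm(\lambda^{2m})(\cdot,y)=e^{\pm i\lambda|y|}\omega_j^\pm(\lambda,\cdot,y)$ (Lemmas \ref{lemma-QjvR-odd} and \ref{lem3.10}), where the vanishing moments $Q_j(x^\alpha v)=0$ gain a factor $\lambda^{\vartheta(j)}$ that compensates the singular weight $\lambda^{-i-j}$. Once the $\lambda$-integration is performed, the kernel in the region $|x|\sim|y|$ contains a non-integrable singularity of the form $(|x|\pm|y|)^{-1}$ times bounded factors; this is not a fractional-integral kernel, so Stein--Weiss or Hardy--Littlewood--Sobolev size estimates cannot handle it, and in particular they can never yield the open range $1<p<\infty$ — the exclusion of $p=1,\infty$ is exactly the signature of a Hilbert-transform obstruction. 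The paper deals with this by splitting into $|x|>2|y|$, $|x|<\tfrac12|y|$ and $|x|\sim|y|$, and on the diagonal region reducing the singular ("bad") part, after a dyadic decomposition, to the truncated Hilbert transform on the half-line via polar coordinates (Propositions \ref{lem.estforwij3b1} and \ref{pro: bound for series}); your plan contains no such singular-integral step, so as written it would fail on the diagonal region. The cancellations you flag ("orthogonality of the projections") are a different mechanism: they fix the $\lambda$-power bookkeeping via $Q_j(x^\alpha v)=0$ applied to the Taylor expansion of the free resolvent kernel, and it is these refined pointwise bounds in the off-diagonal region $|x|<\tfrac12|y|$ — not the decay of an $L^2$ eigenfunction — that produce the sharp thresholds $\frac{n}{n-2m+\mathbf{k}+k_c-1}$ and $\frac{2n}{n-1}$ (Propositions \ref{propo-low energy-odd-2} and \ref{propo-low energy-odd-2'}). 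They do not remove the $(|x|\pm|y|)^{-1}$ singularity, which must be treated by genuine singular-integral cancellation.
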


\begin{remark}
Some comments are given as follows:
\begin{itemize}
    \item In Theorem \ref{thm-main result} (i) (i.e., the regular case with \(k = 0\)), if \(2m + 1\leq n\leq 4m - 1\) is odd, the wave operators \(W_\pm\big(H;(-\Delta)^m\big)\) are also bounded on \(L^p(\mathbb{R}^n)\) for the endpoints \(p = 1\) and \(p=\infty\) (cf. Erdoğan and Green \cite{EG22}).
    \vspace{4pt}
    \item When \(1\leq n\leq 2m - 1\), as first observed by Goldberg and Green \cite{GG21} for \((n, m) = (3, 2)\), the resolvent exhibits a singularity at zero energy. This singularity leads to the appearance of the Hilbert transform in the stationary representation of the low-energy part of the wave operators. The presence of the Hilbert transform and the zero-energy singularity causes difficulties in ensuring the boundedness of the wave operators. Hence, it is expected that, in general, wave operators are unbounded on \(L^p\) for \(p = 1, \infty\) when \( 1\leq n\leq2m-1\) (cf. Mizutani-Wan-Yao \cite{MWY22, MWY23-1, MWY23-2} for counterexamples in the cases \(n = 1, 3\) and \(m = 2\)). 
    \vspace{4pt}
    \item Moreover, in Theorem \ref{thm-main result} (ii), clearly \(k_c\geq 1\) is necessary, which requires that \(1\leq n\leq 2m - 1\) according to \eqref{defkc}. Hence, it follows from Theorem \ref{thm-main result} (iii) and (iv) that the range of \(p\) for which the \(L^p\)-boundedness of \(W_\pm\big(H;(-\Delta)^m\big)\) holds will be a finite interval if any kind of zero resonance or eigenvalue occurs in the cases of \(2m + 1\leq n\leq 4m - 1\).
\end{itemize}
\end{remark}

In the following result, we also show that the interval of $p$ in the above theorem is sharp, except for the endpoints when $\k>k_c$. 
\begin{theorem}\label{thm-unbound}
 Assume that the assumption of Theorem \ref{thm-main result} holds. Then the following holds:
 
 \vspace{5pt}
\indent\emph{(\romannumeral1)}
 \(W_\pm(H; (-\Delta)^m) \notin \mathbb{B}(L^p(\mathbb{R}^n))\) for all \(\frac{n}{n - 2m + \mathbf{k} + k_c - 1} < p \leq \infty\) if  \(k_c<\mathbf{k}\leq m_n\).

\vspace{5pt}
\indent\emph{(\romannumeral2)}
 \(W_\pm(H; (-\Delta)^m) \notin \mathbb{B}(L^p(\mathbb{R}^n))\) for all \(\frac{2n}{n - 1}< p \leq \infty\) if zero is an eigenvalue of \(H\) with a non-zero distributional solution of \(H\phi = 0\) in \(\big(\bigcap_{s < -\frac{1}{2}}L^{2}_{s}(\R^n)\big)\setminus L^2(\R^n)\).
\end{theorem}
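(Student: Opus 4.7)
The plan is to argue by contradiction through the intertwining identity \eqref{eq-intertwing identity}, reducing the assumed $L^p$-boundedness of $W_\pm$ to a weighted $L^2$ time-decay bound for $e^{itH}P_{ac}(H)$ that is strictly stronger than what the zero resonance (or $p$-wave eigenfunction) structure allows.

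Concretely, suppose $W_\pm(H;(-\Delta)^m)\in \mathbb{B}(L^{p_0}(\R^n))$ for some $p_0$ in the allegedly unbounded range; by duality and the $L^2$-isometry of $W_\pm$, Riesz--Thorin interpolation lets us reduce to a finite $p_0>\max(2,p_\mathbf{k})$ in case (i) and $p_0>\tfrac{2n}{n-1}$ in case (ii), with the complementary range $p<2$ (if any in case (i)) handled by interchanging the roles of $W_\pm$ and $W_\pm^*$. Taking $f(\lambda)=e^{it\lambda}$ in \eqref{eq-intertwing identity}, combining with the standard free dispersive bound $\|e^{it(-\Delta)^m}\|_{L^{p_0'}\to L^{p_0}}\lesssim |t|^{-\alpha(p_0)}$ where $\alpha(p_0):=\tfrac{n}{2m}(1-\tfrac{2}{p_0})$, and with $W_\pm^*\in\mathbb{B}(L^{p_0'})$ (by duality), one obtains
\[
\|e^{itH}P_{ac}(H)\|_{L^{p_0'}\to L^{p_0}}\lesssim |t|^{-\alpha(p_0)}.
\]
For any $\sigma>\tfrac{n}{2}(1-\tfrac{2}{p_0})$, H\"older's inequality gives the continuous embeddings $L^2_\sigma(\R^n)\hookrightarrow L^{p_0'}(\R^n)$ and $L^{p_0}(\R^n)\hookrightarrow L^2_{-\sigma}(\R^n)$, hence
\[
\|e^{itH}P_{ac}(H)\|_{L^2_\sigma\to L^2_{-\sigma}}\lesssim |t|^{-\alpha(p_0)},\qquad t\to\infty.
\]
Since $\alpha(\cdot)$ is strictly increasing, this exponent exceeds $\alpha(p_\mathbf{k})$ in case (i) and $\tfrac{1}{2m}$ in case (ii).

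To reach a contradiction I would then prove a matching lower bound: the sharp weighted $L^2$ decay of $e^{itH}P_{ac}(H)$ is no faster than $|t|^{-\alpha(p_\mathbf{k})}$ in case (i) and $|t|^{-1/(2m)}$ in case (ii). Starting from Stone's formula
\[
e^{itH}P_{ac}(H)=\frac{m}{\pi i}\int_0^\infty e^{it\lambda^{2m}}\lambda^{2m-1}\bigl(R^+(\lambda^{2m})-R^-(\lambda^{2m})\bigr)\,d\lambda,
\]
and inserting a low-energy cutoff $\chi(\lambda)$, the sharp asymptotic expansion of $R^\pm$ in Theorem \ref{thm-M inverse-odd} isolates a leading singular term in $M(\lambda)^{-1}$ attributable to the $\mathbf{k}$-th kind resonance (or to the $p$-wave solution $\phi$ in case (ii)). Paired against a test state $\psi\in C_c^\infty(\R^n)$ with $\langle\psi,\tilde\phi\rangle\neq 0$, where $\tilde\phi$ denotes the canonical representative of the resonance in the expansion of $M(\lambda)^{-1}$, the resulting oscillatory integral behaves asymptotically as a nonzero constant times $|t|^{-\alpha(p_\mathbf{k})}$ (resp.\ $|t|^{-1/(2m)}$), via a standard stationary-phase / fractional integration argument.

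The main obstacle is establishing the non-vanishing of the leading coefficient in this oscillatory integral. For case (ii), the very hypothesis that a non-zero $\phi\in\bigcap_{s<-1/2}L^2_s\setminus L^2$ exists is what guarantees that the corresponding rank-one projection in the singular part of $M(\lambda)^{-1}$ is non-degenerate, so that a test state $\psi$ with the required pairing is available. For case (i), the analogous non-degeneracy of the $\mathbf{k}$-th kind resonance projection is furnished by Theorem \ref{thm-M inverse-odd}. One must additionally check that the sub-leading regular terms in the resolvent expansion contribute strictly faster time decay and hence cannot cancel the singular leading contribution, and that the high-energy part of Stone's integral decays at least as fast as $|t|^{-\alpha(p_0)}$; both checks can be carried out using ingredients already developed for the boundedness direction of Theorem \ref{thm-main result}.
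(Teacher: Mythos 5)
Your overall strategy is the paper's own: argue by contradiction, push the assumed $L^p$-bound of $W_\pm$ (and of $W_\pm^*$ on the dual space) through the intertwining identity \eqref{eq-intertwing identity} and H\"older into a weighted $L^2$ decay $\|e^{itH}P_{ac}(H)\|_{L^2_\sigma\to L^2_{-\sigma}}\lesssim |t|^{-\frac{n}{m}(\frac12-\frac1p)}$, and contradict it with a Kato--Jensen-type lower bound $|\langle e^{itH}P_{ac}(H)\psi,\psi\rangle|\gtrsim |t|^{-\frac{n}{m}(\frac12-\frac{1}{p_{\k}})}$ extracted from Stone's formula and the low-energy expansion of $(M^\pm(\lambda))^{-1}$; this is precisely the argument of Section \ref{sec-unb}.

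There are, however, two places where your plan is under-justified, and one of them fails as stated. First, the non-vanishing of the leading coefficient is not ``furnished by'' the statement of Theorem \ref{thm-M inverse-odd}. After the cancellations, the leading operator in case (i) is $b_0^2\lambda^{2m-n-2j_{\k}}G_{2m-n}vQ_{j_{\k}}\big(M^+_{j_{\k},j_{\k}}-M^-_{j_{\k},j_{\k}}\big)Q_{j_{\k}}vG_{2m-n}$ with $j_{\k}=k_c+\k-1$, and since $(M^+(\lambda))^*=M^-(\lambda)$ this difference is skew-adjoint; hence a pairing $\langle\,\cdot\,\psi,\psi\rangle$ could vanish for a particular test state even when $Q_{j_{\k}}vG_{2m-n}\psi\neq0$. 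Theorem \ref{thm-M inverse-odd} identifies $M^\pm_{i,j}$ only for the half-integer indices and says nothing about this difference, so one must go back to the construction in Appendix \ref{sec:Minverse}: there $M^+_{j_{\k},j_{\k}}-M^-_{j_{\k},j_{\k}}=\big(e^{\frac{i\pi}{2m}(n-2m+2j_{\k})}-e^{-\frac{i\pi}{2m}(n-2m+2j_{\k})}\big)M_{j_{\k},j_{\k}}$, the exponential factor does not cancel, and $M_{j_{\k},j_{\k}}$ is a strictly negative definite block of $d^{-1}$, so the quadratic form is nonzero on every nonzero vector; moreover the paper takes $\psi=vU\psi^1_{j_{\k}}$ and uses $Q_{j_{\k}}T_0=0$ (valid since $\k>k_c$) to get $b_0Q_{j_{\k}}vG_{2m-n}\psi=-\psi^1_{j_{\k}}\neq0$. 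Your generic $C_c^\infty$ choice only works after these structural facts are in place.

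Second, in case (ii) your check that ``the sub-leading regular terms contribute strictly faster time decay'' is not true: when $\k=m_n+1$ the zero-eigenvalue block enters at the same order as the $p$-wave term. Indeed $M^+_{2m-\frac n2,2m-\frac n2}=M^-_{2m-\frac n2,2m-\frac n2}$ cancels, but $\lambda^{-2m}\big(\Gamma^+_{2m-\frac n2,2m-\frac n2}-\Gamma^-_{2m-\frac n2,2m-\frac n2}\big)=O(\lambda^{1-2m})$ and the cross terms $\lambda^{\frac12-2m}\big(\Gamma^+-\Gamma^-\big)_{2m-\frac{n+1}{2},2m-\frac n2}$ are also $O(\lambda^{1-2m})$, which is exactly the order of the leading $Q_{2m-\frac{n+1}{2}}$ contribution in \eqref{eq: expan for spectral measure-1}; their time decay is the same $|t|^{-\frac1{2m}}$ and their coefficients have no known sign, so order counting cannot rule out cancellation. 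The paper removes these competitors not by size but by the choice of test function: $\psi=vU\psi^1_{2m-\frac{n+1}{2}}$ satisfies $Q_{2m-\frac n2}vG_{2m-n}\psi=0$ (orthogonality of the projections together with $Q_{2m-\frac n2}T_0=0$), so only the $p$-wave block survives at leading order. With these two repairs, the remaining ingredients of your outline (negligibility of the high-energy piece and of $e^{it(-\Delta)^m}\chi(|D|)$, the oscillatory-integral asymptotics of Lemma \ref{lem: the lower bound of osci int}, and the duality/interpolation reduction to a finite exponent) do go through as in the paper.
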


\begin{remark} $\mbox{}$ \vspace{0.5pt}

\begin{itemize}
\item If zero is an eigenvalue of \(H\),  but there is no nonzero solution \(\phi\) to \(H\phi = 0\) in \(\bigcap_{s < -\frac{1}{2}}L^{2}_{s}(\mathbb{R}^n)\setminus L^2(\mathbb{R}^n)\), then the wave operators \(W_\pm\big(H;(-\Delta)^m\big)\) may be bounded on \(L^p\) with an interval of \(p\) values that strictly contains \((1,\, \frac{2n}{n - 1})\). 
\item For the specific case of \(m = 2\) and \(n = 3\), Mizutani-Wan-Yao in \cite{MWY23-1} call zero a \(p\)-wave resonance if there exists \(\phi\in \bigcap_{s < -\frac{1}{2}}L^{2}_{s}(\mathbb{R}^3)\setminus L^2(\mathbb{R}^3)\) such that \(H\phi = 0\). 
In particular, they have shown that \(W_\pm\big(H;(-\Delta)^2\big)\) are bounded on \(L^p(\mathbb{R}^3)\) for all \(1 < p< \infty\) if zero is an eigenvalue of \(H\), but not a \(p\)-wave resonance, and all zero eigenfunctions \(\phi\) of \(H\) satisfy
\begin{align}
\label{orthogonal}
\int_{\mathbb{R}^3}x_ix_jx_k V(x)\phi(x)dx = 0
\end{align}
for all \(i,j,k = 1,2,3\).
\item 
    We prove Theorem \ref{thm-unbound} by contradiction in Section \ref{sec-unb}. The key idea involves reducing the problem to establishing the optimality of time-decay estimates for $e^{itH}$ in weighted $L^2$ spaces (known as {\it  Kato-Jensen-type estimates}). These optimal decay estimates, in turn, rely on a detailed asymptotic expansion of the resolvent $R^\pm(\lambda^{2m})$ of $H = (-\Delta)^m + V$ near the zero threshold, accounting for the presence of zero resonances or eigenvalue singularities.  

\end{itemize}
\end{remark}

\begin{remark}

To provide a comprehensive overview of the \(L^p\)-boundedness of the wave operators $W_\pm\big(H;(-\Delta)^m\big)$, in Table \ref{table} below, we list the known $L^p$-boundedness results of the wave operators $W_\pm(H;(-\Delta)^m)$ when $m\geq1$ and $1\leq n\leq 4m - 1$ is odd. Our results agree with the existing ones for $m = 1,2$.
\end{remark} 
\begin{table}[h]
\renewcommand\arraystretch{2}
\centering
\begin{tabular}{|p{2cm}<{\centering}|p{4.3cm}<{\centering}|p{9cm}<{\centering}|}
 \hline 
  {\bf Operators}      &  {$\bf n$,\  $\bf m_n$ {\bf and} $\bf{k}$}     & {\bf $L^p$-bounds of  $W_\pm$ with zero resonance type ${\bf k }$} \\
  \hline
  \multirow{2}{*}{\makecell{$-\Delta+V$\\ $m=1$}}      &  \makecell{$n=1,\ m_n=1$  \vspace{0.05cm}\\ $0\le \mathbf{k} \le 2$  \vspace{0.05cm}\\no zero eigen.(${\bf k}=2$)}   \vspace{0.05cm}&  \makecell{$\mathbf{k}=0,1;$\ \ $1<p<\infty$, \vspace{0.05cm}\\
Weder \cite{Weder-99-CMP}, Galtbayar-Yajima \cite{Gal-Yaj-00},   \vspace{0.05cm}\\ and D'Ancona-Faneli \cite{AF06}.}  \vspace{0.05cm}\\
  \cline{2-3}
  & \makecell{$n=3,\ m_n=1$  \vspace{0.05cm}\\ $0\le {\bf k}\le 2$}   \vspace{0.05cm}& \makecell{$\mathbf{k}=0$:\ \ $1\le p\le \infty$,\vspace{0.05cm}\\  Yajima \cite{Yajima-JMSJ-95}, Beceanu-Schlag \cite{Beceanu-Schlag-20};   \vspace{0.05cm}\\ $\mathbf{k}=1,2$:\ \ $1<p<3$,  Yajima \cite{Yajima-2016-3d}.}  \vspace{0.05cm}\\
  \hline
  \multirow{3}{*}{\makecell{$\Delta^2+V$\\ $m=2$}}      & \makecell{$n=1,\ m_n=2$  \vspace{0.05cm}\\ $0\le \k\le 3$  \vspace{0.05cm}\\no zero eigen. ($\k=3$)}   \vspace{0.05cm}&  \makecell{$\mathbf{k}=0,1,2$:\ \ $1<p<\infty$,\vspace{0.05cm}\\
Mizutani-Wan-Yao\cite{MWY22}.}\\
  \cline{2-3}
  &\makecell{ $n=3,\ m_n=2$  \vspace{0.05cm}\\ $0\le \k\le 3$  
  } & \makecell{$\mathbf{k}=0$:\ \ $1<p<\infty$  \vspace{0.05cm}\\  Goldberg-Green \cite{GG21},  Mizutani-Wan-Yao \cite{MWY23-2};  \vspace{0.05cm}\\
  $\mathbf{k}=1$:\ \ $1<p<\infty$ and  $\mathbf{k}=2,3$:\ \ $1<p<3$,
  \\ Mizutani-Wan-Yao \cite{MWY23-1}.}  \vspace{0.05cm}\\
  \cline{2-3}
  & \makecell{$n=5,7$  \vspace{0.05cm}\\  $0\le \k \le m_n+1$  \vspace{0.05cm}} & \makecell{$\mathbf{k}=0$:\ \ $1\le p\le\infty$, Erdo\u{g}an-Green \cite{EG22};  \vspace{0.05cm}\\   $1\le \k \le m_n+1$,  Thm \ref{thm-main result}.
  }  \\
  \hline
   \multirow{2}{*}{\makecell{$(-\Delta)^m+V$\\ $m\ge3$}}     & \makecell{$1\le n\le 2m-1$   \vspace{0.05cm}\\ $k_c=m-\frac{n}{2}+\frac{1}{2}$\\ $m_n=m$   \vspace{0.05cm}
   \\ $0\le \k \le m_n+1$} & \makecell{$0\le \k\le k_c:\ 1<p<\infty,$\ \  Thm \ref{thm-main result} (i),(ii);  \vspace{0.08cm}\\ $k_c< \k\le m_n:\ 1<p<\frac{n}{n-2m+\mathbf{k}+k_c-1},$\\ Theorem \ref{thm-main result} (iii);  \vspace{0.08cm}\\ $k=m_n+1:\ 1<p<\frac{2n}{n-1}$,\ \ Thm \ref{thm-main result} (iv).}  \\
  \cline{2-3}
  & \makecell{$2m+1\le n\le 4m-1$  \vspace{0.08cm}\\ $k_c=0$\vspace{0.08cm}\\ $m_n=2m-\frac{n-1}{2}$   \vspace{0.08cm}\\ $0\le \k \le m_n+1$}&  \makecell{\\$\mathbf{k}=0$:\ \ $1\le p\le\infty$, \vspace{0.08cm}\\ Erdo\u{g}an-Green \cite{EG22, EG23};   \vspace{0.05cm}\\
  $1\le\k\le m_n:\ 1<p<\frac{n}{n-2m+\mathbf{k}-1},$\\ Theorem \ref{thm-main result} (iii);  \vspace{0.08cm}\\ $\k=m_n+1:\ 1<p<\frac{2n}{n-1}$,\ \ Thm \ref{thm-main result} (iv).} \\
  \cline{2-3}
  \hline
\end{tabular}
\vskip 0.4cm
\caption{
The $L^p$-boundedness for $W_\pm\big(H;(-\Delta)^m\big)$ when $1\le n\le 4m-1$ is odd}\label{table}
\end{table}

\subsubsection{Some applications}
In this subsection, we give a quick application to higher-order Schr\"odinger group $e^{itH}P_{ac}(H)$. Recall that as the following dispersive estimates for the free higher-order Schr\"{o}dinger group $e^{it(-\Delta)^m}$ hold  (see e.g. \cite{ZYF}) 
\begin{equation}\label{eq: for free}
\left\|e^{i t(-\Delta)^m}\right\|_{L^p-L^{p'}}\lesssim |t|^{-\frac{n}{m}(\frac{1}{p}-\frac {1}{2})}, \quad t\neq0,
\end{equation}
where $1\le p\le2$ and $\frac{1}{p'}+\frac{1}{p}=1$.  Then by the $L^p$-boundedness of  $W_\pm(H;-\Delta)^m$  in Theorem \ref{thm-main result} and intertwining formula \eqref{eq-intertwing identity} of wave operators, we can immediately  obtain the following corollary.

\begin{corollary}
Let $1\le n\le 4m-1$ be odd. Under Assumption \ref{Assumption} with resonance type $\mathbf{k}$,  one has 
\begin{equation}\label{decayest}
\Big\|e^{i tH}P_{ac}(H)\Big\|_{L^p-L^{p'}}\lesssim |t|^{-\frac{n}{m}(\frac{1}{p}-\frac 12)}, \quad t\neq0,
\end{equation}
where  $\frac{1}{p'}+\frac{1}{p}=1$ and the exponent $p'$  satisfies that
\begin{equation*}
\begin{cases}
2\le p'< \infty, &\quad \text{if}\ \ ~ 0\le \mathbf{k} \le k_c,\\
2\le p'< \frac{n}{n-2m+\mathbf{k}+k_c-1}, &\quad \text{if}\ \ ~  k_c+1\le \mathbf{k} \le m_n,\\
2\le p'< \frac{2n}{n-1}, &\quad \text{if}\ \ ~\mathbf{k}= m_n+1.
\end{cases}
\end{equation*}
\end{corollary}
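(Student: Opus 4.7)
My plan is to derive \eqref{decayest} as an immediate consequence of three ingredients that are already in place: the intertwining identity \eqref{eq-intertwing identity} applied with the Borel function $f(\lambda)=e^{it\lambda}$, the free dispersive bound \eqref{eq: for free}, and the $L^p$-boundedness of the wave operators furnished by Theorem \ref{thm-main result}. In particular, no new harmonic analysis is needed — the corollary is a black-box sandwich.

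First, I would rewrite $e^{itH}P_{ac}(H) = W_\pm\, e^{it(-\Delta)^m}\, W_\pm^*$ via \eqref{eq-intertwing identity} and chain the three bounds for $1\le p\le 2$:
\[
\bignorm{e^{itH}P_{ac}(H)f}_{L^{p'}} \le \|W_\pm\|_{L^{p'}\to L^{p'}}\bignorm{e^{it(-\Delta)^m}W_\pm^* f}_{L^{p'}} \le \|W_\pm\|_{L^{p'}\to L^{p'}}\|W_\pm^*\|_{L^{p}\to L^{p}}|t|^{-\frac{n}{m}(\frac{1}{p}-\frac{1}{2})}\|f\|_{L^p}.
\]
Thus the estimate holds whenever $W_\pm\in\mathbb{B}(L^{p'})$ and $W_\pm^*\in\mathbb{B}(L^{p})$. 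By duality, $W_\pm^*\in\mathbb{B}(L^p)$ is equivalent to $W_\pm\in\mathbb{B}(L^{p'})$, so the single requirement is the $L^{p'}$-boundedness of $W_\pm$.

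Then I would read off the admissible range of $p'$ directly from Theorem \ref{thm-main result}, noting that the endpoint $p'=2$ is automatic from the partial-isometry identities $W_\pm^*W_\pm=I$ and $W_\pm W_\pm^*=P_{ac}(H)$. In the regular or low-resonance regime $0\le\mathbf{k}\le k_c$ (Theorem \ref{thm-main result}(i)(ii)) the full range $2\le p'<\infty$ is available; in the intermediate regime $k_c+1\le\mathbf{k}\le m_n$ (Theorem \ref{thm-main result}(iii)) one obtains $2\le p'<\frac{n}{n-2m+\mathbf{k}+k_c-1}$; and in the eigenvalue case $\mathbf{k}=m_n+1$ (Theorem \ref{thm-main result}(iv)) one obtains $2\le p'<\frac{2n}{n-1}$. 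These are precisely the three trichotomous ranges stated in the corollary.

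There is essentially no substantive obstacle here, since all the analytical work has been absorbed into Theorem \ref{thm-main result}. The only point demanding any attention is the duality step identifying $\|W_\pm^*\|_{L^{p}\to L^{p}}$ with $\|W_\pm\|_{L^{p'}\to L^{p'}}$; having done so, the admissible $p'$ range inherited from the wave-operator theorem matches the one listed in the corollary term-by-term, and the argument is complete.
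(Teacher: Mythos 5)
Your proposal is correct and matches the paper's own argument: the paper obtains the corollary exactly by combining the intertwining identity \eqref{eq-intertwing identity} (with $f(\lambda)=e^{it\lambda}$), the free dispersive bound \eqref{eq: for free}, and the wave-operator bounds of Theorem \ref{thm-main result}, i.e.\ the sandwich estimate \eqref{eq-disperesive estimate}. Your duality remark identifying $\|W_\pm^*\|_{L^{p}\to L^{p}}$ with $\|W_\pm\|_{L^{p'}\to L^{p'}}$ and the $p'=2$ endpoint via the partial-isometry identities are exactly the routine points the paper leaves implicit.
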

\vskip0.2cm
\begin{remark}

As is well known, the decay estimate \eqref{decayest} for the Schr\"odinger operator $-\Delta + V$ ( i.e.  $m = 1$) has been an active research topic over the past three decades and has found broad applications in nonlinear Schr\"odinger equations. In particular, $L^1-L^\infty$ decay estimates have been extensively studied.  
For $n \geq 3$, Journé, Soffer, and Sogge \cite{JSS} first established the $L^1-L^\infty$ dispersive estimate of Schr\"odinger operator in the regular case using methods distinct from wave operator techniques. Subsequent contributions for lower dimensions were made by Weder \cite{Weder1}, Rodnianski and Schlag \cite{RodSchl}, Goldberg and Schlag \cite{Goldberg-Schlag}, and Schlag \cite{Schlag-CMP}. Further developments and references can be found in Schlag’s survey work \cite{Schlag}.  

Moreover, it is worth noting that there exist many recent advances in the study of dispersive estimates for $ e^{itH}$ in the context of higher-order Schrödinger operators $H = (-\Delta)^m + V$ with $m \geq 2$. For the case $m = 2$, Kato-Jensen estimates or $ L^1-L^\infty$ estimates for $e^{itH}$ have been established in works such as \cite{EGT21, FSY, Green-Toprak-4D, SWY22, LSY23}. For general $m \geq 2$, further results can be found in \cite{CHHZ-2024, EGG_ArXiv, FSWY20}.

\end{remark}

\subsection{Further related backgrounds.}\label{subsec:1.3}
 In this subsection, we review known results on the $L^p$ boundedness of wave operators for both the Schr\"odinger operator and higher-order elliptic operators.

For the classical Schr\"odinger operator \( H = -\Delta + V(x) \), due to its numerous applications in linear and nonlinear dispersive equations, there has been significant work on the $L^p$ boundedness of wave operators \( W_\pm(H; -\Delta) \). If zero energy is regular, Yajima \cite{Yajima-JMSJ-95,Yajima-1995-even} was the first to prove that, under certain smoothness and decay conditions on the potential \( V \), the wave operators \( W_\pm(H; -\Delta) \) are bounded on \( L^p(\mathbb{R}^n) \) for all \( 1 \leq p \leq \infty \) in dimensions \( n \geq 3 \). Using the Wiener algebra method, Beceanu \cite{Beceanu} derived sharp conditions on the potential for \( n = 3 \) (see also Beceanu-Schlag \cite{Beceanu-Schlag-20}). For \( n = 1, 2 \) with regular zero energy, it has been shown that the wave operators are bounded on \( L^p(\mathbb{R}^n) \) for all \( 1 < p < \infty \) (see \cite{Weder-99-CMP, Gal-Yaj-00, AF06, Yajima-99-CMP, Jensen-Yajima-02-CMP}). Notably, for \( n = 1 \), the range of \( p \) remains unchanged even when zero energy is not regular (cf. \cite{Weder-99-CMP}).

In addition, when zero energy is a resonance or eigenvalue of \( H = -\Delta + V \), the range of exponent \( p \) for which the wave operators \( W_\pm(H; -\Delta) \) are bounded on \( L^p(\mathbb{R}^n) \) may shrink. For \( n > 4 \), it was shown in \cite{Finco-Yajima-2006-even,Yajima-JMSJ-2006} that the wave operators are bounded on \( L^p(\mathbb{R}^n) \) for \( n/(n-2) < p < n/2 \). Yajima \cite{Yajima-2016} and Goldberg-Green \cite{Goldberg-Green-2016} independently extended this range to \( 1 < p < n \) (with \( p = 1 \) also covered in \cite{Goldberg-Green-2016}). For \( n = 4 \), Goldberg-Green \cite{Goldberg-Green-2017} showed that the wave operators are bounded on \( L^p(\mathbb{R}^4) \) for \( 1 \leq p < 4 \) if zero is an eigenvalue but no resonance is present, while Yajima \cite{Yajima-2022-4} showed that the range is \( 1 < p < 4 \) if there is a zero resonance.

For \( n = 3 \), Yajima \cite{Yajima-2016-3d} demonstrated that the wave operators are bounded on \( L^p(\mathbb{R}^3) \) for \( 1 \leq p < 3 \) if zero energy is an eigenvalue, and for \( 1 < p < 3 \) if zero energy is a resonance. For \( n = 2 \), Erdo\u{g}an-Goldberg-Green \cite{Erdogan-Goldberg-Green-JFA-2018} established that  wave operators are bounded on \( L^p(\mathbb{R}^2) \) for \( 1 < p < \infty \) if there is an \( s \)-wave resonance or eigenvalue at zero. Yajima \cite{Yajima-2022} showed that if there is a \( p \)-wave resonance at zero, the wave operators are bounded on \( L^p(\mathbb{R}^2) \) for \( 1 < p \leq 2 \) but unbounded for \( 2 < p < \infty \).

It is natural to extend the study to higher-order Schr\"odinger operators \( H = (-\Delta)^m + V \) with \( m \geq 2 \). In fact, based on recent developments in the resolvent \( (H - z)^{-1} \) and decay estimates for \( e^{-itH} \) (see, e.g. \cite{EGT21, FSWY20}), Goldberg-Green \cite{GG21} first proved that for \( n = 3 \) and \( m = 2 \), the wave operators \( W_\pm(H; \Delta^2) \) are bounded on \( L^p(\mathbb{R}^3) \) for \( 1 < p < \infty \) under the assumption that the zero energy is regular. This result was later extended to higher-order Schr\"odinger operators \( H = (-\Delta)^m + V \) on \( \mathbb{R}^n \) for \( n > 2m \) in the regular case by Erdo\u{g}an-Green \cite{EG22, EG23}.

Recently, significant progress has been made in understanding the $L^p$ boundedness of wave operators for the fourth-order Schr\"odinger operator \( \Delta^2 + V(x) \). For \( n = 1 \), Mizutani-Wan-Yao \cite{MWY22} proved that the wave operators \( W_\pm(H; \Delta^2) \) are bounded on \( L^p(\mathbb{R}) \) for all \( 1 < p < \infty \), while being unbounded at \( p = 1 \) and \( p = \infty \), regardless of whether the zero energy is a regular point or resonance.

For \( n = 3 \), Mizutani-Wan-Yao \cite{MWY23-2} showed that the wave operators \( W_\pm(H; \Delta^2) \) are unbounded on \( L^1(\mathbb{R}^3) \) and \( L^\infty(\mathbb{R}^3) \) and established weak \( (1,1) \) estimates in the regular case. In \cite{MWY23-1}, they further proved that the range of \( p \) is \( 1 < p < \infty \) if zero is a first-kind resonance, but \( 1 < p < 3 \) if zero is a second- or third-kind resonance. For \( n = 4 \), Galtbayar-Yajima \cite{GY23} demonstrated that the wave operators \( W_\pm(H; \Delta^2) \) are bounded on \( L^p(\mathbb{R}^4) \) for \( 1 < p < \infty \) if zero energy is regular or corresponds to only \( s \)-wave resonance. The range of \( p \) is \( 1 < p < 4 \) for cases involving \( s \)- and \( p \)-wave resonances and zero energy eigenfunctions, and \( 1 < p \leq 2 \) if a \( d \)-wave resonance occurs.

In this paper, as well as in a forthcoming sequel \cite{CWY24}, we will show that \(W_\pm(H; (-\Delta)^m)\) are bounded on \(L^p(\mathbb{R}^n)\), where the sharp range of the exponent \(p\) is determined by all possible types of zero energy resonance of \(H\), for \(1 \leq n \leq 4m\).

\subsection{The outline of the proof}
 In what follows, we briefly explain the ideas behind the proofs of the main theorem. The usual starting point is the following stationary representation of wave operators (see e.g. Kuroda \cite{Kuroda}):
\begin{equation}\label{eq-stationary formula}
W_{\pm}(H,(-\Delta)^m)=I-\frac{m}{\pi i}\int_0^{+\infty}\lambda^{2m-1}R^{\mp}(\lambda^{2m})V\big(R_0^+(\lambda^{2m})-R_0^-(\lambda^{2m})\big)d\lambda,
\end{equation}
where $R^{\pm}(\lambda^{2m}),\ R_0^\pm(\lambda^{2m})$ are the limiting resolvent operators defined by
$$R^{\pm}(\lambda^{2m}):=\lim\limits_{\varepsilon\downarrow0}R(\lambda^{2m}\pm i\varepsilon)=\lim\limits_{\varepsilon\downarrow0}\big(H-(\lambda^{2m}\pm i\varepsilon)\big)^{-1},$$
$$R^\pm_0(\lambda^{2m}):=\lim\limits_{\varepsilon\downarrow0}R_0(\lambda^{2m}\pm i\varepsilon)=\lim\limits_{\varepsilon\downarrow0}\big((-\Delta)^m-(\lambda^{2m}\pm i\varepsilon)\big)^{-1}.$$
We shall only prove the $L^p$-boundedness for $W_{-}$ here, since $ \overline{W_+f}=W_-\overline{f}$.
Because the identity operator is obviously bounded on $L^p(\mathbb{R}^n)$ for all $1\le p\le \infty$, it suffices to consider the second term involving the integral  in \eqref{eq-stationary formula}. For convenience, we denote this integral term by $W$.  Let $\chi\in C_0^\infty(\mathbb{R}^n)$ such that $\chi\equiv1$ in $(-\lambda_0/2,\,\lambda_0/2)$ and $\text{supp}\chi\subset[-3\lambda_0/4,\,3\lambda_0/4]$, and the fixed $\lambda_0$ is given in Theorem \ref{thm-M inverse-odd}. We then use the functions $\chi$ and $1-\chi$ to decompose the  $W$ into the low and high energy parts as follows: 
\begin{equation}\label{eq-wave operator-low-0}
W^L=\frac{m}{\pi i}\int_0^\infty\lambda^{2m-1}\chi(\lambda)R^+(\lambda^{2m})V\big(R_0^+(\lambda^{2m})-R_0^-(\lambda^{2m})\big)d\lambda,
\end{equation}
\begin{equation}\label{eq-wave operator-high-0}
W^H=\frac{m}{\pi i}\int_0^\infty\lambda^{2m-1}(1-\chi(\lambda))R^+(\lambda^{2m})V\big(R_0^+(\lambda^{2m})-R_0^-(\lambda^{2m})\big)d\lambda.
\end{equation}

\subsubsection{The low energy part $W^{L}$} 
We first sketch the proof for the low energy part \(W^{L}\).
We define \(v(x)=\sqrt{|V(x)|}\) and \(U(x)\) as the sign function of \(V(x)\), denoted by \(\text{sgn} V(x)\). Here, \(\text{sgn} V(x)\) represents the sign function of \(V(x)\), which equals \(1\) when \(V(x)\ge 0\), and \(-1\) when \(V(x)<0\). And then we define
\begin{equation} \label{defM}
M^\pm(\lambda)=U+vR_0^\pm(\lambda^{2m})v.	
\end{equation}
Due to the absence of embedded positive eigenvalue of \(H=(-\Delta)^m+V\), it is well-known that  \(M^\pm(\lambda)\) is invertible on \(L^2(\mathbb{R}^n)\) for all \(\lambda>0\) (see e.g. Agmon \cite{Agmon} and Kuroda \cite{Kuroda}), and  one has the symmetric second resolvent identity
\begin{equation}\label{eq: reso-identity}
R^\pm(\lambda^{2m})V=R_0^\pm(\lambda^{2m})v(M^\pm(\lambda))^{-1}v.	
\end{equation}
Using the above identity,  \(W^L\) can be expressed as
\begin{equation}\label{eq-wave operator-low-1}
	W^L=\frac{m}{\pi i}\int_0^\infty\lambda^{2m-1}\chi(\lambda)R_0^+(\lambda^{2m})v(M^+(\lambda))^{-1}v\big(R_0^+(\lambda^{2m})-R_0^-(\lambda^{2m})\big)d\lambda.
\end{equation}	
The main technical challenge is to understand, as \(\lambda\rightarrow 0^+\), the refined integral kernel structure of  the following composed operator:
$$R_0^+(\lambda^{2m})v(M^+(\lambda))^{-1}v\big(R_0^+(\lambda^{2m})-R_0^-(\lambda^{2m})\big).$$

One of the key tools in our proof is the asymptotic expansions of $(M^\pm(\lambda))^{-1}$ recently derived in \cite{CHHZ-2024}, which are presented in the following form as $\lambda\rightarrow0^+$ (also see Theorem \ref{thm-M inverse-odd}):
\begin{equation*}
	\big(M^\pm(\lambda)\big)^{-1}=\sum_{i,j\in J_\mathbf{k}}\lambda^{2m - n - i - j}Q_i(M_{i,j}^\pm+\Gamma_{i,j}^\pm(\lambda))Q_j,
\end{equation*}
where $M^\pm_{i,j}\in\mathbb{B}(L^2)$ are $\lambda$-independent operators and $\Gamma_{i,j}^\pm(\lambda)$ are $\lambda$-dependent operators, and the index set $J_{\mathbf{k}}$ is dependent on zero energy resonance type $\mathbf{k}$. 

Using this asymptotic expansion, the integral kernel $W^L(x,y)$ is a sum of finite such terms $W_{i,j}^L(x,y)$, which can be expressed as
\begin{equation}\label{eq-1}
\int_0^\infty\lambda^{4m - n - 1 - i - j}\chi(\lambda)\Big\langle \Big(M_{i,j}^++\Gamma_{i,j}^+(\lambda)\Big)Q_jv\Big(R_0^{+}-R_0^{-}\Big)(\lambda^{2m})(\cdot,y),\, Q_ivR_0^{-}(\lambda^{2m})(\cdot,x)\Big\rangle _{L^2}d\lambda,
\end{equation}
where $i, j \in J_{\mathbf{k}}$, and $\langle \cdot, \cdot\rangle_{L^2}$ denotes the inner product on $L^2(\mathbb{R}^n)$.

The novel and key step is that we can write $Q_jvR_0^{-}(\lambda^{2m})(\cdot,y)$ and $Q_jv (R_0^{+}-R_0^{-})(\lambda^{2m})(\cdot,y)$ as
\begin{equation*}
	Q_jvR_0^-(\lambda^{2m})(\cdot,\, y)=e^{-i\lambda |y|}\omega_{j}^{-}(\lambda,\cdot,y)=e^{- i\lambda |y|}\big(\omega_{j,1}^-(\lambda,\cdot,y)+\omega_{j,2}^-(\lambda,\cdot,y)+\omega_{j,3}^-(\lambda,\cdot,y)\big),
\end{equation*}
and
\begin{equation*}
	Q_jv(R_0^+-R_0^-)(\lambda^{2m})(\cdot,\,y)=e^{ i\lambda |y|}\tilde{\omega}_{j}^{+}(\lambda,\cdot,y)+e^{-i\lambda |y|}\tilde{\omega}_{j}^{-}(\lambda,\cdot,y),
\end{equation*}
where $\omega_{j}^{\pm}$, $\omega_{j,1}^{\pm}$, $\omega_{j,2}^{\pm}$, $\omega_{j,3}^{\pm}$ and $\tilde{\omega}_{j}^{\pm}$ satisfy some estimates dependent on the property of $Q_j$, as detailed in Lemma \ref{lemma-QjvR-odd} and Lemma \ref{lem3.10}.

By putting these expressions into integral \eqref{eq-1}, the kernels $W_{i,j}^L(x,y)$ of  $W_{i,j}^L$ can be written as 
\begin{equation*}
W_{i,j}^L(x,y)
=\int_0^\infty \Big(e^{i\lambda(|x|+|y|)}T_{i,j}^+(\lambda,x,y)+e^{i\lambda(|x|-|y|)}T_{i,j}^-(\lambda,x,y)\Big)\chi(\lambda)d\lambda,
\end{equation*}
where
\begin{equation*}
	T_{i,j}^{\pm}(\lambda,x,y)=\lambda^{4m-n-1-i-j}\Big\langle \big(M_{i,j}^++\Gamma_{i,j}^+(\lambda)\big)\widetilde{\omega}_{j}^{\pm}(\lambda,\cdot,y),\, \omega_{i}^{-}(\lambda,\cdot,x)\Big\rangle_{L^2}.
\end{equation*}

We shall demonstrate the $L^p$-boundedness of $W_{i,j}^L$ by analyzing its integral kernel for three cases:
$$|x|> 2|y|,\,\, |x|< \frac12|y|\, \,\,  {\rm and}\,\,\,  \frac12|y|\le |x|\le 2|y|.$$ The corresponding operators for these cases are $W_{i,j,1}^L$, $W_{i,j,2}^L$ and $W_{i,j,3}^L$, respectively.

(I) \underline{For the first two cases, we write $W_{i,j,1}^L(x,y)$ and $W_{i,j,2}^L(x,y)$} as
$$\mathbf{1}_{\{|x|> 2|y|\}}\int_0^\infty\chi(\lambda) \Big(e^{i\lambda(|x|+|y|)}T_{i,j}^+(\lambda,x,y)+e^{i\lambda(|x|-|y|)}T_{i,j}^-(\lambda,x,y)\Big)d\lambda,$$
$$\mathbf{1}_{\{|x|< \frac12|y|\}}\int_0^\infty\chi(\lambda) \Big(e^{i\lambda(|x|+|y|)}T_{i,j}^+(\lambda,x,y)+e^{i\lambda(|x|-|y|)}T_{i,j}^-(\lambda,x,y)\Big)d\lambda.$$
Here $T_{i,j}^\pm(\lambda;x,y)$ satisfy some pointwise estimates (see Lemma \ref{lemma-T-odd1} and Lemma \ref{lemma-T-odd2}).  Hence, using a result from the oscillatory integral estimate, we obtain that $|W_{i,j,1}^L(x,y)|\lesssim\langle x\rangle^{-n}$ in the case $|x|> 2|y|$. This implies that $W_{i,j,1}^L$ is in $\mathbb{B}(L^p(\mathbb{R}^n))$ for $1< p \le \infty$ (see Proposition \ref{propo-low energy-odd-1}). While in the case $|x|< \frac12|y|$, it follows that $W_{i,j,2}^L(x,y)$ satisfies $|W_{i,j,2}^L(x,y)|\lesssim\langle x\rangle^{-\rho_i}\langle y\rangle^{-n+\rho_i}$ for some  $\rho_i$ depending on $i\in J_{\k}$. Consequently, we establish that $W_{i,j,2}^L\in \mathbb{B}(L^p(\mathbb{R}^n))$ for $1\le p < \sigma_i$ ($\sigma_i$ depends on $i\in J_{\k}$) in Proposition \ref{propo-low energy-odd-2}. The restrictions on the range of $p$ in Theorem \ref{thm-main result} are based on the  results established in Propositions \ref{propo-low energy-odd-2} and \ref{propo-low energy-odd-2'} .

 (II) \underline{For the last term $W_{i,j,3}^L$ in the low energy part}, we divide $W_{i,j,3}^L$ into three parts $I_{i,j}^{\pm,b}$, $I_{i,j,1}^{\pm,g}$ and $I_{i,j,2}^{\pm,g}$, where the first is the  bad part of $W_{i,j,3}^L$, and the last two are good parts. 
We can obtain the $L^p$-boundedness of {\bf the good parts} $I_{i,j,1}^{\pm,g}$ and $I_{i,j,2}^{\pm,g}$ for all $1\le p\le \infty$  by a similar analysis to that for $W_{i,j,1}^L$ or $W_{i,j,2}^L$ (see Proposition \ref{lem.estforwij3g}). 

To estimate  {\bf the bad part}  $I_{i,j}^{\pm,b}$, we apply the dyadic decomposition to rule out an admissible error term $e(x,y)$ as follows:
\begin{equation*}
I_{i,j}^{\pm,b}(x,y)=\sum_{|s-h|<2}\chi_s(x)I_{i,j}^{\pm,b}(x,y)\chi_h(y)+ e(x,y),
\end{equation*}
where  $\chi_0(x)=\mathbf{1}_{\{|x|\le 1 \}}$ and $\chi_h(x)=\mathbf{1}_{\{2^{h-1}<|x|\le2^{h} \}}$ for $h\in \N^{+} $ on $\mathbb{R}^n$ ( see \eqref{sumofw3b}).  Therefore, to show that $I_{i,j}^{\pm,b}$ is bounded on $L^p(\mathbb{R}^n)$,  it suffices to prove that 
\begin{equation*}
\left\| \chi_s I_{i,j}^{\pm,b}\chi_h \right\|_{\mathbb{B}(L^p(\mathbb{R}^n))}\leqslant C_p	
\end{equation*}
holds uniformly for $s,h\in\mathbb{N}_0$ such that $|s-h|<2$. 

It can be further shown that the kernel $\chi_l(x) I_{i,j}^{\pm,b}(x,y)\chi_h(y)$ can be split into an admissible part and a singular part. The singular part is a sum of 
\begin{equation*}
C \chi_s(x)\chi_h(y)\frac{y^\alpha}{|y|^{\frac{n-1}{2}+\vartheta(j)}}	\frac{x^{\tilde{\alpha}}}{|x|^{\frac{n-1}{2}+\vartheta(i)}}\frac{1}{|x|\pm|y|},
\end{equation*}
with $|\alpha|=|\tilde{\alpha}|=\vartheta(j)$. Finally, we can reduce the singular part operator to the $L^p$-boundedness of {\bf the truncated Hilbert Transform} on the line, and show that $I_{i,j}^{\pm,b}$ is bounded on $L^p(\mathbb{R}^n)$ for $1<p<\infty$ (see Proposition \ref{lem.estforwij3b1} for details). 
 At this time, we have finished the proof of the low energy estimate of $W^{L}$.
\subsubsection{The high energy part $W^H$} 
We now sketch the proof of the high energy part.
Erdo\u{g}an-Green \cite{EG22,EG23} proved the $L^p$-boundedness of $W^H$ for all $1\le p\le\infty$ when $n>2m$ under a milder condition of $V$, which requires that $|V|\lesssim \langle x\rangle^{-\beta}$ with $\beta>n+5$, as well as $\|\langle x\rangle^{\frac{4m-1+n}{2}+\delta}V(x)\|_{L^2}\lesssim 1$ for $2m< n< 4m-1$ and  $\|\langle x\rangle^{1+\delta}\langle\nabla \rangle ^{\delta} V(x)\|_{L^2}\lesssim 1$ for $n=4m-1$ with a sufficiently small $\delta>0$. Hence, we shall focus on the case $1\le n\le 2m-1$ here. Indeed, applying the second resolvent identity,
\[R^+(\lambda^{2m})=R_0^+(\lambda^{2m})-R_0^+(\lambda^{2m})VR^+(\lambda^{2m}),\]
$W^H$ can be divided as follows:
\begin{equation*}
	W^H=W^H_1+W^H_2,
\end{equation*}
where
\begin{equation*}
	\begin{split}
		W_1^H=&\frac{m}{\pi i}\int_0^\infty\lambda^{2m-1}(1-\chi(\lambda))R_0^+(\lambda^{2m})V(R_0^+(\lambda^{2m})-R_0^-(\lambda^{2m}))d\lambda,\\
		W_2^H=&\frac{m}{\pi i}\int_0^\infty\lambda^{2m-1}(1-\chi(\lambda))R_0^+(\lambda^{2m})VR^+(\lambda^{2m})V(R_0^+(\lambda^{2m})-R_0^-(\lambda^{2m}))d\lambda.
	\end{split}
\end{equation*}
Using the expression \eqref{eq-free kernel-F} of $R^\pm_0(\lambda^{2m})$ , the integral kernel of $W_1^H$ can be written as a difference of the following terms:
\begin{equation*}
	\begin{split}
		W_{1}^{H,+}(x,y)=\frac{m}{\pi i}\int_0^\infty\lambda^{n-2m}(1-\chi(\lambda))&\Big(\int_{\mathbb{R}^n}e^{i\lambda|x-z|+i\lambda|z-y|}
		\frac{F_n^+(\lambda|x-z|)V(z)F_n^+(\lambda|z-y|)}{|x-z|^{\frac{n-1}{2}}|z-y|^{\frac{n-1}{2}}}dz\Big)d\lambda,
	\end{split}
\end{equation*}
\begin{equation*}
	\begin{split}
		W_{1}^{H,-}(x,y)=\frac{m}{\pi i}\int_0^\infty\lambda^{n-2m}(1-\chi(\lambda))&\Big(\int_{\mathbb{R}^n}e^{i\lambda|x-z|-i\lambda|z-y|}
		\frac{F_n^+(\lambda|x-z|)V(z)F_n^-(\lambda|z-y|)}{|x-z|^{\frac{n-1}{2}}|z-y|^{\frac{n-1}{2}}}dz\Big)d\lambda.
	\end{split}
\end{equation*}
Note that $F_n^+$ shares the same property as $F_n^-$(see \eqref{eq-F decay-odd} ).   Applying an argument for oscillatory integral, we shall obtain
\begin{equation*}
	\left| W_{1}^{H,\pm}(x,y) \right|\lesssim \int_{\mathbb{R}^n}\frac{\min\Big\{\big| \log||x-z|\pm|z-y||\big| ,\,\big||x-z|\pm|z-y|\big|^{-n-1}\Big\}|V(z)|}{|x-z|^\frac{n-1}{2}|y-z|^\frac{n-1}{2}}dz. 
\end{equation*}
This result leads to the conclusion that 
$$	\sup_{y}\int_{\mathbb{R}^n}\left| W_{1}^{H,\pm}(x,y) \right| dx+\sup_{x}\int_{\mathbb{R}^n}\left| W_{1}^{H,\pm}(x,y) \right| dy \lesssim 1,$$
which implies that $W_{1}^{H,\pm}\in\mathbb{B}(L^{p}(\mathbb{R}^n))$ for all $1\le p\le \infty$.

For the remainder term $W_2^H$,  by using the high energy estimates for the resolvent  $R^\pm(\lambda^{2m})$ (see \cite[Lemma 3.9]{FSWY20})  and  the integration by parts, we can prove that the integral kernel $W_2^H(x,y)$ is an admissible kernel, which implies that $W_2^H\in \mathbb{B}(L^p(\R^n))$ for all $1\le p\le\infty.$

\subsection{ Notations}
 We first list some common notations and conventions as follows:
\begin{itemize}
\item  $A\lesssim B$ means $A \leq  CB$, where $C>0$ is an absolute constant. $A\lesssim_l B$ means $A \leq  C_lB$ with a constant $C_l$ dependent on $l$.
 $\mathbb{N}_0=\{0,1,\cdots\}$,  $\mathbb{N}^+=\{1,2,\cdots\}$,  $\mathbb{Z}=\{0,\pm1, \pm2,\cdots\}$.
\vspace{0.1cm}
\item $L^p=L^p(\R^n;\mathbb{C})$ for $1\le p\le \infty$, and $L^{2}_\sigma(\mathbb{R}^n)=\{f; \langle x\rangle^\sigma f(x)\in L^2 \}$.  $L^{p,q}(\R^n)$  denotes the Lorentz space for $1\le p\le \infty$ and  $1\le q\le \infty$ (cf. \cite{gra}).
\vspace{0.1cm}
\item $\lfloor l\rfloor$ denotes the greatest integer less than and equal to  $l$. 
\vspace{0.1cm}
\item $\mathbb{B}(X,Y)$ denotes the space of bounded operators from Banach space $X$ to Banach space $Y$, and $\mathbb{B}(X)=\mathbb{B}(X,X)$.
\vspace{0.1cm}
\item $\mathbf{1}_{\Omega}$ denotes the characteristic function of a set $\Omega\subset \mathbb{R}^n$.
\vspace{0.1cm}
\item $T(x,y)$  denotes  the integral kernel of an operator $T$, namely, 
$$(Tf)(x)=\int T(x,y) f(y) dy.$$

\end{itemize}
\begin{itemize}
\item $m_n$  in \eqref{equ0.1},   denotes {\bf  the total number of zero resonance types} of $H=(-\Delta)^m+V$ on $\mathbb{R}^n$ when $n$ is odd and $1\le n\le 4m-1.$
\vspace{0.1cm}
\item $k_c$  in \eqref{defkc},   denotes {\bf the critical resonance type} of $H$ in the sense that the range of the value $p$ of $L^p$-bound of wave operators will shrink into  a finite interval when  $\k>k_c$.
\vspace{0.1cm}
\item $J_\k$ is the index set with $0\le \k\le m_n$, which is defined in \eqref{eq-Jk-odd} and \eqref{eq-Jk-odd-2}.

\end{itemize}

	
	
	
	
\subsection{The organization of paper}
In Section \ref{sec:3}, we present the properties of free resolvent, the asymptotic behavior of $\big(M^\pm(\lambda)\big)^{-1}$ as $\lambda \to 0^+$ and some integral kernel estimates, which will be used to prove the $L^p$-boundedness of $W_\pm$. 

Section \ref{sec:low energy} is devoted to proving the low energy part of Theorem \ref{thm-main result}.
The high energy part of the proof for Theorem \ref{thm-main result} is presented in section \ref{sec:high energy}. In Section \ref{sec-unb}, we analyze the unboundedness of $W_\pm(H;(-\Delta)^m)$ when $\k>k_c$. We give the proof of two key lemmas in Section \ref{sec:proof}. 

In Appendix \ref{sec:Minverse}, we recall the process of obtaining the asymptotic expansion of $\big(M^\pm(\lambda)\big)^{-1}$.

\section{Preliminaries}\label{sec:3}
This section is dedicated to study the properties of the free resolvent, the asymptotic behavior of $\big(M^\pm(\lambda)\big)^{-1}$ as $\lambda \to 0^+$ and some integral kernel estimates, which will be used to prove the $L^p$-boundedness of $W$.

\subsection{Properties of the free resolvent of $(-\Delta)^m$}
 In this subsection, we recall some properties of the free resolvent of $(-\Delta)^m$. 
For $z\in \mathbb{C}\setminus[0,+\infty),$  set
\begin{equation*}
R_0(z)=((-\Delta)^m-z)^{-1},\ \ \ R(-\Delta;z)=(-\Delta-z)^{-1}.
\end{equation*}
It is known that $R_0(z)$ has the following expression (see \cite{FSWY20}):
\begin{equation}\label{eq-R0z}
R_0(z)=\frac{1}{mz^{1-\frac{1}{m}}}\sum_{k=0}^{m-1}e^{i\frac{\pi k}{m}}R(-\Delta;e^{i\frac{\pi k}{m}}z^{\frac{1}{m}}),
\end{equation}
where  $\arg(z^{\frac{1}{m}})\in (0, \frac{2\pi}{m})$.
Then by the formula \eqref{eq-R0z} and the limiting absorption principle for the free resolvent $R(-\Delta;\,z)$ (see e.g. Agmon \cite{Agmon}), one obtains that 
\begin{equation}\label{defR0}
R_0^\pm(\lambda^{2m})=\big((-\Delta)^m-(\lambda^{2m}\pm i0)\big)^{-1}=\frac{1}{m\lambda^{2m}}\sum_{k\in I^\pm}\lambda_k^2R^\pm(-\Delta;\lambda_k^2),\quad \lambda>0,
\end{equation}
where $\lambda_k=\lambda e^{i\frac{k\pi}{m}}$, $R^\pm(-\Delta;\lambda_k^2)=(-\Delta-(\lambda_k^2\pm i0))^{-1}$ and
\begin{equation}\label{eq-I}
	I^+=\{0,1,\cdots,m-1\},\ \ \ \  I^-=\{1,\cdots,m\}.
\end{equation}
In particular, note that for each $k\in\{1,\cdots,m-1\},$  $\lambda_k^2\in  \mathbb{C}\setminus[0,+\infty)$ and 
$$R^+(-\Delta;\lambda_k^2)=R^-(-\Delta;\lambda_k^2)=(-\Delta-\lambda_k^2)^{-1}.$$
Hence we have that 
\begin{equation}\label{R_+-R_}
R_0^+(\lambda^{2m})-R_0^-(\lambda^{2m})=\frac{1}{m\lambda^{2m-2}}\Big(R^+(-\Delta;\lambda^2)-R^-(-\Delta;\lambda^2)\Big).
\end{equation}
Moreover, the integral kernel of $R_0^\pm(\lambda^{2m})$ can be written as 
\begin{equation}\label{eq-free resovelt-LAP}
	R_0^\pm(\lambda^{2m})(x,y)=\frac{1}{m\lambda^{2m}}\sum_{k\in I^\pm}\lambda_k^2R^\pm(-\Delta;\lambda_k^2)(x,y).
\end{equation}
where
\begin{equation*}
	R(-\Delta;z)(x,y)=\frac{i}{4}\Big(\frac{z^{\frac{1}{2}}}{2\pi|x-y|}\Big)^{\frac{n}{2}-1}H^{(1)}_{\frac{n}{2}-1}(z^{\frac{1}{2}}|x-y|).
\end{equation*}
Here $\Im z^{\frac{1}{2}}> 0$ and $H^{(1)}_{\frac{n}{2}-1}$ is the first Hankel function (cf. \cite{GR02}).

When $n\ge 1$ is odd, by the explicit expressions of the Hankel function (see \cite{GR02}) and the expression \eqref{eq-free resovelt-LAP}, we have
\begin{equation}\label{eq2.8}
	R_0^{\pm}(\lambda^{2m})(x,y)=\frac{1}{(4\pi)^{\frac{n-1}{2}}m\lambda^{2m}}\sum_{k\in I^{\pm}}\frac{\lambda_k^2e^{ i\lambda_k|x-y|}}{|x-y|^{n-2}}\sum^{\frac{n-3}{2}}_{j=\min\{0,\frac{n-3}{2}\}}{c_j(i\lambda_k|x-y|)^j},
\end{equation}
where
\begin{equation}\label{equ1.4}
	c_j=\begin{cases}
		-\frac12,\quad&\mbox{if}\,\, j=-1,\\
		\frac{(-2)^j(n-3-j)!}{j!(\frac{n-3}{2}-j)!},&\mbox{if}\,\, 0\le j\le \frac{n-3}{2}.
	\end{cases}
\end{equation}

Next, we establish some expansions of the integral kernel $R_0^\pm(\lambda^{2m})(x,y)$.
\begin{lemma}\label{lemma-free resolvent expansion}
Let $1\le n\le4m-1$ be odd and $\theta\in\mathbb{N}_0$. Then for any $\lambda>0$, we have the following expansions:

(i)~ If $0\le \theta\le 2m-n$ when $1\le n\le 2m-1$ or $\theta=2m-n$ when $2m<n<4m$, one has
\begin{equation}\label{eq-free expansion-odd-1}
R_0^\pm(\lambda^{2m})(x,y)
=\sum_{0\le j\le\lfloor \frac{\theta-1}{2}\rfloor}a_j^\pm\lambda^{n-2m+2j}|x-y|^{2j}+\lambda^{n-2m}r_{\theta}^\pm(\lambda|x-y|);
\end{equation}

 (ii)~    If $2m-n+1\le \theta\le 4m-n$ when $1\le n\le 2m-1$ or $\theta=0$ when $2m<n<4m$, one has
\begin{equation}\label{eq-free expansion-odd-2}
R_0^\pm(\lambda^{2m})(x,y)
=\sum_{0\le j\le\lfloor \frac{\theta-1}{2}\rfloor}a_j^\pm\lambda^{n-2m+2j}|x-y|^{2j}+b_0|x-y|^{2m-n}+\lambda^{n-2m}r_{\theta}^\pm(\lambda|x-y|);
\end{equation}

(iii)~  If $\theta=4m-n+1$, one has
\begin{equation}\label{eq-free expansion-odd-5}
	\begin{split}
		R_0^\pm(\lambda^{2m})(x,y)
		=b_0|x-y|^{2m-n}+\sum_{0\le j\le\lfloor \frac{\theta-1}{2}\rfloor}a_j^\pm &\lambda^{n-2m+2j}|x-y|^{2j}+\\&b_1\lambda^{2m}|x-y|^{4m-n} +\lambda^{n-2m}r_{\theta}^\pm(\lambda|x-y|).
	\end{split}
\end{equation}
Here in the  \eqref{eq-free expansion-odd-1}---\eqref{eq-free expansion-odd-5}, $a_j^\pm\in\mathbb{C}\setminus\mathbb{R}$ ,  $b_0,~b_1\in\mathbb{R}\setminus\{0\}$ and all the remainder terms $r_{\theta}^\pm(z)\in C^\infty(\mathbb{R})$ satisfy that 
\begin{equation}\label{eq-r estimate-odd}
\big|\frac{d^l}{dz^l} r_{\theta}^\pm(z)\big|\lesssim_l 
\begin{cases}
   |z|^{\theta-l},\ \ \ &l=0,\cdots,\frac{n-1}{2}+\theta,\\
   |z|^{-\frac{n-1}{2}},\ \ \ &l=\frac{n-1}{2}+\theta+1,\cdots.	    
\end{cases}
\end{equation}
Moreover, for $1\le \theta\le 4m-n+1$,  one has that
\begin{equation}\label{eq-free expansion-odd-4}
	{\small\begin{split}
		\big(R_0^+- R_0^-\big)(\lambda^{2m})(x,y)
		=&\sum_{0\le j\le \lfloor\frac{\theta-1}{2}\rfloor}(a_j^+-a_j^-)\lambda^{n-2m+2j}|x-y|^{2j}+\lambda^{n-2m}\big(r_{\theta}^+-r_{\theta}^-\big)(\lambda|x-y|)).
	\end{split}}
\end{equation}

\end{lemma}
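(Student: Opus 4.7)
The plan is to start from the explicit kernel formula \eqref{eq2.8}, which in odd dimensions expresses $R_0^\pm(\lambda^{2m})(x,y)$ as a finite sum over $k\in I^\pm$ of products of $\lambda_k^2 e^{i\lambda_k|x-y|}$ with a polynomial in $\lambda_k|x-y|$ of degree $\tfrac{n-3}{2}$ (reducing to a single term when $n=1$), where $\lambda_k=\lambda\omega^k$ and $\omega=e^{i\pi/m}$. Writing $z=\lambda|x-y|$, I would Taylor-expand each exponential as $e^{i\omega^k z}=\sum_{\ell=0}^{L-1}(i\omega^k z)^{\ell}/\ell!+R_L^{(k)}(z)$, choosing $L$ so that the error is of size $|z|^{\theta}$, substitute into \eqref{eq2.8}, and regroup by total power of $z$. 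After factoring out $\lambda^{n-2m}$, the kernel becomes a function $K^\pm(z)$ of $z$ alone, and each polynomial contribution takes the form $A_N\,z^{N-(n-2)}\sum_{k\in I^\pm}\omega^{(N+2)k}$ for $N\ge 0$, with $A_N$ an explicit complex constant built from the Hankel polynomial coefficients.

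The key algebraic observation is the Gauss-type identity $\sum_{k=0}^{m-1}\omega^{pk}=m$ when $p$ is a nonzero multiple of $2m$, equal to zero when $p$ is even and not such a multiple, and equal to $2/(1-\omega^p)$ when $p$ is odd. Since $n$ is odd, the analytic terms come from odd values of $N+2$, giving contributions $a_j^\pm\lambda^{n-2m+2j}|x-y|^{2j}$ with complex, $\pm$-dependent coefficients because $\sum_{k\in I^-}\omega^{pk}=\omega^p\sum_{k\in I^+}\omega^{pk}$. The resonant contributions at $N+2=2m,4m,\dots$ produce precisely the non-analytic terms $b_0|x-y|^{2m-n}$ and $b_1\lambda^{2m}|x-y|^{4m-n}$, and their coefficients are real and independent of $\pm$ because $\omega^{2m}=\omega^{4m}=1$ makes the Gauss sum equal to $m$ uniformly over $I^+$ and $I^-$. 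These resonant terms must be extracted explicitly once $\theta$ grows beyond their order, which is exactly the dichotomy producing the three cases (i), (ii), and (iii).

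For the remainder bound \eqref{eq-r estimate-odd}, I would use the identity $R_L^{(l)}(z)=i^l R_{L-l}(z)$ for $l\le L$ together with the standard estimate $|R_L(z)|\lesssim|z|^L$ coming from the integral representation $R_L(z)=\frac{i^L}{(L-1)!}\int_0^z(z-t)^{L-1}e^{it}\,dt$, which yields $|R_L^{(l)}(z)|\lesssim|z|^{L-l}$. Combined with the Hankel polynomial prefactor of degree $\tfrac{n-3}{2}$ this translates to the first branch $|z|^{\theta-l}$ for $l\le\theta+\tfrac{n-1}{2}$. Once $l>\theta+\tfrac{n-1}{2}$, the Taylor estimate saturates: the polynomial part of the expansion has already been annihilated by differentiation, and the remaining derivative is bounded by differentiating the representation \eqref{eq2.8} directly; each $\partial_z$ hits either a bounded factor $i\omega^k$ from an exponential or the algebraic factor $|x-y|^{-(n-2)}$, and after the Hankel polynomial of degree $\tfrac{n-3}{2}$ compensates for the leading $|x-y|^{-(n-2)}$, one is left with the uniform decay $|z|^{-(n-1)/2}$. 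The jump identity \eqref{eq-free expansion-odd-4} is then immediate from \eqref{R_+-R_}: the $k\in\{1,\dots,m-1\}$ contributions agree for $R^+$ and $R^-$ and thus cancel, only the $k=0,m$ terms survive with opposite signs, and the real resonant coefficients $b_0, b_1$ disappear from the difference, leaving only the complex analytic strand together with $r_\theta^+-r_\theta^-$.

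The main obstacle will be the combinatorial bookkeeping: matching each $(\ell,j)$-pair in the Taylor-times-Hankel double sum to its monomial in $z$, verifying that within each $\theta$-window no additional non-analytic resonance appears beyond the two already extracted, and handling the transition between the Taylor-controlled regime and the saturated regime for the remainder derivatives uniformly across all odd $n$ in the range $1\le n\le 4m-1$. The case $n=1$ also needs separate attention since the Hankel polynomial in \eqref{eq2.8} degenerates to the single term at $j=-1$, but the Gauss-sum/Taylor mechanism carries through unchanged.
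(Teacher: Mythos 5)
Your plan follows essentially the paper's route: Taylor-expand the exponentials in the explicit odd-dimensional kernel \eqref{eq2.8}, regroup by powers of $z=\lambda|x-y|$, use root-of-unity sums over $k\in I^\pm$ to produce the analytic terms $a_j^\pm\lambda^{n-2m+2j}|x-y|^{2j}$ and the resonant terms $b_0|x-y|^{2m-n}$, $b_1\lambda^{2m}|x-y|^{4m-n}$, and bound the integral-form Taylor remainder separately for $|z|\le 1$ and $|z|\ge 1$ (your ``saturation'' step is, after the change of variables $s\mapsto s/z$, exactly the paper's estimate \eqref{estforrpm}); the difference formula \eqref{eq-free expansion-odd-4} then follows by subtraction, since $b_0,b_1$ are $\pm$-independent.

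There is, however, a genuine gap in your cancellation mechanism. After regrouping, the term of index $N=l$ is, up to harmless constants,
\begin{equation*}
d_l\Big(\sum_{k\in I^\pm}e^{\frac{i\pi k}{m}(l+2-2m)}\Big)\lambda^{l+2-2m}\,|x-y|^{l+2-n},
\qquad d_l=\sum_{j}\frac{c_j}{(l-j)!},
\end{equation*}
and for odd $l$ with $1\le l\le n-4$ this is a \emph{singular} term ($|x-y|^{l+2-n}$ with negative even exponent) whose root-of-unity factor is, by your own Gauss-type identity, the nonzero number $2/(1-e^{i\pi(l+2-2m)/m})$, because $l+2-2m$ is odd. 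So the mechanism you rely on (Gauss sums alone) cannot remove these contributions, yet no negative powers of $|x-y|$ appear in \eqref{eq-free expansion-odd-1}--\eqref{eq-free expansion-odd-5}; for every odd $n\ge 5$ in the lemma's range (already $n=5,7$ when $m=2$) your regrouping would leave spurious singular terms. What is needed, and what the paper invokes, is the separate identity $d_l=0$ for odd $l$, $1\le l\le n-4$, a nontrivial property of the Hankel coefficients $c_j$ in \eqref{equ1.4} (Lemma 3.3 of \cite{J80}); the Gauss sums only dispose of the even-$l$ singular terms such as the leading $|x-y|^{2-n}$ one. This is not deferrable ``combinatorial bookkeeping'': without that coefficient identity the step ``regroup and observe that only the listed terms survive'' fails.
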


\begin{proof}
To prove \eqref{eq-free expansion-odd-1}--\eqref{eq-free expansion-odd-5}, for any $\theta\in \mathbb{N}$, we apply the Taylor formula for $e^{ i\lambda_k|x-y|}$ in \eqref{eq2.8}to get
\begin{equation}\label{eq-free-uniform}
	R^{\pm}_0(\lambda^{2m})(x,y)=\frac{1}{(4\pi)^{\frac{n-1}{2}}m} \sum_{k\in I^{\pm}}\sum_{l=0}^{n+\theta-3}d_l\lambda_k^{l+2-2m}|x-y|^{l-n+2}+\lambda^{n-2m}r_{\theta}^{\pm}(\lambda|x-y|),
\end{equation}
where $d_l:=\sum^{\min\{l,\, \frac{n-3}{2}\}}_{j=\min\{0,\frac{n-3}{2}\}}{\frac{c_j}{(l-j)!}}$,
$c_j$ is given in \eqref{equ1.4}, and
\begin{align}\label{eq2.10}
	r_{\theta}^{\pm}(z)=\sum_{k\in I^{\pm}} \sum_{j=\min\{0,\frac{n-3}{2}\}}^{\frac{n-3}{2}} C_{j,\theta}~ {e^{\frac{i(n-2 m+\theta) k\pi}{m}}}z^\theta \int_{0}^{1} e^{i se\frac{i k \pi}{m} z}(1-s)^{n-j+\theta-3} d s, \quad z\in \mathbb{R}.
\end{align}
Denoted by
\begin{equation}\label{eq2.10.11}
	a_j^{\pm}=\frac{1}{(4\pi)^{\frac{n-1}{2}}m}\sum_{k\in I^{\pm}}d_{2j+n-2}\,e^{\frac{i k\pi}{m}(2j+n-2m)},\quad \,\, b_j=(4\pi)^{-\frac{n-1}{2}} d_{2mj+2m-2},\quad j\in \mathbb{N}_0.
\end{equation}
Then identities \eqref{eq-free expansion-odd-1}--\eqref{eq-free expansion-odd-5} with the respective range of $\theta$ follow from  \eqref{eq-free-uniform} by using the property
\begin{equation*}\label{eq2.8.1}
	\sum_{k=0}^{m-1}{\lambda_k^{2j}}= \sum_{k=1}^{m}{\lambda_k^{2j}}=0,\qquad \text{when}\,\,\, j\in \mathbb{Z}\setminus m\mathbb{Z},
\end{equation*}
and  the fact that $d_l=0$ (see \cite[Lemma 3.3]{J80})  when $l$ is odd and $1\le l\le n-4$. Furthermore, \eqref{eq-free expansion-odd-4} follows immediately from the identities \eqref{eq-free expansion-odd-1}--\eqref{eq-free expansion-odd-5}.

By a direct computation, one has for $z\in \mathbb{R}^+$ and $l=0,\,1,\,\cdots$ that
\begin{equation*}
\left|\frac{d^l}{dz^l}\left( {z}^{\theta } \int_{0}^{1} e^{i s e^\frac{i k\pi}{m}z}(1-s)^{n-j+\theta-3} d s\right)  \right| \lesssim
|z|^{\max\{\theta-l,~0\}}, \quad |z|\leq 1
\end{equation*}	
Meanwhile, note that 
$${z}^{\theta } \int_{0}^{1} e^{i s e^\frac{i k\pi}{m}z}(1-s)^{n-j+\theta-3} d s= z^{n-2-j}\int_{0}^{z} e^{i s e^\frac{i k\pi}{m}}(z-s)^{n-j+\theta-3} d s.$$
Then, we have 
\begin{equation*}
\left|\frac{d^l}{dz^l}\left( z^{n-2-j}\int_{0}^{z} e^{i s e^\frac{i k\pi}{m}}(z-s)^{n-j+\theta-3} d s.\right)  \right| \lesssim|z|^{\max\{\theta-l,-(n-j-2)\}}, \quad |z|\geq 1.
\end{equation*}	
These two estimates, together with $\min\{0,\frac{n-3}{2}\}\le j\le \frac{n-3}{2}$,  yield that for $l=0,\,1\, \cdots$, 
\begin{equation}\label{estforrpm}
	\left|\frac{d^l}{dz^l}\Big( r_{\theta}^{\pm}(z)\Big)  \right| \lesssim_l
	\begin{cases}
		|z|^{\max\{\theta-l,~0\}}, &\quad |z|\leq 1,\\
		|z|^{\max\{\theta-l,\,-\frac{n-1}{2}\}}, &\quad |z|\geq 1,
	\end{cases}
\end{equation}	
which implies \eqref{eq-r estimate-odd}.
\end{proof}

Furthermore,  we also have the following representation of the integral kernels $R_0^\pm(\lambda^{2m})(x,\, y).$
\begin{lemma}\label{lemma-free kernel}
Let $1\le n\le 4m-1$ be odd, then we have
\begin{equation}\label{eq-free kernel-F}
R_0^\pm(\lambda^{2m})(x,y)=\frac{\lambda^{\frac{n+1}{2}-2m}}{|x-y|^{\frac{n-1}{2}}}e^{\pm i\lambda|x-y|}F_n^\pm(\lambda|x-y|),
\end{equation}
where for any $l\in\mathbb{N}_0,$ $F_n^\pm(z)\in C^\infty([0,\,\infty))$ satisfies that
\begin{equation}\label{eq-F decay-odd}
  \big|\frac{d^l}{dz^l} F^\pm_n(z)\big|\lesssim_l \langle z\rangle ^{-l}.
\end{equation}
\end{lemma}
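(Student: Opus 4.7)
The plan is to read $F_n^\pm$ off directly from the closed-form kernel \eqref{eq2.8}. Substituting $\lambda_k = \lambda e^{i\pi k/m}$, setting $z = \lambda|x-y|$, and using the algebraic identity $\lambda^{2-2m}|x-y|^{2-n} = \lambda^{\frac{n+1}{2}-2m}|x-y|^{-\frac{n-1}{2}}z^{-\frac{n-3}{2}}$, one is led to set
\[
F_n^\pm(z) := \frac{z^{-\frac{n-3}{2}}}{(4\pi)^{\frac{n-1}{2}}m}\sum_{k\in I^\pm}e^{\frac{2i\pi k}{m}}\,e^{iz(e^{i\pi k/m}\mp 1)}\sum_{j=\min\{0,\frac{n-3}{2}\}}^{\frac{n-3}{2}}c_j\,i^j z^j e^{\frac{ij\pi k}{m}},
\]
which is manifestly a function of $z$ alone and makes \eqref{eq-free kernel-F} hold by construction. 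Its smoothness on $(0,\infty)$ is immediate as a finite combination of exponentials, monomials, and the factor $z^{-\frac{n-3}{2}}$.

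To extend smoothness to $[0,\infty)$ I would invoke Lemma \ref{lemma-free resolvent expansion} with the choice $\theta = 4m-n+1$: multiplying \eqref{eq-free expansion-odd-5} by $\lambda^{2m-\frac{n+1}{2}}|x-y|^{\frac{n-1}{2}}e^{\mp iz}$ identifies
\[
F_n^\pm(z) = e^{\mp iz}\Big[\sum_{j=0}^{2m-\frac{n+1}{2}}a_j^\pm z^{\frac{n-1}{2}+2j} + b_0\, z^{2m-\frac{n+1}{2}} + b_1\, z^{4m-\frac{n+1}{2}} + z^{\frac{n-1}{2}}\, r^\pm_{4m-n+1}(z)\Big].
\]
Since $n$ is odd and $n \le 4m-1$, every exponent of $z$ appearing above is a non-negative integer, and $r^\pm_{4m-n+1}$ is smooth with $r^\pm_{4m-n+1}(0) = 0$ by \eqref{eq-r estimate-odd}; hence the bracket, and therefore $F_n^\pm$, belongs to $C^\infty([0,\infty))$.

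For the decay \eqref{eq-F decay-odd}, I would split $F_n^\pm = F_n^{\pm,0} + F_n^{\pm,\mathrm{dec}}$ by isolating the non-oscillatory index ($k=0$ in $I^+$, or $k=m$ in $I^-$), for which $e^{iz(e^{i\pi k/m}\mp 1)} = 1$. The non-oscillatory term $F_n^{\pm,0}(z)$ is $z^{-\frac{n-3}{2}}$ times a polynomial in $z$ of degree $\frac{n-3}{2}$, hence a polynomial in $1/z$ which is bounded at infinity and whose $l$-th derivative is $O(z^{-l})$ for $z\ge 1$. For each remaining $k\in\{1,\ldots,m-1\}$, one has $\Im(e^{i\pi k/m}\mp 1) = \sin(\pi k/m) > 0$, so $e^{iz(e^{i\pi k/m}\mp 1)}$ and all its derivatives decay exponentially as $z\to\infty$, making $F_n^{\pm,\mathrm{dec}}$ decay faster than any polynomial in $z$. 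Combined with the smoothness of $F_n^\pm$ on the compact interval $[0,1]$ established in the previous step, this yields $|\partial_z^l F_n^\pm(z)|\lesssim \langle z\rangle^{-l}$ uniformly.

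The genuinely delicate step is the regularity at $z = 0$ in dimensions $n\ge 5$, where the prefactor $z^{-\frac{n-3}{2}}$ is formally singular and a cancellation must occur among the indices $k\in I^\pm$ so that the remaining trigonometric sum vanishes to the appropriate order at the origin. Lemma \ref{lemma-free resolvent expansion} already encodes precisely this cancellation through its zero-energy asymptotic expansion, which is why I lean on it here rather than unwinding the sums over $k$ and $j$ by hand.
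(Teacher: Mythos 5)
Your proposal is correct and follows essentially the same route as the paper: define $F_n^\pm$ by factoring the phase $e^{\pm i\lambda|x-y|}$ out of the explicit kernel \eqref{eq2.8}, use the expansions of Lemma \ref{lemma-free resolvent expansion} to get smoothness and boundedness of all derivatives near $z=0$ (the monomial exponents being non-negative integers since $n$ is odd and $n\le 4m-1$), and use the exponential-sum formula \eqref{eq2.8} for $z\ge 1$, where the non-oscillatory frequency gives a polynomial in $1/z$ and the frequencies $e^{i\pi k/m}\mp1$ with $1\le k\le m-1$ decay exponentially. The only (harmless) deviations are that you invoke the expansion with $\theta=4m-n+1$ where the paper uses $\theta=0$, and that you spell out the large-$z$ decay mechanism which the paper only sketches.
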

\begin{proof}
Firstly,  define
\begin{equation}\label{F_n1}
F_n^\pm(\lambda|x-y|):=\frac{|x-y|^{\frac{n-1}{2}}}{\lambda^{\frac{n+1}{2}-2m}} e^{\mp i\lambda|x-y|}R_0^\pm(\lambda^{2m})(x,y),
\end{equation}
since the right hand side can be considered as a function of $\lambda|x-y|$ by using the expression \eqref{eq2.8}.
By applying  \eqref{eq-free expansion-odd-1} for $1\le n\le 2m-1$ and  \eqref{eq-free expansion-odd-2}for $2m+1\le n\le 4m-1$ with $\theta =0$, we also  have
\begin{equation}\label{F_n2}
{\small F_n^\pm(\lambda|x-y|)=
\begin{cases}
e^{\mp i\lambda|x-y|}\cdot \left( \lambda|x-y|\right)^{\frac{n-1}{2}} r_{0}^{\pm}(\lambda|x-y|),&\quad 1\le n\le 2m-1,\\	

e^{\mp i\lambda|x-y|}\left(  b_0\left( \lambda|x-y|\right)^{2m-\frac{n+1}{2}}+\left( \lambda|x-y|\right)^{\frac{n-1}{2}} r_{0}^{\pm}(\lambda|x-y|)\right) ,&\quad 2m+1\le n\le 4m-1.
\end{cases}}
\end{equation}
Hence, for $|z|\le 1$, \eqref{eq-F decay-odd} is derived from \eqref{F_n1} and \eqref{estforrpm} noting that $2m-\frac{n+1}{2}\ge 0$ is an integer. On the other hand, for  $|z|\ge 1$,  we can obtain \eqref{eq-F decay-odd} simply by using \eqref{F_n2} and \eqref{eq2.8}. 
\end{proof}

\subsection{The asymptotic expansion for $\big(M^\pm(\lambda)\big)^{-1}$with zero  resonances.}\label{expansion} As seen in \eqref{eq-wave operator-low-1}, to obtain the $L^p$-boundedness of wave operators $W_\pm$ in the low energy part, one of the key steps is to establish the asymptotic expansion of $\big(M^\pm(\lambda)\big)^{-1}$ near the zero energy threshold. In this subsection, we shall accomplish this task.
\subsubsection{Resonance orthogonal projections  $\{Q_j\}_{j\in J_{\k}}$}
Firstly, we define some notations used throughout the section.  Let $m\ge 2$ and $n$ be an odd integer. Recall that 
\begin{equation*}\
	m_n=\min\big\{m, \ 2m-\frac{n-1}{2} \big\}=
	\begin{cases}
		m,&\ \ \mbox{if}\ \  1\le n\le 2m-1,\\[0.3cm]
		\mbox{$\frac{4m-n+1}{2}$},&\ \ \mbox{if}\ \ 2m+1\le n\le 4m-1,
	\end{cases}
\end{equation*}
 which is the total number of zero resonance types of $H=(-\Delta)^{m}+V$ on $\R^n$.  
 
Let $\k$ be an integer and $0\le k\le m_n+1$.  If $1\le n\leq 2m-1$, then we define the index set $J_{\k}$ of real number:
\begin{equation}\label{eq-Jk-odd}
J_{\k}:=\begin{cases}
	\{m-\frac{n}{2}\}\cup\{j\in\mathbb{N}_0;\,0\le j< m-\frac{n-1}{2}+\k\},\quad&\mbox{if}\,\, 0\le \k\le m_n,\\[0.3cm]
	\{2m-\frac{n}{2}\}\cup J_{m_n},&\mbox{if}\,\, \k=m_n+1;
\end{cases}
\end{equation}
For $2m+1\leq n\leq 4m-1$, we denote
\begin{equation}\label{eq-Jk-odd-2}
	J_{\k}:=\begin{cases}
		\{m-\frac{n}{2}\}\cup\{j\in\mathbb{N}_0;\,0\leq j< \k \},\quad&\mbox{if}\, \, 0\le \k\le m_n,\\[0.3cm]
		\{2m-\frac{n}{2}\}\cup J_{m_n},&\mbox{if}\,\, \k=m_n+1.
	\end{cases}
\end{equation}

 More specifically, the elements in $J_{\k}$ for each $0\le \k\le m_n+1$ are listed as follows: \\ 
If $1\le n\leq 2m-1$,  then 
\begin{equation}\label{eq-Jk-odd''}
J_{\k}=\begin{cases}\{0,\ 1,\ \cdots,\  m-\frac{n+1}{2},\ {\bf m-\frac{n}{2}}\},\quad&\mbox{if}\,\, \k=0,\\[0.2cm]
	\{0,\ 1,\ \cdots,\  m-\frac{n+1}{2},\  {\bf m-\frac{n}{2}},\  m-\frac{n-1}{2}, \ \cdots,\    m-\frac{n-1}{2}+\k-1\},\quad&\mbox{if}\,\, 1\le \k\le m_n=m,\\[0.2cm]
	 \{0,\ 1,\ \cdots,\  m-\frac{n+1}{2},\  {\bf m-\frac{n}{2}},\   m-\frac{n-1}{2}, \ \cdots,\    2m-\frac{n+1}{2},\ {\bf 2m-\frac{n}{2}}\},&\mbox{if}\,\, \k=m_n+1;
\end{cases}
\end{equation}
If $2m+1\leq n\leq 4m-1$,  then  
\begin{equation}\label{eq-Jk-odd-2''}
	J_{\k}=\begin{cases}
 \{{\bf m-\frac{n}{2}}\},\quad&\mbox{if}\, \, \k=0,\\[0.2cm]
		\{{\bf m-\frac{n}{2}},\ 0, \ 1, \  \cdots, \ \k-1 \},\quad&\mbox{if}\, \, 1\le \k\le m_n=2m-\frac{n-1}{2},\\[0.2cm]
		\{{\bf m-\frac{n}{2}},\ 0, \ 1, \  \cdots, \ 2m-\frac{n+1}{2},\ {\bf 2m-\frac{n}{2}} \},&\mbox{if}\,\, \k=m_n+1.
	\end{cases}
\end{equation}
Clearly, $J_{\k}\subset J_{\k'}$ for $\k<\k'$ in each dimension class. Since $n$ is an odd integer, the elements $m-{n\over 2}$ and $2m-{n\over 2}$ are the only two possible {\bf half-integers} appearing in the set $J_{\k}$, and each set $J_{\k}$ contains at least one or both of the two half-integers.

Note that the elements in $J_{\k}$ are related to the expansions of $R_0^{\pm}(\lambda^{2m})(x,y)$ in Lemma \ref{lemma-free resolvent expansion}. Hence for each $j\in J_{\k}$, we define the operators
\begin{equation}\label{eq-G-odd}
	G_{2j}f=\int_{\R^n}|x-y|^{2j}f(y)\, d y,\quad \text{for}\, \, f\in \mathcal{S}(\R^n),
\end{equation}
and set 
\begin{equation}\label{eq-T_0-odd}
T_0=U+v(-\Delta)^{-m}v=U+b_{0}vG_{2m-n}v,
\end{equation}
where $v(x)=\sqrt{|V(x)|}, U(x)=\text{sgn} V(x)$  and $b_0G_{2m-n}$ is the fundamental solution operator of $(-\Delta)^m$  with  a constant  $b_0$  given in Lemma \ref{lemma-free resolvent expansion}.   These operators $G_{2j}$ and $T_0$ will play fundamental roles in the inverse expansion of $M^{\pm}(\lambda)=U+vR^{\pm}_0(\lambda^{2m}) v$ at zero threshold of $H=(-\Delta)^m+V$.

Now we introduce some subspaces of $L^2(\R^n)$ and their associated orthogonal projections $S_j$. In the sequel,  $X^\perp$ denotes the orthogonal complement to a subspace $X$ of $L^2(\R^n)$,  and ${\rm{Ker}}(T)$ denotes the kernel space of some operator $T$ on $L^2(\R^n)$.  

Denoted by $S_{-l}=I$ when $l=1,2,\cdots.$ For $j\in J_{m_n+1}$ and a fixed integer   $0\le \k\le m_n+1$, if $n$ is odd and $1\le n \le2m-1$, we define
{\small
\begin{equation}\label{eq-S_j-odd}
 S_j L^2=\begin{cases}
 	\{x^\alpha v;\,\,|\alpha|\le j\}^\perp,&\mbox{if}\,\ 0\le j< m-\frac{n}{2},\\
 	\mbox{Ker}\,(S_{m-\frac{n+1}{2}} T_0 S_{m-\frac{n+1}{2}})\bigcap S_{m-\frac{n+1}{2}} L^2,\,&\mbox{if}\,\ j=m-\frac{n}{2},\\
 	\{x^\alpha v;\,\,\,|\alpha|\le j\}^\perp\bigcap\mbox{ker}(S_{2m-n-j-1}T_0S_{m-\frac{n}{2}})\bigcap S_{m-\frac{n}{2}} L^2,
 	 &\mbox{if}\,\ m-\frac{n}{2}<j< m-\frac{n-1}{2}+\k,\\
 	\{0\},&\mbox{if}\,\ j\ge m-\frac{n-1}{2}+\k;
 \end{cases}
\end{equation}
}
if $n$ is odd and $2m+1\leq n\leq 4m-1$, we define
\begin{equation}\label{eq-S_j-odd-2}
S_j L^2=\begin{cases}
\mbox{Ker}\,(T_0) ,\,&\mbox{if}\,\ j=m-\frac{n}{2},\\
\{x^\alpha v;\,\,\,|\alpha|\le j\}^\perp
\bigcap S_{m-\frac{n}{2}} L^2,
&\mbox{if}\,\ 0\le j< \k,\\
\{0\},&\mbox{if}\,\ j\le \k.
		\end{cases}
\end{equation}
\begin{remark}
\mbox{}
\begin{itemize}

\item   In the  case of  a fixed resonance type  $\mathbf{k}$ of  $H=(-\Delta)^m+V$,   we emphasize that these projections $S_j$ are well-defined by the decay condition on $V$ in Assumption \ref{Assumption},  and  one has $S_j=0$ if  $j\in J_{m_n+1}\setminus J_{\k}$. 
\item  From the definition of $S_jL^2$ above,  
 the following  inclusion relations hold for  $1\le n\le 2m-1$,
\begin{equation*}
	S_0L^2\supset S_1L^2\supset\cdots\supset S_{m-\frac{n+1}{2}}L^2\supset S_{m-\frac{n}{2}}L^2\supset S_{m-\frac{n-1}{2}}L^2\supset\cdots\supset S_{2m-\frac{n+1}{2}}L^2=\{0\},
\end{equation*}
and  for $2m+1\le n\le 4m-1$, 
\begin{equation*}
	S_{m-\frac{n}{2}}L^2\supset S_0L^2\supset S_1L^2\supset\cdots\supset S_{2m-\frac{n+1}{2}}L^2=\{0\}.
\end{equation*}
 In particular,   zero is a regular point of $H$ ( i.e. $\k=0$ )  is equivalent that $S_{m-\frac{n}{2}}L^2=\{0\}$.
\item Furthermore, for each $j\in J_{m_n+1}$, one always has the cancellation relations:
\begin{equation}\label{CforQ_j}
	S_j(x^\alpha v)=0, \quad \text{for}~\alpha\in {\mathbb{N}^n_0},~|\alpha|\leq\max\{0,\lfloor j\rfloor\},
\end{equation} where $\lfloor j\rfloor$ denotes the greatest integer at most $j$. Also, for each $j\ge 2m-n$, we have 
\begin{equation}\label{S_jT_0}
	T_0S_j=(U+b_0vG_{2m-n}v)S_j=0.
\end{equation}
\end{itemize}
\end {remark}

 In addition, we define another group of orthogonal projections $Q_j$ for each $j\in J_{\k}$ by
\begin{equation}\label{eq-Q_j-odd}
Q_j:=
\begin{cases}   
I-S_0,  &\quad \text{if $1\le n\le 2m-1$ and $j=0$}, \\
I-S_{m-\frac n2},  &\quad \text{if $2m< n <4m$ and $j=m-\frac n2$}, \\
S_{j'}-S_j,   &\quad \text{else},     
\end{cases}
\end{equation}
where $$ j'=\max\Big\{l\in J_{m_n+1};\,\, \ \ l<j \Big\}.$$  Then , it follows that $Q_j\le S_{\lfloor j+\frac12\rfloor-1}$ when $j>0$ and  $Q_j$ satisfies the following  cancellation relations:
\begin{equation}\label{eq-cancellation-Q_j}
	Q_j(x^\alpha v)=0, \quad \text{for ~$ \alpha\in {\mathbb{N}_0^n}, ~~|\alpha|\leq\max\{0,\lfloor j+1/2\rfloor\}-1$}.	
\end{equation}

For convenience,  let 
\begin{equation}\label{thetaj}
\vartheta(j)=\max\{0,\lfloor j+1/2 \rfloor\}.
\end{equation}

By the definition of these orthogonal projections $\{Q_j\}_{j\in J_{\k}}$,  we can give the following equivalent characterizations of zero-energy resonances of $H$, which can be seen in \cite{CHHZ-2024}.

\begin{proposition}\label{prop-resonance-odd}
Let $1\le n\le 4m-1$ be odd, $0\le \k\le m_n+1$,  and assume that $V(x)$ satisfies the Assumption \ref{Assumption}. Then the following statements are valid:
\vskip0.1cm
\indent\emph{(\romannumeral1)} Zero energy is a regular point of $H$ (  i.e. $\k=0$ )  if and only if  $\sum_{j\in J_{0}}Q_j=I$;
\vskip0.1cm
\indent\emph{(\romannumeral2)} Zero energy is the $\mathbf{k}$-th kind resonance with $0< \mathbf{k}\le m_n+1$ if and only if  $\sum_{j\in J_{\mathbf{k-1}}}Q_j\neq I$ and $\sum_{j\in J_{\mathbf{k}}}Q_j=I$.

\end{proposition}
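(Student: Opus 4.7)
The plan is to exploit the telescoping structure of the family $\{Q_j\}_{j \in J_{\k}}$ and reduce both parts (i) and (ii) to a single question: when does $S_{j^*(\k)} L^2$ vanish, where $j^*(\k) := \max J_{\k}$?

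First, by the definition \eqref{eq-Q_j-odd}, every $Q_j$ other than the one attached to the smallest element of $J_{m_n+1}$ equals $S_{j'} - S_j$, where $j'$ is the immediate predecessor of $j$ in the \emph{full} index set $J_{m_n+1}$ (not in $J_{\k}$). Listing the elements of $J_{\k}$ in increasing order and summing, the sum telescopes to
\begin{equation*}
\sum_{j \in J_{\k}} Q_j = I - S_{j^*(\k)},
\end{equation*}
using in addition that $Q_0 = I - S_0$ for $1 \le n \le 2m-1$ and $Q_{m - n/2} = I - S_{m - n/2}$ for $2m+1 \le n \le 4m-1$. Hence $\sum_{j \in J_{\k}} Q_j = I$ if and only if $S_{j^*(\k)} L^2 = \{0\}$, and the proposition reduces to establishing the equivalence: $S_{j^*(\k)} L^2 = \{0\}$ if and only if $H$ admits no resonance of kind strictly greater than $\k$ (with the zero eigenvalue counted as the $(m_n+1)$-th kind).

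Second, I would set up a bijection between non-zero elements of $S_j L^2 \cap \ker T_0$ and non-trivial distributional solutions $\phi$ of $H\phi = 0$ of prescribed weighted-$L^2$ decay, via the Lippmann--Schwinger reduction. Writing $V = vUv$ and recalling that $b_0 G_{2m-n}$ is a fundamental solution of $(-\Delta)^m$, any such $\phi$ satisfies $\phi = -b_0 G_{2m-n}(V\phi)$, so that $\psi := Uv\phi$ lies in $\ker T_0$; conversely, every $\psi \in \ker T_0$ reconstructs $\phi = -b_0 G_{2m-n}(v\psi)$. The nested cancellation conditions encoded in $\psi \in S_j L^2$, namely $\int x^\alpha v\psi\, dx = 0$ for $|\alpha| \le \lfloor j \rfloor$ together with the secondary kernel constraints in \eqref{eq-S_j-odd}--\eqref{eq-S_j-odd-2}, translate by Taylor expansion of $|x-y|^{2m-n}$ in $y$ into sharp decay of $\phi$ at infinity. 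A careful matching of the half-integer thresholds $m - \tfrac{n}{2}, m - \tfrac{n-1}{2}, \ldots, 2m - \tfrac{n}{2}$ appearing in $J_{m_n+1}$ against the weights $-\tfrac{1}{2} - m_n + \k$ in Definition \ref{defi-resonance} then identifies $j^*(\k)$ with precisely the transition between kind-$\k$ and kind-$(\k+1)$ resonances.

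The main obstacle is the second step, where one must extract sharp decay of $\phi = -b_0 G_{2m-n}(v\psi)$ from the vanishing-moment constraints. When $n < 2m$, the kernel $|x-y|^{2m-n}$ grows polynomially, so each cancelled moment of $v\psi$ upgrades the decay of $\phi$ by one half-integer step, and controlling the Taylor remainder in the weighted $L^2_s$ norm relies on the decay hypothesis \eqref{decayonV} from Assumption \ref{Assumption}. The range $2m+1 \le n \le 4m-1$ is parallel but simpler, since $|x-y|^{2m-n}$ already decays, and this is why the index set \eqref{eq-Jk-odd-2} is much shorter than \eqref{eq-Jk-odd}. The eigenvalue case $\k = m_n + 1$ is distinguished by the inclusion of the top element $2m - \tfrac{n}{2}$ in $J_{m_n+1}$: the additional moment cancellation it encodes is exactly what promotes $\phi$ from $\bigcap_{s < -1/2} L^2_s$ into $L^2$. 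Once this correspondence---already worked out in \cite{CHHZ-2024}---is in hand, parts (i) and (ii) follow at once from the telescoping identity above.
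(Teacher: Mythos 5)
Your route is exactly the one the paper intends: the paper itself gives no proof of this proposition (it is imported from \cite{CHHZ-2024}), and your reduction is correct, since each $J_{\k}$ is an initial segment of $J_{m_n+1}$ in increasing order, so the sum telescopes to $\sum_{j\in J_{\k}}Q_j=I-S_{\max J_{\k}}$ (consistent with the paper's remark that regularity is equivalent to $S_{m-\frac n2}L^2=\{0\}$), after which everything rests on identifying $S_{\max J_{\k}}L^2$ with the resonance/eigenfunction spaces of Definition \ref{defi-resonance}, which is precisely the content of the cited work. The only imprecision, absorbed by your appeal to \cite{CHHZ-2024}, is that for slowly decaying $\phi$ the identity $\phi=-b_0G_{2m-n}(V\phi)$ holds only modulo polyharmonic polynomials, and for $m-\frac n2<j<2m-n$ membership in $S_jL^2$ imposes only the partial constraints $S_{2m-n-j-1}T_0S_{m-\frac n2}$ from \eqref{eq-S_j-odd} rather than full $\ker T_0$ membership; this graded matching of polynomial ambiguities against the half-integer weights is exactly what the reference carries out.
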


\subsubsection{Asymptotic expansions of $(M^\pm(\lambda))^{-1}$}
Now we turn to study the asymptotic expansion of $(M^\pm(\lambda))^{-1}$ as $\lambda\to 0$, where $v(x) = \sqrt{|V(x)|}, U (x) = \sgn V(x)$ and $$M^\pm(\lambda)=U+vR_0^\pm(\lambda^{2m})v,\,\,\, \lambda>0.$$

Let zero energy be the $\mathbf{k}$-th kind resonance of $H=(-\Delta)^m+V$. Then applying the expansions of $R_0^{\pm}(x,y)$  in Lemma \ref{lemma-free resolvent expansion}, $M^\pm(\lambda)$ can be expanded as follows:
\begin{itemize}
\item If $ 0\le \mathbf{k}\le m_n$, then we have
\begin{equation}\label{equ3.52}
\begin{aligned}
M^\pm(\lambda)&=\sum\limits_{0\le j\le \lfloor\frac{\theta-1}{2}\rfloor}a_j^\pm\lambda^{n-2m+2j}vG_{2j}v+T_0+\lambda^{n-2m}vr_{\theta}^\pm(\lambda)v,
\end{aligned}
\end{equation}
where
\begin{equation*}
\theta=
\begin{cases}
\max\{0,2m-n\}+1,&\quad\text{for}~ \mathbf{k}=0,\\
\max\{0,2m-n\}+2\mathbf{k},&\quad \text{for}~  1\le \mathbf{k}\le m_n;
\end{cases}
\end{equation*}
\item If $\mathbf{k}= m_n+1$, taking $\theta=4m-n+1$, then we have
\begin{equation}\label{equ3.52.1}
\begin{aligned}
M^\pm(\lambda)&=\sum\limits_{0\le j\le \lfloor\frac{\theta-1}{2}\rfloor}a_j^\pm\lambda^{n-2m+2j}vG_{2j}v+T_0+b_1\lambda^{2m}vG_{4m-n}v+\lambda^{n-2m}vr_{4m-n+1}^\pm(\lambda)v.
\end{aligned}
\end{equation}
\end{itemize}
In the above expansions,  $r_{\theta}^\pm(\lambda)$ are the operators with integral kernels $r_{\theta}^\pm(\lambda|x-y|)$.

Proposition \ref{prop-resonance-odd} shows that  when zero is the $\mathbf{k}$-th kind resonance,   $\{Q_j\}_{j\in J_{\mathbf{k}}}$  gives a direct sum decomposition of $L^2$ ( i.e. $\sum_{j\in J_{\k}} Q_jL^2=L^2$ ). Then for a fixed $\lambda>0$, we define an isomorphism $B_\lambda$ on $L^2$ 
$$B:=B_\lambda=\big(\lambda^{-j} Q_j\big)_{j\in J_{\k}}:~ \big(f_j\big)_{j\in J_{\k}}\in\bigoplus_{j\in J_{\k}}Q_jL^2\longmapsto \sum_{j\in J_{\k}}\lambda^{-j}Q_jf_j\in L^2,$$
where  $\bigoplus_{j\in J_{\k}}Q_jL^2$ denotes a vector-valued space.
Thus, by the  transform $B_\lambda$, to study the inverse of $M^\pm(\lambda)$ on $L^2$, it suffices to analyze the inverse of the following  operator matrices $B^*M^\pm(\lambda)B$ on  $\bigoplus_{j\in J_{\k}}Q_jL^2$,  which was recently derived in \cite{CHHZ-2024} based on the aforementioned ideas.

Now we present the results on the asymptotic expansion of $(M^\pm(\lambda))^{-1}$ near zero energy threshold.  In Appendix A, we also give a concise proof for reader's convenience. 
\begin{theorem} \label{thm-M inverse-odd}
 Let $1\le n\le 4m-1$ be odd, $0\le \mathbf{k}\le m_n+1,$ and assume that $V(x)$ satisfies  Assumption \ref{Assumption}. Then there exists some $\lambda_0\in(0,1)$ such that $M^\pm(\lambda)$ are invertible on $L^2(\mathbb{R}^n)$ for all $0<\lambda<\lambda_0$, and we have

\begin{equation}\label{eq-M expansion-odd}
\big(M^\pm(\lambda)\big)^{-1}=\sum_{i,j\in J_\mathbf{k}}\lambda^{2m-n-i-j}Q_i(M_{i,j}^\pm+\Gamma_{i,j}^\pm(\lambda))Q_j,
\end{equation}
where all $M^\pm_{i,j}$ are $\lambda-$independent operators,  all $\Gamma_{i,j}^\pm(\lambda)$ are $\lambda-$dependent and satisfy
\begin{equation}\label{equ3.47}
\big\|\partial_\lambda^l\Gamma_{i,j}^\pm(\lambda)\big\|_{\mathbb{B}(L^2)}\lesssim \lambda^{\frac{1}{2}-l}, \qquad \text{for}~l=0,1,\cdots,\frac{n+3}{2}.
\end{equation}
 Additionally, for any $l=0,1,\cdots,\frac{n+3}{2}$, we have
\begin{equation}\label{equ2.100}
	\begin{cases}
		\big\|\partial_\lambda^l\Gamma_{ m-\frac{n}{2},  m-\frac{n}{2}}^\pm(\lambda)\big\|_{\mathbb{B}(L^2)}\lesssim \lambda^{\max\{1,\, n-2m\}-l}\quad&\mbox{when}\,\,\, \mathbf{k}=0,
		\\
		\big\|\partial_\lambda^l\Gamma_{ 2m-\frac{n}{2},  2m-\frac{n}{2}}^\pm(\lambda)\big\|_{\mathbb{B}(L^2)}\lesssim \lambda^{1-l}\quad&\mbox{when}\,\,\, \mathbf{k}=m_n+1.
	\end{cases}
\end{equation}
 Furthermore, we have that
 $$M^\pm_{m-\frac{n}{2},m-\frac{n}{2}}=(Q_{m-\frac{n}{2}}T_0Q_{m-\frac{n}{2}})^{-1}, \,\,\, M^\pm_{2m-\frac{n}{2},2m-\frac{n}{2}}=(b_1Q_{2m-\frac{n}{2}}vG_{4m-n}vQ_{2m-\frac{n}{2}})^{-1},$$ 
where $b_1$ is given in Lemma \ref{lemma-free resolvent expansion},
 and 
 \begin{equation}\label{eq-mij=0}
 M^\pm_{i,j}=0, \quad i\ne j, ~i\in\{m-\frac{n}{2},\, 2m-\frac{n}{2}\} ~\text{or} ~j\in\{m-\frac{n}{2},\, 2m-\frac{n}{2}\}.
 \end{equation}
 \end{theorem}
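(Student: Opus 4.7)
The plan is to derive the expansion by conjugating $M^\pm(\lambda)$ with the multi-scale isomorphism $B_\lambda$ and then inverting the resulting block operator matrix. I begin by substituting the asymptotic expansion of $R_0^\pm(\lambda^{2m})(x,y)$ from Lemma \ref{lemma-free resolvent expansion} into $M^\pm(\lambda)=U+vR_0^\pm(\lambda^{2m})v$, choosing the threshold $\theta$ as in \eqref{equ3.52}--\eqref{equ3.52.1} according to the resonance type $\mathbf{k}$. This yields a finite sum
\[
M^\pm(\lambda)=T_0+\sum_{0\le j\le \lfloor(\theta-1)/2\rfloor}a_j^\pm\lambda^{n-2m+2j}vG_{2j}v\;(+\,b_1\lambda^{2m}vG_{4m-n}v\text{ if }\mathbf{k}=m_n+1)+\lambda^{n-2m}vr_\theta^\pm(\lambda)v,
\]
whose error term can be shown, via the pointwise bounds \eqref{eq-r estimate-odd} on $r_\theta^\pm$ and the decay of $V$, to be differentiable in $\lambda$ with each derivative gaining a half power (this is where the decay condition \eqref{decayonV} and the smoothness assumption at $n=4m-1$ are used).

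Next, since Proposition \ref{prop-resonance-odd} identifies $\mathbf{k}$-th kind resonances with $\sum_{j\in J_{\mathbf{k}}}Q_j=I$, I introduce $B_\lambda=(\lambda^{-j}Q_j)_{j\in J_{\mathbf{k}}}$ and study $B_\lambda^* M^\pm(\lambda) B_\lambda$ as an operator matrix on $\bigoplus_{j\in J_{\mathbf{k}}}Q_jL^2$ whose $(i,j)$-block is $\lambda^{-i-j}Q_i M^\pm(\lambda) Q_j$. The cancellation identities \eqref{eq-cancellation-Q_j} and \eqref{S_jT_0}---namely $Q_j(x^\alpha v)=0$ for $|\alpha|\le\vartheta(j)-1$ and $T_0 S_{m-n/2}=0$---kill the would-be singular contributions $\lambda^{n-2m+2l-i-j}Q_i vG_{2l}vQ_j$ when $l<\vartheta(i),\vartheta(j)$, so every block is $O(\lambda^{n-2m})$ and the factored matrix $\lambda^{2m-n}B_\lambda^* M^\pm(\lambda) B_\lambda$ admits a $\lambda\to 0^+$ limit $\mathcal{M}_0^\pm$ plus a remainder of order $\lambda^{1/2}$ (with the improved order in \eqref{equ2.100} following because the special integer-coefficient expansion improves by one extra power at the half-integer indices). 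An explicit block-by-block computation identifies the diagonal entries of $\mathcal{M}_0^\pm$ at $i=j=m-n/2$ as $Q_{m-n/2}T_0Q_{m-n/2}$ and at $i=j=2m-n/2$ (when $\mathbf{k}=m_n+1$) as $b_1 Q_{2m-n/2}vG_{4m-n}vQ_{2m-n/2}$, both of which are invertible on their respective ranges by the very definitions of $S_{m-n/2}L^2$ and $S_{2m-n/2}L^2$ in \eqref{eq-S_j-odd}--\eqref{eq-S_j-odd-2}. The remaining integer-index blocks assemble into a submatrix that is invertible either by an elementary Schur-complement argument using the coefficients $a_j^\pm$, or by showing directly that its null vectors would violate the maximality built into $S_jL^2$.

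With $\mathcal{M}_0^\pm$ invertible, a Neumann series gives $(\lambda^{2m-n}B_\lambda^* M^\pm(\lambda) B_\lambda)^{-1}=(\mathcal{M}_0^\pm)^{-1}+\Gamma^\pm(\lambda)$ on $\bigoplus Q_jL^2$, where $\Gamma^\pm(\lambda)$ inherits the $\lambda^{1/2-l}$ derivative bounds from the error in $M^\pm(\lambda)$; un-conjugating yields \eqref{eq-M expansion-odd} with $M_{i,j}^\pm$ the $(i,j)$-block of $(\mathcal{M}_0^\pm)^{-1}$ and $\Gamma_{i,j}^\pm(\lambda)$ that of $\Gamma^\pm(\lambda)$. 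The vanishing \eqref{eq-mij=0} is a consequence of the block-diagonal structure at the half-integer indices: because $T_0 Q_j=0$ for $j\ne m-n/2$ and $vG_{4m-n}v Q_j=0$ for $j\ne 2m-n/2$ (after using $Q_j$ cancellations), the rows and columns of $\mathcal{M}_0^\pm$ labeled by half-integers decouple from the integer-index block, so $(\mathcal{M}_0^\pm)^{-1}$ has the same decoupled structure.

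The main obstacle is the combinatorial bookkeeping in identifying the leading coefficient $\mathcal{M}_0^\pm$: one must verify that, for every pair $(i,j)\in J_{\mathbf{k}}\times J_{\mathbf{k}}$, the cancellations produced by $Q_i,Q_j$ exactly compensate the singular prefactor $\lambda^{-i-j}$ so that the power of $\lambda$ in the block is uniform and no spurious leading singular term is left behind. This requires using the orthogonality to moments $x^\alpha v$ with $|\alpha|\le \vartheta(\cdot)-1$ together with the Taylor expansion of $|x-y|^{2l}$ and careful accounting of the $r_\theta^\pm$ remainder---something made clean by choosing $\theta$ just large enough that all contributing coefficients in \eqref{equ3.52}--\eqref{equ3.52.1} are captured but no more.
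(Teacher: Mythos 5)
Your overall architecture coincides with the paper's: expand $M^\pm(\lambda)$ via Lemma \ref{lemma-free resolvent expansion}, conjugate by $B_\lambda=(\lambda^{-j}Q_j)_{j\in J_{\mathbf k}}$ (Lemma \ref{lemma-inverse}), split off a $\lambda$-independent leading operator matrix plus an $O(\lambda^{1/2})$ remainder, and close with a Neumann series. The genuine gap is in the step you dispatch in one sentence: the invertibility of the leading matrix. For $1\le n\le 2m-1$ the integer-index blocks do \emph{not} form a submatrix whose invertibility follows "from the coefficients $a_j^\pm$'': the indices $i<m-\frac n2$ and $m-\frac n2<j<2m-\frac n2$ are coupled by the off-diagonal entries $Q_iT_0Q_j$ with $i+j=2m-n$ (the block $D_{01}$ in \eqref{equ3.64}), and the two coupled sub-blocks have opposite signs. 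The paper's proof rests on two inputs you never supply: (i) the strict positivity of $\mathbf D_{00}$ and strict negativity of $\mathbf D_{11}$ (a nontrivial fact about the constants $a_j$ and the forms $\langle vG_{2l}v\cdot,\cdot\rangle$ on the ranges of the $Q_j$, imported from Lemma 2.6 of \cite{CHHZ-2024}); and (ii) the Feshbach/Schur-complement argument (Lemma \ref{lemma-Feshbach formula}) in which a null vector $f$ of $d=\mathbf D_{11}-D_{01}^*\mathbf D_{00}^{-1}D_{01}$ forces, by the sign-definiteness, both $Q'f=0$ and $D_{01}f=0$, and then the identities $Q_{2m-n-j}T_0f_j=0$ together with the definition \eqref{eq-S_j-odd} of $S_jL^2$ put $f_j\in S_jL^2\cap(S_jL^2)^\perp=\{0\}$. "Null vectors would violate the maximality built into $S_jL^2$'' is the right slogan, but without the definiteness of the two sub-blocks the quadratic-form argument does not start, and an "elementary Schur-complement argument'' alone cannot certify invertibility of a sign-indefinite matrix. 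Likewise, the invertibility of $Q_{2m-\frac n2}vG_{4m-n}vQ_{2m-\frac n2}$ in the eigenvalue case is not "by the very definition of $S_{2m-\frac n2}L^2$''; it requires the separate argument of \cite{JN01,SWY22}.

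Two smaller inaccuracies: the vanishing \eqref{eq-mij=0} does not come from "$T_0Q_j=0$ for $j\ne m-\frac n2$'' (false in general, since $T_0Q_j\ne0$ for integer $j<2m-n$; only $T_0S_j=0$ for $j\ge 2m-n$ holds, see \eqref{S_jT_0}). The correct mechanism is that for a mixed integer/half-integer pair $(i,j)$ the total power $i+j$ can never equal an even integer $2l$ nor $2m-n$, so no $\lambda$-independent entry survives, while for the pair $(m-\frac n2,\,2m-\frac n2)$ the moment cancellations \eqref{eq-cancellation-Q_j} kill $Q_{m-\frac n2}vG_{i+j}vQ_{2m-\frac n2}$ because the required moments exceed degree $i+j$. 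Finally, the improved bounds \eqref{equ2.100} do not follow automatically from "one extra power at the half-integer indices'': you must also use the decoupled structure of $(D^\pm)^{-1}$ at those indices so that, in the Neumann series, only the improved remainder entries $r^\pm_{m-\frac n2,m-\frac n2}$ (resp.\ $r^\pm_{2m-\frac n2,2m-\frac n2}$) of \eqref{eq-rij-regular}, \eqref{equ3.65.55} contribute to that diagonal block of $\Gamma^\pm(\lambda)$.
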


\begin{remark}
$\Gamma_{i,j}^\pm(\lambda)$ in the above theorem are $\mathbb{B}(L^2)$-valued $C^{\frac{n+3}{2}}$ functions of $\lambda$ instead of $C^{\frac{n+1}{2}}$ functions of $\lambda$ presented in Theorem 2.7 of \cite{CHHZ-2024}. This slight modification arises because that for a fixed $0\le \mathbf{k}\le m_n+1$, $vr_{\theta}^\pm(\lambda)v$ in \eqref{equ3.52} and \eqref{equ3.52.1} are indeed  $\mathbb{B}(L^2(\mathbb{R}^n))$-valued $C^{\frac{n+3}{2}}$ functions of $\lambda$ by the Hilbert-Schmidt property, our decay assumption on $V$ and \eqref{eq-r estimate-odd}. Meanwhile they satisfy
\begin{equation}
	\big\|\partial_\lambda^l\Big(vr_{\theta}^\pm(\lambda)v\Big)\big\|_{\mathbb{B}(L^2)}\lesssim  \lambda^{\theta-l},\quad\text{for}~~0<\lambda<1, ~~l=0,\,1\,\cdots,\,\frac{n+3}{2}.
\end{equation}
Thus, by repeating the proof for Theorem 2.7 of \cite{CHHZ-2024}, we establish a slightly modified version, which we present as Theorem \ref{thm-M inverse-odd}.
\end{remark}

\subsection{Two key lemmas.}
 The following two lemmas take advantage of the cancellation property of  projections $Q_j$ to study the structure of  the kernels  $ \Big(Q_jvR_0^\pm(\lambda^{2m})\Big)(x, y)$  of  $Q_jvR^\pm(\lambda^{2m})$ given by the following integrals:
$$\Big(Q_jvR^\pm(\lambda^{2m})f\Big)(x)=\int_{\R^n}\Big(Q_jvR_0^\pm(\lambda^{2m})\Big)(x, y)\, f(y)dy,$$
which will be ultimately utilized to analyze the integral kernel of $W^L$.   Let  $R_0^\pm(\lambda^{2m})(x, y)$ be the kernel of  $R_0^\pm(\lambda^{2m})$.  Then  one has
$$\Big(Q_jvR_0^\pm(\lambda^{2m})\Big)(x, y)=Q_jv\Big(R_0^\pm(\lambda^{2m})(\cdot, y)\Big)(x).$$
In particular,  without confusion,  $\Big(Q_jvR_0^\pm(\lambda^{2m})\Big)(x, y)$ can be regarded as $Q_jv$ acting on the first variable $z$  of the kernel $R_0^\pm(\lambda^{2m})(z, y)$ for each fixed $y$.
\vskip0.2cm
To understand more on the role of projection $Q_j$, we first note that by  Lemma \ref{lemma-free kernel},  the kernels of $vR_0^\pm(\lambda^{2m})$ without the action of $Q_j$ can be written as follows:
\begin{equation}\label{eq-free kernel-F'}
	vR_0^\pm(\lambda^{2m})(x,y)=e^{\pm i\lambda|y|} \,\omega^{\pm}(\lambda, x,y),
\end{equation}
where 
$$\omega^{\pm}(\lambda, x,y)=\frac{\lambda^{{n+1\over 2}-2m}\,v(x)}{ |x-y|^{\frac{n-1}{2}}}\,\,e^{\pm i\lambda\big(|x-y|-|y|\big)}F_n^\pm(\lambda|x-y|),$$
satisfy that for each $l=0,1,\cdots,\frac{n+3}{2}$ (depending on the decay condition of potential $V$), 
$$
\big\|\partial_\lambda^l \omega(\lambda,x,y)\big\|_{L^2_x}\lesssim\lambda^{\frac{n+1}{2}-2m-l}\langle y\rangle^{-\frac{n-1}{2}},\,\, \lambda>0.
$$
Comparing with \eqref{eq-k1 estimate-odd-3} or \eqref{eq-k2-odd-2} in the following  Lemma \ref{lemma-QjvR-odd},  the kernel  $\big(Q_jvR_0^\pm(\lambda^{2m})\big)(x, y)$, benefiting from the cancellation of $Q_j$, has a better property than $vR_0^\pm(\lambda^{2m})(x,y)$ as $\lambda \to 0$ due to the gain of an extra factor $\lambda^{\vartheta(j)}$.    This gain, in some sense, serves as a compensation for the singularity factor  $\lambda^{-j}$  in the expansion \eqref{eq-M expansion-odd} of $(M^\pm(\lambda))^{-1}$ if zero is $\k$-th kind resonance  of $H$.

Now we list the following lemmas, whose proofs are complicated and given in Section \ref{sec:proof}.  

\begin{lemma}\label{lemma-QjvR-odd}
	Let $1\le n\le 4m-1$ be odd, and $0\le \mathbf{k}\le m_n+1$. Assume that $V(x)$ satisfies Assumption \ref{Assumption}.    Then for any  $j\in J_\mathbf{k}$ and $\lambda\in(0,1)$, we have
	\begin{equation}\label{eq-QjR kernel-odd}
		Q_jv\Big(R_0^\pm(\lambda^{2m})(\cdot, y)\Big)(x)=e^{\pm i\lambda |y|}\omega_{j}^{\pm}(\lambda,x,y),
	\end{equation}
	where for any $l=0,1,\cdots,\frac{n+3}{2}$, the functions $\omega_{j}^{\pm}(\lambda,x,y)$ satisfy
	\begin{align}\label{eq-k1 estimate-odd-1}
		\big\|\partial_\lambda^l \omega_{j}^{\pm}(\lambda,x,y)\big\|_{L_x^2}&\lesssim \lambda^{\min\{n-2m+\vartheta(j),\, 0\}-l}, \\ \label{eq-k1 estimate-odd-3}
		\big\|\partial_\lambda^l \omega_{j}^{\pm}(\lambda,x,y)\big\|_{L_x^2}&\lesssim \lambda^{\min\{\frac{n+1}{2}-2m+\vartheta(j),\, 0\}-l}\langle y\rangle^{-\frac{n-1}{2}}.
	\end{align}
	Furthermore, when $j\in J_\mathbf{k}$ and $j\ge  \max\{2m-n,\, m-\frac n2\}$, then for any $l=0,1,\cdots,\frac{n+3}{2}$,  we have
	\begin{equation}\label{eq-k1 estimate-odd-2}
		\big\|\partial_\lambda^l \omega_{j}^{\pm}(\lambda,x,y)\big\|_{L_x^2}\lesssim \lambda^{-l}\langle y\rangle^{-\min\{n-2m+\vartheta(j), \,\frac{n-1}{2}\}}.\\
	\end{equation}
	For any $0\le \mathbf{k}\le m_n+1$ and $j\in J_\mathbf{k}$, we have
	\begin{equation}\label{eq3.900}
		Q_jv\big(R_0^+(\lambda^{2m})(\cdot,\,y)-R_0^-(\lambda^{2m})(\cdot,\,y)\big)(x)=e^{ i\lambda |y|}\tilde{\omega}_{j}^{+}(\lambda,x,y)+e^{-i\lambda |y|}\tilde{\omega}_{j}^{-}(\lambda,x,y),
	\end{equation}
	where the functions $\tilde{\omega}_{j}^{\pm}(\lambda,x,y)$ satisfy
	\begin{align}\label{eq-k2-odd-1}
		\big\|\partial_\lambda^l \tilde{\omega}_{j}^{\pm}(\lambda,x,y)\big\|_{L_x^2}&\lesssim\lambda^{n-2m+\vartheta(j)-l},\\ \label{eq-k2-odd-2}
		\big\|\partial_\lambda^l \tilde{\omega}_{j}^{\pm}(\lambda,x,y)\big\|_{L_x^2}&\lesssim\lambda^{\frac{n+1}{2}-2m+\vartheta(j)-l}\langle y\rangle^{-\frac{n-1}{2}}.
	\end{align}
	In the inequalities above,  $\vartheta(j)=\max\{0,\lfloor j+1/2 \rfloor\}$  as given in \eqref{thetaj}.
\end{lemma}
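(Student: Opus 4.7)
The starting point is the closed-form representation of the free resolvent kernel in Lemma~\ref{lemma-free kernel}. Writing $e^{\pm i\lambda|x-y|} = e^{\pm i\lambda|y|}\, e^{\pm i\lambda(|x-y|-|y|)}$, one immediately obtains the decomposition \eqref{eq-QjR kernel-odd} with
\[
\omega_j^{\pm}(\lambda,x,y) := Q_j\!\left[v(\cdot)\,\frac{\lambda^{\frac{n+1}{2}-2m}\, e^{\pm i\lambda(|\cdot-y|-|y|)}\, F_n^{\pm}(\lambda|\cdot-y|)}{|\cdot-y|^{\frac{n-1}{2}}}\right]\!(x),
\]
and an analogous expression for \eqref{eq3.900}, where the difference $R_0^+ - R_0^-$ produces a sum of two such terms carrying the phases $e^{\pm i\lambda|y|}$. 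For the baseline bound \eqref{eq-k1 estimate-odd-3} (without the $\lambda^{\vartheta(j)}$ gain), I would apply the pointwise estimate $|F_n^{\pm}|\lesssim 1$ from \eqref{eq-F decay-odd}, pair $v(x)$ with the integrable singularity $|x-y|^{-(n-1)/2}$ to extract the weight $\langle y\rangle^{-(n-1)/2}$, and use the $L^2$-boundedness of $Q_j$. Derivatives in $\lambda$ (up to order $(n+3)/2$) pass through the phase, the argument of $F_n^{\pm}$, and the explicit $\lambda^{(n+1)/2 - 2m}$ prefactor, producing the $\lambda^{-l}$ loss and compensating factors of $|x-y|$ that are absorbed by the polynomial decay of $v$ granted by Assumption~\ref{Assumption}.

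To obtain the gain of $\lambda^{\vartheta(j)}$ appearing in \eqref{eq-k1 estimate-odd-1} and \eqref{eq-k2-odd-1}, the key input is the cancellation \eqref{eq-cancellation-Q_j}: $Q_j(x^{\alpha} v) = 0$ for $|\alpha|\le \vartheta(j)-1$. Invoking Lemma~\ref{lemma-free resolvent expansion} to expand $R_0^{\pm}(\lambda^{2m})(x,y)$ to order $\theta = \vartheta(j)$ in $\lambda$, the polynomial coefficients $|x-y|^{2j'}$ with $2j'\le \theta - 1$ are polynomials in $x$ of degree at most $\vartheta(j)-1$, hence annihilated by $Q_j v$. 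Only the smooth remainder $\lambda^{n-2m}\, r_{\theta}^{\pm}(\lambda|x-y|)$ survives; using \eqref{eq-r estimate-odd} together with the decay of $v$ yields the bound $\lambda^{n-2m+\vartheta(j)-l}$ (equivalently the capped value dictated by the $\min$ when that exponent is non-negative). For the difference $R_0^+ - R_0^-$, the real term $b_0|x-y|^{2m-n}$ is automatically absent by \eqref{eq-free expansion-odd-4}, so the same scheme, combined with the $Q_j$-cancellation of the polynomial leading terms, produces \eqref{eq-k2-odd-1}--\eqref{eq-k2-odd-2} uniformly for all $j\in J_{\k}$.

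The sharper estimate \eqref{eq-k1 estimate-odd-2}, valid for $j \ge \max\{2m-n,\ m-n/2\}$, further requires the operator identity $T_0 S_j = 0$ from \eqref{S_jT_0}. In the regime $1\le n\le 2m-1$, the expansion of $R_0^{\pm}$ contains the non-polynomial fundamental-solution term $b_0|x-y|^{2m-n}$, which cannot be cancelled by $Q_j v$ alone. Writing $Q_j = S_{j'}-S_j$ (with $j' \ge 2m-n$ under the present hypothesis on $j$) and using $T_0 S_{j'} = (U + b_0\, v G_{2m-n} v) S_{j'} = 0$, the would-be singular contribution $b_0\, Q_j v G_{2m-n} v$ is absorbed into $-Q_j U$, which is $L^2$-bounded with no singular behavior in $\lambda$. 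This, together with the $Q_j$-cancellation of the polynomial part, yields the uniform-in-$\lambda$ bound and the improved weight $\langle y\rangle^{-\min\{n-2m+\vartheta(j),(n-1)/2\}}$.

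The main obstacle I anticipate is the case bookkeeping. The correct expansion order $\theta$ in Lemma~\ref{lemma-free resolvent expansion} must match $\vartheta(j)$ while remaining within the admissible range prescribed by that lemma, which depends on whether one is in $1\le n\le 2m-1$ (fundamental-solution term present) or $2m+1\le n\le 4m-1$ (absent), and on whether $\theta$ falls in the range (i), (ii), or (iii). Deriving \eqref{eq-k1 estimate-odd-2} requires exploiting the polynomial cancellation of $Q_j$ and the operator identity $T_0 S_j=0$ \emph{simultaneously}, and one must carefully track how each $\lambda$-derivative (up to order $(n+3)/2$) distributes among the remainder $r_\theta^\pm$, the phase, and the prefactor so as to preserve the claimed uniformity in $y$, especially when the factor $|x-y|$ generated by a derivative interacts with the $L^2$-integrability in $x$ against the weight coming from $v$.
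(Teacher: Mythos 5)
Your outline captures the easy part (pull out the phase $e^{\pm i\lambda|y|}$, use the moment cancellation $Q_j(x^\alpha v)=0$), but it misses the two mechanisms the proof actually hinges on, and one of its steps is false as stated. First, for the single resolvent you cannot expand to order $\theta=\vartheta(j)$ and claim that "only the smooth remainder survives": when $1\le n\le 2m-1$ and $\vartheta(j)>2m-n$, the expansion \eqref{eq-free expansion-odd-2} contains the term $b_0|x-y|^{2m-n}$, which is \emph{not} a polynomial in $x$ (the exponent $2m-n$ is odd) and is not annihilated by $Q_jv$ — this is exactly why the exponent in \eqref{eq-k1 estimate-odd-1} is capped by the $\min$ with $0$, and why the paper takes $\theta=\min\{\vartheta(j),\,2m-n\}$ for $R_0^\pm$ (only for the difference $R_0^+-R_0^-$ may one take $\theta=\vartheta(j)$). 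Second, and more seriously, your plan to bound the surviving remainder by $|r_\theta^\pm(\lambda|x-y|)|\lesssim(\lambda|x-y|)^\theta$ and "absorb the factors of $|x-y|$ into the decay of $v$" loses a factor $\langle y\rangle^{\theta}$ (since $|x-y|^\theta\lesssim\langle x\rangle^\theta\langle y\rangle^\theta$), which destroys the uniformity in $y$ claimed in \eqref{eq-k1 estimate-odd-1} and a fortiori the decay $\langle y\rangle^{-\frac{n-1}{2}}$ in \eqref{eq-k1 estimate-odd-3} and \eqref{eq-k2-odd-2}. Note also that \eqref{eq-k1 estimate-odd-3} is \emph{not} a "baseline bound without the $\lambda^{\vartheta(j)}$ gain": its exponent $\min\{\frac{n+1}{2}-2m+\vartheta(j),0\}$ already encodes the gain, so the crude estimate $|F_n^\pm|\lesssim1$ plus $L^2$-boundedness of $Q_j$ cannot produce it. The paper's cure, absent from your proposal, is a case split $\lambda\langle y\rangle\le1$ versus $\lambda\langle y\rangle\ge1$ together with a \emph{second} Taylor expansion — of $r_\theta^\pm$ (small region), respectively of $G_n^\pm(z)=e^{\pm iz}F_n^\pm(z)$ (large region) — around the point $z=\lambda|y|$, so that the polynomial terms $(|x-y|-|y|)^s$, $s\le\vartheta(j)-1$, are estimated by the weighted moment bound $\|Q_j(v|\cdot-y|^\rho)\|_{L^2_x}\lesssim\langle y\rangle^{\rho-\vartheta(j)}$ of Proposition \ref{corcancell} (via Lemma \ref{lemma-chhz}); the $\langle y\rangle^{-\vartheta(j)}$ decay is then traded against powers of $\lambda$ using $\lambda\langle y\rangle\le1$ or $\ge1$.

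Finally, your route to \eqref{eq-k1 estimate-odd-2} through $T_0S_j=0$ does not work in this setting: the identity \eqref{S_jT_0} concerns the sandwiched operator $(U+b_0vG_{2m-n}v)S_j$, whereas here one must estimate $Q_j\big(v(\cdot)|\cdot-y|^{2m-n}\big)(x)$ with $y$ a free external variable and no second factor of $v$, so there is nothing to "absorb into $-Q_jU$". In the paper the improvement for $j\ge\max\{2m-n,\,m-\frac n2\}$ comes again from Proposition \ref{corcancell} with $l=2m-n$ (admissible since $2m-n\le j+1$), which converts the uncancelled fundamental-solution term into the decay $\langle y\rangle^{2m-n-\vartheta(j)}$, and from the same $\lambda\langle y\rangle$ dichotomy; no use of $T_0S_j=0$ is needed or possible at this point. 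Without the cap on $\theta$, the expansion around $\lambda|y|$, the weighted moment estimates, and the two-region trade-off, the claimed bounds — in particular their uniformity and decay in $y$ — do not follow from the steps you describe.
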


\vskip0.2cm
\begin{remark}\label{vartheta}
	For $j\in J_{m_n+1}$, if $n$ is odd and $1\le n \le2m-1$, we have
	{\small
		\begin{equation}\label{eq-vartheta-odd-1}
			\vartheta(j)=\begin{cases}
				j&\mbox{if}\,\ 0\le j\le m-\frac{n+1}{2},\\
				m-\frac{n-1}{2},\ &\mbox{if}\,\ j=m-\frac{n}{2},\\
				j,\ &\mbox{if}\,\  m-\frac{n-1}{2}\le j\le 2m-\frac{n+1}{2},\\
				2m-\frac{n-1}{2},&\mbox{if}\,\ j=2m-\frac{n}{2};
			\end{cases}
		\end{equation}
	}
	and if $n$ is odd and $2m+1\leq n\leq 4m-1$, we have
	\begin{equation}\label{eq-vartheta-odd-2}
		\vartheta(j)=\begin{cases}
			0 ,\,&\mbox{if}\,\ j=m-\frac{n}{2},\\
			j,
			&\mbox{if}\,\ 0\le j\le 2m-\frac{n+1}{2} ,\\
			2m-\frac{n-1}{2},&\mbox{if}\,\ j=2m-\frac{n}{2}.
		\end{cases}
	\end{equation}   
	As a consequence, for all $j<2m-\frac{n}{2}$, we have $\frac{n+1}{2}-2m+\vartheta(j)\le 0$ and  $n-2m+\vartheta(j)\le \frac{n-1}{2}$. Only when $j=2m-\frac{n}{2}$, we have $\frac{n+1}{2}-2m+\vartheta(j)\ge 0$ and $n-2m+\vartheta(j) >\frac{n-1}{2}$. 
\end{remark}

\begin{remark}
	The exponential minimum restriction of $\lambda$ in the estimates \eqref{eq-k1 estimate-odd-1} and \eqref{eq-k1 estimate-odd-3} cannot be removed. This can be seen from the asymptotic  expression  \eqref{eq-free expansion-odd-5} with $x=0$, that is 
	$$Q_jv\Big(R_0^\pm(\lambda^{2m})(\cdot,0)\Big)(x)=\omega_{j}^{\pm}(\lambda,x,0)=Q_j\big(v(x)|x|^{2m-n}\big)+O(\lambda^1),$$	
	for $j>2m-n$. The term $Q_j\big(v(x)|x|^{2m-n}\big)$ will not vanish since $2m-n$ is odd. Hence we only have that
	$$\big\|\omega_{j}^{\pm}(\lambda,x,0)\big\|_{L_x^2}=O(1), \ \ 0<\lambda\ll1,$$
	which means that the estimates \eqref{eq-k1 estimate-odd-1} and \eqref{eq-k1 estimate-odd-3} cannot be improved generally.
\end{remark}

We also need the following result on a more refined analysis of $Q_jv\Big(R_0^\pm(\lambda^{2m})(\cdot, y)\Big)(x)$.

\begin{lemma}\label{lem3.10}
	Let $1\le n\le 4m-1$ be odd, and $0\le \mathbf{k}\le m_n+1$. Assume that $V(x)$ satisfies Assumption \ref{Assumption}.    Then for any  $j\in J_\mathbf{k}$ and $\lambda\in(0,1)$, we have
	\begin{equation}\label{eq3.1000}
		\Big(Q_jvR_0^\pm(\lambda^{2m}(\cdot,y)\Big)(x)=e^{\pm i\lambda|y|}\Big(\omega_{j,1}^\pm(\lambda,x,y)+\omega_{j,2}^\pm(\lambda,x,y)+\omega_{j,3}^\pm(\lambda,x,y)\Big).
	\end{equation}
	Here, 
	\begin{equation}\label{eq3.100001}
		\omega_{j,1}^\pm(\lambda,x,y)=\lambda^{\frac{n+1}{2}-2m+\vartheta(j)}\,\mathbf{1}_{\{|y|>1\}}\sum_{|\alpha|=|\beta|=\vartheta(j)}A_{\alpha,\beta}^\pm \frac{y^\alpha \Big(Q_j(v z^\beta)\Big)(x)}{|y|^{\frac{n-1}{2}+\vartheta(j)}},
	\end{equation}
	and for any $l=0,\cdots,\frac{n+3}{2}$, $\omega_{j,2}^\pm(\lambda,x,y)$ and $ \omega_{j,3}^\pm(\lambda,x,y)$ satisfy
	\begin{equation}\label{eq3.1001}
		\Big\|\partial^l_\lambda \omega_{j,2}^\pm(\lambda,x,y)\Big\|_{L_x^2}\lesssim 
		\begin{cases}
			\lambda^{\frac{n-1}{2}-2m+\vartheta(j)+\varepsilon-l}\langle y\rangle^{-\frac{n+1}{2}+\varepsilon} \quad & j<2m-\frac n2~~ \text{and} ~~\varepsilon\in [0,1] ,\\	
			\lambda^{-l}\langle y\rangle^{-\frac{n+1}{2}}, \quad & j=2m-\frac n2,
		\end{cases}
	\end{equation}
	and 
	\begin{equation}\label{eq3.10011}
		\Big\|\partial^l_\lambda \omega_{j,3}^\pm(\lambda,x,y)\Big\|_{L_x^2}\lesssim \lambda^{\frac{n+3}{2}-2m+\vartheta(j)}\langle y\rangle^{-\frac{n-1}{2}}. 
	\end{equation}
	In the inequalities above,  $\vartheta(j)=\max\{0,\lfloor j+1/2 \rfloor\}$ as given in \eqref{thetaj}.
\end{lemma}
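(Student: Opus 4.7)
The plan is to follow the Taylor-expansion/cancellation scheme behind Lemma \ref{lemma-QjvR-odd}, but to perform one additional spatial Taylor step in order to isolate the leading asymptotic behavior in $y$. Starting from Lemma \ref{lemma-free kernel}, factor out $e^{\pm i\lambda|y|}$ and write
$$e^{\mp i\lambda|y|}R_0^\pm(\lambda^{2m})(z,y) = \lambda^{(n+1)/2-2m}\,\Phi^\pm(z,y,\lambda),\quad \Phi^\pm(z,y,\lambda) := \frac{e^{\pm i\lambda(|z-y|-|y|)}}{|z-y|^{(n-1)/2}}\,F_n^\pm(\lambda|z-y|).$$
For $|y|\leq 1$ the indicator $\mathbf{1}_{\{|y|>1\}}$ kills $\omega_{j,1}^\pm$ and the full $(Q_jvR_0^\pm(\cdot,y))(x)$ may be absorbed into $\omega_{j,3}^\pm$ by re-applying the Lemma \ref{lemma-QjvR-odd}-style estimates on this bounded region: here $\langle y\rangle^{-(n-1)/2}\lesssim 1$ is automatic and the required $\lambda$-power $\lambda^{(n+3)/2-2m+\vartheta(j)}$ is produced for free from the $Q_j$ cancellation since $\lambda<1$.

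For $|y|>1$, apply Taylor's formula in $z$ to $\Phi^\pm(\cdot,y,\lambda)$ around $z=0$ of order $\vartheta(j)$, with integral remainder:
$$\Phi^\pm(z,y,\lambda)=\sum_{|\alpha|\leq\vartheta(j)-1}\frac{z^\alpha}{\alpha!}\partial_z^\alpha\Phi^\pm(0,y,\lambda)+\vartheta(j)\sum_{|\alpha|=\vartheta(j)}\frac{z^\alpha}{\alpha!}\int_0^1(1-t)^{\vartheta(j)-1}\partial_z^\alpha\Phi^\pm(tz,y,\lambda)\,dt.$$
The cancellation $Q_j(z^\alpha v)=0$ for $|\alpha|\leq\vartheta(j)-1$ in \eqref{eq-cancellation-Q_j} annihilates the polynomial part. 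In the integral remainder, further split $\partial_z^\alpha\Phi^\pm(tz,y,\lambda)=\partial_z^\alpha\Phi^\pm(0,y,\lambda)+[\partial_z^\alpha\Phi^\pm(tz,y,\lambda)-\partial_z^\alpha\Phi^\pm(0,y,\lambda)]$. The $dt$-integration of the first piece yields a $z$-constant coefficient for the boundary term (the seed of $\omega_{j,1}^\pm$ and $\omega_{j,2}^\pm$), while the difference yields an extra factor of $|z|$ together with one further spatial derivative of $\Phi^\pm$; this derivative contributes an extra $\lambda$ whenever it hits the exponential (collecting into $\omega_{j,3}^\pm$) or an extra $\langle y\rangle^{-1}$ otherwise (collecting into $\omega_{j,2}^\pm$).

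Next, evaluate $\partial_z^\alpha\Phi^\pm(0,y,\lambda)$ for $|\alpha|=\vartheta(j)$ by Leibniz's rule across the three factors: a derivative on the exponential contributes $\lambda\cdot(\pm i\hat{y}+O(|z|/|y|))$, a derivative on $|z-y|^{-(n-1)/2}$ contributes $O(|y|^{-1})$, and derivatives of $F_n^\pm(\lambda|z-y|)$ are bounded by \eqref{eq-F decay-odd}. The dominant contribution deposits all $\vartheta(j)$ derivatives on the exponential, producing $\lambda^{\vartheta(j)}(\pm i)^{\vartheta(j)}\hat{y}^\alpha|y|^{-(n-1)/2}F_n^\pm(\lambda|y|)$ up to combinatorial constants. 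Define $\omega_{j,1}^\pm$ by replacing $F_n^\pm(\lambda|y|)$ with the constant $F_n^\pm(0)$, so that the coefficients $A_{\alpha,\beta}^\pm$ in \eqref{eq3.100001} are absolute. The remaining pieces---the error $F_n^\pm(\lambda|y|)-F_n^\pm(0)=O(\min\{1,\lambda|y|\})$ plus all subdominant Leibniz terms in which some derivative lands on $|z-y|^{-(n-1)/2}$ or on $F_n^\pm$---combine with the $|z|$-driven remainders above to form $\omega_{j,2}^\pm$, with each subdominant term trading one $\lambda$-power for one $\langle y\rangle^{-1}$ factor. This gives the $\varepsilon=0$ endpoint of \eqref{eq3.1001}; the $\varepsilon=1$ endpoint is no stronger than the $\omega_{j,1}^\pm$-type bound, and intermediate $\varepsilon\in[0,1]$ follows by interpolation between the two extremes.

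The $\|\cdot\|_{L^2_x}$ bounds with $\partial_\lambda^l$ for $l\leq(n+3)/2$ then follow from the decay assumption on $V$ (which guarantees $v(z)z^\beta\in L^2$ and finiteness of the relevant weighted moments of $v$), from \eqref{eq-F decay-odd}, and from elementary derivative estimates on $|z-y|$ for $z\in\mathrm{supp}\,v$ and $|y|>1$. The principal technical obstacle is the bookkeeping of $\partial_\lambda^l$: each $\partial_\lambda$ may land on the exponential $e^{\pm i\lambda(|z-y|-|y|)}$ (producing an $O(|z-y|)$ factor) or on $F_n^\pm(\lambda|z-y|)$ (controlled by \eqref{eq-F decay-odd}), and one must verify that the three-term decomposition commutes with $\partial_\lambda^l$ while preserving the stated estimates. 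The endpoint case $j=2m-n/2$ (which appears only when zero is an eigenvalue of $H$) requires dedicated treatment: its $\varepsilon$-interpolation collapses to the sole bound $\lambda^{-l}\langle y\rangle^{-(n+1)/2}$, which is obtained by invoking the additional cancellation encoded in the structural identity \eqref{S_jT_0} together with the eigenspace definition of the projection $Q_{2m-n/2}$.
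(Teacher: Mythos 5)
Your overall strategy (factor out the phase $e^{\pm i\lambda|y|}$, exploit the cancellation \eqref{eq-cancellation-Q_j} through a Taylor expansion, isolate a rank-structured leading term, and dump the rest into $\omega_{j,2}^\pm,\omega_{j,3}^\pm$) is the right spirit, but the region bookkeeping is wrong in a way that breaks the stated estimates. First, the region $|y|\le 1$ cannot be absorbed wholesale into $\omega_{j,3}^\pm$: for $\lambda\in(0,1)$ a larger exponent of $\lambda$ is a \emph{stronger} statement, so the power $\lambda^{\frac{n+3}{2}-2m+\vartheta(j)-l}$ is not "produced for free" by the $Q_j$ cancellation, which only yields $\lambda^{\min\{n-2m+\vartheta(j),0\}-l}$ as in \eqref{eq-k1 estimate-odd-1}. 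In fact the remark after Lemma \ref{lemma-QjvR-odd} (take $y=0$ and $j>2m-n$, e.g.\ $j=2m-\frac n2$, so $\vartheta(j)=2m-\frac{n-1}{2}$) shows $\|\omega_j^\pm(\lambda,\cdot,0)\|_{L^2_x}\sim 1$ as $\lambda\to0^+$, whereas \eqref{eq3.10011} would force $O(\lambda^{2})$ there; so your assignment contradicts a sharp bound. The small-$\lambda\langle y\rangle$ contribution (which contains $|y|\le1$ with $\lambda$ small) must go into $\omega_{j,2}^\pm$, whose $\varepsilon$-family \eqref{eq3.1001} is exactly what the low-energy bound \eqref{eq3.9031}, i.e.\ $\lambda^{n-2m+\theta-l}\langle y\rangle^{\theta-\vartheta(j)}$ with $\theta=\min\{\vartheta(j),2m-n\}$, can absorb. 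The same omission hurts you on $|y|>1$: you never split according to $\lambda\langle y\rangle$, yet the subdominant Leibniz coefficients in $\partial_z^\alpha\Phi^\pm(0,y,\lambda)$ (one derivative on $|z-y|^{-\frac{n-1}{2}}$, the rest on the phase) have size $\lambda^{\frac{n+1}{2}-2m+\vartheta(j)-1-l}\langle y\rangle^{-\frac{n+1}{2}}$, which satisfies \eqref{eq3.1001} only at $\varepsilon=0$ and violates every $\varepsilon>0$ member (and \eqref{eq3.10011}) by factors $(\lambda\langle y\rangle)^{-\varepsilon}$, resp.\ $\lambda^{-2}\langle y\rangle^{-1}$, as $\lambda\to0$ with, say, $|y|=2$; since the downstream argument (Lemma \ref{est for T-3} and Proposition \ref{lem.estforwij3g}) needs the bound for small $\varepsilon$, this placement fails. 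The paper's proof fixes both issues with the cutoffs $\phi_0(\lambda\langle y\rangle),\phi_1(\lambda\langle y\rangle)$: the zero-energy expansion is used when $\lambda\langle y\rangle\lesssim1$ and the oscillatory representation only when $\lambda\langle y\rangle\gtrsim1$.

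Two further problems occur on the $|y|>1$ side. (i) The normalizing constant in $\omega_{j,1}^\pm$ must be $A^\pm=\lim_{z\to\infty}F_n^\pm(z)$, not $F_n^\pm(0)$: with your choice the replacement error $F_n^\pm(\lambda|y|)-F_n^\pm(0)$ is $O(1)$ whenever $\lambda|y|\gtrsim1$, and the resulting term of size $\lambda^{\frac{n+1}{2}-2m+\vartheta(j)-l}\langle y\rangle^{-\frac{n-1}{2}}$ fails \eqref{eq3.1001} for every $\varepsilon<1$ and fails \eqref{eq3.10011} by a factor $\lambda^{-1}$; with $A^\pm$ the error is $F_n^\pm(\lambda|y|)-A^\pm=O(\langle\lambda|y|\rangle^{-1})$, which gains exactly the missing $(\lambda\langle y\rangle)^{-1}$ in the relevant regime. (ii) Your multivariable Taylor remainder evaluates $\partial_z^\alpha\Phi^\pm$ at $tz$, $t\in[0,1]$; since $\supp v$ is not compact these points can approach the singularity at $y$, where $\vartheta(j)$ derivatives of $|z-y|^{-\frac{n-1}{2}}$ produce $|tz-y|^{-\frac{n-1}{2}-\vartheta(j)}$, which is not integrable along the segment and (for $\vartheta(j)\ge\frac{n+1}{2}$, which occurs here) not locally square integrable; "elementary derivative estimates on $|z-y|$" do not cover this. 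The paper sidesteps it by Taylor-expanding only the bounded function $G_n^\pm(z)=e^{\pm iz}F_n^\pm(z)$ in the scalar variable $\lambda|\cdot-y|$ about $\lambda|y|$, never differentiating $|\cdot-y|^{-\frac{n-1}{2}}$, and then extracting the multinomial structure of \eqref{eq3.100001} from $(|x-y|^2-|y|^2)^{\vartheta(j)}=(-2x\cdot y+|x|^2)^{\vartheta(j)}$ combined with the cancellation of $Q_j$. Finally, the case $j=2m-\frac n2$ needs no appeal to \eqref{S_jT_0}: in the paper it falls out of the same estimates, since $\theta=2m-n$ and $\vartheta(j)=2m-\frac{n-1}{2}$ turn \eqref{eq3.9031} into exactly $\lambda^{-l}\langle y\rangle^{-\frac{n+1}{2}}$.
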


\subsection {Some criteria to $L^p$-boundedness and oscillatory integrals} \label{sec-pre-for-low}
 In what follows, we prepare several basic criteria to establish $L^p$-boundedness of  operators with specific integral kernels.

Denote by $T(x,y)$  the integral kernel of the operator $T$. We say that the integral kernel $T(x,y)$ is admissible if and only if $T(x,y)$ satisfies
\begin{equation}\label{eq-admissible condition}
\sup_{x\in\mathbb{R}^n}\int |T(x,y)|dy+\sup_{y\in\mathbb{R}^n}\int|T(x,y)|dx<\infty.
\end{equation}
Now, let us recall the classical Schur's lemma.
\begin{lemma}\label{Schur Lemma}
If $T(x,y)$ is admissible, then $T$ is bounded on $L^p(\mathbb{R}^n)$ for all $1\le p\le\infty$.
\end{lemma}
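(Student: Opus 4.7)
The plan is to establish the $L^p$-boundedness first at the two endpoints $p=1$ and $p=\infty$ by direct integration, and then either interpolate via Riesz--Thorin or carry out a short Hölder argument for $1<p<\infty$. Let $M_1 := \sup_{y}\int |T(x,y)|\,dx$ and $M_\infty := \sup_{x}\int |T(x,y)|\,dy$, both finite by the admissibility hypothesis \eqref{eq-admissible condition}.

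For $p=\infty$, I would estimate pointwise
\[
|(Tf)(x)| \le \int |T(x,y)|\,|f(y)|\,dy \le \|f\|_{L^\infty}\int |T(x,y)|\,dy \le M_\infty\,\|f\|_{L^\infty},
\]
which gives $\|T\|_{L^\infty\to L^\infty}\le M_\infty$. For $p=1$, I would apply Tonelli's theorem:
\[
\|Tf\|_{L^1} \le \int\!\!\int |T(x,y)|\,|f(y)|\,dy\,dx = \int |f(y)|\!\left(\int |T(x,y)|\,dx\right)dy \le M_1\,\|f\|_{L^1}.
\]

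For the intermediate range $1<p<\infty$, the cleanest route is the standard Schur trick: write $|T(x,y)| = |T(x,y)|^{1/p'}\cdot |T(x,y)|^{1/p}$ and apply Hölder's inequality in the $y$-variable to obtain
\[
|(Tf)(x)|^p \le \left(\int |T(x,y)|\,dy\right)^{p/p'}\int |T(x,y)|\,|f(y)|^p\,dy \le M_\infty^{p/p'}\int |T(x,y)|\,|f(y)|^p\,dy.
\]
Integrating in $x$ and applying Tonelli's theorem gives
\[
\|Tf\|_{L^p}^p \le M_\infty^{p/p'} M_1\,\|f\|_{L^p}^p,
\]
so $\|T\|_{L^p\to L^p}\le M_\infty^{1/p'}M_1^{1/p}$. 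Alternatively, one may simply interpolate between the endpoint estimates via Riesz--Thorin.

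There is essentially no obstacle here: the result is the classical Schur test, and the admissibility hypothesis \eqref{eq-admissible condition} is precisely the condition required. The only minor care needed is the use of Fubini/Tonelli, which is justified because the integrand is nonnegative (or, equivalently, one first verifies absolute convergence using $M_1,M_\infty<\infty$). No cancellation or oscillation is exploited, which is consistent with the fact that this lemma will later be applied to integral kernels that have already been reduced to absolute-value estimates.
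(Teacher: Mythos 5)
Your proof is correct: the endpoint bounds at $p=1$ and $p=\infty$ plus the Hölder/Riesz--Thorin step is exactly the standard Schur test, and the admissibility condition \eqref{eq-admissible condition} supplies the two constants $M_1$ and $M_\infty$ you need. The paper gives no proof of this lemma at all (it simply recalls it as the classical Schur lemma), so your argument fills in precisely the standard reasoning the authors are invoking.
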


Moreover, we also need the following three lemmas.

\begin{lemma}\label{lemma-bounedness-1}
Assume that the integral kernel $T(x,y)$ obeys the following pointwise bounds:
\[|T(x,y)|\lesssim \langle x\rangle^{-n}{\bf1}_{\{|x|>2|y|\}}.\] 
Then the operator $T$ is bounded on $L^p(\mathbb{R}^n)$ for all $1< p\le \infty.$
\end{lemma}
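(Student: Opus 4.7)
The plan is to dominate $|Tf(x)|$ pointwise by the Hardy--Littlewood maximal function $Mf(x)$, after which the conclusion will follow immediately from the classical maximal theorem. Starting from the kernel bound in the hypothesis, we have
\[
|Tf(x)| \;\le\; C\,\langle x\rangle^{-n}\int_{|y|<|x|/2}|f(y)|\,dy.
\]
The key geometric observation is that $|y|<|x|/2$ implies $|y-x|\le |y|+|x|<3|x|/2$, so $B(0,|x|/2)\subset B(x,3|x|/2)$. Hence the inner integral is bounded by $\int_{B(x,3|x|/2)}|f(y)|\,dy \le C|x|^n\,Mf(x)$.

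Combining this with the trivial estimate $|x|^n\langle x\rangle^{-n}\le 1$, valid for every $x$ since $\langle x\rangle\ge|x|$, we obtain $|Tf(x)|\lesssim Mf(x)$ for all $x\in\mathbb{R}^n$. The Hardy--Littlewood maximal inequality then gives $\|Tf\|_p\lesssim\|Mf\|_p\lesssim\|f\|_p$ for every $1<p<\infty$, while the endpoint $p=\infty$ is handled by the trivial bound $\|Mf\|_\infty\le\|f\|_\infty$.

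I do not anticipate any serious technical obstacles: the hypothesis is designed so that the support condition $\{|x|>2|y|\}$ confines the integration of $f$ to a ball about the origin whose radius scales with $|x|$, which is precisely the regime controlled by the maximal operator, and the weight $\langle x\rangle^{-n}$ exactly cancels the resulting factor $|x|^n$. The only point to keep in mind is that the argument cannot reach $p=1$; indeed, since $\int_{\mathbb{R}^n}\langle x\rangle^{-n}\,dx=+\infty$, one cannot in general invoke Schur's lemma, and $T$ is genuinely not bounded on $L^1$, consistent with the statement.
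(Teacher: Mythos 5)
Your proof is correct, but it takes a different route from the paper. You dominate $|Tf(x)|$ pointwise by the Hardy--Littlewood maximal function: the support condition $|y|<|x|/2$ puts the integration inside $B(x,3|x|/2)$, the ball's volume produces the factor $|x|^n$, and $|x|^n\langle x\rangle^{-n}\le 1$ cancels it, so $|Tf(x)|\lesssim Mf(x)$ and the maximal theorem finishes $1<p\le\infty$ in one stroke. The paper instead observes that the kernel bound $\langle x\rangle^{-n}$ alone gives boundedness from $L^1$ to $L^{1,\infty}$ (since $\langle x\rangle^{-n}\in L^{1,\infty}$), that the indicator ${\bf 1}_{\{|x|>2|y|\}}$ gives $\sup_x\int|T(x,y)|\,dy\lesssim 1$ and hence $L^\infty$-boundedness, and then interpolates by Marcinkiewicz. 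The two arguments are of comparable depth (the maximal theorem is itself a weak-$(1,1)$ bound plus interpolation), but yours packages the geometry into a single pointwise inequality and makes transparent why exactly the weight $\langle x\rangle^{-n}$ balances the ball of radius $\sim|x|$, whereas the paper's version separates the roles of the weight (weak endpoint) and the truncation ($L^\infty$ endpoint). One small caveat on your closing remark: the hypothesis is only an upper bound on the kernel, so it does not force $T$ to be unbounded on $L^1$ (e.g.\ $T=0$ satisfies it); the correct statement is merely that $L^1$-boundedness cannot be deduced from this bound, which is consistent with both proofs stopping at $p>1$. This aside does not affect the validity of your argument.
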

\begin{proof}
Firstly, it is obvious that the operator with the kernel $ \langle x \rangle^{-n} $ is bounded from $ L^1(\mathbb{R}^n) $ to $ L^{1,\infty}(\mathbb{R}^n) $. Furthermore, one has that for any $ x \in \mathbb{R}^n $, there exists
\[
\sup_{x\in\mathbb{R}^n}\int_{\mathbb{R}^n} |T(x,y)|dy \lesssim \sup_{x\in\mathbb{R}^n}\Big(\langle x\rangle^{-n} \int_{\mathbb{R}^n} {\bf1}_{\{|x|>2|y|\}}dy \Big)\lesssim 1.
\]
Therefore,  $T$ is  also bounded on $ L^{\infty }(\mathbb{R}^n) $. Then, by the Marcinkiewicz interpolation theorem (see Section 1.3 in \cite{gra}), the operator $ T $ is bounded on $ L^p(\mathbb{R}^n) $ for any $ 1 < p \le  \infty$. 
\end{proof}

\begin{lemma}\label{lemma-bounedness-11}
Assume that the integral kernel $T(x,y)$ obeys the following pointwise bounds:
\[|T(x,y)|\lesssim \langle x\rangle^{-\alpha}\langle y\rangle^{\alpha-n}{\bf1}_{\{|x|<\frac 12|y|\}}\] 
for some $\alpha\in[0,n)$. Then the operator $T$ is bounded on $L^p(\mathbb{R}^n)$ for all $1\le  p< \frac{n}{\alpha}$.
\end{lemma}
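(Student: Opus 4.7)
The plan is to mirror the proof of Lemma \ref{lemma-bounedness-1}: first establish a strong $(1,1)$ estimate, then obtain a weak-type $(p_0,p_0)$ estimate for each $p_0<n/\alpha$ via Hölder's inequality, and finally apply Marcinkiewicz interpolation.

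First I would verify the strong $(1,1)$ bound by Fubini:
\[
\sup_{y\in\R^n}\int_{\R^n} |T(x,y)|\,dx\;\lesssim\;\sup_y \langle y\rangle^{\alpha-n}\int_{|x|<|y|/2}\langle x\rangle^{-\alpha}\,dx\;\lesssim\;1,
\]
where the inner integral contributes $\lesssim \langle y\rangle^{n-\alpha}$ precisely because $\alpha<n$; this gives $\|T\|_{L^1\to L^1}\lesssim 1$ and in particular the case $p=1$.

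Next, for any fixed $p_0$ with $1<p_0<n/\alpha$, I would apply Hölder to the pointwise bound
\[
|Tf(x)|\;\lesssim\;\langle x\rangle^{-\alpha}\int_{|y|>2|x|}\langle y\rangle^{\alpha-n}|f(y)|\,dy\;\le\;\langle x\rangle^{-\alpha}\|f\|_{L^{p_0}}\bigl\|\langle y\rangle^{\alpha-n}\mathbf{1}_{\{|y|>2|x|\}}\bigr\|_{L^{p_0'}}.
\]
The key computation is $\bigl\|\langle y\rangle^{\alpha-n}\mathbf{1}_{\{|y|>2|x|\}}\bigr\|_{L^{p_0'}}^{p_0'}=\int_{|y|>2|x|}\langle y\rangle^{(\alpha-n)p_0'}\,dy$; the hypothesis $p_0<n/\alpha$ is exactly what guarantees $(n-\alpha)p_0'>n$, so this integral converges and is $\lesssim \langle x\rangle^{n-(n-\alpha)p_0'}$, whence the $L^{p_0'}$ norm is $\lesssim \langle x\rangle^{\alpha-n/p_0}$. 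Combining, $|Tf(x)|\lesssim \langle x\rangle^{-n/p_0}\|f\|_{L^{p_0}}$. Since $\langle x\rangle^{-n/p_0}\in L^{p_0,\infty}(\R^n)$, this yields the weak-type bound $\|Tf\|_{L^{p_0,\infty}}\lesssim \|f\|_{L^{p_0}}$.

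Finally, Marcinkiewicz interpolation between the strong $(1,1)$ estimate and the weak $(p_0,p_0)$ estimate gives strong $(q,q)$ for every $1<q<p_0$; since $p_0<n/\alpha$ is arbitrary, combined with the $L^1$ bound we conclude that $T\in\mathbb{B}(L^q(\R^n))$ for all $1\le q<n/\alpha$. The argument is elementary and the only place the restriction $p<n/\alpha$ actually enters is in the convergence of the $L^{p_0'}$-integral above, so if there is any obstacle it is simply tracking the exponents carefully to ensure the resulting power decay $\langle x\rangle^{-n/p_0}$ lies in $L^{p_0,\infty}$; no deeper difficulty arises.
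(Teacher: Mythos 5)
Your proof is correct, and the exponent bookkeeping checks out: the Schur-type bound $\sup_y\int|T(x,y)|\,dx\lesssim1$ follows exactly as you say because $\alpha<n$; for $1<p_0<n/\alpha$ the condition $(n-\alpha)p_0'>n$ is indeed equivalent to $p_0<n/\alpha$, so ordinary H\"older gives $|Tf(x)|\lesssim\langle x\rangle^{-n/p_0}\|f\|_{L^{p_0}}$ and hence the weak $(p_0,p_0)$ bound since $\langle x\rangle^{-n/p_0}\in L^{p_0,\infty}$; Marcinkiewicz interpolation with the $(1,1)$ bound and letting $p_0\uparrow n/\alpha$ then exhausts the whole range $1\le p<n/\alpha$. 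The paper's proof shares the same skeleton (an $L^1$ bound from the kernel plus interpolation) but implements the second ingredient differently: rather than working strictly below the endpoint, it applies the H\"older-type inequality in Lorentz spaces directly at $p=n/\alpha$, using that $\langle x\rangle^{-n}\in L^{1,\infty}$ and $\langle y\rangle^{\alpha-n}$ pairs with $f\in L^{n/\alpha,1}$, to obtain a single bound from $L^{n/\alpha,1}$ to $L^{n/\alpha,\infty}$, and then invokes the off-diagonal Marcinkiewicz interpolation theorem once. The paper's route buys a clean restricted-weak-type statement at the endpoint and one interpolation; yours trades the Lorentz-space machinery for a family of sub-endpoint weak-type estimates proved by elementary H\"older, arriving at the same open range of $p$.
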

\begin{proof}
One can easily find that 
$$\sup_{y\in \R^n}\int_{\mathbb{R}^n} |T(x,y)| dx\lesssim 1.$$
Thus, $T$ is bounded  on $L^1(\mathbb{R}^n)$. 
Furthermore, 
by the H\"older-type inequality in Lorentz spaces (see Section 1.4 in \cite{gra}), one has
\begin{equation}\label{eq.estforkernel}
\Big\| Tf(x) \Big\|_{L^{\frac{n}{\alpha},\infty}}\lesssim \Big\|\langle x \rangle^{-\alpha}\int_{\mathbb{R}^n}\langle y \rangle^{-n+\alpha}f(y) \, dy\Big\|_{L^{\frac{n}{\alpha},\infty}} \lesssim \|\langle x \rangle^{-n}\|_{L^{1,\infty}}  \|f\|_{L^{\frac{n}{\alpha},1}}.	
\end{equation}
Therefore,  $T$ is also bounded from $ L^{\frac{n}{\alpha},1}(\mathbb{R}^n) $ to $ L^{\frac{n}{\alpha},\infty}(\mathbb{R}^n) $.
Thus, the conclusion follows from the off-diagonal Marcinkiewicz interpolation theorem (see Section 1.4 in \cite{gra}).
\end{proof}

\begin{lemma}\label{lemma-bounedness-2}
Assume that the integral kernel $T(x,y)$ obeys the following pointwise bounds:
\[|T(x,y)|\le C_\varepsilon \langle x\rangle^{-(n-\varepsilon)}\langle |x|-|y|\rangle^{-\varepsilon}{\bf1}_{\{\frac12|y|\le|x|\le 2|y|\}}\] 
for any $\varepsilon\in(0,1).$ Then the operator $T$ is bounded on $L^p(\mathbb{R}^n)$ for all $1< p< \infty.$
\end{lemma}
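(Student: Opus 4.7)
The plan is to apply Schur's lemma (Lemma \ref{Schur Lemma}) directly, for an arbitrary fixed $\varepsilon\in(0,1)$. The key structural observation is that on the support $\{\tfrac12|y|\le|x|\le 2|y|\}$ of $T(x,y)$ one has $\langle x\rangle\approx\langle y\rangle$, so the row and column integrals of $|T(x,y)|$ can be handled by the same computation, with the roles of $x$ and $y$ interchanged.

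First I would pass to polar coordinates in $y=s\omega$ with $s=|y|$ and $\omega\in\Sphere^{n-1}$, reducing the row integral to
\[
\int_{\R^n}|T(x,y)|\,dy\lesssim_\varepsilon \langle x\rangle^{-(n-\varepsilon)}\int_{|x|/2}^{2|x|} s^{n-1}\langle|x|-s\rangle^{-\varepsilon}\,ds.
\]
For $|x|\le 1$ the integrand is uniformly bounded and the domain has finite measure, so the estimate is trivial. For $|x|>1$ I would bound $s^{n-1}\lesssim|x|^{n-1}$ and substitute $u=s-|x|$ to obtain
\[
\int_{-|x|/2}^{|x|}\langle u\rangle^{-\varepsilon}\,du\lesssim_\varepsilon |x|^{1-\varepsilon},
\]
where the hypothesis $\varepsilon<1$ is precisely what forces this one-dimensional integral to be finite with the stated polynomial growth. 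Combining factors gives
\[
\int_{\R^n}|T(x,y)|\,dy\lesssim_\varepsilon \langle x\rangle^{-(n-\varepsilon)}\cdot |x|^{n-\varepsilon}\lesssim_\varepsilon 1,
\]
uniformly in $x\in\R^n$.

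The column integral $\int_{\R^n}|T(x,y)|\,dx$ is handled identically after swapping the roles of $x$ and $y$ and using $\langle x\rangle\approx\langle y\rangle$ on the support to replace the prefactor $\langle x\rangle^{-(n-\varepsilon)}$ by $\langle y\rangle^{-(n-\varepsilon)}$. Schur's lemma (Lemma \ref{Schur Lemma}) then delivers $T\in\mathbb{B}(L^p(\R^n))$ for every $p\in[1,\infty]$, which in particular yields the claimed range $1<p<\infty$. I do not expect any serious obstacle: the exponent $n-\varepsilon$ in $\langle x\rangle^{-(n-\varepsilon)}$ is tailor-made to cancel the $|x|^{n-\varepsilon}$ growth from the radial integration, and the condition $\varepsilon<1$ is exactly what guarantees the $|u|^{1-\varepsilon}$ control on $\int\langle u\rangle^{-\varepsilon}du$ over large intervals. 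The only reason the statement restricts to $1<p<\infty$ (rather than including the endpoints that Schur would also yield) appears to be calibration to the form in which the lemma is invoked later.
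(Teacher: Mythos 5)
Your proof is correct, but it takes a genuinely different route from the paper. You verify admissibility directly: on the annulus $\{\tfrac12|y|\le|x|\le2|y|\}$ the radial integral $\int_{|x|/2}^{2|x|}\langle|x|-s\rangle^{-\varepsilon}\,ds\lesssim_\varepsilon|x|^{1-\varepsilon}$ beats the trivial bound by exactly the factor needed to cancel $\langle x\rangle^{-(n-\varepsilon)}|x|^{n-1}$, and the column integral follows by the same computation after using $\langle x\rangle\approx\langle y\rangle$ on the support; Schur's lemma (Lemma \ref{Schur Lemma}) then gives boundedness. The paper instead factorizes the kernel bound in two ways (using the hypothesis at $\varepsilon=\tfrac{n-\alpha}{n}$ and $\varepsilon=\tfrac{\alpha}{n}$), applies the H\"older inequality in Lorentz spaces together with the off-diagonal Marcinkiewicz interpolation theorem to get $\mathbb{B}(L^p)$ for $\tfrac{n}{n-\alpha}<p<\tfrac{n}{\alpha}$, and then lets $\alpha\downarrow0$. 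The comparison cuts both ways in your favor here: your argument is more elementary, uses the hypothesis at only a single fixed $\varepsilon\in(0,1)$ rather than a family of values, and in fact yields the stronger conclusion $T\in\mathbb{B}(L^p(\mathbb{R}^n))$ for all $1\le p\le\infty$ (which of course contains the claimed open range and creates no conflict elsewhere in the paper, since the endpoint obstructions there come from other pieces such as the truncated Hilbert transform part and $W_{i,j,1}^L$, $W_{i,j,2}^L$). So the restriction to $1<p<\infty$ in the lemma reflects the paper's interpolation method rather than a genuine limitation of kernels satisfying this bound; your final speculation about "calibration" is consistent with that, and the only caveat is that your closing sentence slightly understates that you have actually proved a strictly stronger statement, not merely the stated one.
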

\begin{proof}
Firstly,  when $\frac{|x|}{2}\le |y|\le2|x|$, for any $\alpha \in(0,1)$, one has
\begin{equation}\label{eq-chi_3}
\langle x\rangle^{-(n-\varepsilon)}\langle |x|-|y|\rangle^{-\varepsilon}\lesssim\begin{cases}
\langle x\rangle^{-\alpha}\langle |x|-|y|\rangle^{-\frac{n-\alpha}{n}}
\langle y\rangle^{-\frac{(n-1)(n-\alpha)}{n}}, \quad \text{when  $\varepsilon=\frac{n-\alpha}{n}$}, & \\[3mm]
\langle x\rangle^{-(n-\alpha)}\langle |x|-|y|\rangle^{-\frac{\alpha}{n}}
\langle y\rangle^{-\frac{(n-1)\alpha}{n}},\quad \text{when  $\varepsilon=\frac{\alpha}{n}$}.& 
\end{cases}
\end{equation}
In addition,  note that
\[\sup_{x\in\mathbb{R}^n}\Big\|\langle |x|-|y|\rangle^{-\frac{n-\alpha}{n}}\langle y\rangle^{-\frac{(n-1)(n-\alpha)}{n}}\Big\|_{L^{\frac{n}{n-\alpha},+\infty}_y}\lesssim 1,\]
and
\[\sup_{x\in\mathbb{R}^n}\Big\|\langle |x|-|y|\rangle^{-\frac{\alpha}{n}}\langle y\rangle^{-\frac{(n-1)\alpha}{n}}\Big\|_{L^{\frac{n}{\alpha},+\infty}_y}\lesssim 1.\]
Then, by the H\"{o}lder-type inequality in Lorentz space and the off-diagonal Marcinkiewicz interpolation theorem, one has that the operator $T$ is bounded in $L^p(\mathbb{R}^n)$ for any $\frac{n}{n-\alpha}<p<\frac{n}{\alpha}$. Since the choice of $ \alpha $ can be  arbitrary in $(0,\,1)$, it follows that $ T $ is bounded in $ L^p(\mathbb {R}^n) $ for any $ 1 < p < \infty $.
\end{proof}

\section{Low energy estimates }\label{sec:low energy}

In this section, we are devoted to studying the $L^p$-boundedness of $W^L$, which is defined in \eqref{eq-wave operator-low-0}. Namely, we will prove the following theorem.
\begin{theorem}\label{thm-low energy-regular-odd}
Let $1\le n\le 4m-1$ be odd. Assume that Assumption \ref{Assumption}, which depends on the resonance type $\mathbf{k}$, holds. Then the following statements hold:

\vspace{5pt}
\indent\emph{(\romannumeral1)} If $\mathbf{k}=0$ (zero is the regular point of $H$), then $W^L\in \mathbb{B}(L^p(\mathbb{R}^n))$ for all $1<p<\infty$;

\vspace{5pt}
\indent\emph{(\romannumeral2)} If  $1\le \mathbf{k}\le k_c$, then $W^L\in \mathbb{B}(L^p(\mathbb{R}^n))$ for all $1<p<\infty;$

\vspace{5pt}
\indent\emph{(\romannumeral3)} If  $k_c+1\le \mathbf{k}\le m_n$, then $W^L\in \mathbb{B}(L^p(\mathbb{R}^n))$ for all $1<p<\frac{n}{n-2m+\mathbf{k}+k_c-1}$;

\vspace{5pt}
\indent\emph{(\romannumeral4)} If $\mathbf{k}=m_n+1$ (zero is an eigenvalue  of $H$), then $W^L\in \mathbb{B}(L^p(\mathbb{R}^n))$ for all $1<p<\frac{2n}{n-1}.$
\end{theorem}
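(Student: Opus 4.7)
The plan is to feed the asymptotic expansion of $(M^+(\lambda))^{-1}$ from Theorem \ref{thm-M inverse-odd} into the representation \eqref{eq-wave operator-low-1} of $W^L$, and then analyze the resulting finite sum of oscillatory kernels piece by piece. Writing
\[
W^L=\sum_{i,j\in J_{\mathbf{k}}} W_{i,j}^L,
\]
each term $W_{i,j}^L$ has integral kernel of the form \eqref{eq-1}; applying Lemma \ref{lemma-QjvR-odd} to $Q_i v R_0^-$ and $Q_j v(R_0^+-R_0^-)$, the kernel takes the clean shape
\[
W_{i,j}^L(x,y)=\int_0^\infty\chi(\lambda)\Bigl(e^{i\lambda(|x|+|y|)}T_{i,j}^+(\lambda,x,y)+e^{i\lambda(|x|-|y|)}T_{i,j}^-(\lambda,x,y)\Bigr)\,d\lambda,
\]
where $T_{i,j}^\pm(\lambda,x,y)=\lambda^{4m-n-1-i-j}\langle (M_{i,j}^++\Gamma_{i,j}^+(\lambda))\widetilde\omega_j^\pm(\lambda,\cdot,y),\,\omega_i^-(\lambda,\cdot,x)\rangle_{L^2}$, and the factor $\chi(\lambda)$ localizes everything in $(0,\lambda_0)$. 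The main point is that Theorem \ref{thm-M inverse-odd} gives the precise negative power of $\lambda$ one must fight against, while Lemmas \ref{lemma-QjvR-odd}--\ref{lem3.10} provide compensating positive powers $\lambda^{\vartheta(i)}$, $\lambda^{\vartheta(j)}$ coming from the cancellation properties of $Q_i$, $Q_j$.

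Next I would split each $W_{i,j}^L$ according to the three geometric regions $|x|>2|y|$, $|x|<\tfrac12|y|$ and $\tfrac12|y|\le |x|\le 2|y|$, giving pieces $W_{i,j,1}^L$, $W_{i,j,2}^L$, $W_{i,j,3}^L$.  On the first two regions the phases $|x|\pm|y|$ are bounded away from $0$, so repeated integration by parts in $\lambda$ (using the $L^2_x$-estimates \eqref{eq-k1 estimate-odd-1}--\eqref{eq-k2-odd-2} for $\omega_i^-$ and $\widetilde\omega_j^\pm$, together with \eqref{equ3.47}) will convert each $\lambda$-derivative into a gain of $\langle|x|\pm|y|\rangle^{-1}$.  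In the region $|x|>2|y|$ this should yield $|W_{i,j,1}^L(x,y)|\lesssim \langle x\rangle^{-n}\mathbf{1}_{\{|x|>2|y|\}}$, whence Lemma \ref{lemma-bounedness-1} gives $L^p$-boundedness for all $1<p\le\infty$.  In the region $|x|<\tfrac12|y|$, using the $y$-weighted estimates \eqref{eq-k1 estimate-odd-3}, \eqref{eq-k2-odd-2} together with the refined decomposition of Lemma \ref{lem3.10}, I expect a bound
\[
|W_{i,j,2}^L(x,y)|\lesssim \langle x\rangle^{-\rho_i}\langle y\rangle^{\rho_i-n}\mathbf{1}_{\{|x|<\frac12|y|\}},
\]
for an exponent $\rho_i$ computed from $\vartheta(i)$ and the critical index $k_c$.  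Lemma \ref{lemma-bounedness-11} then delivers $L^p$-boundedness on $1\le p<n/\rho_i$.  A careful bookkeeping of $\rho_i$ over $i\in J_{\mathbf{k}}$ will pin down exactly the range of $p$ claimed in (i)--(iv); this is where the threshold $p_{\mathbf{k}}=n/(n-2m+\mathbf{k}+k_c-1)$ (and $2n/(n-1)$ in the eigenvalue case) is produced.

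The diagonal region $\tfrac12|y|\le|x|\le 2|y|$ is the delicate part.  Here the phase $|x|-|y|$ is no longer bounded away from $0$, so I would further split $W_{i,j,3}^L$ into a ``good'' part (where either the free-resolvent amplitudes or $\Gamma_{i,j}^+(\lambda)$ supply the extra smallness/smoothness needed to recover integrability) and a ``bad'' part $I_{i,j}^{\pm,b}$.  For the good part, integration by parts again gives $|T^\pm_{i,j,{\rm good}}(x,y)|\lesssim \langle x\rangle^{-n+\varepsilon}\langle|x|-|y|\rangle^{-\varepsilon}\mathbf{1}_{\{\frac12|y|\le|x|\le 2|y|\}}$, and Lemma \ref{lemma-bounedness-2} gives $L^p$ on $1<p<\infty$.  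For the bad part, using Lemma \ref{lem3.10} to isolate the leading angular singularity, a dyadic decomposition $\sum_{|s-h|<2}\chi_s(x)\cdot\,\cdot\,\chi_h(y)$ plus an admissible error reduces the analysis to kernels of the form
\[
C\,\chi_s(x)\chi_h(y)\,\frac{y^\alpha}{|y|^{\frac{n-1}{2}+\vartheta(j)}}\frac{x^{\widetilde\alpha}}{|x|^{\frac{n-1}{2}+\vartheta(i)}}\,\frac{1}{|x|\pm|y|},\quad |\alpha|=|\widetilde\alpha|=\vartheta(j),
\]
whose $L^p$-boundedness I would then reduce, via polar coordinates and Schur-type estimates in the angular variables, to the $L^p$-boundedness of the truncated Hilbert transform on $\mathbb{R}$, yielding the full range $1<p<\infty$ uniformly in $s,h$.

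I expect the hardest step to be the bad part on the diagonal.  Two interlocking difficulties appear there: first, one must show that after extracting the explicit angular singularity via \eqref{eq3.100001}, the remaining operator really does reduce to a scalar truncated Hilbert transform acting only on the radial variable (this requires uniform control in $(s,h)$ of the weighted averages of $Q_i(vz^\beta)$); and second, the bookkeeping between the negative $\lambda$-power $\lambda^{2m-n-i-j}$, the cancellation gain $\lambda^{\vartheta(i)+\vartheta(j)}$ from the projections, and the geometric decay in $|y|$ must be tight enough that no term forces a further shrinkage of the $p$-range beyond what the region $|x|<\tfrac12|y|$ already imposes.  Once this is under control, assembling (i)--(iv) is a matter of taking the worst $\rho_i$ as $i$ ranges over $J_{\mathbf{k}}$ in each of the four cases, which, thanks to \eqref{eq-vartheta-odd-1}--\eqref{eq-vartheta-odd-2}, produces exactly the indices appearing in Theorem \ref{thm-main result}.
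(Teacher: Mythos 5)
Your proposal follows essentially the same route as the paper: the expansion of $(M^+(\lambda))^{-1}$ feeding into the kernels $T_{i,j}^\pm$, the three-region splitting $|x|>2|y|$, $|x|<\tfrac12|y|$, $\tfrac12|y|\le|x|\le 2|y|$, the oscillatory-integral/integration-by-parts bounds combined with Lemmas \ref{lemma-bounedness-1}, \ref{lemma-bounedness-11}, \ref{lemma-bounedness-2}, the good/bad splitting on the diagonal via Lemma \ref{lem3.10}, and the dyadic reduction of the bad part to the truncated Hilbert transform are exactly the paper's Propositions \ref{propo-low energy-odd-1}, \ref{propo-low energy-odd-2}, \ref{propo-low energy-odd-2'}, \ref{lem.estforwij3g} and \ref{lem.estforwij3b1}. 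You also correctly identify that the $p$-range restriction arises solely from the region $|x|<\tfrac12|y|$ through the exponents governed by $\vartheta(i)$, which is precisely the paper's Remark \ref{rmk-on-p}.
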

\underline{{\bf The outline of  proof of Theorem \ref{thm-low energy-regular-odd}:}}
To study the $L^p$-boundedness of $W^L$, recall that
\[W^L=\frac{m}{\pi i}\int_0^\infty\lambda^{2m-1}\chi(\lambda)R^+(\lambda^{2m})V\big(R_0^+(\lambda^{2m})-R_0^-(\lambda^{2m})\big)d\lambda.\] 
By the following symmetric resolvent identity:
\[R^+(\lambda^{2m})V=R_0(\lambda^{2m})v(M^+(\lambda))^{-1}v,\]
we can rewrite $W^L$ as 
\begin{equation*}
\begin{split}
W^L=&\frac{m}{\pi i}\int_0^\infty \lambda^{2m-1}\chi(\lambda)R_0^+(\lambda^{2m})v(M^+(\lambda))^{-1}(\lambda)v(R_0^+(\lambda^{2m})-R_0^+(\lambda^{2m}))d\lambda\\
:=&\frac{m}{\pi i}\int_0^\infty G(\lambda)\chi(\lambda)d\lambda.
\end{split}
\end{equation*}
Note that $\supp \chi(\lambda)\subset[-\lambda_0,\lambda_0]$. So if zero energy is the $\mathbf{k}$-th kind resonance of $H=(-\Delta)^m+V$, then by the expansion \eqref{eq-M expansion-odd} of $(M^\pm(\lambda))^{-1}$ in Theorem \ref{thm-M inverse-odd}, i.e. 
\[\big(M^\pm(\lambda)\big)^{-1}=\sum_{i,j\in J_\mathbf{k}}\lambda^{2m-n-i-j}Q_i(M_{i,j}^\pm+\Gamma_{i,j}^\pm(\lambda))Q_j, \]
we can obtain that 
\begin{equation*}
	\begin{split}
		G(\lambda)=&\sum_{i,j\in J_\k}\lambda^{4m-n-1-i-j} R_0^{+}(\lambda^{2m})vQ_i\big(M_{i,j}^++\Gamma_{i,j}^+(\lambda)\big)Q_jv\big(R_0^{+}-R_0^{-}\big)(\lambda^{2m}), \\
		:=&\sum_{i,j\in J_\k} G_{i,j}(\lambda),
	\end{split}
\end{equation*}
The kernels $G_{i,j}(\lambda)(x,y)$  can be represented as
\begin{equation}\label{eqforGij}
	\begin{split}
		G_{i,j}(\lambda)(x,y)=&\int_{\R^n} \lambda^{4m-n-1-i-j} R_0^{+}(\lambda^{2m})(x,z)\Big(vQ_i(M_{i,j}^++\Gamma_{i,j}^+(\lambda))Q_jv\big(R_0^{+}-R_0^{-}\big)(\lambda^{2m})\Big)(z, y)dz\\ 
  =&\lambda^{4m-n-1-i-j}\Big\langle \big(M_{i,j}^++\Gamma_{i,j}^+(\lambda)\big)Q_jv\big(R_0^{+}-R_0^{-}\big)(\lambda^{2m})(\cdot, y),\, Q_ivR_0^{-}(\lambda^{2m})(\cdot,x)\Big\rangle_{L^2},
	\end{split}
\end{equation}
where  $R^{\pm }_0(\lambda^{2m})(x,y)$ are the kernels of resolvent $R^{\pm}_0(\lambda^{2m})$, $\langle f, g\rangle_{L^2}=\int_{\R^n} f \bar{g}$ denotes the inner product on $L^2(\R^n)$ and we used the symmetry fact $R_0^{-}(\lambda^{2m})(\cdot,x)=R_0^{-}(\lambda^{2m})(x,\cdot)$ in the above formula \eqref{eqforGij}.
Then one has that
\begin{equation}\label{eq.rewrifrowl}
	W^L=\frac{m}{\pi i}\sum_{i,j\in J_\k}\int_0^\infty G_{i,j}(\lambda)\chi(\lambda)d\lambda:=\frac{m}{\pi i}\sum_{i,j\in J_\k}	W_{i,j}^L.
\end{equation}
In the sequel, it suffices to study the $L^p$ boundedness of  $W_{i,j}^L$ for each $i,j\in J_\mathbf{k}$. The  kernel $W_{i,j}^L (x,y)$ will further be split into the following three parts:
\begin{equation}\label{decom-W}
\begin{cases}
W_{i,j,1}^L(x,y)=&{\bf1}_{\{|x|>2|y|\}}W_{i,j}^L(x,y),\\
W_{i,j,2}^L(x,y)=&{\bf1}_{\{|x|<\frac 12|y|\}}W_{i,j}^L(x,y),\\
W_{i,j,3}^L(x,y)=&{\bf1}_{\{\frac12|y|\le|x|\le 2|y|\}}W_{i,j}^L(x,y).
\end{cases}	
\end{equation}
We will prove the $L^p$-boundedness of $W_{i,j,1}^L$, $W_{i,j,2}^L$ in Subsection \ref{subsec:3.1} and $W_{i,j,3}^L$ in Subsection \ref{subsec:3.2}. And Theorem \ref{thm-low energy-regular-odd} can be immediately obtained by Proposition\ref{propo-low energy-odd-1}, Proposition \ref{propo-low energy-odd-2} and Proposition \ref{estforw_3} below.

\subsection{The $L^p$-boundedness of $W_{i,j,1}^L$ and $W_{i,j,2}^L$.}\label{subsec:3.1} 
Recall that from \eqref{eqforGij} and \eqref{eq.rewrifrowl}, for each $i,j\in J_{\k}$, one has that 
$$W_{i,j}^L(x,y)=\int_0^\infty G_{i,j}(\lambda)(x,y)\chi(\lambda) d\lambda,$$
and \begin{equation*}
	\begin{split}
		G_{i,j}(\lambda)(x,y)=\lambda^{4m-n-1-i-j}\Big\langle \big(M_{i,j}^++\Gamma_{i,j}^+(\lambda)\big)Q_jv\big(R_0^{+}-R_0^{-}\big)(\lambda^{2m})(\cdot, y),\, Q_ivR_0^{-}(\lambda^{2m})(\cdot,x)\Big\rangle_{L^2}.
	\end{split}
\end{equation*}
By Lemma \ref{lemma-QjvR-odd}, we know that
\[Q_jvR_0^\pm(\lambda^{2m})(\cdot,\,x)=e^{\pm i\lambda |x|}\omega_{j}^{\pm}(\lambda,\cdot,x),\]
and
\[Q_jv\big(R_0^+(\lambda^{2m})-R_0^-(\lambda^{2m})\big)(\cdot,\,y)=e^{ i\lambda |y|}\tilde{\omega}_{j}^{+}(\lambda,\cdot,y)+e^{-i\lambda |x|}\tilde{\omega}_{j}^{-}(\lambda,\cdot,y).\]
Hence, we can represent $G_{i,j}(\lambda)(x,y)$ as 
\begin{equation}\label{eqexpfroGij}
	G_{i,j}(\lambda)(x,y)= e^{i\lambda(|x|+|y|)}T_{i,j}^+(\lambda,x,y)+e^{i\lambda(|x|-|y|)}T_{i,j}^-(\lambda,x,y),
\end{equation}
where
\begin{equation}\label{eq-def-Tij}
	T_{i,j}^{\pm}(\lambda,x,y)=\lambda^{4m-n-1-i-j}\Big\langle \big(M_{i,j}^++\Gamma_{i,j}^+(\lambda)\big)\widetilde{\omega}_{j}^{\pm}(\lambda,\cdot,y),\, \omega_{i}^{-}(\lambda,\cdot,x)\Big\rangle_{L^2}.
\end{equation}
Now, the integral kernels $W_{i,j,1}^L(x,y)$ and $W_{i,j,2}^L(x,y)$ can be written as 
\begin{equation}\label{eq.w-odd-1}
W_{i,j,1}^L(x,y)
=\mathbf{1}_{\{|x|>2|y|\}}\int_0^\infty \Big(e^{i\lambda(|x|+|y|)}T_{i,j}^+(\lambda,x,y)+e^{i\lambda(|x|-|y|)}T_{i,j}^-(\lambda,x,y)\Big)\chi(\lambda)d\lambda,
\end{equation}
\begin{equation}\label{eq.w-odd-2}
	W_{i,j,2}^L(x,y)
	=\mathbf{1}_{\{|x|<\frac 12|y|\}}\int_0^\infty \Big(e^{i\lambda(|x|+|y|)}T_{i,j}^+(\lambda,x,y)+e^{i\lambda(|x|-|y|)}T_{i,j}^-(\lambda,x,y)\Big)\chi(\lambda)d\lambda.
\end{equation}
\subsubsection{The $L^p$-boundedness of $W_{i,j,1}^L$}
\begin{proposition}\label{propo-low energy-odd-1}
Let $1\le n\le 4m-1$ be odd, $0\le \mathbf{k}\le m_n+1$, and assume that $V$ satisfies Assumption \ref{Assumption}. Then, for any $i,j\in J_\mathbf{k}$, we have $W_{i,j,1}^{L}$ is bounded on $L^p(\mathbb{R}^n)$ for all $1<p\le \infty$.
\end{proposition}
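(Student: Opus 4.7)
I aim to establish the pointwise bound $|W_{i,j,1}^L(x,y)|\lesssim \langle x\rangle^{-n}\mathbf{1}_{\{|x|>2|y|\}}$, after which Lemma \ref{lemma-bounedness-1} immediately yields the $L^p$-boundedness for $1<p\le\infty$. The starting point is the representation \eqref{eq.w-odd-1}, which exhibits $W_{i,j,1}^L$ as the sum of two oscillatory integrals in $\lambda\in (0,\lambda_0)$ with amplitudes $T_{i,j}^{\pm}(\lambda,x,y)$ and phases $\phi_{\pm}(x,y)=|x|\pm|y|$. The crucial geometric fact on the relevant region is that both phases are comparable to $|x|$; indeed $|\phi_{\pm}(x,y)|\ge |x|/2$ whenever $|x|>2|y|$. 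Once the amplitudes are shown to be smooth in $\lambda$ with controlled derivative decay, repeated integration by parts against $e^{i\lambda\phi_{\pm}}$ will convert this phase size into the spatial decay $|x|^{-n}$.

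\textbf{Amplitude estimates.} The next step is to bound $T_{i,j}^{\pm}(\lambda,x,y)$ and its $\lambda$-derivatives up to order $\tfrac{n+3}{2}$. Starting from \eqref{eq-def-Tij}, I would apply Cauchy--Schwarz together with the $L^2_x$-estimates provided by Lemma \ref{lemma-QjvR-odd}: for $\omega_i^-(\lambda,\cdot,x)$ use \eqref{eq-k1 estimate-odd-3} (or \eqref{eq-k1 estimate-odd-2} when $i=2m-\tfrac{n}{2}$) to harvest the weight $\langle x\rangle^{-(n-1)/2}$, and for $\tilde\omega_j^{\pm}(\lambda,\cdot,y)$ use the pair \eqref{eq-k2-odd-1}--\eqref{eq-k2-odd-2}. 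Combining with the bound \eqref{equ3.47} on $\Gamma_{i,j}^{+}(\lambda)$ and the $\mathbb{B}(L^2)$-boundedness of $M_{i,j}^{+}$, Leibniz's rule produces, for all $\lambda\in(0,\lambda_0)$,
\begin{equation*}
\bigl|\partial_\lambda^l T_{i,j}^{\pm}(\lambda,x,y)\bigr|\lesssim \lambda^{\gamma-l}\,\langle x\rangle^{-(n-1)/2},\qquad 0\le l\le \tfrac{n+3}{2},
\end{equation*}
with a suitable exponent $\gamma>-1$ determined by $i,j\in J_{\mathbf k}$ and the half-integer cancellation gains encoded in $\vartheta$ (see \eqref{eq-vartheta-odd-1}--\eqref{eq-vartheta-odd-2}).

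\textbf{Oscillatory integral and main obstacle.} With these derivative bounds in hand, the plan is a standard large/small frequency split $\int_0^{1/\langle x\rangle}+\int_{1/\langle x\rangle}^{\lambda_0}$. On the low-$\lambda$ piece the naive estimate $|T_{i,j}^{\pm}|\lesssim \lambda^{\gamma}\langle x\rangle^{-(n-1)/2}$ gives a contribution of order $\langle x\rangle^{-(n+1)/2-\gamma}$. On the high-$\lambda$ piece, integrating by parts $N$ times in $\lambda$ against $e^{i\lambda\phi_{\pm}}$ gains a factor $|\phi_{\pm}|^{-N}\lesssim |x|^{-N}$ at the cost of $\lambda^{\gamma-N}$ in the amplitude; the boundary term at $\lambda=1/\langle x\rangle$ produces a contribution of the same order as the low-$\lambda$ piece, while the bulk term is integrable for $N$ large enough. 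Choosing $N$ appropriately and using $\langle y\rangle^{-(n-1)/2}\le 1$ on $\{|x|>2|y|\}$, I expect to arrive at the desired $|W_{i,j,1}^L(x,y)|\lesssim\langle x\rangle^{-n}$. The main difficulty will be the borderline indices, typically when $i$ or $j$ equals one of the half-integers $m-\tfrac{n}{2}$ or $2m-\tfrac{n}{2}$: here $\gamma$ can be as small as $-\tfrac12$, so the crude bookkeeping above falls just short of $\langle x\rangle^{-n}$. To close that gap I would invoke Lemma \ref{lem3.10} to split $Q_jv R_0^{\pm}(\lambda^{2m})(\cdot,y)$ into the explicit leading piece $\omega_{j,1}^{\pm}$, whose separated $(x,y)$-structure allows the $\lambda$-integral to be evaluated with extra cancellation, and the remainders $\omega_{j,2}^{\pm},\omega_{j,3}^{\pm}$ which come with improved $\lambda$- and $\langle y\rangle$-decay and can therefore be absorbed by the argument above.
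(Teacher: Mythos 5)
Your overall strategy coincides with the paper's: prove the pointwise bound $|W_{i,j,1}^L(x,y)|\lesssim\langle x\rangle^{-n}\mathbf{1}_{\{|x|>2|y|\}}$ by estimating $\partial_\lambda^l T_{i,j}^\pm$ via Cauchy--Schwarz, Lemma \ref{lemma-QjvR-odd} and Theorem \ref{thm-M inverse-odd}, gain $\langle|x|\pm|y|\rangle^{-(b+1)}$ from the oscillation (your frequency split plus integration by parts is exactly the content of Lemma \ref{oscillatory estimates}), and conclude with Lemma \ref{lemma-bounedness-1}. However, there is a genuine gap at the step you yourself flag: you leave the amplitude exponent as an unspecified $\gamma>-1$ and assert that at the half-integer indices $\gamma$ "can be as small as $-\tfrac12$", proposing to rescue those cases with Lemma \ref{lem3.10}. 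That diagnosis is incorrect, and the proposed rescue is both unnecessary and unlikely to work: on the region $|x|>2|y|$ the leading piece $\omega_{j,1}^\pm$ of Lemma \ref{lem3.10} only offers extra $\langle y\rangle$-decay and a separated $(x,y)$-structure, neither of which converts into additional $\langle x\rangle$-decay there (the paper uses Lemma \ref{lem3.10} for the diagonal regime $W_{i,j,3}^L$, where $|x|\sim|y|$, precisely because that is where its structure pays off).

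The correct closing of the estimate is pure bookkeeping, uniform in $i,j\in J_\k$. Since $\vartheta(i)\ge i$ and $\vartheta(j)\ge j$ for every index in $J_\k$ (Remark \ref{vartheta}), the cancellation gains in \eqref{eq-k2-odd-1} and \eqref{eq-k1 estimate-odd-3} exactly offset the singular prefactor $\lambda^{4m-n-1-i-j}$: when $i<2m-\tfrac n2$ one gets the exponent $\tfrac{n-1}{2}+(\vartheta(i)-i)+(\vartheta(j)-j)\ge\tfrac{n-1}{2}$; when $i=j=2m-\tfrac n2$ the exponent is again exactly $\tfrac{n-1}{2}$; and in the only delicate off-diagonal case $i=2m-\tfrac n2$, $j<2m-\tfrac n2$, one has $M^+_{i,j}=0$ by \eqref{eq-mij=0}, so the extra factor $\lambda^{1/2}$ from \eqref{equ3.47} restores the exponent to $\tfrac{n-1}{2}$. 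Hence
\begin{equation*}
\big|\partial_\lambda^l T_{i,j}^\pm(\lambda,x,y)\big|\lesssim \lambda^{\frac{n-1}{2}-l}\langle x\rangle^{-\frac{n-1}{2}},\qquad 0\le l\le \tfrac{n+3}{2},
\end{equation*}
for all $i,j\in J_\k$ (this is Lemma \ref{lemma-T-odd1}), and then Lemma \ref{oscillatory estimates} with $b=\tfrac{n-1}{2}$ gives $|W_{i,j,1}^L(x,y)|\lesssim\langle x\rangle^{-\frac{n-1}{2}}\langle|x|\pm|y|\rangle^{-\frac{n+1}{2}}\lesssim\langle x\rangle^{-n}$ on $\{|x|>2|y|\}$, with no separate treatment of borderline indices. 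As written, your proof does not establish the bound in exactly the cases you identify as critical, so you should replace the appeal to Lemma \ref{lem3.10} by this explicit computation (and note that for $\tilde\omega_j^\pm$ you must use \eqref{eq-k2-odd-1} rather than \eqref{eq-k2-odd-2}, since on $\{|x|>2|y|\}$ the weight $\langle y\rangle^{-\frac{n-1}{2}}$ buys nothing while the $\lambda$-power it costs would ruin the count).
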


We first study the behavior of $T_{i,j}^\pm(\lambda;x,y)$ to prove Proposition \ref{propo-low energy-odd-1}.

\begin{lemma}\label{lemma-T-odd1}
Let $1\le n\le 4m-1$ be odd, $0\le \mathbf{k}\le m_n+1$, and assume that $V$ satisfies Assumption \ref{Assumption}. Then
for any $i,j\in J_\mathbf{k}$, \,$\lambda\in(0,\lambda_0)$ and $l=0,\cdots, \frac{n+3}{2}$, we have 
\begin{equation}\label{eq-kernel estimates-low energy-odd-1}
\big|\partial^l_\lambda T_{i,j}^\pm(\lambda;x,y)\big|\lesssim \lambda^{\frac{n-1}{2}-l}\langle x\rangle^{-\frac{n-1}{2}}.
\end{equation}
\end{lemma}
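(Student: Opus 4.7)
The plan is to differentiate the defining identity \eqref{eq-def-Tij} via the Leibniz rule, distributing the $l$ derivatives among the four factors: the scalar prefactor $\lambda^{4m-n-1-i-j}$, the operator $M_{i,j}^{+}+\Gamma_{i,j}^{+}(\lambda)$, and the two $L^2$-valued vectors $\widetilde{\omega}_{j}^{\pm}(\lambda,\cdot,y)$ and $\omega_{i}^{-}(\lambda,\cdot,x)$. A Cauchy--Schwarz step on the inner product then reduces each resulting term, indexed by $(l_0,l_1,l_2,l_3)$ with $l_0+l_1+l_2+l_3=l$, to the product of an operator norm with two $L^2$ norms. For the two $L^2$ factors I will plug in the estimates from Lemma \ref{lemma-QjvR-odd}: the bound \eqref{eq-k2-odd-1} yields $\|\partial_\lambda^{l_2}\widetilde{\omega}_{j}^{\pm}(\lambda,\cdot,y)\|_{L^2}\lesssim \lambda^{n-2m+\vartheta(j)-l_2}$, and the decay-in-$x$ estimate \eqref{eq-k1 estimate-odd-3} yields $\|\partial_\lambda^{l_3}\omega_{i}^{-}(\lambda,\cdot,x)\|_{L^2}\lesssim \lambda^{\min\{0,\,(n+1)/2-2m+\vartheta(i)\}-l_3}\langle x\rangle^{-(n-1)/2}$, which already delivers the weight appearing in the conclusion. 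The operator factor is controlled by $\|M_{i,j}^{+}\|_{\mathbb{B}(L^2)}\lesssim 1$ (contributing only when $l_1=0$) together with $\|\partial_\lambda^{l_1}\Gamma_{i,j}^{+}(\lambda)\|_{\mathbb{B}(L^2)}\lesssim \lambda^{1/2-l_1}$ from Theorem \ref{thm-M inverse-odd}.

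Collecting all the powers of $\lambda$ and simplifying by $(4m-n-1)+(n-2m)=2m-1$, the accumulated exponent becomes
\[
2m-1-i+(\vartheta(j)-j)+\epsilon_1+\min\bigl\{0,\,\tfrac{n+1}{2}-2m+\vartheta(i)\bigr\}-l,
\]
where $\epsilon_1=1/2$ for the $\Gamma_{i,j}^{+}$-piece and $\epsilon_1=0$ for the $M_{i,j}^{+}$-piece. In the generic regime $\vartheta(i)\le 2m-(n+1)/2$, the $\min$ equals $(n+1)/2-2m+\vartheta(i)$ and the exponent simplifies to $\tfrac{n-1}{2}-l+(\vartheta(i)-i)+(\vartheta(j)-j)+\epsilon_1$; since $\vartheta(s)\ge s$ on $J_{m_n+1}$ and $\epsilon_1\ge 0$, the desired bound $\lambda^{(n-1)/2-l}$ follows immediately.

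The main obstacle, and the only place where the direct count falls short, is the borderline regime $\vartheta(i)>2m-(n+1)/2$, which occurs solely at the single half-integer $i=2m-n/2$ and only in the zero-eigenvalue case $\k=m_n+1$. There the minimum vanishes and the $M_{i,j}^{+}$-piece (with $\epsilon_1=0$, $l_1=0$) is short by exactly $\lambda^{1/2}$ whenever $j$ is an integer; I plan to resolve this by invoking the orthogonality relation \eqref{eq-mij=0}, which forces $M_{i,j}^{+}=0$ precisely in this off-diagonal situation, so the $M$-piece drops out entirely. The $\Gamma_{i,j}^{+}$-piece is unaffected because its intrinsic $\lambda^{1/2}$ already supplies the missing half-power. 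On the diagonal $i=j=2m-n/2$ the mismatch $\vartheta(j)-j=1/2$ compensates directly, while the symmetric configuration $j=2m-n/2$ with $i$ integer automatically stays in the generic regime since every integer $i\in J_{m_n+1}$ satisfies $\vartheta(i)=i\le 2m-(n+1)/2$. The regularity window $l\le (n+3)/2$ matches exactly the differentiation budget provided by Lemma \ref{lemma-QjvR-odd} and Theorem \ref{thm-M inverse-odd}, so no additional care is needed at the endpoint.
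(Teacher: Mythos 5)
Your proposal is correct and follows essentially the same route as the paper: Leibniz plus Cauchy--Schwarz on \eqref{eq-def-Tij}, the bounds \eqref{equ3.47}, \eqref{eq-k2-odd-1}, \eqref{eq-k1 estimate-odd-3} together with $\vartheta(i)\ge i$, $\vartheta(j)\ge j$, and the vanishing $M^{+}_{i,j}=0$ from \eqref{eq-mij=0} combined with the extra $\lambda^{1/2}$ from $\Gamma^{+}_{i,j}$ to handle the borderline case $i=2m-\frac n2$, $j$ off-diagonal. The only difference is presentational: you track the $M$- and $\Gamma$-pieces separately throughout, whereas the paper lumps them as $\lambda^{-l_2}$ except in that borderline case.
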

  
\begin{proof}
Recall that
\[T_{i,j}^{\pm}(\lambda,x,y)=\lambda^{4m-n-1-i-j}\Big\langle \big(M_{i,j}^++\Gamma_{i,j}^+(\lambda)\big)\widetilde{\omega}_{j}^{\pm}(\lambda,\cdot,y),\, \omega_{i}^{-}(\lambda,\cdot,x)\Big\rangle_{L^2},\] 
then by Leibniz's rule and Schwartz inequality,  for any $l=0,1,\cdots, \frac{n+3}{2}$, we have
\begin{equation}\label{eq4.0001}
	\begin{aligned}
		\big|\partial^l_\lambda T_{i,j}^\pm(\lambda;x,y)\big|
		\lesssim \sum_{l_1+l_2+l_3+l_4=l}&\lambda^{4m-n-i-j-1-l_1}
		\left\|\frac{d^{l_2}}{d\lambda^{l_2}}(M_{i,j}^++\Gamma_{i,j}^+(\lambda))\right\|_{\mathbb{B}(L^2)} \\&\times \left\|\partial_\lambda^{l_3}\widetilde{\omega}_{j}^{\pm}(\lambda,\cdot,y) \right\|_{L^2} \left\|\partial_\lambda^{l_4}\omega_{i}^{-}(\lambda,\cdot,x) \right\|_{L^2}.
	\end{aligned}
\end{equation}	

By using \eqref{equ3.47} in Theorem \ref{thm-M inverse-odd}, as well as \eqref{eq-k1 estimate-odd-3} and \eqref{eq-k2-odd-1} in Lemma \ref{lemma-QjvR-odd}, it follows that 
\begin{equation}\left\|\frac{d^{l_2}}{d\lambda^{l_2}}(M_{i,j}^++\Gamma_{i,j}^+(\lambda))\right\|_{\mathbb{B}(L^2)}\lesssim \lambda^{-l_2},\, \, \lambda<\lambda_0,
\end{equation} 
\begin{equation}
	\left\|\partial_\lambda^{l_3}\widetilde{\omega}_{j}^{\pm}(\lambda,\cdot,y)\right\|_{L^2}\lesssim \lambda^{n-2m+\vartheta(j)-l_3}, \ \lambda<\lambda_0.
\end{equation}
and 
\begin{equation}\label{omega_i}
	\left\|\partial_\lambda^{l_4}\omega_{i}^{-}(\lambda,\cdot,x) \right\|_{L^2}\lesssim \lambda^{\min\{\frac{n+1}{2}-2m+\vartheta(i),\ 0\}-l_4}\langle x\rangle^{-\frac{n-1}{2}}, \ \lambda<\lambda_0.
\end{equation}

When $i< 2m-\frac n2$ and $j\le 2m-\frac n2$, we have that
$\frac{n+1}{2}-2m+\vartheta(i)\le 0$ by Remark \ref{vartheta}. Hence it follows from \eqref{eq4.0001}-\eqref{omega_i}  that for each $l=0,1,\cdots, \frac{n+3}{2}$,
\begin{equation*}
	\big|\partial^l_\lambda T_{i,j}^\pm(\lambda;x,y)\big|\lesssim \lambda^{\frac{n-1}{2}+\vartheta(i)+\vartheta(j)-i-j-l}\langle x\rangle^{-\frac{n-1}{2}}	\lesssim \lambda^{\frac{n-1}{2}-l}\langle x\rangle^{-\frac{n-1}{2}}, \ \lambda<\lambda_0,
\end{equation*} 
where in the last inequality we use the facts $\vartheta(i)=\max\{0,\lfloor i+1/2 \rfloor\}\geq i$ and $\vartheta(j)\geq j$.

When  $i=2m-\frac n2$ and $j=2m-\frac n2$, it is easy to obtain that $\vartheta(i)=\vartheta(j)=2m-\frac{n-1}{2}$, and then  $$\frac{n+1}{2}-2m+\vartheta(i)=1, \ \ \  n-2m+\vartheta(j)=\frac{n+1}{2}.$$ Hence from \eqref{eq4.0001}-\eqref{omega_i} again, we have that  
\begin{equation*}
	\big|\partial^l_\lambda T_{i,j}^\pm(\lambda;x,y)\big|\lesssim \lambda^{ 4m-n-i-j-1+\frac{n+1}{2}-l}\langle x\rangle^{-\frac{n-1}{2}}= \lambda^{\frac{n-1}{2}-l}\langle x\rangle^{-\frac{n-1}{2}}, \ \lambda<\lambda_0.
\end{equation*}  

When $i=2m-\frac n2$ and $j<2m-\frac n2$,  since  $M_{i,j}^+=0$ by the \eqref{eq-mij=0} in Theorem \ref{thm-M inverse-odd},  one has that 
\begin{equation}\label{M_ij+gamma_ij}
	\left\|\frac{d^{l_2}}{d\lambda^{l_2}}(M_{i,j}^++\Gamma_{i,j}^+(\lambda))\right\|_{\mathbb{B}(L^2)}=\left\|\frac{d^{l_2}}{d\lambda^{l_2}}\Gamma_{i,j}^+(\lambda)\right\|_{\mathbb{B}(L^2)}\lesssim \lambda^{\frac{1}{2}-l_2}, \, \, \lambda<\lambda_0.
\end{equation}
Then, based on the refined estimate \eqref{M_ij+gamma_ij}, the desired \eqref{eq-kernel estimates-low energy-odd-1} follows from the same reasons as above for this case. 
\end{proof}

To prove Proposition \ref{propo-low energy-odd-1},  
we   also need a result of oscillatory integral estimate.
\begin{lemma}\label{oscillatory estimates}
Let $b>-1$ and $k>b+1$, and assume that $f(\lambda)\in C^k(\mathbb{R})$ such that
\begin{equation}\label{symbol-est}
\Big|\frac{d^l}{d\lambda^l} f(\lambda)\Big|\lesssim \lambda^{b-l}, \quad l=0,1,\cdots,k.
\end{equation}
Then, we have  
\begin{equation}\label{eq.oscillatory est}
\left| \int_0^\infty e^{i\lambda x}f(\lambda)\chi(\lambda)d\lambda\right| \lesssim \langle x\rangle^{-(b+1)},\quad x\in\mathbb{R},
\end{equation}
where $\chi(\lambda)$ is a smooth function such that $\chi\equiv1$ on $(-\lambda_0/2,\,\lambda_0/2)$ and $\supp\chi\subset[-\lambda_0,\,\lambda_0]$ for some fixed $\lambda_0>0$.
\end{lemma}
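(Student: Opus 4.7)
\textbf{Proof plan for Lemma \ref{oscillatory estimates}.} The plan is to split into the regimes $|x|\le 1$ and $|x|>1$, and in the latter to combine a dyadic partition of unity in $\lambda$ with repeated integration by parts exploiting the oscillation of $e^{i\lambda x}$. For $|x|\le 1$, the estimate $\langle x\rangle^{-(b+1)}\sim 1$ is essentially trivial: since $\chi$ is supported in $[-\lambda_0,\lambda_0]$ and $|f(\lambda)|\lesssim \lambda^b$ with $b>-1$, the integrand is absolutely integrable near $\lambda=0$, so
\begin{equation*}
\Big|\int_0^\infty e^{i\lambda x}f(\lambda)\chi(\lambda)\,d\lambda\Big|\lesssim \int_0^{\lambda_0}\lambda^b\,d\lambda\lesssim 1.
\end{equation*}

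For $|x|>1$, I would introduce a smooth Littlewood--Paley partition of unity: pick $\phi\in C_c^\infty((1/2,2))$ with $\sum_{j\in\mathbb{Z}}\phi(2^j\lambda)=1$ for $\lambda>0$, and write
\begin{equation*}
\int_0^\infty e^{i\lambda x}f(\lambda)\chi(\lambda)\,d\lambda=\sum_{j\in\mathbb{Z}}I_j(x),\qquad I_j(x):=\int_0^\infty e^{i\lambda x}g_j(\lambda)\,d\lambda,
\end{equation*}
with $g_j(\lambda):=f(\lambda)\chi(\lambda)\phi(2^j\lambda)$ supported in $\lambda\sim 2^{-j}$. Only $j$ with $2^{-j}\lesssim \lambda_0$ contribute. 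I would then split the sum according to whether $2^{-j}\le |x|^{-1}$ (low-frequency piece, denoted $L$) or $2^{-j}>|x|^{-1}$ (high-frequency piece, denoted $H$).

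For the low-frequency piece, bound $|g_j(\lambda)|\lesssim \lambda^b\sim 2^{-jb}$ on its support of length $\sim 2^{-j}$, so $|I_j|\lesssim 2^{-j(b+1)}$; since $b+1>0$, summing the geometric series over $2^{-j}\le |x|^{-1}$ gives $|L|\lesssim |x|^{-(b+1)}$. For the high-frequency piece, integrate by parts $k$ times against $e^{i\lambda x}$, producing a factor $(ix)^{-k}$, so
\begin{equation*}
|I_j(x)|\lesssim |x|^{-k}\int |g_j^{(k)}(\lambda)|\,d\lambda.
\end{equation*}
Leibniz's rule together with \eqref{symbol-est}, the boundedness of $\chi$ and its derivatives, and the scaling bound $|(2^j)^l\phi^{(l)}(2^j\lambda)|\lesssim \lambda^{-l}$ for $\lambda\sim 2^{-j}$ yields $|g_j^{(k)}(\lambda)|\lesssim \lambda^{b-k}\sim 2^{-j(b-k)}$, hence $|I_j|\lesssim |x|^{-k}\,2^{j(k-b-1)}$. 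Summing over $j$ with $2^j<|x|$, and using the hypothesis $k-b-1>0$ so the geometric series is controlled by its largest term, gives $|H|\lesssim |x|^{-k}\cdot |x|^{k-b-1}=|x|^{-(b+1)}$, completing the estimate. The only delicate point is matching the dyadic exponents and verifying that the hypothesis $k>b+1$ is exactly what makes both geometric series convergent on the correct side; the compact support of $\chi$ eliminates any issue from $\lambda\to\infty$.
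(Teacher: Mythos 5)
Your proof is correct and follows essentially the same route as the paper: both treat $|x|\le 1$ trivially and, for $|x|>1$, split at the scale $\lambda\sim |x|^{-1}$, bounding the small-$\lambda$ part by the size estimate $|f(\lambda)|\lesssim\lambda^{b}$ (using $b>-1$) and the large-$\lambda$ part by $k$-fold integration by parts (using $k>b+1$, with no boundary terms since the integrand vanishes near $\lambda=0$ and $\chi$ is compactly supported). The only cosmetic difference is that the paper implements the splitting with a single cutoff $\phi_0(\lambda|x|)$ instead of a dyadic partition of unity, which replaces your two geometric-series summations by two one-line integral estimates.
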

\begin{proof}
We denote the integral in \eqref{eq.oscillatory est} by $I(x)$. 
If $|x|\le 1$, \eqref{eq.oscillatory est} can be obtained by the simple fact $|I(x)|\lesssim 1$. If $|x|>1$, we split $I(x)=I_0(x)+I_1(x)$, where
\begin{equation*}
	I_i(x)=\int_0^\infty e^{i\lambda x}f(\lambda)\chi(\lambda)\phi_i(\lambda|x|)d\lambda, \quad \text{for } i=0,1,
\end{equation*}
where $\phi_0 \in C^{\infty}(\R)$ such that $\phi_0(\lambda)=1$ when $|\lambda|\leq 1/2$ and $\phi_0(\lambda)=0$ when $|\lambda|\geq 1$, and $\phi_1(\lambda)=1-\phi_0$.
Thus, a direct computation shows that 
$|I_0(x)|\lesssim |x|^{-(b+1)}$. Next, by applying integration by parts $k$ times to $I_1(x)$, and then we have
\begin{equation*}
	|I_1(x)|\lesssim |x|^{-k}\int_{\frac{1}{2}|x|^{-1}}^\infty \lambda^{b-k}d\lambda\lesssim |x|^{-(b+1)}.
\end{equation*}
This completes the proof.
\end{proof}

\begin{proof}[\bf{Proof of Proposition \ref{propo-low energy-odd-1}.}]
Let  $f(\lambda)=\langle x\rangle^{\frac{n-1}{2}}T_{i,j}^\pm(\lambda;x,y)\tilde{\chi}(\lambda)$, where $\tilde{\chi}$ is a smoothing function  such that  $\supp\tilde{\chi}\subset[-\lambda_0,\lambda_0]$ and  $\tilde{\chi}(\lambda) \chi(\lambda)=\chi(\lambda)$.
Then by the estimate \eqref{eq-kernel estimates-low energy-odd-1} in Lemma \ref{lemma-T-odd1},  we can conclude that  $f(\lambda)\in C^{\frac{n+3}{2}}(\R)$ and satisfies the estimate  \eqref {symbol-est} with $b={\frac{n-1}{2}}$  for  each $0\le  l\le k=\frac{n+3}{2}$.  Hence it  easily  follows from  Lemma \ref{oscillatory estimates} that 
$$
\Big|\int_0^\infty e^{i\lambda(|x|\pm|y|)}T_{i,j}^{\pm}(\lambda,x,y)\chi(\lambda)d\lambda\Big|\lesssim\langle x \rangle^{-\frac{n-1}{2}}\langle|x|\pm|y|\rangle^{-\frac{n+1}{2}}. $$
Note that the support of $W_{i,j,1}^{L}(x,y)$ is in $\{(x,y)\in\mathbb{R}^{n}\times\mathbb{R}^n,~|x|>2|y|\}$.
Then the expression \eqref{eq.w-odd-1} immediately indicates that
 $$ \big|W_{i,j,1}^{L}(x,y)\big| \lesssim \langle x \rangle^{-\frac{n-1}{2}}\langle|x|-|y|\rangle^{-\frac{n+1}{2}}\mathbf{1}_{\{|x|>2|y|\}}\lesssim \langle x \rangle^{-n}\mathbf{1}_{\{|x|>2|y|\}}. $$
Thus, according to Lemma \ref{lemma-bounedness-1}, $W_{i,j,1}^{L}$ is bounded on $L^p(\mathbb{R}^n)$ for all $ 1 < p \le  \infty $.
\end{proof}

\subsubsection{The $L^p$-boundedness of $W_{i,j,2}^L$}

\begin{proposition}\label{propo-low energy-odd-2}
Let $1\le n\le 4m-1$ be odd, $0\le \mathbf{k}\le m_n+1$, and assume that $V$ satisfies Assumption \ref{Assumption}. Then, for any $i,j\in J_\mathbf{k}$, we have the following statements:
	\vskip 0.15cm
\indent\emph{(\romannumeral1)} If ~~$i\le \max\{2m-n,\, m-\frac n2\}$, then $W_{i,j,2}^{L}\in \mathbb{B}(L^p(\mathbb{R}^n))$ for all  $1\le p<\infty$;
		\vskip 0.15cm
\indent\emph{(\romannumeral2)} If~~$\max\{2m-n,\, m-\frac n2\}<i<2m-\frac n2$, then $W_{i,j,2}^{L}\in \mathbb{B}(L^p(\mathbb{R}^n))$ for all $1\le p<\frac{n}{n-2m+\vartheta(i)}$;
		\vskip 0.15cm
\indent\emph{(\romannumeral3)} If~~$i=2m-\frac n2$, then $W_{i,j,2}^{L}\in \mathbb{B}(L^p(\mathbb{R}^n))$ for all $1\le p<\frac{2n}{n-1}$.
	\vskip 0.15cm
\noindent Here $\vartheta(i)=\max\{0,\lfloor i+1/2 \rfloor\}$ given in \eqref{thetaj}.
\end{proposition}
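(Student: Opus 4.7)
The plan is to mirror the proof of Proposition~\ref{propo-low energy-odd-1}, now with $|y|$ playing the role of the large variable. Starting from the representation \eqref{eq.w-odd-2} and noting that $||x|\pm|y||\sim\langle y\rangle$ on the support $\{|x|<|y|/2\}$, Lemma~\ref{oscillatory estimates} will convert any symbol-type bound $|\partial_\lambda^l T^\pm_{i,j}(\lambda,x,y)|\lesssim \lambda^{b_i-l}\langle x\rangle^{-\alpha_i}$ (for $l=0,1,\ldots,\tfrac{n+3}{2}$) into the pointwise kernel estimate
\[
|W^L_{i,j,2}(x,y)|\lesssim \langle x\rangle^{-\alpha_i}\langle y\rangle^{-\beta_i}\mathbf{1}_{\{|x|<|y|/2\}},
\]
where $\beta_i$ combines the oscillatory decay $b_i+1$ with any $\langle y\rangle$-decay already present in $T^\pm_{i,j}$. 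As long as $\alpha_i+\beta_i\ge n$, Lemma~\ref{lemma-bounedness-11} then supplies $L^p$-boundedness on $[1,n/\alpha_i)$ (with $n/0:=\infty$). The task is therefore to produce, in each of the three cases, an $(\alpha_i,\beta_i)$ whose sum is at least $n$ and whose $\alpha_i$ matches the claim.

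The bounds on $T^\pm_{i,j}$ come from Lemma~\ref{lemma-QjvR-odd} paired with Theorem~\ref{thm-M inverse-odd}. In case (ii), pairing \eqref{eq-k1 estimate-odd-2} on $\omega_i^-$ (whose minimum is attained at $n-2m+\vartheta(i)$, since $i$ is an integer with $i\le 2m-(n+1)/2$) with \eqref{eq-k2-odd-1} on $\tilde\omega_j^\pm$ and the uniform bound \eqref{equ3.47} on $M^+_{i,j}+\Gamma^+_{i,j}(\lambda)$ produces $\alpha_i=n-2m+\vartheta(i)$ and $\beta_i=2m-i$, summing to exactly $n$. In case (iii), $i=2m-n/2$ forces $\alpha_i=(n-1)/2$; the dominant contribution is the $j=2m-n/2$ sub-block through the bounded inverse $M^\pm_{2m-n/2,2m-n/2}=(b_1 Q_{2m-n/2}vG_{4m-n}vQ_{2m-n/2})^{-1}$ from Theorem~\ref{thm-M inverse-odd}, and combining \eqref{eq-k1 estimate-odd-2} with the refined \eqref{eq-k2-odd-2} on $\tilde\omega_j^\pm$ yields $\beta_i=(n+1)/2$, giving $1\le p<2n/(n-1)$; the remaining sub-blocks $j\ne 2m-n/2$ are smaller thanks to \eqref{eq-mij=0} and \eqref{equ3.47}.

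Case (i) splits into two regimes. When $1\le n\le 2m-1$, every admissible $i$ satisfies $\vartheta(i)\le 2m-n$, and pairing \eqref{eq-k1 estimate-odd-1} with \eqref{eq-k2-odd-1} already yields a $\lambda$-exponent of at least $n-1$ in $T^\pm_{i,j}$ \emph{without} invoking any $\langle x\rangle$-decay, so one may take $\alpha_i=0$ in Lemma~\ref{lemma-bounedness-11} to conclude $1\le p<\infty$. When $2m+1\le n\le 4m-1$ the only admissible index is the half-integer $i=m-n/2$ and the naive pairing falls short; here I would decompose $\omega_i^-=\omega_{i,1}^-+\omega_{i,2}^-+\omega_{i,3}^-$ via Lemma~\ref{lem3.10} and treat each piece separately. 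The remainder pieces $\omega_{i,2}^-$ and $\omega_{i,3}^-$ carry sufficient extra $\lambda$-decay that, combined with \eqref{eq-k2-odd-1} or \eqref{eq-k2-odd-2} on $\tilde\omega_j^\pm$, reaches $\langle y\rangle^{-n}$; on the structured piece $\omega_{i,1}^-$ one exploits the vanishing \eqref{eq-mij=0} of the off-diagonal entries $M^\pm_{m-n/2,j}$ (which buys an extra $\lambda^{1/2}$ from $\Gamma$ via \eqref{equ3.47}) and, in the regular case $\mathbf{k}=0$, the improved diagonal estimate \eqref{equ2.100}, after which the same conclusion $\alpha_i=0$, $p<\infty$ is reached.

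The main obstacle is precisely this last regime: reaching the full range $p<\infty$ when $n>2m$ requires delicate bookkeeping of the three pieces of Lemma~\ref{lem3.10} against the block structure of $(M^\pm(\lambda))^{-1}$, together with judicious switching between \eqref{eq-k2-odd-1} and \eqref{eq-k2-odd-2} for $\tilde\omega_j^\pm$ sub-block by sub-block so as to secure an overall $\langle y\rangle^{-n}$ decay. Apart from this, the derivative bounds required by Lemma~\ref{oscillatory estimates} up to order $\tfrac{n+3}{2}$ propagate routinely from Lemmas~\ref{lemma-QjvR-odd}, \ref{lem3.10} and Theorem~\ref{thm-M inverse-odd}, each $\partial_\lambda$ costing exactly one factor of $\lambda^{-1}$; the rest of the argument is a direct extension of the oscillatory-integral analysis used for $W^L_{i,j,1}$ in Proposition~\ref{propo-low energy-odd-1}.
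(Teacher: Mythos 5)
Your skeleton is the same as the paper's: obtain symbol-type bounds on $T^\pm_{i,j}$, apply Lemma \ref{oscillatory estimates}, use $\big||x|\pm|y|\big|\gtrsim\langle y\rangle$ on $\{|x|<\tfrac12|y|\}$, and conclude with Lemma \ref{lemma-bounedness-11}; your case (iii) reproduces the paper's treatment (Lemma \ref{lemma-T-odd2}(iii)). However, in case (ii), and in case (i) for $1\le n\le 2m-1$, your specific pairing fails. By taking the \emph{unweighted} estimate \eqref{eq-k2-odd-1} for $\tilde\omega^\pm_j$, you force all of the $\langle y\rangle$-decay to come from the oscillatory integral, and Lemma \ref{oscillatory estimates} yields $\langle|x|\pm|y|\rangle^{-(b+1)}$ only under the hypothesis $k>b+1$ with $k=\frac{n+3}{2}$, the regularity available for $\Gamma^\pm_{i,j}$ and the $\omega$'s. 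In your case (i) the target is $b+1=n$, which requires $\frac{n+3}{2}>n$, i.e.\ $n=1$: already for $(n,m)=(3,2)$ the hypothesis fails at the border (integration by parts only gives $\langle y\rangle^{-3}\log\langle y\rangle$, which is not of the form required by Lemma \ref{lemma-bounedness-11}), and for odd $n\ge 5$ at most $\langle y\rangle^{-\frac{n+3}{2}}$ is reachable, short of the needed $\langle y\rangle^{-n}$ when $\alpha_i=0$. In your case (ii) the target is $b+1=2m-i$ (your pairing gives $b=2m-1-i$ at worst), and $2m-i\ge\frac{n+3}{2}$ does occur for admissible indices: for example $(n,m,i)=(7,4,2)$ with $i\in J_2$ needs $\langle y\rangle^{-6}$ of oscillatory decay from only $5$ derivatives, and $(n,m,i)=(5,3,2)$ is the borderline logarithmic case. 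Since \eqref{eq-k2-odd-1} carries no $\langle y\rangle$-weight, this shortfall cannot be recovered, so the claimed kernel bound $\langle x\rangle^{-(n-2m+\vartheta(i))}\langle y\rangle^{-(2m-i)}$ does not follow along your route.

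The repair is exactly the paper's Lemma \ref{lemma-T-odd2}: in cases (i) and (ii) use for $\tilde\omega^\pm_j$ the \emph{weighted} estimate \eqref{eq-k2-odd-2} instead of \eqref{eq-k2-odd-1}, trading $\lambda^{\frac{n-1}{2}}$ of smallness for the factor $\langle y\rangle^{-\frac{n-1}{2}}$. This gives $|\partial^l_\lambda T^\pm_{i,j}|\lesssim\lambda^{\frac{n-1}{2}-l}\langle y\rangle^{-\frac{n-1}{2}}$ in case (i) and $\lesssim\lambda^{2m-\frac{n+1}{2}-i-l}\langle x\rangle^{2m-n-\vartheta(i)}\langle y\rangle^{-\frac{n-1}{2}}$ in case (ii), so the oscillation only has to supply at most $\frac{n+1}{2}<\frac{n+3}{2}$ powers, safely inside the derivative budget, and the resulting kernels $\langle y\rangle^{-n}\mathbf{1}_{\{|x|<\frac12|y|\}}$ and $\langle x\rangle^{-(n-2m+\vartheta(i))}\langle y\rangle^{-(2m-i)}\mathbf{1}_{\{|x|<\frac12|y|\}}$ feed into Lemma \ref{lemma-bounedness-11} exactly as you intend. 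Note also that the paper covers the sub-case $2m+1\le n\le 4m-1$ of (i) (where the only index is $i=m-\frac n2$) with this same pairing, recorded in Lemma \ref{lemma-T-odd2}(i), rather than the heavier decomposition through Lemma \ref{lem3.10}, \eqref{eq-mij=0} and \eqref{equ2.100} that you sketch for that regime.
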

\vskip0.3cm

\begin{remark}\label{rmk-on-p}
We point out that the restriction on the range of the exponent $p$ in Theorem \ref{thm-main result} and Theorem \ref{thm-low energy-regular-odd} is purely due to  Proposition \ref{propo-low energy-odd-2}.
\end{remark}

 Let $m_n=\min\{m, \ 2m-\frac{n-1}{2}\}$ and $k_c=\max\{m-\frac {n-1}{2}, 0\}$ defined in \eqref{equ0.1} and \eqref{defkc}, respectively. Suppose that zero is a $\mathbf{k}$-th kind resonance with $0\le \k\le m_n+1$. By the definition of $J_{\k}$ in \eqref{eq-Jk-odd} and \eqref{eq-Jk-odd-2}, if $1\le n\leq 2m-1$,  then $m_n=m$ and $k_c=m-\frac{n-1}{2}$
\begin{equation}\label{eq-Jk-odd'}
J_{\k}:=\begin{cases}\{0,\ 1,\ \cdots,\  m-\frac{n+1}{2},\ {\bf m-\frac{n}{2}}\},\quad&\mbox{if}\,\, \k=0,\\[0.2cm]
	\{0,\ 1,\ \cdots,\  m-\frac{n+1}{2},\  {\bf m-\frac{n}{2}},\  k_c, \ \cdots,\   k_c+\k-1\},\quad&\mbox{if}\,\, 1\le \k\le m,\\[0.2cm]
	 \{0,\ 1,\ \cdots,\  m-\frac{n+1}{2},\  {\bf m-\frac{n}{2}},\  k_c, \ \cdots,\   k_c+m-1,\ {\bf 2m-\frac{n}{2}}\},&\mbox{if}\,\, \k=m+1;
\end{cases}
\end{equation}
If $2m+1\leq n\leq 4m-1$,  then $m_n=2m-\frac{n-1}{2}$ and $k_c=0$, 
\begin{equation}\label{eq-Jk-odd-2'}
	J_{\k}:=\begin{cases}
 \{{\bf m-\frac{n}{2}}\},\quad&\mbox{if}\, \, \k=0,\\[0.2cm]
		\{{\bf m-\frac{n}{2}},\ 0, \ 1, \  \cdots, \ \k-1 \},\quad&\mbox{if}\, \, 1\le \k\le 2m-\frac{n-1}{2},\\[0.2cm]
		\{{\bf m-\frac{n}{2}},\ 0, \ 1, \  \cdots, \ 2m-\frac{n+1}{2},\ {\bf 2m-\frac{n}{2}} \},&\mbox{if}\,\, \k=2m-\frac{n+1}{2}.
	\end{cases}
\end{equation}
 Let $j_{\k}=\max J_{\k}$ denote the maximum of number elements in $J_{\k}$. Then for all odd $n$ and $1\le n\le 4m-1$, 
\begin{equation}\label{MaxJ_k}
\max J_\mathbf{k}=
\begin{cases}
m-\frac{n}{2}, &\quad \text{when}~\mathbf{k}=0,\\
k_c+\mathbf{k}-1,	&\quad \text{when}~0<\mathbf{k}\le k_c=\max\{m-\frac {n-1}{2}, 0\},\\
k_c+\mathbf{k}-1,&\quad \text{when}~k_c<\mathbf{k}\le m_n,\\
2m-\frac{n}{2}, &\quad \text{when}~\mathbf{k}= m_n+1.
\end{cases}
\end{equation}
In particular, we notice that $\max J_{\k}$ increases as $\k$ increases, and
\begin{equation}\label{maxJ_K_c}
\max J_{k_c}=\max\big\{2m-n,\, m-\frac n2\big\}=
\begin{cases}
2m-n, &\quad \text{if }~1\le n\le 2m-1,\\
m-\frac{n}{2},	&\quad \text{if}~\ 2m+1\le n\le 4m-1.
\end{cases}
\end{equation} 
Thus, if $0\le\k\le k_c$, then each $i\in J_\mathbf{\k}$ implies that $i\le\max J_{k_c}=\max\big\{2m-n,\, m-\frac n2\big\}.$ Hence Proposition \ref{propo-low energy-odd-2} (i) shows that  for any $i, j\in J_\mathbf{k}$, $W_{i,j,2}^{L}\in \mathbb{B}(L^p(\mathbb{R}^n))$ for all $1\le p<\infty$. 

If $k_c<\k\le m_n$, then $i\in J_\mathbf{\k}$ means that $\max J_{k_c}<i<2m-\frac n2$ or $i\le \max J_{k_c}$. So by Proposition \ref{propo-low energy-odd-2} (i) and (ii), we always can get that  for each fixed  $i, j\in J_\mathbf{k}$, $W_{i,j,2}^{L}\in \mathbb{B}(L^p(\mathbb{R}^n))$ for all for all $1\le p<n/(n-2m+\vartheta(i))$.  Note that $\vartheta(i)\le \vartheta(k_c+\k-1)= k_c+\k-1$ for each $i\in J_\mathbf{\k}$, Hence we have concluded that if $k_c<\k\le m_n$,  $W_{i,j,2}^{L}\in \mathbb{B}(L^p(\mathbb{R}^n))$ for all $1\le p<n/(n-2m+k_c+\k-1)$ and $i, j\in J_\mathbf{k}$.  

Finally, if $\k=m_n+1$, then Proposition \ref{propo-low energy-odd-2} (i)-(iii) can easily give that $W_{i,j,2}^{L}\in \mathbb{B}(L^p(\mathbb{R}^n))$ for all $1\le p<2n/(n-1)$ and $i, j\in J_{m_n+1}$.

Hence summing up discussions above, we can obtain the following proposition:

\begin{proposition}\label{propo-low energy-odd-2'}
Let $1\le n\le 4m-1$ be odd, $0\le \mathbf{k}\le m_n+1$, and assume that $V$ satisfies Assumption \ref{Assumption}. Then we have the following statements:
	\vskip 0.2cm
\indent\emph{(\romannumeral1)} If $0\le \mathbf{k}\le k_c$, then  for each $i,j\in J_\mathbf{k}$,  $W_{i,j,2}^{L}\in \mathbb{B}(L^p(\mathbb{R}^n))$ for all  $1\le p<\infty$;
		\vskip 0.2cm
\indent\emph{(\romannumeral2)} If~~$k_c+1\le \mathbf{k}\le m_n$, then  for each  $i,j\in J_\mathbf{k}$, $W_{i,j,2}^{L}\in \mathbb{B}(L^p(\mathbb{R}^n))$ for all $1\le p<\frac{n}{n-2m+k_c+\mathbf{k}-1}$;
		\vskip 0.2cm
\indent\emph{(\romannumeral3)} If~~$\mathbf{k}=m_n+1$, then for each $i,j\in J_\mathbf{k}$,  $W_{i,j,2}^{L}\in \mathbb{B}(L^p(\mathbb{R}^n))$ for all $1\le p<\frac{2n}{n-1}$;
\end{proposition}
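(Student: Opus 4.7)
The plan is to obtain Proposition \ref{propo-low energy-odd-2'} as a direct book-keeping consequence of the finer Proposition \ref{propo-low energy-odd-2}, by identifying, for each resonance type $\mathbf{k}$, the worst index $i\in J_{\mathbf{k}}$ that appears and determining which of the three regimes (i)--(iii) of Proposition \ref{propo-low energy-odd-2} governs it. The key structural inputs are the explicit formula \eqref{MaxJ_k} for $\max J_{\mathbf{k}}$ and the identity \eqref{maxJ_K_c} for $\max J_{k_c}=\max\{2m-n,\,m-\tfrac{n}{2}\}$, together with the fact that $J_{\mathbf{k}}\subset J_{\mathbf{k}'}$ whenever $\mathbf{k}\le \mathbf{k}'$. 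Since $W_{i,j,2}^L$ is treated for each fixed pair $(i,j)$ and the $L^p$-boundedness of the sum $\sum_{i,j\in J_{\mathbf{k}}}W_{i,j,2}^L$ is obtained by intersecting the $p$-ranges coming from each $(i,j)$, only the worst $i$ matters, and the conclusion on $j$ is vacuous in Proposition \ref{propo-low energy-odd-2}.

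For part (i), when $0\le\mathbf{k}\le k_c$, every $i\in J_{\mathbf{k}}\subset J_{k_c}$ satisfies $i\le \max J_{k_c}=\max\{2m-n,\,m-\tfrac{n}{2}\}$, so Proposition \ref{propo-low energy-odd-2}(i) applies and yields $L^p$-boundedness for all $1\le p<\infty$. For part (ii), when $k_c+1\le\mathbf{k}\le m_n$, an index $i\in J_{\mathbf{k}}$ either lies in $J_{k_c}$ (handled as in part (i)) or is an integer in the range $\max J_{k_c}<i\le \max J_{\mathbf{k}}=k_c+\mathbf{k}-1$. Since $k_c+\mathbf{k}-1<2m-\tfrac{n}{2}$ (verified separately in the two dimensional regimes using $k_c=m-\tfrac{n-1}{2}$, $m_n=m$ for $n<2m$ and $k_c=0$, $m_n=2m-\tfrac{n-1}{2}$ for $n>2m$), Proposition \ref{propo-low energy-odd-2}(ii) applies, and since $\vartheta(i)=i$ for such integers the most restrictive range is $\vartheta(i)=k_c+\mathbf{k}-1$, giving exactly $1\le p<\tfrac{n}{n-2m+k_c+\mathbf{k}-1}$.

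For part (iii), when $\mathbf{k}=m_n+1$, the set $J_{\mathbf{k}}$ additionally contains the half-integer $2m-\tfrac{n}{2}$, which triggers Proposition \ref{propo-low energy-odd-2}(iii) and contributes the range $1\le p<\tfrac{2n}{n-1}$. All other $i\in J_{\mathbf{k}}$ are integers $\le 2m-\tfrac{n+1}{2}$; in both dimensional regimes a direct computation gives $k_c+m_n-1=2m-\tfrac{n+1}{2}$, so the worst such $i$ yields via Proposition \ref{propo-low energy-odd-2}(ii) the same range $1\le p<\tfrac{n}{n-2m+2m-(n+1)/2}=\tfrac{2n}{n-1}$. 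Hence the intersection is again $1\le p<\tfrac{2n}{n-1}$.

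Since no new analytic estimate is required, there is really no hard step here: the entire argument is combinatorial, consisting of enumerating the elements of $J_{\mathbf{k}}$ via \eqref{eq-Jk-odd''}--\eqref{eq-Jk-odd-2''} and applying Proposition \ref{propo-low energy-odd-2} term-by-term. The only point requiring a small verification is the identity $k_c+m_n-1=2m-\tfrac{n+1}{2}$ in both parities of the dimensional regime, which ensures the two distinct worst-case mechanisms (the case-(ii) bound for the largest integer index and the case-(iii) bound for $i=2m-\tfrac{n}{2}$) coincide in the eigenvalue scenario and produce the sharp endpoint $\tfrac{2n}{n-1}$.
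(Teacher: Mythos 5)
Your proposal is correct and takes essentially the same route as the paper: the paper likewise deduces Proposition \ref{propo-low energy-odd-2'} purely combinatorially from Proposition \ref{propo-low energy-odd-2}, using the explicit description of $J_{\mathbf{k}}$, the formulas \eqref{MaxJ_k} and \eqref{maxJ_K_c} for $\max J_{\mathbf{k}}$ and $\max J_{k_c}$, and the bound $\vartheta(i)\le k_c+\mathbf{k}-1$ for the worst index. The only tiny inaccuracy is in part (iii), where you assert that all indices of $J_{m_n+1}$ other than $2m-\tfrac n2$ are integers, overlooking the half-integer $m-\tfrac n2\in J_{m_n+1}$; since $m-\tfrac n2\le\max J_{k_c}$ that index is handled by Proposition \ref{propo-low energy-odd-2}(i) and the stated range $1\le p<\tfrac{2n}{n-1}$ is unaffected.
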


To proceed, we need another estimate of the kernel $T_{i,j}^\pm(\lambda,x,y)$ to prove Proposition \ref{propo-low energy-odd-2}.
\begin{lemma}\label{lemma-T-odd2} 
Let $1\le n\le 4m-1$ be odd and $0\le \mathbf{k}\le m_n+1$. $T^{\pm}_{i,j}(\lambda,x,y)$ are defined in \eqref{eq-def-Tij}. Assume that $V$ satisfies Assumption \ref{Assumption}. Then
for any $i,j\in J_\mathbf{k}$, \,$\lambda\in(0,\lambda_0)$ and $l=0,\cdots, \frac{n+3}{2}$, the following holds.\\
\indent\emph{(\romannumeral1)} If $i\le \max\{2m-n,\, m-\frac n2\}$, one has 
\begin{equation} \label{eq-kernel estimates-low energy-odd-2-1}
  \big|\partial^l_\lambda T_{i,j}^\pm(\lambda,x,y)\big|\lesssim\lambda^{\frac{n-1}{2}-l}\langle y\rangle^{-\frac{n-1}{2}};  
\end{equation}
\indent\emph{(\romannumeral2)} If $\max\{2m-n,\, m-\frac n2\}<i<2m-\frac n2$, one has 
\begin{equation} \label{eq-kernel estimates-low energy-odd-2-2}
  \big|\partial^l_\lambda T_{i,j}^\pm(\lambda,x,y)\big|\lesssim\lambda^{2m-\frac{n+
					1}{2}-i-l}\langle x \rangle^{2m-n-\vartheta(i)}\langle y\rangle^{-\frac{n-1}{2}};  
\end{equation}
\indent\emph{(\romannumeral3)} If $i=2m-\frac{n}{2}$, one has 
\begin{equation} \label{eq-kernel estimates-low energy-odd-2-3}
  \big|\partial^l_\lambda T_{i,j}^\pm(\lambda,x,y)\big|\lesssim\lambda^{-l}\langle x\rangle^{-\frac{n-1}{2}}\langle y\rangle^{-\frac{n-1}{2}}.  
\end{equation}
\end{lemma}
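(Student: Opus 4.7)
The plan is to adapt the Leibniz-plus-Cauchy-Schwarz scheme used in the proof of Lemma \ref{lemma-T-odd1}, now choosing the estimate applied to the $\omega_i^-$-factor so that $\langle y\rangle^{-(n-1)/2}$ (rather than $\langle x\rangle^{-(n-1)/2}$) appears on the right. Starting from \eqref{eq-def-Tij} and distributing derivatives,
\begin{equation*}
\partial^l_\lambda T_{i,j}^\pm(\lambda,x,y)=\sum_{l_1+l_2+l_3+l_4=l}C_{\mathbf{l}}\,\lambda^{4m-n-1-i-j-l_1}\Big\langle \partial_\lambda^{l_2}\big(M_{i,j}^++\Gamma_{i,j}^+\big)\partial_\lambda^{l_3}\widetilde{\omega}_{j}^{\pm}(\lambda,\cdot,y),\,\partial_\lambda^{l_4}\omega_{i}^{-}(\lambda,\cdot,x)\Big\rangle_{L^2},
\end{equation*}
I will apply Cauchy--Schwarz and then reduce the problem to scalar $\lambda$-power counting for three factors. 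Throughout, the $\widetilde\omega_j^\pm$-factor is estimated via \eqref{eq-k2-odd-2}, which delivers the advertised $\langle y\rangle^{-(n-1)/2}$ decay; the $M^+_{i,j}+\Gamma^+_{i,j}$-factor is estimated via the generic \eqref{equ3.47}, refined by \eqref{eq-mij=0} and \eqref{equ2.100} whenever a half-integer index forces a structural cancellation; the three claims in the lemma then correspond to three different choices for the $\omega_i^-$-factor.

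For case (i), the hypothesis $i\le \max\{2m-n,\,m-\tfrac{n}{2}\}$ allows application of \eqref{eq-k1 estimate-odd-1}, which yields $\|\partial_\lambda^{l_4}\omega_i^-\|_{L^2}\lesssim\lambda^{\min\{n-2m+\vartheta(i),\,0\}-l_4}$ with no spatial decay in $x$. Collecting the powers of $\lambda$ gives the exponent
\[
2m-\tfrac{n+1}{2}-i+(\vartheta(j)-j)+\min\{n-2m+\vartheta(i),0\}+E_M-l,
\]
where $E_M\in\{0,\tfrac{1}{2}\}$ records whether the half-integer vanishing \eqref{eq-mij=0} applies. When $1\le n\le 2m-1$ the minimum equals $n-2m+\vartheta(i)$ and the count collapses to $\tfrac{n-1}{2}+(\vartheta(i)-i)+(\vartheta(j)-j)-l\ge \tfrac{n-1}{2}-l$ using $\vartheta(\cdot)\ge(\cdot)$; when $n>2m$ the only admissible index is $i=m-\tfrac{n}{2}$, and the gain $\vartheta(i)-i=\tfrac{n}{2}-m$ produced by this half-integer index, augmented by $E_M=\tfrac{1}{2}$ coming from \eqref{eq-mij=0}, balances the count.

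For case (ii), the strict inequality $\max\{2m-n,\,m-\tfrac{n}{2}\}<i<2m-\tfrac{n}{2}$ activates the improved estimate \eqref{eq-k1 estimate-odd-2}, and Remark \ref{vartheta} guarantees $n-2m+\vartheta(i)\le\tfrac{n-1}{2}$, so that factor becomes $\lambda^{-l_4}\langle x\rangle^{-(n-2m+\vartheta(i))}$. Re-summing produces the $\lambda$-exponent
\[
2m-\tfrac{n+1}{2}-i+(\vartheta(j)-j)-l\ \ge\ 2m-\tfrac{n+1}{2}-i-l,
\]
which combined with $\langle x\rangle^{2m-n-\vartheta(i)}\langle y\rangle^{-(n-1)/2}$ is precisely the claim.

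Case (iii) is the delicate one and I expect it to be the main technical obstacle: at $i=2m-\tfrac{n}{2}$ the identity $\vartheta(i)=2m-\tfrac{n-1}{2}$ forces $\min\{n-2m+\vartheta(i),\tfrac{n-1}{2}\}=\tfrac{n-1}{2}$ in \eqref{eq-k1 estimate-odd-2}, but the naive count is critically balanced and would miss $\lambda^{-l}$ by exactly $\tfrac{1}{2}$ power (respectively a full power) of $\lambda$. This shortfall is closed precisely by the exceptional vanishings recorded in Theorem \ref{thm-M inverse-odd}: when $j\ne 2m-\tfrac{n}{2}$, \eqref{eq-mij=0} annihilates $M^+_{i,j}$ and the generic $\lambda^{1/2}$-bound on $\Gamma^+_{i,j}$ supplies the missing half power; while when $j=i=2m-\tfrac{n}{2}$, the improved bound \eqref{equ2.100} on $\Gamma^\pm_{2m-n/2,\,2m-n/2}$, combined with the boundedness of $M^\pm_{2m-n/2,\,2m-n/2}$, supplies the missing full power. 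In either sub-case, combining \eqref{eq-k1 estimate-odd-2} with \eqref{eq-k2-odd-2} and book-keeping the derivative losses exactly produces $\lambda^{-l}\langle x\rangle^{-(n-1)/2}\langle y\rangle^{-(n-1)/2}$, completing the proof.
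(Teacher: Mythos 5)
Your proposal follows the paper's proof almost verbatim: the paper also disposes of the three cases by pairing \eqref{eq-k2-odd-2} for the $\widetilde\omega_j^\pm$-factor with \eqref{eq-k1 estimate-odd-1} in case (i), with \eqref{eq-k1 estimate-odd-2} in case (ii), and with \eqref{eq-mij=0}, \eqref{equ3.47}, \eqref{eq-k1 estimate-odd-2} in case (iii), exactly as you do. Your explicit power counting is correct for case (ii), for case (iii), and for case (i) in the range $1\le n\le 2m-1$.

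However, your justification of case (i) in the range $2m+1\le n\le 4m-1$ does not hold up against your own exponent formula. There the only admissible index is $i=m-\frac n2$, for which $\vartheta(i)=0$ and $n-2m+\vartheta(i)=n-2m>0$, so the term $\min\{n-2m+\vartheta(i),0\}$ in \eqref{eq-k1 estimate-odd-1} is $0$: the "gain $\vartheta(i)-i=\frac n2-m$" you invoke is precisely what the minimum removes, so it never enters the count. Plugging $i=m-\frac n2$ into your formula gives the exponent $m-\frac12+(\vartheta(j)-j)+E_M-l$, which equals $\frac{n-1}{2}-l$ when $j=m-\frac n2$ (where $\vartheta(j)-j=\frac n2-m$, $E_M=0$), but for integer $j$ it is only $m-l$, and for $j=2m-\frac n2$ only $m+\frac12-l$; both fall short of the claimed $\frac{n-1}{2}-l$ as soon as $n\ge 2m+3$. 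Concretely, for $(n,m)=(7,2)$, $i=-\frac32$, $j=0$, the cited estimates yield $\lambda^{2}\langle y\rangle^{-3}$, not the asserted $\lambda^{3}\langle y\rangle^{-3}$. To be fair, the paper's own two-line proof cites the same pair of estimates and glosses over this, so you are in the same position; but since you explicitly assert the count balances, you would need either an improved bound on $\Gamma^\pm_{m-\frac n2,\,j}$ (beyond the generic $\lambda^{1/2}$ of \eqref{equ3.47}) or a weakened statement that still suffices downstream. A smaller point: in case (iii) your narrative is off — on the diagonal $i=j=2m-\frac n2$ there is no missing full power, since the prefactor $\lambda^{-1}$ is exactly compensated by $\lambda^{(n+1)/2-2m+\vartheta(j)}=\lambda^{1}$ from \eqref{eq-k2-odd-2}; the boundedness of $M^\pm_{2m-\frac n2,2m-\frac n2}$ cannot "supply" any power of $\lambda$, and invoking \eqref{equ2.100} is unnecessary (though harmless), while the off-diagonal sub-case is handled correctly via \eqref{eq-mij=0} and \eqref{equ3.47}.
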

\begin{proof}	
The proof follows from the similar processes for the proof of Lemma \ref{lemma-T-odd1}. Recall from the \eqref{eq-def-Tij} that \begin{equation*}
	T_{i,j}^{\pm}(\lambda,x,y)=\lambda^{4m-n-1-i-j}\Big\langle \big(M_{i,j}^++\Gamma_{i,j}^+(\lambda)\big)\widetilde{\omega}_{j}^{\pm}(\lambda,\cdot,y),\, \omega_{i}^{-}(\lambda,\cdot,x)\Big\rangle_{L^2}.
\end{equation*}

When $ i\le \max\{2m-n,\, m-\frac n2\}$,   
	\eqref{eq-kernel estimates-low energy-odd-2-1} is obtained by using the estimates
	\eqref{eq-k1 estimate-odd-1} and \eqref{eq-k2-odd-2};	
 
 When $\max\{2m-n,\, m-\frac n2\}<i<2m-\frac n2$, \eqref{eq-kernel estimates-low energy-odd-2-2} is obtained by  using the estimates \eqref{eq-k1 estimate-odd-2} and \eqref{eq-k2-odd-2}. 
 Meanwhile, note that $M_{i,j}^+=0$ when $i=2m-\frac n2$ and $j\neq 2m-\frac n2$. Hence,  we obtain \eqref{eq-kernel estimates-low energy-odd-2-3} by using the estimates \eqref{equ3.47},  \eqref{eq-k1 estimate-odd-2} and \eqref{eq-k2-odd-2} for $i= 2m- \frac n2$.
\end{proof}

Now, we turn to prove Proposition \ref{propo-low energy-odd-2}.
\begin{proof}[\bf{Proof of Proposition \ref{propo-low energy-odd-2}.}]
Recall  that
\[W_{i,j,2}^L(x,y)
	=\mathbf{1}_{\{|x|<\frac 12|y|\}}\int_0^\infty \Big(e^{i\lambda(|x|+|y|)}T_{i,j}^+(\lambda,x,y)+e^{i\lambda(|x|-|y|)}T_{i,j}^-(\lambda,x,y)\Big)\chi(\lambda)d\lambda.\]
Combining Lemma \ref{oscillatory estimates} with Lemma \ref{lemma-T-odd2} , we have
 $$
 |W_{i,j,2}^{L}(x,y)|  \lesssim
 \begin{cases} 
 	\langle y \rangle^{-n}\mathbf{1}_{\{|x|<\frac 12|y|\}} , & \text{if }\ i \leq \max\{2m-n,\, m-\frac n2\}, \\[3mm] 
 	\langle x \rangle^{-(n-2m+\vartheta(i))} \langle y \rangle^{-(2m-i)}\mathbf{1}_{\{|x|<\frac 12|y|\}} , & \text{if }\ \max\{2m-n,\, m-\frac n2\} < i < 2m-\frac{n}{2}, \\[3mm] 
 	\langle x \rangle^{-\frac{n-1}{2}} \langle y \rangle^{-\frac{n+1}{2}}\mathbf{1}_{\{|x|<\frac 12|y|\}} , & \text{if }\ i = 2m-\frac{n}{2}.
 \end{cases}
 $$
Then, by Lemma \ref{lemma-bounedness-11}, the $L^p$-boundedness of $W_{i,j,2}^{L}$ in this proposition can be established.
\end{proof}


\subsection{The $L^p$-boundedness of $W_{i,j,3}^L$}\label{subsec:3.2} 
\begin{proposition}\label{estforw_3}
Let $1\le n\le 4m-1$ be odd, $0\le \mathbf{k}\le m_n+1$, and assume that $V$ satisfies Assumption \ref{Assumption}. Then
for any $i,j\in J_\mathbf{k}$, $W_{i,j,3}^L$ is a bounded operator on $L^p(\mathbb{R}^n)$ for all $1<p<\infty$.	
\end{proposition}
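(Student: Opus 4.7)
The plan is to use the refined decomposition of Lemma \ref{lem3.10} to split each factor in $T_{i,j}^\pm(\lambda,x,y)$ into a singular ``leading'' piece and two better-decaying ``remainder'' pieces. Concretely, writing $\omega_i^-(\lambda,\cdot,x)=\sum_{k=1}^3\omega_{i,k}^-(\lambda,\cdot,x)$ and $\tilde\omega_j^\pm(\lambda,\cdot,y)=\sum_{k=1}^3\tilde\omega_{j,k}^\pm(\lambda,\cdot,y)$ and plugging into \eqref{eq-def-Tij}, I get nine pairings. I will designate the single pairing $(\omega_{i,1}^-,\tilde\omega_{j,1}^\pm)$ as the bad part $I_{i,j}^{\pm,b}$, and collect the remaining eight pairings into the good parts $I_{i,j,1}^{\pm,g}$ and $I_{i,j,2}^{\pm,g}$.

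For the good parts, each pairing carries at least one of $\omega_{i,2}^-,\omega_{i,3}^-,\tilde\omega_{j,2}^\pm,\tilde\omega_{j,3}^\pm$, and the estimates \eqref{eq3.1001}--\eqref{eq3.10011} provide an extra factor $\lambda^\varepsilon\langle y\rangle^{-1/2-\varepsilon/2}$ or $\lambda^{(n+3)/2-2m+\vartheta(j)}$ relative to the bad pairing. Combined with the Cauchy--Schwarz inequality and the cutoff $\tfrac12|y|\le|x|\le 2|y|$, Leibniz's rule yields bounds of the form
\begin{equation*}
\big|\partial_\lambda^l T_{i,j,\mathrm{good}}^\pm(\lambda,x,y)\big|\lesssim \lambda^{(n-1)/2+\varepsilon-l}\langle x\rangle^{-(n-1)/2-\varepsilon/2}\langle y\rangle^{-(n-1)/2-\varepsilon/2},
\end{equation*}
for $l=0,\dots,\tfrac{n+3}{2}$, uniformly in small $\varepsilon>0$. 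Applying Lemma \ref{oscillatory estimates} in the phase $|x|\pm|y|$ then gives the pointwise kernel estimate $|I_{i,j,k}^{\pm,g}(x,y)|\lesssim\langle x\rangle^{-(n-\varepsilon)}\langle|x|-|y|\rangle^{-\varepsilon}\mathbf{1}_{\{|x|\asymp|y|\}}$, so Lemma \ref{lemma-bounedness-2} delivers $L^p$-boundedness for all $1<p<\infty$.

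The main obstacle is $I_{i,j}^{\pm,b}$. Here the explicit form of $\omega_{i,1}^-$ and $\tilde\omega_{j,1}^\pm$ in \eqref{eq3.100001} allows the singular geometric factors $\frac{x^{\tilde\alpha}}{|x|^{(n-1)/2+\vartheta(i)}}\cdot\frac{y^\alpha}{|y|^{(n-1)/2+\vartheta(j)}}$ (with $|\tilde\alpha|=\vartheta(i)$, $|\alpha|=\vartheta(j)$) to be pulled outside the $\lambda$-integral, leaving a scalar oscillatory integral $\int_0^\infty e^{i\lambda(|x|\pm|y|)}a_{i,j}^\pm(\lambda;x,y)\chi(\lambda)\,d\lambda$ whose amplitude satisfies a symbol-type bound of order zero in $\lambda$ (after the powers of $\lambda$ combine to $\lambda^0$). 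The $+$ phase case is harmless since $|x|+|y|\asymp|x|$ gives fast decay, so Schur's lemma handles it. The $-$ phase case produces the Hilbert-type singularity $\langle|x|-|y|\rangle^{-1}$ that is not integrable on the diagonal strip.

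To handle this singularity, I perform a dyadic decomposition $\chi_s(x)=\mathbf{1}_{\{2^{s-1}<|x|\le 2^s\}}$ (with $\chi_0=\mathbf{1}_{\{|x|\le 1\}}$). The cutoff $\tfrac12|y|\le|x|\le 2|y|$ forces only $|s-h|<2$ to contribute; the off-diagonal terms and the lower-order pieces extracted by further Taylor expansion of the symbol combine into an admissible error $e(x,y)$ controlled by Schur's lemma. For each diagonal pair, after factoring the homogeneous-of-degree-zero angular kernel into a bounded operator on $L^p(\mathbb{S}^{n-1})$, the radial part of the remaining operator is exactly a truncated Hilbert transform on $(0,\infty)$ of the form $g(r)\mapsto\int_{r/2}^{2r}\frac{g(r')}{r-r'}\,dr'$, which is $L^p$-bounded for all $1<p<\infty$ and unbounded at $p=1,\infty$. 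Establishing this reduction step cleanly, in particular verifying that the extracted amplitude is a genuine zero-order symbol so that the oscillatory integral really yields a principal-value $(|x|-|y|)^{-1}$ and nothing rougher, is the technical heart of the proof; the desired range $1<p<\infty$ for $W_{i,j,3}^L$ follows.
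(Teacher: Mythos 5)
Your overall route is the same as the paper's: isolate the pairing of the two leading pieces $\omega_{i,1}^{-}$ and $\omega_{j,1}^{\pm}$ as the bad part, handle the remaining pairings through Lemma \ref{oscillatory estimates} and Lemma \ref{lemma-bounedness-2}, and reduce the bad part, after dyadic localization to $|s-h|<2$, to a truncated Hilbert transform in the radial variable. However, your displayed derivative bound for the good amplitudes does not follow from Lemma \ref{lemma-QjvR-odd} and Lemma \ref{lem3.10} and is false as stated: the remainder pieces $\omega_{\cdot,2}^{\pm}$ gain only spatial decay while \emph{losing} a power of $\lambda$. For instance, pairing $\omega_{i,1}^{-}$ with $\omega_{j,2}^{\pm}$ and using \eqref{eq3.100001}, \eqref{eq3.1001} together with the prefactor $\lambda^{4m-n-1-i-j}$ gives, in the worst case $\vartheta(i)=i$, $\vartheta(j)=j$, an amplitude of order $\lambda^{-(1-\varepsilon)-l}\langle x\rangle^{-\frac{n-1}{2}}\langle y\rangle^{-\frac{n-1}{2}-(1-\varepsilon)}$, i.e.\ of \emph{negative} order in $\lambda$, and the $\omega_{\cdot,3}^{\pm}$ pairings give order $\lambda^{1-l}$; your claimed order $\lambda^{\frac{n-1}{2}+\varepsilon-l}$ is unobtainable (for $n\ge 5$ it even exceeds the $\lambda^{1}$ available for the best remainder). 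The error is repairable and does not change the outcome: with the correct bounds, Lemma \ref{oscillatory estimates} applied with $b=-\varepsilon>-1$ (resp.\ $b$ close to $1$) still yields exactly the kernel bound $\langle x\rangle^{-(n-\varepsilon)}\langle|x|-|y|\rangle^{-\varepsilon}$ on the strip that you state, so Lemma \ref{lemma-bounedness-2} applies; this is precisely the content of Lemma \ref{est for T-3} and Proposition \ref{lem.estforwij3g}.

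For the bad part, the point you defer as the ``technical heart'' is indeed where the work lies, and ``zero-order symbol'' is not the right sufficient condition: an amplitude satisfying only $|\partial_\lambda^l a|\lesssim\lambda^{-l}$ gives a kernel of size $O(1/\big||x|\pm|y|\big|)$ with no cancellation, which is not $L^p$-bounded on the diagonal strip. What actually saves the argument is the explicit structure of $T_{i,j,1,1}^{\pm}$: after pulling out the bounded angular factors $x^{\tilde\alpha}/|x|^{\vartheta(i)}$ and $y^{\alpha}/|y|^{\vartheta(j)}$, the amplitude equals $\lambda^{\vartheta(i)+\vartheta(j)-i-j}$ times the $\lambda$-independent constant $\langle M_{i,j}^{+}Q_jv z^{\beta},\,Q_i v z^{\tilde\beta}\rangle$ plus a $\Gamma_{i,j}^{+}(\lambda)$ contribution obeying $\lambda^{\frac12-l}$ by \eqref{equ3.47}; since $\vartheta(i)+\vartheta(j)-i-j$ is either $0$ or at least $\frac12$, every $\lambda$-dependent piece produces $\langle|x|\pm|y|\rangle^{-3/2}$ and is admissible uniformly in $s,h$, while only the genuine constant survives and yields, after one integration by parts, exactly $C/(|x|\pm|y|)$ on each dyadic block, which your polar-coordinate/spherical-mean reduction then converts to the one-dimensional truncated Hilbert transform (this is the paper's Proposition \ref{pro: bound for series}, where the uniformity of the blockwise bounds in $s,h$ is also checked before summing). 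With these two repairs your plan coincides with the paper's proof.
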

\underline{Reduction of the proof of Proposition \ref{estforw_3}.} 

Recall that from \eqref{eqforGij} and \eqref{eq.rewrifrowl}, for each $i,j\in J_{\k}$, one has that 
$$W_{i,j,3}^L(x,y)=\mathbf{1}_{\{\frac12|y|\le|x|\le 2|y|\}}\int_0^\infty G_{i,j}(\lambda)(x,y)\chi(\lambda) d\lambda,$$
and \begin{equation*}
	\begin{split}
		G_{i,j}(\lambda)(x,y)=\lambda^{4m-n-1-i-j}\Big\langle \big(M_{i,j}^++\Gamma_{i,j}^+(\lambda)\big)Q_jv\big(R_0^{+}-R_0^{-}\big)(\lambda^{2m})(\cdot, y),\, Q_ivR_0^{-}(\lambda^{2m})(\cdot,x)\Big\rangle_{L^2}.
	\end{split}
\end{equation*}
Firstly,   by using the refined formula \eqref{eq3.1000} in Lemma \ref{lem3.10}, 
\begin{equation*}
	\Big(Q_jvR_0^\pm(\lambda^{2m}(\cdot,x)\Big)(z)=e^{\pm i\lambda|x|}\Big(\omega_{j,1}^\pm(\lambda,z,x)+\omega_{j,2}^\pm(\lambda,z,x)+\omega_{j,3}^\pm(\lambda,z,x)\Big).
\end{equation*}
we can rewrite that
\begin{equation*}
	\begin{aligned}
		G_{i,j}(\lambda)(x,y)=&\sum_{s=1}^3
		e^{i\lambda|x|}\lambda^{4m-n-1-i-j}\langle \big(M_{i,j}^++\Gamma_{i,j}^+(\lambda)\big)Q_jv(R_0^+-R_0^-)(\lambda^{2m})(\cdot,y),\, \omega_{i,s}^{-}(\lambda,\cdot,x)\Big\rangle_{L^2},\\
		:=&\sum_{s=1}^3G_{i,j}^{s}(\lambda)(x,y).
	\end{aligned}
\end{equation*}

{\bf For the term $G_{i,j}^{1}(\lambda)(x,y)$.}  Applying \eqref{eq3.1000} again, we then have 
\begin{equation}\label{eq4.10101}
	G_{i,j}^{1}(\lambda)(x,y)=\sum_{t=1}^3\Big(e^{i\lambda(|x|+|y|)}T_{i,j,1,t}^{+}(\lambda,x,y)-e^{i\lambda(|x|-|y|)}T_{i,j,1,t}^{-}(\lambda,x,y)\Big),
\end{equation}
where
\begin{equation}\label{eq4.101012}
	T_{i,j,1,t}^{\pm}(\lambda,x,y)=\lambda^{4m-n-1-i-j}\Big\langle \big(M_{i,j}^++\Gamma_{i,j}^+(\lambda)\big)\omega_{j,t}^{\pm}(\lambda,\cdot,y),\, \omega_{i,1}^{-}(\lambda, \cdot,x)\Big\rangle, \ \ \ ~t=1,2,3.	
\end{equation}
\vskip0.2cm
{\bf For the terms $G_{i,j}^{2}(\lambda)(x,y)$ and $G_{i,j}^{3}(\lambda)(x,y)$}. 

Using \eqref{eq3.900} in Lemma \ref{lemma-QjvR-odd}, we have that
\begin{equation}
	{G}_{i,j}^{s}(\lambda)(x,y)= e^{i\lambda(|x|+|y|)}\tilde{T}_{i,j,s}^{+}(\lambda,x,y)+e^{i\lambda(|x|-|y|)}\tilde{T}_{i,j,s}^{-}(\lambda,x,y),\quad \ \  ~s=2,\,3,
\end{equation}
where
\begin{equation}\label{eq.4.100123}
	\tilde{T}_{i,j,s}^{+}(\lambda,x,y)=\lambda^{4m-n-1-i-j}\Big\langle \big(M_{i,j}^++\Gamma_{i,j}^+(\lambda)\big)\tilde{\omega}_{j}^{\pm}(\lambda,\cdot,y),\, \omega_{i,s}^{-}(\lambda,\cdot,x)\Big\rangle_{L^2}.	
\end{equation}
Thus, the integral kernel  $W_{i,j,3}^{L}(x,y)$  of  $W_{i,j,3}^L$  in \eqref{decom-W} can be written as a linear combination of the following terms:
\begin{equation}\label{eq-def-Iijb}
	I_{i,j}^{\pm,bad}(x,y)=\mathbf{1}_{\{\frac12|y|\le|x|\le 2|y|\}}\int_0^\infty e^{i\lambda(|x|\pm|y|)}T_{i,j,1,1}^{\pm}(\lambda,x,y)\chi(\lambda)d\lambda,
\end{equation}
\begin{equation*}
	I_{i,j,1}^{\pm,good}(x,y)=\mathbf{1}_{\big\{\frac12|y|\le|x|\le 2|y|\big\}}\int_0^\infty e^{i\lambda(|x|\pm|y|)}\big(T_{i,j,1,2}^\pm(\lambda,x,y)+\tilde{T}_{i,j,2}^\pm(\lambda,x,y)\big)\chi(\lambda)d\lambda,
\end{equation*}
and
\begin{equation*}
	I_{i,j,2}^{\pm,good}(x,y)=\mathbf{1}_{\big\{\frac12|y|\le|x|\le 2|y|\big\}}\int_0^\infty e^{i\lambda(|x|\pm|y|)}\big(T_{i,j,1,3}^\pm(\lambda,x,y)+\tilde{T}_{i,j,3}^\pm(\lambda,x,y)\big)\chi(\lambda)d\lambda,
\end{equation*}
where the corresponding operators are denoted by  $I_{i,j}^{\pm,b}$, $I_{i,j,1}^{\pm,g}$ and $I_{i,j,2}^{\pm,g}$.

Consequently, to prove the results in Proposition \ref{estforw_3}, it suffices to establish the  $L^p$-boundedness of the {\bf  good parts} $I_{i,j,1}^{\pm,g}$, $I_{i,j,2}^{\pm,g}$, and the {\bf  bad parts} $I_{i,j}^{\pm,b}$ for all $1<p<\infty$, which are presented in the following two subsections.

\subsubsection{The  good parts $I_{i,j,1}^{\pm,g}$ and $I_{i,j,2}^{\pm,g}$ }\mbox\\
\vskip0.2cm

\begin{proposition}\label{lem.estforwij3g} Under Assumption \ref{Assumption}, for any $i,j\in J_\mathbf{k}$, 
$I_{i,j,1}^{\pm,g}$ and $I_{i,j,2}^{\pm,g}$ are  bounded on $L^p(\mathbb{R}^n)$ for all $1<p<\infty$.
\end{proposition}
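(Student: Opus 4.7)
The plan is to establish, for any $\varepsilon\in(0,1)$, a pointwise kernel bound of the form
\[
|I^{\pm,g}_{i,j,s}(x,y)|\ \lesssim\ \langle x\rangle^{-(n-\varepsilon)}\,\langle|x|\pm|y|\rangle^{-\varepsilon}\,\mathbf{1}_{\{\frac12|y|\le|x|\le2|y|\}},\qquad s=1,2,
\]
and then conclude: the ``$-$'' case is $L^p$-bounded on $(1,\infty)$ by Lemma \ref{lemma-bounedness-2}, while in the ``$+$'' case $|x|+|y|\asymp|x|$ on the support collapses the kernel to something bounded by $\langle x\rangle^{-n}\mathbf{1}_{\{\frac12|y|\le|x|\le2|y|\}}$, which is admissible and so is covered by Schur's Lemma \ref{Schur Lemma}.

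The real work lies in deriving, for each of the four kernel functions $T^\pm_{i,j,1,2}$, $T^\pm_{i,j,1,3}$, $\tilde T^\pm_{i,j,2}$, $\tilde T^\pm_{i,j,3}$ appearing in the definitions of $I^{\pm,g}_{i,j,s}$, a $\lambda$-derivative estimate
\[
|\partial_\lambda^l T(\lambda,x,y)|\ \lesssim\ \lambda^{b-l}\,\langle x\rangle^{-\alpha}\,\langle y\rangle^{-\beta},\qquad 0\le l\le\tfrac{n+3}{2},
\]
with $b\ge -1+\varepsilon$ and $\alpha+\beta\ge n-\varepsilon$. I would obtain this by applying Leibniz's rule and Cauchy--Schwarz to the bilinear expressions \eqref{eq4.101012} and \eqref{eq.4.100123}, then inserting: the operator-norm bound \eqref{equ3.47} for $M^+_{i,j}+\Gamma^+_{i,j}$, refined to $\lambda^{1/2-l}$ whenever $M^+_{i,j}=0$ (i.e., when exactly one of $i,j$ equals $2m-\tfrac n2$, by \eqref{eq-mij=0}) and to $\lambda^{1-l}$ at the diagonal $i=j=2m-\tfrac n2$ via \eqref{equ2.100}; the $L^2_z$-norm estimates on $\omega_{j,2}^\pm$, $\omega_{j,3}^\pm$ (and their counterparts $\omega_{i,2}^-$, $\omega_{i,3}^-$) from Lemma \ref{lem3.10}; the explicit bound \eqref{eq3.100001}\,--\,\eqref{eq3.1001} on $\omega_{i,1}^-$; and the bound \eqref{eq-k2-odd-2} on $\tilde\omega_j^\pm$ from Lemma \ref{lemma-QjvR-odd}. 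The essential gain over the bad part $T^\pm_{i,j,1,1}$ is that each of the four kernels contains either an $\omega_{\cdot,2}$-factor (which trades $\lambda$-decay for an extra $\langle\cdot\rangle^{-1+\varepsilon}$ of spatial decay, exactly what is needed to push $\alpha+\beta$ up to $n-\varepsilon$) or an $\omega_{\cdot,3}$-factor (which carries an extra power of $\lambda$ in smoothness, directly lifting $b$ above $-1$). A straightforward telescoping of the $\lambda$-exponents then yields $b\ge -1+\varepsilon$ in every subcase, whereupon Lemma \ref{oscillatory estimates} delivers the oscillatory gain $\langle|x|\pm|y|\rangle^{-(b+1)}$ with $b+1\ge\varepsilon$. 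Using $|x|\asymp|y|$ on the support, the pointwise bound displayed above follows.

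The principal obstacle will be the bookkeeping at the half-integer indices $i,j\in\{m-\tfrac n2,\,2m-\tfrac n2\}$ appearing in $J_\mathbf{k}$: at $i=2m-\tfrac n2$ the $\omega_{i,2}^-$-bound from Lemma \ref{lem3.10} degenerates, losing its $\lambda^\varepsilon$ factor in exchange for the stronger spatial decay $\langle x\rangle^{-(n+1)/2}$; meanwhile $M^+_{i,j}$ vanishes off-diagonal and only $\Gamma^+_{i,j}$ survives. One has to verify that the compensating $\lambda^{1/2}$ (or $\lambda$, on the diagonal) from Theorem \ref{thm-M inverse-odd} together with the enhanced $\langle x\rangle$-decay still produces both $b\ge-1+\varepsilon$ and $\alpha+\beta\ge n-\varepsilon$. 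This reduces to a finite case check relying on the elementary identity $\vartheta(\ell)-\ell\in\{0,\tfrac12\}$ (with equality to $\tfrac12$ exactly at $\ell\in\{m-\tfrac n2,\,2m-\tfrac n2\}$) and the explicit values of $\vartheta$ given in \eqref{eq-vartheta-odd-1}--\eqref{eq-vartheta-odd-2}.
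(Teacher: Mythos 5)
Your overall route is the paper's own: bound the $\lambda$-derivatives of the four kernels $T^\pm_{i,j,1,2}$, $T^\pm_{i,j,1,3}$, $\tilde T^\pm_{i,j,2}$, $\tilde T^\pm_{i,j,3}$ by Leibniz and Cauchy--Schwarz using Theorem \ref{thm-M inverse-odd}, Lemma \ref{lemma-QjvR-odd} and Lemma \ref{lem3.10} (this is exactly Lemma \ref{est for T-3}), then apply Lemma \ref{oscillatory estimates} and conclude via Lemma \ref{lemma-bounedness-2} or Schur's test. For the $\omega_{\cdot,2}$-kernels, i.e.\ for $I^{\pm,g}_{i,j,1}$, your bookkeeping is correct, including the half-integer cases: there the $\varepsilon$-tradeoff in \eqref{eq3.1001} really does give $b\ge-1+\varepsilon$ together with total spatial decay $n-\varepsilon$ (or $b\ge-\tfrac12$ with spatial decay $n$ at $i$ or $j$ equal to $2m-\tfrac n2$), and your claimed pointwise bound follows. (Minor slip: \eqref{eq-mij=0} gives $M^+_{i,j}=0$ whenever $i\neq j$ and one index lies in $\{m-\tfrac n2,\,2m-\tfrac n2\}$, not only at $2m-\tfrac n2$; harmless, since at $m-\tfrac n2$ the gain $\vartheta(\ell)-\ell=\tfrac12$ already supplies the needed half power.)

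There is, however, a genuine flaw in your treatment of $I^{\pm,g}_{i,j,2}$, i.e.\ the $\omega_{\cdot,3}$-kernels $T^\pm_{i,j,1,3}$ and $\tilde T^\pm_{i,j,3}$. Your stated target ``$b\ge-1+\varepsilon$ and $\alpha+\beta\ge n-\varepsilon$ in every subcase'' cannot be met there: by \eqref{eq3.10011} and \eqref{eq-k2-odd-2} the spatial decay of these kernels is only $\langle x\rangle^{-\frac{n-1}{2}}\langle y\rangle^{-\frac{n-1}{2}}\asymp\langle x\rangle^{-(n-1)}$ on the support, with no $\varepsilon$-tradeoff available, so $\alpha+\beta=n-1<n-\varepsilon$ for every $\varepsilon\in(0,1)$, and the uniform pointwise bound $\langle x\rangle^{-(n-\varepsilon)}\langle|x|\pm|y|\rangle^{-\varepsilon}$ you start from is unattainable (excess off-diagonal decay in $\langle|x|-|y|\rangle$ cannot be converted into $\langle x\rangle$-decay near $|x|=|y|$). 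Moreover, with only ``$b$ lifted above $-1$'', the resulting bound $\langle x\rangle^{-(n-1)}\langle|x|-|y|\rangle^{-\varepsilon}\mathbf{1}_{\{|x|\asymp|y|\}}$ is neither admissible nor within the scope of Lemma \ref{lemma-bounedness-2}, so your argument does not close for $s=2$. The fix is exactly what the paper does, and your cited estimate \eqref{eq3.10011} already delivers it: the $\omega_{\cdot,3}$-factor carries a full extra power, so the exponent count gives $b\ge 1$ (not merely $b\ge-1+\varepsilon$), whence Lemma \ref{oscillatory estimates} yields $|I^{\pm,g}_{i,j,2}(x,y)|\lesssim\langle|x|-|y|\rangle^{-2}\langle x\rangle^{-(n-1)}\mathbf{1}_{\{\frac12|y|\le|x|\le2|y|\}}$, which is admissible, and Lemma \ref{Schur Lemma} gives boundedness even for $1\le p\le\infty$. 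With this correction your proof coincides with the paper's.
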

We first establish the following results similar to those in Lemma \ref{lemma-T-odd1}. 
\begin{lemma}\label{est for T-3}
Let $1\le n\le 4m-1$ be odd, $0\le \mathbf{k}\le m_n+1$, and assume that $V$ satisfies Assumption \ref{Assumption}. Then,
for any $i,j\in J_\mathbf{k}$, $\lambda\in(0,\lambda_0)$ and $l=0,\cdots, \frac{n+3}{2}$, we have the following estimates.
\vskip0.2cm
\indent\emph{(\romannumeral1)}~ For any $\varepsilon \in [0,1)$, $\tilde{T}_{i,j,2}^\pm(\lambda,x,y)$ and  $T_{i,j,1,2}^\pm(\lambda,x,y)$ given in \eqref{eq4.101012} and \eqref{eq.4.100123} satisfy
\begin{equation} \label{eq.4.10001}
	\big|\partial^l_\lambda T_{i,j,1,2}^\pm(\lambda,x,y)\big| +\big|\partial^l_\lambda \tilde{T}_{i,j,2}^\pm(\lambda,x,y)\big|\le C
	\begin{cases}
		\lambda^{-\varepsilon-l}\langle x\rangle^{-(n-1+\varepsilon)}, \quad &\text{if $i< 2m- \frac n2$}, \\[3mm]
		\lambda^{-l}\langle x\rangle^{-n}, &\text{if $i= 2m- \frac n2$}
	\end{cases}
\end{equation}
where the absolute constant $C$ in the above inequality is independent of $\varepsilon$;
\vskip0.2cm
\indent\emph{(\romannumeral2)}~ In addition, $\tilde{T}_{i,j,3}^\pm(\lambda,x,y)$ and  $T_{i,j,1,3}^\pm(\lambda,x,y)$ given in \eqref{eq4.101012} and \eqref{eq.4.100123} satisfy
\begin{equation} \label{eq.4.10002}
	\big|\partial^l_\lambda T_{i,j,1,3}^\pm(\lambda,x,y)\big| +\big|\partial^l_\lambda 
	\tilde{T}_{i,j,3}^\pm(\lambda,x,y)\big|\lesssim 
	\lambda^{1-l}\langle x\rangle^{-(n-1)}.
\end{equation}
\end{lemma}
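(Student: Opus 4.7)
All four kernels in Lemma \ref{est for T-3} share a common bilinear structure
$$\lambda^{4m-n-1-i-j}\bigl\langle \bigl(M^+_{i,j}+\Gamma^+_{i,j}(\lambda)\bigr)\phi^{\pm}(\lambda,\cdot,y),\,\psi^{-}(\lambda,\cdot,x)\bigr\rangle_{L^2_z},$$
with the pair $(\phi,\psi)$ running through $(\omega^\pm_{j,2},\omega^-_{i,1})$, $(\tilde\omega^\pm_j,\omega^-_{i,2})$, $(\omega^\pm_{j,3},\omega^-_{i,1})$ and $(\tilde\omega^\pm_j,\omega^-_{i,3})$. My plan is to differentiate in $\lambda$ by the Leibniz rule, apply Cauchy--Schwarz to each resulting term, and plug in the $L^2_z$-bounds already established in Lemma \ref{lemma-QjvR-odd} and Lemma \ref{lem3.10}, together with the operator estimate $\|\partial^{l_2}_\lambda(M^+_{i,j}+\Gamma^+_{i,j})\|_{\mathbb{B}(L^2)}\lesssim \lambda^{-l_2}$ coming from Theorem \ref{thm-M inverse-odd}. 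A straightforward accounting of the $\lambda$-exponents then reduces to the claimed bounds, using that $\vartheta(i)-i,\vartheta(j)-j\in\{0,\tfrac12\}$ as recorded in Remark \ref{vartheta}.

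For part (i) with $i<2m-\tfrac{n}{2}$, I intend to exploit the free parameter in \eqref{eq3.1001}. Choosing its value to be $1-\varepsilon\in(0,1]$ produces
$$\|\partial^{l_3}_\lambda \omega^\pm_{j,2}\|_{L^2_z}\lesssim \lambda^{\frac{n-1}{2}-2m+\vartheta(j)+1-\varepsilon-l_3}\langle y\rangle^{-\frac{n-1}{2}-\varepsilon},$$
while the explicit form \eqref{eq3.100001} gives $\|\partial^{l_4}_\lambda \omega^-_{i,1}\|_{L^2_z}\lesssim\lambda^{\frac{n+1}{2}-2m+\vartheta(i)-l_4}\mathbf{1}_{\{|x|>1\}}|x|^{-(n-1)/2}$. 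Adding exponents collapses the $\lambda$-power to $-\varepsilon-l+(\vartheta(i)-i)+(\vartheta(j)-j)\le -\varepsilon-l$, and on the support of $W^L_{i,j,3}$ (where $|x|\sim|y|$) the spatial weight assembles into $\langle y\rangle^{-\frac{n-1}{2}-\varepsilon}|x|^{-\frac{n-1}{2}}\mathbf{1}_{\{|x|>1\}}\sim\langle x\rangle^{-(n-1+\varepsilon)}$. The bound on $\tilde T^\pm_{i,j,2}$ is identical in spirit, using \eqref{eq-k2-odd-2} for $\tilde\omega^\pm_j$ against the $\omega^-_{i,2}$ estimate \eqref{eq3.1001} with parameter $1-\varepsilon$. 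For $i=2m-\tfrac{n}{2}$ two subcases arise: when $j\ne i$ one has $M^+_{i,j}=0$ by \eqref{eq-mij=0}, so $\|\partial^{l_2}_\lambda(M+\Gamma)\|_{\mathbb{B}(L^2)}$ improves to $\lambda^{1/2-l_2}$, and this extra $\lambda^{1/2}$ together with $\vartheta(i)-i=\tfrac12$ gives enough slack to take $\varepsilon=1$ and obtain $\lambda^{-l}\langle x\rangle^{-n}$; when $j=i=2m-\tfrac{n}{2}$, I invoke case~2 of \eqref{eq3.1001}, namely $\|\partial^l_\lambda\omega^\pm_{j,2}\|\lesssim\lambda^{-l}\langle y\rangle^{-(n+1)/2}$, paired with the uniform bound $\|M+\Gamma\|\lesssim 1$, and the powers cancel exactly while the decay assembles to $\langle x\rangle^{-n}$.

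For part (ii) the strategy is the same. For $T^\pm_{i,j,1,3}$ I pair the $\omega^\pm_{j,3}$ bound \eqref{eq3.10011} with the $\omega^-_{i,1}$ bound; the total $\lambda$-exponent becomes $1-l+(\vartheta(i)-i)+(\vartheta(j)-j)\ge 1-l$, while the decay combines to $\langle y\rangle^{-(n-1)/2}|x|^{-(n-1)/2}\sim\langle x\rangle^{-(n-1)}$ on $|x|\sim|y|$, giving the stated estimate. For $\tilde T^\pm_{i,j,3}$ the analogous calculation pairs \eqref{eq-k2-odd-2} for $\tilde\omega^\pm_j$ with \eqref{eq3.1001} for $\omega^-_{i,3}$; the bookkeeping of exponents is identical.

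The main obstacle is the delicate choice of the free parameter in \eqref{eq3.1001}. A naive $\varepsilon=0$ substitution squanders a power of $\lambda$ in favor of excess spatial decay, whereas $\varepsilon=1$ swaps the trade the wrong way in the half-integer case $i=2m-\tfrac{n}{2}$; threading the right parameter against the corrections $\vartheta(i)-i$ and the extra $\lambda^{1/2}$ gained from $M^+_{i,j}=0$ is the one point where care is required. Once this interpolation is arranged, the rest of the argument is mechanical.
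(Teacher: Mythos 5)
Your proposal is correct and takes essentially the same route as the paper's own (very terse) proof: Leibniz plus Cauchy--Schwarz on the bilinear forms \eqref{eq4.101012}--\eqref{eq.4.100123}, feeding in the explicit $L^2$ bound $\|\partial^l_\lambda\omega_{i,1}^{-}(\lambda,\cdot,x)\|_{L^2}\lesssim\lambda^{\frac{n+1}{2}-2m+\vartheta(i)-l}\langle x\rangle^{-\frac{n-1}{2}}$ from \eqref{eq3.100001}, the estimates \eqref{eq-k2-odd-2}, \eqref{eq3.1001}, \eqref{eq3.10011}, the $\mathbb{B}(L^2)$ bounds for $M^+_{i,j}+\Gamma^+_{i,j}(\lambda)$ (improved to $\lambda^{\frac12-l}$ when $M^+_{i,j}=0$), and then converting the mixed $\langle x\rangle$--$\langle y\rangle$ decay into pure $\langle x\rangle$ decay on the region $\tfrac12|y|\le|x|\le2|y|$ where these kernels are actually used. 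Only trivial slips: the exponent comparison in part (i) should read $-\varepsilon-l+(\vartheta(i)-i)+(\vartheta(j)-j)\ge-\varepsilon-l$ (which is what yields $\lambda^{\,\cdot}\le\lambda^{-\varepsilon-l}$ for $\lambda<1$); the estimate you invoke for $\omega^{-}_{i,3}$ is \eqref{eq3.10011}, not \eqref{eq3.1001}; $\vartheta(i)-i$ equals $\tfrac n2-m$ (which can exceed $\tfrac12$) when $i=m-\tfrac n2$ and $2m+1\le n\le 4m-1$, though only $\vartheta(i)\ge i$ is needed; and in the case $j=2m-\tfrac n2\ne i$ the free parameter of \eqref{eq3.1001} is unavailable, but the same accounting with $M^+_{i,j}=0$ still closes the estimate.
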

\vskip0.3cm
\begin{proof}
Note that by the definition of $\omega_{i,1}^{-}(\lambda,\cdot,x)$ in \eqref{eq3.100001}, for any $l=0,\cdots, \frac{n+3}{2}$,  there is 
\begin{equation*}
\begin{aligned}
  &\left\|\partial^l_\lambda \omega_{i,1}^{-}(\lambda,\cdot,x) \right\|_{L^2}\\
  \lesssim &
  \lambda^{\frac{n+1}{2}-2m+\vartheta(i)-l} \langle x\rangle^{-\frac{n-1}{2}} \left\|\sum_{|\alpha|=|\beta|=\vartheta(j)}A_{\alpha,\beta}^- Q_j(vz^\beta)(y)	\right\|_{L^2_y}\lesssim \lambda^{\frac{n+1}{2}-2m+\vartheta(i)-l} \langle x\rangle^{-\frac{n-1}{2}},  
\end{aligned}
\end{equation*}
which is consistent with the form of the estimate in equation \eqref{eq-k2-odd-1}. Using the  same analysis as that for Lemma \ref{lemma-T-odd1},  the above estimate, together with \eqref{eq-k2-odd-2} and \eqref{eq3.1001},  derives \eqref{eq.4.10001}. Similarly, \eqref{eq-k2-odd-2} combined with \eqref{eq3.10011} derives \eqref{eq.4.10002}.
\end{proof}

\begin{proof}[\bf{Proof of Proposition \ref{lem.estforwij3g}.}]
We initially consider the operator  $I_{i,j,1}^{\pm,g}$.
It follows from Lemma \ref{oscillatory estimates} and the estimate \eqref{eq.4.10001} that for $\frac{1}{2}|y|\le |x|\le2|y|$ and  each $\varepsilon\in [0,1)$,  we have 
\begin{equation*}
|I_{i,j,1}^{\pm,good}(x,y)|\lesssim
\begin{cases}
\langle|x|-|y|\rangle^{-1+\varepsilon}\langle x\rangle^{-(n-1+\varepsilon)}, &\quad i< 2m- \frac n2,\\[2mm]
\langle|x|-|y|\rangle^{-1}\langle x\rangle^{-n}, &\quad i= 2m- \frac n2.
\end{cases}
\end{equation*}
By using Lemma \ref{lemma-bounedness-2}, we can obtain that when $i<2m- \frac n2$, the operator $I_{i,j,1}^{\pm,g}$ are bounded on $L^p(\mathbb{R}^n)$ for $1<p<\infty$. Meanwhile, one can check that $\langle|x|-|y|\rangle^{-1}\langle x\rangle^{-n}$  is admissible in the region $\frac{1}{2}|y|\le |x|\le2|y|$. Thus, $I_{i,j,1}^{\pm,g}$ is bounded on $L^p(\mathbb{R}^n)$ for $1\le p\le\infty$  when $i=2m- \frac n2$. Similarly, according to Lemma \ref{oscillatory estimates} and \eqref{eq.4.10002}, we conclude that $I_{i,j,2}^{\pm,good}(x,y)$ are supported in the region  $\frac{1}{2}|y|\le |x|\le2|y|$ and satisfy 
\begin{equation*}
|I_{i,j,2}^{\pm,good}(x,y)|\lesssim
   \langle|x|-|y|\rangle^{-2}\langle x\rangle^{-(n-1)}, 
\end{equation*}
which is admissible. Therefore, the $I_{i,j,2}^{\pm,g}$ are bounded on $L^p(\mathbb{R}^n)$ for all $1 \le p \le \infty$.
\end{proof}

\subsubsection{The  bad parts $I_{i,j}^{\pm,b}$}

\begin{proposition}\label{lem.estforwij3b1} Under Assumption \ref{Assumption}, for any $i,j\in J_\mathbf{k}$, 
$I_{i,j}^{\pm,b}$ are  bounded on $L^p(\mathbb{R}^n)$ for all $1<p<\infty$.
\end{proposition}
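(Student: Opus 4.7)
The plan is to insert the explicit leading-order forms \eqref{eq3.100001} of $\omega_{j,1}^{\pm}$ and $\omega_{i,1}^{-}$ into the definition \eqref{eq4.101012} of $T_{i,j,1,1}^{\pm}$. The rank-one factors $y^{\alpha}/|y|^{\frac{n-1}{2}+\vartheta(j)}$ and $x^{\tilde\alpha}/|x|^{\frac{n-1}{2}+\vartheta(i)}$ pull outside the $L^{2}$-inner product (with $|\alpha|=|\beta|=\vartheta(j)$ and $|\tilde\alpha|=|\tilde\beta|=\vartheta(i)$), leaving the $\lambda$-independent scalar $\langle M_{i,j}^{+}Q_{j}(vz^{\beta}),Q_{i}(vz^{\tilde\beta})\rangle_{L^{2}}$ for the $M_{i,j}^{+}$-contribution, and a factor gaining $\lambda^{1/2}$ by \eqref{equ3.47} for the $\Gamma_{i,j}^{+}(\lambda)$-contribution. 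Collecting $\lambda$-powers yields
$$T_{i,j,1,1}^{\pm}(\lambda,x,y)=\lambda^{\sigma}\mathbf{1}_{\{|x|,|y|>1\}}\sum_{\alpha,\tilde\alpha}C_{\alpha,\tilde\alpha}^{\pm}\,\frac{y^{\alpha}}{|y|^{\frac{n-1}{2}+\vartheta(j)}}\,\frac{x^{\tilde\alpha}}{|x|^{\frac{n-1}{2}+\vartheta(i)}}+R_{i,j}^{\pm}(\lambda,x,y),$$
where $\sigma=(\vartheta(i)-i)+(\vartheta(j)-j)\geq 0$; inspection of $J_{\mathbf{k}}$ shows $\sigma=0$ exactly when both $i$ and $j$ are non-negative integers, and $\sigma>0$ otherwise.

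Next I evaluate $J^{\pm}(r):=\int_{0}^{\infty}e^{i\lambda r}\lambda^{\sigma}\chi(\lambda)\,d\lambda$ with $r=|x|\pm|y|$. If $\sigma>0$, Lemma \ref{oscillatory estimates} gives $|J^{\pm}(r)|\lesssim\langle r\rangle^{-\sigma-1}$, producing an admissible kernel on $\{\tfrac{1}{2}|y|\le|x|\le 2|y|\}$ (handled by Schur's lemma, Lemma \ref{Schur Lemma}, or Lemma \ref{lemma-bounedness-2}). The remainder $R_{i,j}^{\pm}$ integrates to admissible contributions thanks to its extra $\lambda^{1/2}$ gain. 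The only genuinely non-admissible piece arises when $\sigma=0$: iterated integration by parts using $\chi\equiv 1$ near zero (so $\chi^{(k)}(0)=0$ for all $k\geq 1$) yields $J^{\pm}(r)=-\chi(0)/(ir)+O(\langle r\rangle^{-N})$ for any $N$, whose leading term produces the singular Hilbert-type kernel
$$K^{\pm}(x,y)=\mathbf{1}_{\{\frac{1}{2}|y|\le|x|\le 2|y|,\,|x|,|y|>1\}}\sum_{|\alpha|=\vartheta(j),\,|\tilde\alpha|=\vartheta(i)}C_{\alpha,\tilde\alpha}^{\pm}\,\frac{y^{\alpha}}{|y|^{\frac{n-1}{2}+\vartheta(j)}}\,\frac{x^{\tilde\alpha}}{|x|^{\frac{n-1}{2}+\vartheta(i)}}\,\frac{1}{|x|\pm|y|}.$$
In the $(+)$-case, $|x|+|y|\gtrsim|x|$ in our region, so $|K^{+}(x,y)|\lesssim|x|^{-n}\mathbf{1}_{\{|x|\sim|y|\}}$ is admissible via Schur; only the $(-)$-case requires further work.

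For $K^{-}$, I use the dyadic decomposition $K^{-}=\sum_{s,h\geq 0}\chi_{s}(x)K^{-}(x,y)\chi_{h}(y)$. Blocks with $|s-h|\geq 2$ satisfy $||x|-|y||\gtrsim 2^{\max(s,h)}$, so their kernels are admissible and form the error term $e(x,y)$ of the outline, handled by Schur's lemma. For each near-diagonal block $|s-h|<2$, passing to polar coordinates $x=r\omega$, $y=\rho\xi$ with $r,\rho\sim 2^{s}$, and observing that $y^{\alpha}/|y|^{\vartheta(j)}=\xi^{\alpha}$ and $x^{\tilde\alpha}/|x|^{\vartheta(i)}=\omega^{\tilde\alpha}$ are smooth symbols on $\mathbb{S}^{n-1}$, the operator $\chi_{s}K^{-}\chi_{h}$ factorizes, modulo bounded angular multipliers, through the one-dimensional truncated Hilbert transform $g\mapsto \mathrm{p.v.}\int_{I_{s}}g(\rho)/(r-\rho)\,d\rho$ on $I_{s}=(2^{s-1},2^{s+1}]$, which is classically $L^{p}$-bounded for $1<p<\infty$ with constants independent of $s$. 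This yields the required uniform bound $\|\chi_{s}I_{i,j}^{-,b}\chi_{h}\|_{\mathbb{B}(L^{p})}\leq C_{p}$, whence $I_{i,j}^{\pm,b}\in\mathbb{B}(L^{p}(\mathbb{R}^{n}))$ for all $1<p<\infty$. The main obstacle is the careful bookkeeping of all admissible remainders---from the $\omega_{j,2}^{\pm},\omega_{j,3}^{\pm},\omega_{i,2}^{-},\omega_{i,3}^{-}$ components, the $\Gamma_{i,j}^{+}(\lambda)$-contribution, the tail $O(\langle r\rangle^{-N})$ of $J^{\pm}(r)$, and the off-diagonal dyadic blocks---each of which must be verified to fit either Schur's lemma or Lemma \ref{lemma-bounedness-2} via Lemma \ref{oscillatory estimates}.
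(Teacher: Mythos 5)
Your argument follows essentially the same route as the paper's proof: you isolate the rank-one leading term coming from $\omega_{i,1}^{-}$ and $\omega_{j,1}^{\pm}$ with the power $\lambda^{\vartheta(i)+\vartheta(j)-i-j}$ (the paper's \eqref{Tab0}), you absorb everything carrying an extra power of $\lambda$ (the $\Gamma_{i,j}^{+}$-contribution and the half-integer cases, where $\vartheta(i)+\vartheta(j)-i-j\ge\frac12$) into admissible kernels via Lemma \ref{oscillatory estimates}, and in the genuinely singular case $\vartheta(i)+\vartheta(j)=i+j$ you integrate by parts to produce the kernel $\frac{1}{|x|\pm|y|}$, which you bound on near-diagonal dyadic blocks by passing to polar coordinates and invoking the one-dimensional truncated Hilbert transform with constants uniform in the block; this is exactly the paper's reduction \eqref{eq-trans-to-hilbert}--\eqref{eqhilbert}, followed by the same disjoint-support summation (which you use implicitly). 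Your direct Schur treatment of the $+$ sign is a harmless simplification.

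One step needs to be stated more carefully. The expansion $J^{\pm}(r)=-\chi(0)/(ir)+O(\langle r\rangle^{-N})$ cannot hold for $|r|\lesssim 1$ (there $J^{\pm}$ is bounded while the claimed leading term blows up), so your kernel $K^{\pm}$, written with no truncation in $|x|\pm|y|$, exhibits a non-integrable singularity across $|x|=|y|$ that the original operator does not have, and the principal-value interpretation you then invoke is not justified by anything in your construction. The fix is exactly the paper's: split into $||x|\pm|y|\,|\le 1$, where the $\sigma=0$ piece is simply bounded by $\langle x\rangle^{-\frac{n-1}{2}}\langle y\rangle^{-\frac{n-1}{2}}$ and hence admissible on the diagonal region (cf. \eqref{zetasmall}), and $||x|\pm|y|\,|>1$, where integration by parts is legitimate and the resulting Hilbert-type kernel retains the truncation $||x|\pm|y|\,|>1$, so that the uniformly $L^p$-bounded truncated Hilbert transform (no principal value needed) closes the argument. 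With this correction your proof coincides in substance with the paper's.
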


\begin{proof}
Recall that 
\begin{align}\label{bad parts}
I_{i,j}^{\pm,bad}(x,y)=\mathbf{1}_{\big\{\frac12|y|\le|x|\le 2|y|\big\}}\int_0^\infty e^{i\lambda(|x|\pm|y|)}T_{i,j,1,1}^{\pm}(\lambda,x,y)\chi(\lambda)d\lambda,
\end{align}
For simplicity, also denote $I_{i,j}^{\pm,b}(x,y)$ as $I_{i,j}^{\pm,bad}(x,y)$ below. 

Let $\chi_0={\bf{1}}_{\{x\in\mathbb{R}^n;\,|x|\le 1\}}$ and $\chi_s={\bf{1}}_{\{x\in\mathbb{R}^n;\,2^{s-1}<|x|\le 2^s\}}$ for $s\in \N^+$. Define  
\begin{equation*}
	\psi(x,y)=\mathbf{1}_{\big\{\frac12|y|\le|x|\le 2|y|\big\}}-\sum_{|s-h|<2}\chi_s(x)\chi_h(y).
\end{equation*}
Thus,  we have the following  decomposition:
\begin{equation}\label{sumofw3b}
I_{i,j}^{\pm,b}(x,y)=\sum_{|s-h|<2}\chi_s(x)I_{i,j}^{\pm,b}(x,y)\chi_h(y)+ \psi(x,y)I_{i,j}^{\pm,b}(x,y).
\end{equation}
Since both $\frac14|y|\le |x|\le 4|y|$ and $|x|,|y|\lesssim||x|-|y||$ hold in the support of $\psi(x,y)$, applying the same analysis as that used for the good parts $I_{i,j,1}^{\pm,g}(x,y)$ and $I_{i,j,2}^{\pm,g}(x,y)$, it is not difficult to conclude that $\psi(x,y)I_{i,j}^{\pm,b}(x,y)$  is bounded by $O\big(\min\{|x|^{-n}, |y|^{-n}\}\big)$ in the region $\frac14|y|\le |x|\le 4|y|$, which is an admissible part, and thus the corresponding  operator is bounded on $L^p(\mathbb{R}^n)$ for all $1 \le p\le \infty$. 

Hence, to prove this proposition, it suffices to show that the remaining part $\sum_{|s-h|<2}\chi_sI_{i,j}^{\pm,b}\chi_h$ is bounded on $L^p(\mathbb{R}^n)$ for all $1<p<\infty$, which is presented in Proposition \ref{pro: bound for series} below.
\end{proof}


\begin{proposition}\label{pro: bound for series}
  The operator $\sum_{|s-h|<2}\chi_sI_{i,j}^{\pm,b}\chi_h$ is bounded on $L^p(\mathbb{R}^n)$ for all $1<p<\infty$.  
\end{proposition}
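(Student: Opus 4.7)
The plan is to reduce the sum to uniform $L^p$-bounds on each dyadic slice $\chi_s I_{i,j}^{\pm,b}\chi_h$ with $|s-h|<2$, then split that kernel into an admissible remainder plus a clean $1/(|x|\pm|y|)$ singularity, and finally handle the singularity by Schur's test in the $+$ case and by a reduction to the truncated Hilbert transform on the line in the $-$ case.

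Since the shells $\{\supp \chi_s\}_s$ are pairwise disjoint on the $x$-side and for each $s$ at most four values of $h$ satisfy $|s-h|<2$, the $L^p$-sum collapses:
\begin{equation*}
\Big\|\sum_{|s-h|<2}\chi_s I_{i,j}^{\pm,b}(\chi_h f)\Big\|_{L^p(\R^n)}^p\lesssim \sum_s\sum_{|s-h|<2}\|\chi_s I_{i,j}^{\pm,b}(\chi_h f)\|_{L^p(\R^n)}^p\lesssim \sum_h\|\chi_h f\|_{L^p(\R^n)}^p\lesssim\|f\|_{L^p(\R^n)}^p,
\end{equation*}
provided one establishes $\|\chi_s I_{i,j}^{\pm,b}\chi_h\|_{\mathbb{B}(L^p(\R^n))}\le C_p$ uniformly for $|s-h|<2$. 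This is the step to prove. Inserting the explicit form \eqref{eq3.100001} of $\omega_{j,1}^\pm$ and $\omega_{i,1}^-$ into \eqref{eq4.101012} pulls the factors $\frac{y^\alpha}{|y|^{(n-1)/2+\vartheta(j)}}\frac{x^{\tilde\alpha}}{|x|^{(n-1)/2+\vartheta(i)}}$ out of the $L^2$ inner product, leaving a scalar $\lambda^{\vartheta(i)+\vartheta(j)-i-j}\Phi^\pm(\lambda)$ with $\Phi^\pm(\lambda)=\langle(M_{i,j}^++\Gamma_{i,j}^+(\lambda))Q_j(vz^\beta),\,Q_i(vz^{\tilde\beta})\rangle_{L^2}$. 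Using \eqref{equ3.47}--\eqref{eq-mij=0}, the vanishing $M_{i,j}^+=0$ for mixed half-integer indices together with the $\lambda^{1/2}$-gain of $\Gamma_{i,j}^+$ makes the effective symbol a function in $C^{(n+3)/2}(0,\lambda_0)$ with $|\partial_\lambda^l|\lesssim \lambda^{-l}$. A single integration by parts in $\lambda$ against $e^{i\lambda(|x|\pm|y|)}\chi(\lambda)$ (equivalently, Lemma \ref{oscillatory estimates}) produces the boundary term $\Phi^\pm(0)/(i(|x|\pm|y|))$ plus an $O(\langle|x|\pm|y|\rangle^{-2})$ remainder, so, modulo an admissible error in the sense of \eqref{eq-admissible condition}, the singular part of $I_{i,j}^{\pm,b}(x,y)$ reads
\begin{equation*}
\sum_{|\alpha|=\vartheta(j),\,|\tilde\alpha|=\vartheta(i)}\frac{C_{\alpha,\tilde\alpha}\,y^\alpha\, x^{\tilde\alpha}\,\mathbf{1}_{\{|x|,|y|>1,\,\frac12|y|\le|x|\le 2|y|\}}}{|y|^{\frac{n-1}{2}+\vartheta(j)}\,|x|^{\frac{n-1}{2}+\vartheta(i)}\,(|x|\pm|y|)}.
\end{equation*}

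On $\supp\chi_s(x)\chi_h(y)$ with $|s-h|<2$ the angular factors $\frac{y^\alpha}{|y|^{\vartheta(j)}},\frac{x^{\tilde\alpha}}{|x|^{\vartheta(i)}}$ are bounded by $1$ and $|x|\sim|y|\sim 2^s$. For the $+$ case the denominator $|x|+|y|\sim 2^s$ forces the singular kernel to be dominated by $2^{-sn}\chi_s(x)\chi_h(y)$, whence Schur's test (Lemma \ref{Schur Lemma}) delivers the uniform $L^p$-bound for $1\le p\le\infty$. For the $-$ case, pass to polar coordinates $x=r\omega,\ y=\rho\theta$, introduce the radial dyadic cutoffs $\tilde\chi_s,\tilde\chi_h$, and set $g_\alpha(\rho):=\int_{\Sphere^{n-1}}\theta^\alpha f(\rho\theta)\,d\theta$; the singular part of $\chi_s I_{i,j}^{-,b}(\chi_h f)$ becomes
\begin{equation*}
\tilde\chi_s(r)\,\frac{\omega^{\tilde\alpha}}{r^{(n-1)/2}}\int_0^\infty \tilde\chi_h(\rho)\,\frac{\rho^{(n-1)/2}\,g_\alpha(\rho)}{r-\rho}\,d\rho.
\end{equation*}
The mean value estimate $|\rho^{(n-1)/2}-r^{(n-1)/2}|\lesssim 2^{s(n-3)/2}|r-\rho|$ on the supports peels off a Schur-bounded error, and setting $\tilde g(\rho):=\rho^{(n-1)/p}\tilde\chi_h(\rho)g_\alpha(\rho)$ together with $(r/\rho)^{(n-1)/p}=1+O(2^{-s}|r-\rho|)$ reduces the remaining main term to $\tilde\chi_s(r)[H(\tilde g)](r)$ with $H$ the usual Hilbert transform on $\R$. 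Since Hölder on $\Sphere^{n-1}$ yields $\|\tilde g\|_{L^p(\R)}\lesssim\|\chi_h f\|_{L^p(\R^n)}$ and $H$ is bounded on $L^p(\R)$ for $1<p<\infty$, the claimed uniform bound follows.

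The principal technical obstacle lies in the symbol analysis of Stage 2: one must carefully exploit the vanishing $M_{i,j}^+=0$ of \eqref{eq-mij=0} and the $\lambda^{1/2}$-regularity of $\Gamma_{i,j}^+$ in the mixed half-integer cases to confirm that $\lambda^{\vartheta(i)+\vartheta(j)-i-j}\Phi^\pm(\lambda)$ is a well-behaved symbol near $\lambda=0$ for every admissible $(i,j)\in J_\mathbf{k}\times J_\mathbf{k}$, so that a single integration by parts isolates the singular $1/(|x|\pm|y|)$ kernel with an admissible tail; this is precisely where the precise expansion Theorem \ref{thm-M inverse-odd} and the orthogonality of the $Q_j$ enter.
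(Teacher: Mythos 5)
Your overall architecture matches the paper's: reduce to a uniform bound for each block $\chi_s I_{i,j}^{\pm,b}\chi_h$, isolate from $\omega_{i,1}^-,\omega_{j,1}^{\pm}$ a scalar symbol $\lambda^{\vartheta(i)+\vartheta(j)-i-j}\Phi^{\pm}(\lambda)$, use $M_{i,j}^+=0$ and the $\lambda^{1/2}$-gain of $\Gamma_{i,j}^+$ when $\vartheta(i)+\vartheta(j)-i-j\ge\frac12$, and reduce the genuinely singular case $\vartheta(i)+\vartheta(j)=i+j$ to a one-dimensional Hilbert-transform bound in the radial variable (your Schur treatment of the $+$ sign and your mean-value/weight-peeling reduction to the classical Hilbert transform are reasonable variants of the paper's polar-coordinate reduction to the truncated Hilbert transform).

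However, there is a genuine gap in how you isolate the singular kernel. Your single integration by parts is an exact identity for every $\mu:=|x|\pm|y|\neq0$, but the remainder it produces is
\begin{equation*}
-\frac{1}{i\mu}\int_0^\infty e^{i\lambda\mu}\,\partial_\lambda\bigl(\lambda^{\vartheta(i)+\vartheta(j)-i-j}\Phi^{\pm}(\lambda)\chi(\lambda)\bigr)\,d\lambda,
\end{equation*}
and since $\partial_\lambda\Gamma_{i,j}^+(\lambda)$ only obeys $\lesssim\lambda^{-1/2}$, this remainder is $O(|\mu|^{-1})$ as $\mu\to0$ and only $O(\langle\mu\rangle^{-3/2})$ for $|\mu|\ge1$ (your claimed $O(\langle\mu\rangle^{-2})$ would require a second integration by parts through $\lambda=0$, which the $\lambda^{1/2}$-regularity does not permit). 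Consequently your statement that, ``modulo an admissible error,'' the singular part is the \emph{untruncated} kernel $\frac{C\,y^{\alpha}x^{\tilde\alpha}}{|y|^{\frac{n-1}{2}+\vartheta(j)}|x|^{\frac{n-1}{2}+\vartheta(i)}(|x|\pm|y|)}$ is false near the diagonal $||x|-|y||\le1$: there the original oscillatory integral is bounded, the extracted kernel blows up like $|\,|x|-|y|\,|^{-1}$, and the difference is neither admissible in the sense of \eqref{eq-admissible condition} nor absolutely integrable, so it cannot be discarded; only the sum of the two pieces is well behaved. The paper avoids exactly this by splitting at $||x|\pm|y||=1$: on $||x|\pm|y||\le1$ the oscillatory integral is estimated directly (giving an admissible kernel on the dyadic block, see \eqref{zetasmall}), and the integration by parts is performed only on $||x|\pm|y||>1$, which produces the \emph{truncated} kernel $\mathbf{1}_{\{||x|\pm|y||>1\}}(|x|\pm|y|)^{-1}$ and hence the truncated Hilbert transform in \eqref{eq-trans-to-hilbert}. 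Your argument can be repaired either by adopting this splitting, or by interpreting your untruncated kernel as a principal value and separately proving that the piece supported in $||x|-|y||\le1$ is $L^p$-bounded — but that local piece is precisely the difference of the Hilbert transform and its truncation, so the truncated-Hilbert-transform input (or an equivalent near-diagonal analysis) cannot be bypassed; as written, the near-diagonal region is simply not addressed.
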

\begin{proof}
We first claim that for any $i,j\in J_\mathbf{k}$, there exists a constant $C_p$ independent of all $s,h\in \mathbb{N}_0$ with $|s-h|<2$ such that the operators \begin{equation}\label{claim}
\|\chi_sI_{i,j}^{\pm,b}\chi_h\|_{\mathbb{B}(L^p(\mathbb{R}^n))}\le C_p
\end{equation}
 for all $1<p<\infty$. In fact, once this claim \eqref{claim} is established,  then for all $1<p<\infty$,  
\begin{equation}
\begin{aligned}
 \Big\|\sum_{\substack{s,h\in \mathbb{N}_0\\|s-h|<2}} \chi_sI_{i,j}^{\pm,b}\chi_h f\Big\|^p_{L^p}&=\int_{\mathbb{R}^n} \Big|\sum_{s\in \mathbb{N}_0}  \sum_{|s-h|<2}\chi_sI_{i,j}^{\pm,b}\chi_h f(x) \Big|^p dx\\  
 &=\sum_{s\in \mathbb{N}_0} \int_{\mathbb{R}^n} \Big| \sum_{|s-h|<2}\chi_sI_{i,j}^{\pm,b}\chi_h f(x) \Big|^p dx\\  
 &\lesssim \sum_{s\in \mathbb{N}_0} \sum_{|s-h|<2} \int_{\mathbb{R}^n} \Big| \chi_h f(x) \Big|^p dx \lesssim  \big\| f\big\|^p_{L^p}. 
\end{aligned}
\end{equation}

To prove the estimate \eqref{claim} uniformly for all $s,h\in \mathbb{N}_0$ with $|s-h|<2$, recall that by \eqref{bad parts} and \eqref{eq4.101012}, 
\begin{equation}\label{bad parts1}I_{i,j}^{\pm,b}(x,y)=\int_0^\infty e^{i\lambda(|x|\pm|y|)}T_{i,j,1,1}^{\pm}(\lambda,x,y)\chi(\lambda)d\lambda,
\end{equation}
and
\begin{equation*}
T_{i,j,1,1}^{\pm}(\lambda,x,y)=\lambda^{4m-n-1-i-j}\Big\langle \big(M_{i,j}^++\Gamma_{i,j}^+(\lambda)\big)\omega_{j,1}^{\pm}(\lambda,\cdot,y),\, \omega_{i,1}^{-}(\lambda, \cdot,x)\Big\rangle_{L^2}.
\end{equation*}
 By the expression of $\omega_{i,1}^{\pm}$ in \eqref{eq3.100001}, we can write $T_{i,j,1,1}^{\pm}(\lambda,x,y)$ as 
\begin{equation*}
\sum_{|\alpha|=|\beta|=\vartheta(j)}\sum_{|\tilde{\alpha}|=|\tilde{\beta}|=\vartheta(i)}A_{\alpha,\beta}^\pm A_{\tilde{\alpha},\tilde{\beta}}^-\left( {\Omega}_{\alpha,\beta,\tilde{\beta},\tilde{\alpha},0}(\lambda,x,y)+\Omega_{\alpha,\beta,\tilde{\beta},\tilde{\alpha},1}(\lambda,x,y)\right) ,
\end{equation*}
where
\begin{equation}\label{Tab0}
{\Omega}_{\alpha,\beta,\tilde{\beta},\tilde{\alpha},0}(\lambda,x,y)=\lambda^{\vartheta(i)+\vartheta(j)-i-j} \Big\langle M_{i,j}^+Q_jv(z^\beta),\, Q_iv(z^{\tilde{\beta}}) \Big\rangle \cdot \frac{\mathbf{1}_{\{|y|>1\}}y^\alpha}{|y|^{\frac{n-1}{2}+\vartheta(j)}}	\ \frac{\mathbf{1}_{\{|x|>1\}}x^{\tilde{\alpha}}}{|x|^{\frac{n-1}{2}+\vartheta(i)}},
\end{equation}
 and 
\begin{equation}\label{Tab1}
\Omega_{\alpha,\beta,\tilde{\beta},\tilde{\alpha},1}(\lambda,x,y)=\lambda^{\vartheta(i)+\vartheta(j)-i-j}
\Big\langle\Gamma_{i,j}^+(\lambda)Q_jv(z^\beta),\, Q_iv(z^{\tilde{\beta}}) \Big\rangle \cdot \frac{\mathbf{1}_{\{|y|>1\}}y^\alpha}{|y|^{\frac{n-1}{2}+\vartheta(j)}}	\ \frac{\mathbf{1}_{\{|x|>1\}}x^{\tilde{\alpha}}}{|x|^{\frac{n-1}{2}+\vartheta(i)}}.
\end{equation}

\noindent Thus, $\chi_s(x)I_{i,j}^{\pm,b}(x,y)\chi_h(y)$ can be written as a linear combination of the following terms: 
\begin{equation}\label{omegaq}
\Omega_{q}^\pm(x,\, y)=\chi_s(x)\chi_h(y)\int_0^\infty e^{i\lambda(|x|\pm|y|)}\Omega_{\alpha,\beta,\tilde{\beta},\tilde{\alpha},q}(\lambda,x,y)\chi(\lambda)d\lambda,\quad q=0,\, 1, 	
\end{equation}
where $s,h\in \N_0$  such that $|s-h|\le 2$,    and  $|\alpha|=|\beta|=\vartheta(j)$, $|\tilde{\alpha}|=|\tilde{\beta}|=\vartheta(i)$.

\underline{{\bf For the case of   $q=1$}}.  

From \eqref{equ3.47} and our  decay assumption \eqref{decayonV} on $V$, one  can obtain that for any $l=0,\cdots, \frac{n+3}{2}$,
$$
\left|\partial_\lambda^l{\Omega}_{\alpha,\beta,\tilde{\beta},\tilde{\alpha},1}(\lambda,x,y) \right| \lesssim \lambda^{\frac12-l}\langle x\rangle^{-\frac{n-1}{2}}\langle y\rangle^{-\frac{n-1}{2}},
$$
then by Lemma \ref{oscillatory estimates},  we have  
$$\big|\Omega_{1}^\pm(x,\, y)\big|\lesssim \langle|x|-|y|\rangle^{-\frac 32}\langle x\rangle^{-\frac{n-1}{2}}\langle y\rangle^{-\frac{n-1}{2}}\chi_s(x)\chi_h(y). $$
And using the fact $\frac 14 |y|\le |x| \le 4|y|$ in the support of $\chi_s(x)\chi_h(y)$ for all $|s-h|<2$, one has
$$\big|\Omega_{1}^\pm(x,\, y)\big|\lesssim  \langle|x|-|y|\rangle^{-\frac 32}\min\{\langle x\rangle^{-n+1},\, \langle y\rangle^{-n+1}\},$$
which  are admissible.  Hence  the corresponding operators $\Omega_{1}^\pm$ are uniformly bounded on $L^p(\mathbb{R}^n)$ for all $1\le p\le \infty$  over all $s,h\in \mathbb{N}_0$ with $|s-h|<2$.

\underline{{\bf For the case of   $q=0$}}.

Note that by the definition of $J_\mathbf{k}$ and $\vartheta(i)$ given in \eqref{thetaj}, when $i,j \in J_\mathbf{k}$, one of the conditions $\vartheta(i)+\vartheta(j)-i-j=0$ or $\vartheta(i)+\vartheta(j)-i-j\ge \frac 12$  must hold ( see Remark \ref{thetaj} ). If $\vartheta(i)+\vartheta(j)-i-j\ge \frac 12$, we have 
that for $l=0,\cdots, \frac{n+3}{2}$, then   
$$
\left|\partial_\lambda^l{\Omega}_{\alpha,\beta,\tilde{\beta},\tilde{\alpha},0}(\lambda,x,y) \right| \lesssim \lambda^{\frac12-l}\langle x\rangle^{-\frac{n-1}{2}}\langle y\rangle^{-\frac{n-1}{2}}.
$$
Thus, applying the same analysis as in the proof for the case $q=1$ gives that 
$\|\Omega_{0}^\pm \big\|_{\mathbb{B}(L^p)} \le C$
hold uniformly for all $1\le p\le \infty$ and $|s-h|<2$ when $\vartheta(i)+\vartheta(j)-i-j\neq 0$..
\vskip0.2cm
We are left to consider the case  $\vartheta(i)+\vartheta(j)-i-j=0$.
In the following, we further split the situation into $||x|\pm|y||\le 1$ and $||x|\pm|y||> 1$ to estimate $\Omega_{0}^\pm(x,\, y)$.

Firstly, let $||x|\pm|y||\le 1$.  Note that  when $\vartheta(i)+\vartheta(j)-i-j=0$, by  the definition \eqref{Tab0}, one has
$$
\left|{\Omega}_{\alpha,\beta,\tilde{\beta},\tilde{\alpha},0}(\lambda,x,y) \right| \lesssim \langle x\rangle^{-\frac{n-1}{2}}\langle y\rangle^{-\frac{n-1}{2}}.
$$
Thus,  a direct computation shows that when $|s-h|< 2$, 
\begin{equation}\label{zetasmall}
\begin{aligned}
\Big|\Omega_{0}^\pm(x,\, y)\Big|&\lesssim \langle x\rangle^{-\frac{n-1}{2}}\langle y\rangle^{-\frac{n-1}{2}}\chi_s(x)\chi_h(y),\\
&\lesssim \min\big\{\langle x\rangle^{-n+1},\, \langle y\rangle^{-n+1}\big\},
\end{aligned}	
\end{equation}
which implies that $\Omega_{0}^\pm(x,\, y){\bf{1}}_{\{||x|\pm|y||\le 1\}}$ are  admissible. Therefore, we have that the corresponding operators  are bounded on $L^p(\mathbb{R}^n)$ for $1\le p\le \infty$ with the bound independent of $|s-h|<2$. 

If  $||x|\pm|y||> 1$,  using integration by parts once for $\Omega_{0}^\pm(x,\, y)$,  we get
\begin{align}
\Omega_{0}^\pm(x,\, y)&= C \mathbf{1}_{\{|x|>1\}}\mathbf{1}_{\{|y|>1\}}\frac{\chi_h(y)y^\alpha}{|y|^{\frac{n-1}{2}+\vartheta(j)}}	\frac{\chi_s(x)x^{\tilde{\alpha}}}{|x|^{\frac{n-1}{2}+\vartheta(i)}}\frac{1}{|x|\pm|y|}\label{Omega_{0}}\\
&\ \ \ \ -C\frac{1}{|x|\pm|y|}\mathbf{1}_{\{|x|>1\}}\mathbf{1}_{\{|y|>1\}}\frac{\chi_h(y)y^\alpha}{|y|^{\frac{n-1}{2}+\vartheta(j)}}	\frac{\chi_s(x)x^{\tilde{\alpha}}}{|x|^{\frac{n-1}{2}+\vartheta(i)}}\int_0^\infty e^{i\lambda(|x|\pm|y|)}\chi'(\lambda)d\lambda,\label{Omega_{0}'}
\end{align}
where the constant $C=-i \big\langle M_{i,j}^+Q_jv(z^\beta),\, Q_iv(z^{\tilde{\beta}}) \big\rangle_{L^2}$.

Since $\supp \chi'$ is away from the origin,  by the integrations by parts again, we can show that the second part \eqref{Omega_{0}'} of $\Omega_{0}^\pm(x,\, y)$ is uniformly controlled  by $O\big(\langle|x|\pm |y|\rangle^{-2}\min\{\langle x\rangle^{-n+1},\,\langle y\rangle^{-n+1}\}\big)$ when $||x|\pm|y||> 1$ for all $|s-h|< 2$, and therefore are admissible. 

Finally, it remains to deal with the first part \eqref{Omega_{0}} of $ \Omega_{0}^\pm(x,\, y)$ when $||x|\pm|y||> 1$.  Note that when $s=0$ or $h=0$,
$$\chi_s(x)\chi_h(y)\mathbf{1}_{\{|x|>1\}}\mathbf{1}_{\{|y|>1\}}=0.$$ 
 Hence, we only need to focus on the case when $s>1$, $h>1$ and $|s-h|<2$. Moreover, since $|\alpha|=\vartheta(j)$ and $|\tilde{\alpha}|=\vartheta(i)$,  both functions: $$\mathbf{1}_{\{|y|>1\}}\frac{y^\alpha}{|y|^{\vartheta(j)}} \ \ {\rm and } \ \ \mathbf{1}_{\{|x|>1\}}\frac{x^{\tilde{\alpha}}}{|x|^{\vartheta(i)}}$$ 
 are bounded on $\R^n$. So we can reduce the first part \eqref{Omega_{0}} into $L^p$ boundedness of  the following truncated operator:
$$(T^\pm f)(x)=\int_{||x|\pm|y||> 1}\frac{\chi_h(|y|)}{|y|^{\frac{n-1}{2}}}\ \frac{\chi_s(|x|)}{|x|^{\frac{n-1}{2}}}\ \frac{1}{|x|\pm|y|}\ f(y)dy,$$
where we use the same notation $\chi_h(|y|)=\chi_h(y)$ and $\chi_h(|x|)=\chi_h(x)$. 
Define the sphere mean of $f$ by 
$$(M f)(r)=\int_{S^{n-1}} f(r\omega)d\omega, \ \  dy=r^{n-1}drd\omega.$$
Then, for $1<p<\infty$,  by the polar coordinates transform, one has
\begin{equation}\label{eq-trans-to-hilbert}
\begin{split}
\left\|T^\pm f\right\|_{L^p}^p
=\int_0^\infty&\Big|\int_{|\rho\pm r|>1}\frac{ \chi_h(r)\ M f(r) r^{\frac{n-1}{2}}}{\rho\pm r}dr\Big|^p
\chi_s(\rho)\rho^{n-1-\frac{(n-1)p}{2}}d\rho\\
\lesssim& \int_0^\infty\Big|\int_{|\rho\pm r|>1}\frac{\chi_h(r)\ M f(r)r^{\frac{n-1}{2}}}{\rho\pm r}dr\Big|^p
2^{h(n-1)(1-\frac{p}{2})}\chi_s(\rho)d\rho\\
\lesssim& 2^{h(n-1)(1-\frac{p}{2})}\int_0^\infty\Big|\int_{|\rho\pm r|>1}\frac{\chi_h(r)\ M f(r)r^{\frac{n-1}{2}}}{\rho\pm r}dr\Big|^p d\rho,\\
\end{split}
\end{equation}
 where  we have used the fact that $|s-h|<2$, $s,h\ge 1$ and $r\sim\rho\sim 2^s\sim 2^h$ in the support of $\chi_s(\rho)$.   

Since the truncated Hilbert transforms 
$$\mathbb{H}: g\mapsto \int_{|\rho\pm r|>1}\frac{g(r)}{\rho\pm r}dr,$$
are $L^p$-bounded for $1<p<\infty$, by the left-side of  \eqref{eq-trans-to-hilbert}, it follows that
\begin{equation}\label{eqhilbert}
\begin{split}
\left\|T^\pm f\right\|_{L^p}^p\lesssim &2^{h(n-1)(1-\frac{p}{2})}
\int_{\mathbb{R}} |\chi_h(r)M f(r) r^{\frac{n-1}{2}}|^p dr\\
\lesssim &\int_{\mathbb{R}} |\chi_h(r)M f(r)|^p r^{\frac{n-1}{2}}dr
= \|\chi_h f\|^p_{L^p(\mathbb{R}^n)}\le \|f\|^p_{L^p(\mathbb{R}^n)},
\end{split}
\end{equation}
Therefore, the inequality ~\eqref{eqhilbert},  together with \eqref{zetasmall},  implies that for all $1< p<\infty$,
$$\big\|\Omega_{0}^\pm \big\|_{\mathbb{B}(L^p)} \le C_p$$
holds uniformly for $|s-h|<2$ when $\vartheta(i)+\vartheta(j)-i-j=0$.

Summing up the arguments above, we have concluded that there exists a constant $C_p$ independent of all $s,h\in \mathbb{N}_0$ with $|s-h|<2$ such that the operators $\|\chi_sI_{i,j}^{\pm,b}\chi_h\|_{\mathbb{B}(L^p(\mathbb{R}^n))}\le C_p$ for all $1<p<\infty$.  Hence, the whole proof of Proposition \ref{lem.estforwij3b1} is finished.
\end{proof}

Finally, we return to prove Theorem \ref{thm-low energy-regular-odd}.

\begin{proof}[{\bf Proof of Theorem \ref{thm-low energy-regular-odd}.}]
Recall that by \eqref{eq.rewrifrowl} and \eqref{decom-W}, we can reduce the $L^p$-boundedness of the low energy part $W^{L}$ to the $L^p$-boundedness of $W_{i,j,s}^L$ for $i,\, j\in J_{\k}$ and $s=1,\,2,\,3$.  It is derived from Proposition \ref{propo-low energy-odd-1} and Proposition \ref{estforw_3} that $W_{i,j,1}^L$ and $W_{i,j,3}^L$ are bounded on $L^p(\mathbb{R}^n)$ for $1<p<\infty$, for all $i,\, j\in J_{\k}$ and $s=1,\,3$. Therefore, it is left to consider $W_{i,j,2}^L$. Indeed, from Proposition \ref{propo-low energy-odd-2'}, we immediately conclude that when zero energy is a $\mathbf{k}$-th kind resonance, all of $W_{i,j,2}^L$ belong to  $\mathbb{B}(L^p)$ for the range of $p$ as stated in Theorem \ref{thm-low energy-regular-odd}. 
Thus, we finish the proof of Theorem \ref{thm-low energy-regular-odd}.
\end{proof} 

\section{High energy estimates}\label{sec:high energy}
In this section, we will prove the $L^p$-boundedness of $W^H$. The main result is stated below.
\begin{theorem}\label{thm-high energy-n<2m}
Let $1\le n\le 4m-1$ be odd, and Assumption \ref{Assumption} holds. Then the high energy part $W^H$ is bounded on $L^p(\mathbb{R}^n)$ for all $1\le p\le\infty$.
\end{theorem}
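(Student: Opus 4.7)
The plan is to follow the outline already sketched in Subsection 1.4 of the introduction. First, the case $2m+1 \le n \le 4m-1$ is already covered by Erdoğan--Green \cite{EG22, EG23}, since the smoothness/decay hypotheses in Assumption \ref{Assumption} (iii) are compatible with theirs. So the real work is confined to the low-dimensional range $1 \le n \le 2m-1$, where the free resolvent actually \emph{decays} at high energy (the factor $\lambda^{n-2m}$ in \eqref{eq-free kernel-F} is negative). Invoking the second resolvent identity $R^+(\lambda^{2m}) = R_0^+(\lambda^{2m}) - R_0^+(\lambda^{2m})VR^+(\lambda^{2m})$ I would split $W^H = W_1^H + W_2^H$ as displayed in the introduction, and treat the two pieces by quite different methods.

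For $W_1^H$, substituting the representation \eqref{eq-free kernel-F} of $R_0^\pm(\lambda^{2m})$ produces an oscillatory double integral in $\lambda$ and in the intermediate variable $z$, with phase $e^{i\lambda(|x-z| \pm |z-y|)}$ and amplitude $\lambda^{n-2m}(1-\chi(\lambda)) F_n^+(\lambda|x-z|) F_n^\pm(\lambda|z-y|) V(z) |x-z|^{-\frac{n-1}{2}} |z-y|^{-\frac{n-1}{2}}$. Since $n-2m \le -1$, the amplitude and all its $\lambda$-derivatives decay in $\lambda$ by \eqref{eq-F decay-odd}, so repeated integration by parts in $\lambda$ (picking up factors of $\big||x-z|\pm|z-y|\big|^{-1}$) yields the pointwise bound
\begin{equation*}
\bigl|W_1^{H,\pm}(x,y)\bigr| \lesssim \int_{\mathbb{R}^n}\frac{\min\!\bigl\{\bigl|\log\bigl||x-z|\pm|z-y|\bigr|\bigr|,\,\bigl||x-z|\pm|z-y|\bigr|^{-n-1}\bigr\}\,|V(z)|}{|x-z|^{\frac{n-1}{2}}|y-z|^{\frac{n-1}{2}}}\,dz
\end{equation*}
claimed in the introduction; the logarithm handles the region where $|x-z|\pm|z-y|$ is small, and the $(n+1)$-fold integration by parts handles the complementary region. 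The decay of $V$ in Assumption \ref{Assumption} is then more than enough to verify the Schur condition \eqref{eq-admissible condition}, hence $L^p$-boundedness for $1 \le p \le \infty$ by Lemma \ref{Schur Lemma}.

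For $W_2^H$, the key input is the high-energy resolvent estimate from \cite[Lemma 3.9]{FSWY20}: for each $N \in \mathbb{N}$ there is a weighted $L^2$ bound on $\partial_\lambda^\ell R^+(\lambda^{2m})$ of size $\lambda^{-2m+1-\ell}$ against weights $\langle x\rangle^{-\sigma}$ with $\sigma$ large. With $(1-\chi(\lambda))$ localizing away from zero, I would differentiate under the integral and perform enough integrations by parts in $\lambda$, distributing the $\partial_\lambda$ between the two free-resolvent kernels (using Lemma \ref{lemma-free kernel} and \eqref{eq-F decay-odd}), the perturbed resolvent $R^+(\lambda^{2m})$, and the oscillatory exponential $e^{i\lambda(|x-z_1|+|z_1-z_2|\pm|z_2-y|)}$. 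Each integration by parts produces a factor $\bigl||x-z_1|+|z_1-z_2|\pm|z_2-y|\bigr|^{-1}$, while at the same time the $V$ factors on either side of $R^+$ absorb the polynomial growth of the weights. The integration by parts is performed $n+1$ times to render the kernel integrable; the net pointwise estimate is an admissible kernel in the sense of \eqref{eq-admissible condition}, and Lemma \ref{Schur Lemma} finishes the argument.

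The main technical obstacle is bookkeeping in the $W_2^H$ estimate: one must choose the number of integrations by parts so that the $\lambda$-decay from $\lambda^{2m-1}(1-\chi(\lambda))$ combined with the $\partial_\lambda R^+(\lambda^{2m})$ weighted bounds (which require pairing against $\langle\cdot\rangle^{-\sigma}$ weights) actually produces an absolutely convergent $\lambda$-integral after the $V$-factors compensate the weights. For $n = 4m-1$ the $\lambda^{2m-1}$ growth is maximal and the margin is tightest, which is exactly where the additional Sobolev-type regularity hypothesis \eqref{decayonV} on $V$ in Assumption \ref{Assumption} (iii) is invoked, mirroring the analogous step in \cite{EG22}. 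Modulo this careful exponent accounting, the remaining estimates are of the same character as those used in the treatment of $W_1^H$.
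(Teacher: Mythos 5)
Your overall reduction — citing Erdo\u{g}an--Green for $2m+1\le n\le 4m-1$, splitting $W^H=W_1^H+W_2^H$ by the second resolvent identity for $1\le n\le 2m-1$, and handling $W_1^H$ via \eqref{eq-free kernel-F}, the $\min\{|\log|\cdot||,|\cdot|^{-n-1}\}$ oscillatory bound and Schur's test — is exactly the paper's route, and that part is fine. The genuine gap is in your treatment of $W_2^H$: you propose to integrate by parts against the phase $e^{i\lambda(|x-z_1|+|z_1-z_2|\pm|z_2-y|)}$, gaining factors $\bigl||x-z_1|+|z_1-z_2|\pm|z_2-y|\bigr|^{-1}$. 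This implicitly treats the perturbed resolvent kernel as $e^{i\lambda|z_1-z_2|}$ times an amplitude with controlled $\lambda$-derivatives, and it also requires pointwise-in-$(z_1,z_2)$ kernel control to exploit a gain that depends on the interior variables. Neither is available from the cited input: Lemma \ref{lemma-high energy-second part-n<2m} (\cite[Lemma 3.9]{FSWY20}) only bounds $\partial_\lambda^k R^{\pm}(\lambda^{2m})$ in $\mathbb{B}(L^2_\sigma,L^2_{-\sigma})$ — derivatives of the \emph{full} resolvent, oscillation included — so the $\lambda$-derivatives of the stripped amplitude $e^{-i\lambda|z_1-z_2|}R^{+}(\lambda^{2m})(z_1,z_2)$ are not controlled, no pointwise kernel bound on $R^+$ exists here, and a gain depending on $z_1,z_2$ cannot be pulled out of the $L^2$ pairing that the weighted estimate forces. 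As stated, the integration-by-parts step for $W_2^H$ cannot be carried out.

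The paper resolves this by choosing a phase depending only on the external variables: it factors out $e^{i\lambda(|x|\pm|y|)}$ from the two free resolvents, keeps the residual oscillations $e^{\pm i\lambda(|x-\cdot|-|x|)}$, $e^{\pm i\lambda(|y-\cdot|-|y|)}$ inside the amplitude $T^{H,\pm}(\lambda,x,y)$ (an $L^2$ inner product around $R^+(\lambda^{2m})$), whose $\lambda$-derivatives produce at most polynomial growth in the interior variable that $V$ absorbs; Lemma \ref{lemma-high energy-second part-n<2m} then gives $|\partial_\lambda^l T^{H,\pm}(\lambda,x,y)|\lesssim \lambda^{n-4m+1}\langle x\rangle^{-\frac{n-1}{2}}\langle y\rangle^{-\frac{n-1}{2}}$ with $n-4m+1\le -2$, and integrating by parts $\frac{n+3}{2}$ times only on the set $||x|\pm|y||\ge 1$ yields an admissible kernel in the sense of \eqref{eq-admissible condition}. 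Two further slips in your bookkeeping paragraph: the case $n=4m-1$ does not belong to the range $1\le n\le 2m-1$ you are treating, so Assumption \ref{Assumption} (iii) enters only through the quoted Erdo\u{g}an--Green result for $n>2m$, not through the $W_2^H$ estimate (and \eqref{decayonV} is the decay hypothesis of part (ii), not the regularity hypothesis of part (iii)); and the relevant constraint on $V$ for $W_2^H$ comes from needing $\frac{n+3}{2}$ weighted derivatives of $R^+(\lambda^{2m})$, which is why Theorem \ref{thm-high energy-2} asks for $\beta>n+5$ rather than an $(n+1)$-fold integration-by-parts count.
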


Since Erdo\u{g}an-Green \cite{EG22} has proved that $W^H$ is bounded on $L^p(\R^n)$ for all $1\le p\le\infty$ when $n>2m,$ we only need to prove the case for odd $n$ when $1\le n\le 2m-1$.
Recall that $W^H$ can be represented as follows,
\begin{equation*}
W^H=\frac{m}{\pi i}\int_0^\infty\lambda^{2m-1}(1-\chi(\lambda))R^+(\lambda^{2m})V\big(R_0^+(\lambda^{2m})-R_0^-(\lambda^{2m})\big)d\lambda,
\end{equation*}
where the cut-off function $\chi(\lambda)$ is defined such that $\chi \in C_0^\infty(\mathbb{R}^n)$ and $\chi \equiv 1$ on $(-\lambda_0/2, \lambda_0/2)$, with $\text{supp} \chi \subset [-\lambda_0, \lambda_0]$.  Here, we fix $\lambda_0$, as given in Theorem \ref{thm-M inverse-odd}.
 By the following resolvent identity 
\begin{equation*}
R^+(\lambda^{2m})=R_0^+(\lambda^{2m})-R_0^+(\lambda^{2m})VR^+(\lambda^{2m}),
\end{equation*}
we can divide the high energy portion $W^H$ into two parts as
\begin{equation*}
\begin{split}
W_1^H=&\int_0^\infty\lambda^{2m-1}(1-\chi(\lambda))R_0^+(\lambda^{2m})V(R_0^+(\lambda^{2m})-R_0^-(\lambda^{2m}))d\lambda,\\
W_2^H=&\int_0^\infty\lambda^{2m-1}(1-\chi(\lambda))R_0^+(\lambda^{2m})VR^+(\lambda^{2m})V(R_0^+(\lambda^{2m})-R_0^-(\lambda^{2m}))d\lambda.
\end{split}
\end{equation*}
Therefore, to obtain Theorem \ref{thm-high energy-n<2m}, it suffices to prove the $L^p$-boundedness of $W_1^H$ and $W_2^H$. The related results are presented as follows.

\begin{theorem}\label{propo-high energy-n<2m-1-1}
	Let $1\le n\le 2m-1$ be odd. Assume that $H$ has no embedded positive eigenvalues  and $|V(x)|\lesssim\langle x\rangle^{-\beta}$ with some $\beta>n$. Then the operator $W_1^H$ is bounded on $L^p(\mathbb{R}^n)$ for all $1\le p\le\infty.$
\end{theorem}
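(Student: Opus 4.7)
\textbf{Proof plan for Theorem \ref{propo-high energy-n<2m-1-1}.}

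The starting point is to make the oscillatory structure of the kernel of $W_1^H$ explicit. Using the representation \eqref{eq-free kernel-F} for $R_0^\pm(\lambda^{2m})(x,y)$ in both factors, the total power of $\lambda$ collapses to $\lambda^{n-2m}$ (note $2m-1+2(\tfrac{n+1}{2}-2m)=n-2m$), and the kernel of $W_1^H$ splits as $W_1^H(x,y)=W_1^{H,+}(x,y)-W_1^{H,-}(x,y)$, where each $W_1^{H,\sigma}(x,y)$ (with $\sigma\in\{+,-\}$) has the form
\begin{equation*}
W_1^{H,\sigma}(x,y)=\int_{\R^n}\frac{V(z)}{|x-z|^{\frac{n-1}{2}}|z-y|^{\frac{n-1}{2}}}\,\mathcal{I}^{\sigma}(c;a,b)\,dz,
\end{equation*}
with $a=|x-z|$, $b=|z-y|$, $c=a+\sigma b$, and
\begin{equation*}
\mathcal{I}^{\sigma}(c;a,b)=\int_0^{\infty}\lambda^{n-2m}(1-\chi(\lambda))\,e^{i\lambda c}\,F_n^{+}(\lambda a)\,F_n^{\sigma}(\lambda b)\,d\lambda.
\end{equation*}

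The core analytic step is the pointwise bound
\begin{equation*}
|\mathcal{I}^{\sigma}(c;a,b)|\lesssim \min\!\bigl\{|\log|c||,\ |c|^{-n-1}\bigr\}
\end{equation*}
uniformly in $a,b>0$. The plan is to split the $\lambda$-integral at $\lambda\sim|c|^{-1}$. On the region $\lambda\lesssim |c|^{-1}$ we estimate $|F_n^{\pm}|\lesssim 1$ and use $\supp(1-\chi)\subset\{|\lambda|\gtrsim\lambda_0\}$ to bound the contribution by $\int_{\lambda_0/2}^{|c|^{-1}}\lambda^{n-2m}\,d\lambda$, which is $O(1)$ when $n<2m-1$ and is $O(|\log|c||)$ in the borderline case $n=2m-1$. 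On the region $\lambda\gtrsim|c|^{-1}$ we integrate by parts $N$ times against $e^{i\lambda c}$; each derivative in $\lambda$ either lowers the power of $\lambda$ by one (from $\lambda^{n-2m}$ or from $\chi$) or produces an $(F_n^{\sigma})^{(l)}$-term, which by \eqref{eq-F decay-odd} satisfies $|a^{l}(F_n^{\sigma})^{(l)}(\lambda a)|\lesssim a^{l}\langle\lambda a\rangle^{-l}\lesssim\lambda^{-l}$ whenever $\lambda a\ge 1$ and $|b^{l}(F_n^{\sigma})^{(l)}(\lambda b)|\lesssim b^{l}$ otherwise. Careful bookkeeping (distinguishing the four regimes $\lambda a\gtrless 1$, $\lambda b\gtrless 1$, and using the inequality $\min(a,b)\le|c|$ when $\sigma=-1$, $\max(a,b)\le|c|$ when $\sigma=+1$ is irrelevant) yields that, up to $N+1$ IBPs, the contribution is bounded by $|c|^{-N}\int_{|c|^{-1}}^{\infty}\lambda^{n-2m-N}d\lambda\lesssim |c|^{2m-n-1}$, which for $n\le 2m-1$ is $O(1)$. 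Choosing $N=n+1$ gives the claimed $|c|^{-n-1}$ bound, while the two regimes combine to the $\min\{\cdot,\cdot\}$ estimate.

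With the pointwise estimate in hand, the final step is Schur's lemma (Lemma \ref{Schur Lemma}) applied to $W_1^{H,\sigma}$. By the decay assumption $|V(z)|\lesssim\langle z\rangle^{-\beta}$ with $\beta>n$, I will verify
\begin{equation*}
\sup_{y}\int_{\R^n}|W_1^{H,\sigma}(x,y)|\,dx\;+\;\sup_{x}\int_{\R^n}|W_1^{H,\sigma}(x,y)|\,dy\;\lesssim 1.
\end{equation*}
For the first supremum I would exchange the $x$ and $z$ integrations, use $|V(z)|\lesssim\langle z\rangle^{-\beta}$ to gain integrability, and separately estimate $\int_{\R^n}\min\{|\log|c_\sigma||,|c_\sigma|^{-n-1}\}|x-z|^{-\frac{n-1}{2}}\,dx$ for fixed $z,y$ by a polar decomposition in $|x-z|$; the logarithmic piece is integrable near $|c_\sigma|=0$, and the $|c_\sigma|^{-n-1}$ tail gives rapid decay away from the diagonal (respectively the equidistant hyperplane $|x-z|=|z-y|$ when $\sigma=-1$), producing a factor controlled by $\langle z-y\rangle^{-(n-1)/2}$ which is then absorbed by $|V(z)|$ when $\beta>n$. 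The second supremum follows symmetrically.

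The main obstacle will be Step 3: the kernel $\min\{|\log|c_-||,|c_-|^{-n-1}\}|x-z|^{-(n-1)/2}|z-y|^{-(n-1)/2}$ is singular on the non-compact hyperplane $\{|x-z|=|z-y|\}$, and controlling its $x$-integral uniformly in $y$ requires using the decay of $V(z)$ in a balanced way against the mild (logarithmic) singularity and the weak decay $|x-z|^{-(n-1)/2}$ of the free-resolvent factor. The condition $\beta>n$ is precisely what is needed to close this estimate.
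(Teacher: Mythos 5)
Your proposal follows essentially the same route as the paper: the kernel representation via \eqref{eq-free kernel-F}, the pointwise bound $\min\{|\log|c||,\,|c|^{-n-1}\}$ for the $\lambda$-integral (the paper packages this as Lemma \ref{osci-high}, applied together with \eqref{eq-F decay-odd}; note that $|\partial_\lambda^l F_n^\pm(\lambda a)|=a^l|(F_n^\pm)^{(l)}(\lambda a)|\lesssim a^l\langle\lambda a\rangle^{-l}\lesssim\lambda^{-l}$ uniformly in $a$, so the four-regime bookkeeping in $\lambda a\gtrless1$, $\lambda b\gtrless1$ is unnecessary), and then the Schur test (Lemma \ref{Schur Lemma}) in both variables via polar coordinates in $|x-z|$, with $\beta>n$ closing the $z$-integral.

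Two small repairs are needed in your write-up, neither affecting the strategy. First, the displayed high-frequency estimate $|c|^{-N}\int_{|c|^{-1}}^\infty\lambda^{n-2m-N}\,d\lambda\lesssim|c|^{2m-n-1}$ only handles $|c|\le1$ and does not yield $|c|^{-n-1}$ for $|c|$ large; for $|c|\ge1$ the splitting point $|c|^{-1}$ lies below $\lambda_0/2$, and since $1-\chi$ is supported in $\{\lambda\ge\lambda_0/2\}$ the $N$-fold integration by parts gives $|c|^{-N}\int_{\lambda_0/2}^\infty\lambda^{n-2m-N}\,d\lambda\lesssim_N|c|^{-N}$, which with $N=n+1$ is the actual source of the $|c|^{-n-1}$ tail. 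Second, in the Schur step for the $-$ sign the $x$-integral near the sphere $\{|x-z|=|z-y|\}$ produces a \emph{growing} factor $\langle z-y\rangle^{+\frac{n-1}{2}}$, not $\langle z-y\rangle^{-\frac{n-1}{2}}$; it is this growth that cancels the prefactor $|z-y|^{-\frac{n-1}{2}}$ and reduces matters to $\int_{\R^n}|V(z)|\,dz<\infty$, which is exactly where $\beta>n$ is used, as in the paper's computation.
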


\begin{theorem}\label{thm-high energy-2}
	Let $1\le n\le 2m-1$ be odd. Assume that $H$ has no embedded positive eigenvalues  and $|V(x)|\lesssim\langle x\rangle^{-\beta}$ with some $\beta>n+5.$ Then the operator $W_2^H$ is bounded on $L^p(\mathbb{R}^n)$ for all $1\le p\le\infty.$
\end{theorem}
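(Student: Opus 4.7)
The plan is to show that the integral kernel $W_2^H(x,y)$ is admissible in the sense of \eqref{eq-admissible condition}, so that Schur's lemma (Lemma \ref{Schur Lemma}) immediately yields boundedness on $L^p(\R^n)$ for all $1 \le p \le \infty$. To set this up, I would combine the representation \eqref{eq-free kernel-F} of $R_0^+(\lambda^{2m})(x,z_1)$ with the identity \eqref{R_+-R_}, which expresses $(R_0^+-R_0^-)(\lambda^{2m})$ in terms of the second-order free resolvents. This writes $W_2^H(x,y)$ as a triple integral in $(z_1, z_2, \lambda)$ carrying oscillatory phase $e^{i\lambda(|x - z_1| \pm |z_2 - y|)}$, amplitudes involving $F_n^\pm$, and the middle factor $R^+(\lambda^{2m})(z_1, z_2)$. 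Equivalently, the $(z_1, z_2)$-integration is a weighted $L^2$ pairing of the form $\langle R^+(\lambda^{2m})[V h_y^\pm], V \overline{g_x^+}\rangle_{L^2}$, where $g_x^+$ and $h_y^\pm$ are the smooth factors from the two outer free resolvents.

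Next, I would integrate by parts $N$ times in $\lambda$ against the phase $e^{i\lambda(|x-z_1| \pm |z_2-y|)}$ to extract the decay $\langle |x-z_1| \pm |z_2-y|\rangle^{-N}$. The $\lambda$-derivatives act on three objects: the cutoff $(1-\chi(\lambda))$ (harmless, supported in $\lambda \gtrsim \lambda_0$), the amplitudes $F_n^\pm(\lambda|x-z_1|)$ and $F_n^\pm(\lambda|z_2-y|)$ (controlled pointwise by Lemma \ref{lemma-free kernel}), and the middle resolvent $R^+(\lambda^{2m})$. For the last, I would invoke the high-energy limiting absorption bound of \cite[Lemma 3.9]{FSWY20} in the form $\|\partial_\lambda^l R^+(\lambda^{2m})\|_{L^2_\sigma \to L^2_{-\sigma}} \lesssim \lambda^{1-2m-l}$ for $\sigma > l + 1/2$ and $\lambda \ge \lambda_0/2$. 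Tracking the resulting polynomial gains and losses in $\lambda$ over the support $\lambda \gtrsim \lambda_0$ (where $\lambda^{2m-1}$ is tempered by the decay of $R^+$ and by taking sufficient integrations by parts), one arrives at a bound
\begin{equation*}
|W_2^H(x,y)| \lesssim \int_{\R^n \times \R^n} \frac{|V(z_1)|\,|V(z_2)|\,\langle z_1\rangle^{\sigma} \langle z_2\rangle^{\sigma}}{|x-z_1|^{(n-1)/2}\,|z_2-y|^{(n-1)/2}\,\langle |x-z_1| \pm |z_2-y|\rangle^{N}}\,dz_1\,dz_2,
\end{equation*}
with $\sigma$ determined by the number of integrations by parts. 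Choosing $N > n+1$ and using $\beta > n+5$ to absorb $\langle z_i\rangle^\sigma |V(z_i)|$ into $L^1 \cap L^\infty$ weights, a routine convolution estimate shows that both $\sup_x \int |W_2^H(x,y)|\,dy$ and $\sup_y \int |W_2^H(x,y)|\,dx$ are finite.

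The principal technical obstacle is that the middle resolvent $R^+(\lambda^{2m})(z_1,z_2)$ has no useful pointwise control, so each $\lambda$-derivative must be interpreted through the weighted $L^2$-bounds of \cite[Lemma 3.9]{FSWY20}, which cost a weight $\langle z_i\rangle^\sigma$ growing with the order of differentiation. Thus the integration-by-parts procedure cannot be decoupled from the $(z_1, z_2)$-integration, and one must carry out the whole argument in the operator-theoretic pairing and only take absolute values at the very end via Cauchy-Schwarz. The decay hypothesis $\beta > n+5$ is tuned precisely so that, for $N$ large enough to yield an admissible kernel, the $V$-weighted factors still lie in $L^2_{\sigma}(\R^n)$, thereby closing the estimate.
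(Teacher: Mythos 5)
There is a genuine gap in your plan, and it sits exactly at the point you flag as the ``principal technical obstacle.'' You propose to integrate by parts in $\lambda$ against the phase $e^{i\lambda(|x-z_1|\pm|z_2-y|)}$, whose phase function depends on the inner variables $z_1,z_2$. Each integration by parts therefore produces a factor $\big(|x-z_1|\pm|z_2-y|\big)^{-1}$ \emph{inside} the double $z$-integral, i.e.\ inside the pairing $\big\langle \partial_\lambda^l R^+(\lambda^{2m})\,[Vh_y^\pm],\,Vg_x^+\big\rangle$. But the only control available on the middle resolvent at high energy is the weighted operator-norm bound $\|\partial_\lambda^l R^\pm(\lambda^{2m})\|_{L^2_\sigma\to L^2_{-\sigma}}\lesssim \lambda^{-(2m-1)}$ of Lemma \ref{lemma-high energy-second part-n<2m} (note also that this bound does not improve with $l$, contrary to the $\lambda^{1-2m-l}$ you quote). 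Such an operator bound can only be exploited when the amplitude multiplying the resolvent kernel factorizes as (function of $z_1$)$\times$(function of $z_2$); the joint, non-factorizable weight $\big(|x-z_1|\pm|z_2-y|\big)^{-N}$ cannot be pulled out of the pairing, and ``taking absolute values at the end via Cauchy--Schwarz'' does not help, because there are no pointwise kernel bounds for $\partial_\lambda^l R^+(\lambda^{2m})(z_1,z_2)$ of the form $\langle z_1\rangle^\sigma\langle z_2\rangle^\sigma$ that your displayed intermediate estimate would require. As stated, that intermediate bound cannot be derived from the cited ingredients, so the argument does not close.

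The paper's proof resolves precisely this tension by choosing the phase to depend only on the outer variables: using \eqref{eq-free kernel-F} it extracts $e^{i\lambda(|x|\pm|y|)}$ and relegates the residual oscillations $e^{-i\lambda(|x-z|-|x|)}$ and $e^{\pm i\lambda(|y-z|-|y|)}$ into the amplitudes defining $T^{H,\pm}(\lambda,x,y)$. Since $\big||x-z|-|x|\big|\le|z|$, each $\lambda$-derivative of these residual phases costs only a factor $\langle z\rangle$, which is absorbed by the decay of $V$ (this is where $\beta>n+5$ is used, to afford $\sigma>\frac{n+3}{2}+\frac12$ in Lemma \ref{lemma-high energy-second part-n<2m} for up to $\frac{n+3}{2}$ derivatives). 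One then gets $\big|\partial_\lambda^l T^{H,\pm}(\lambda,x,y)\big|\lesssim \lambda^{n-4m+1}\langle x\rangle^{-\frac{n-1}{2}}\langle y\rangle^{-\frac{n-1}{2}}$, and integration by parts against $e^{i\lambda(|x|\pm|y|)}$ produces the factor $\big(|x|\pm|y|\big)^{-1}$ \emph{outside} the pairing, yielding the admissible bound $\langle|x|\pm|y|\rangle^{-\frac{n+3}{2}}\langle x\rangle^{-\frac{n-1}{2}}\langle y\rangle^{-\frac{n-1}{2}}$ and then Schur's lemma. If you want to salvage your route, the fix is exactly this rearrangement of phases; with the phase $|x-z_1|\pm|z_2-y|$ kept inside the pairing the integration by parts cannot be justified with the available resolvent estimates.
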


We first prove the result on $W_1^H$. To achieve it, we need the following oscillatory integral estimate.
\begin{lemma}\label{osci-high}
Consider the following oscillatory integral
\begin{equation}\label{eq-osci-high}
I(x)=\int_0^\infty e^{i\lambda x}\lambda^{b}(1-\chi(\lambda))d\lambda, \quad x,~b \in \mathbb{R},~ |x|\neq 0. 	
\end{equation}
For each $b\le -1$ and each integer $N>0$, one has 
$$\left|I(x)\right|\lesssim_N \min\{|\log|x||,\,|x|^{-N} \}.$$
\end{lemma}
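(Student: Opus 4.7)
The plan is to treat the regimes $|x|\ge 1$ and $|x|\le 1$ separately, since the $|x|^{-N}$ and $|\log|x||$ bounds are active in opposite regimes and arise from different mechanisms.

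For $|x|\ge 1$, I would integrate by parts $N$ times directly in \eqref{eq-osci-high}, using that the phase $\lambda x$ has no stationary points on $\supp(1-\chi)\subset\{\lambda\ge\lambda_0/2\}$. This yields
\begin{equation*}
I(x)=\frac{1}{(-ix)^{N}}\int_0^\infty e^{i\lambda x}\,\partial_\lambda^{N}\!\left[\lambda^b(1-\chi(\lambda))\right]d\lambda,
\end{equation*}
with all boundary terms vanishing: the one at $\lambda=0$ because $1-\chi\equiv 0$ near $0$, and the one at $\infty$ because every $\partial_\lambda^{k}[\lambda^{b}]$, $k\ge 0$, tends to $0$ at infinity (using $b\le -1$). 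A Leibniz expansion produces a principal term bounded by $\lambda^{b-N}\mathbf{1}_{\{\lambda\ge \lambda_0/2\}}$, integrable for any $N\ge 1$ (since $b-N<-1$), plus lower-order pieces supported on the compact annulus $\{\lambda_0/2\le\lambda\le\lambda_0\}$ coming from derivatives of $\chi$. This gives $|I(x)|\lesssim_N |x|^{-N}$.

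For $|x|\le 1$, I would decompose with a smooth cutoff at the critical scale $\lambda\sim 1/|x|$. Choose $\phi_0,\phi_1\in C^\infty(\mathbb{R})$ with $\phi_0+\phi_1=1$, $\phi_0\equiv 1$ on $[-1/2,1/2]$, $\supp\phi_0\subset[-1,1]$, and split $I=I_0+I_1$ with the extra factor $\phi_s(\lambda|x|)$ in each summand. The low piece, supported in $\lambda\in[\lambda_0/2,1/|x|]$, is estimated by absolute values:
\begin{equation*}
|I_0(x)|\le\int_{\lambda_0/2}^{1/|x|}\lambda^b\,d\lambda\lesssim 1+|\log|x||,
\end{equation*}
the logarithm appearing exactly in the critical case $b=-1$ (a uniform bound when $b<-1$). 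The high piece $I_1$ is again handled by integration by parts $N$ times; derivatives falling on $\phi_1(\lambda|x|)$ cost $|x|^l$ but are supported on the thin shell $\lambda\in[1/(2|x|),1/|x|]$, and a short book-keeping using $|\partial_\lambda^j\lambda^b|\lesssim\lambda^{b-j}$ shows each term in the resulting Leibniz expansion is $\lesssim |x|^{-b-1}$, which is $O(1)$ when $b=-1$ and decays when $b<-1$.

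Combining the two regimes yields the asserted bound, using the logarithmic estimate on $|x|\le 1$ and the polynomial one on $|x|\ge 1$. The only mildly delicate point is the vanishing of the boundary term at infinity in the first integration by parts when the original integral is only conditionally convergent (the case $b=-1$); I would justify this by a standard truncation, inserting a cutoff $\eta(\lambda/R)$ in $\lambda$, performing the integration by parts in the truncated integral, and then passing to the limit $R\to\infty$ using the decay of $\lambda^{b-k}$ for each $k\ge 0$.
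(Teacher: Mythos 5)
Your proof is correct and follows essentially the same route as the paper's: the same cutoff $\phi_0(\lambda|x|)+\phi_1(\lambda|x|)$ at the scale $\lambda\sim 1/|x|$ for $|x|\le 1$, with the low piece estimated in absolute value (producing the logarithm) and the high piece by integration by parts, and $N$-fold integration by parts for $|x|\ge 1$. The only differences are cosmetic: the paper integrates by parts once in the $I_1$ term rather than $N$ times, and your remark on truncating to justify the vanishing boundary term at infinity is a welcome extra precaution rather than a new idea.
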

\begin{proof}
If $|x|\le 1$, we divide $I(x)$ as 
\begin{equation*}
	I_i(x)=\int_0^\infty  e^{i\lambda x}\lambda^{b}(1-\chi(\lambda))\phi_i(\lambda|x|)d\lambda, \quad \text{for } i=0,1,
\end{equation*}
where $\phi_0 \in C^{\infty}(\R)$ such that $\phi_0(\lambda)=1$ when $|\lambda|\leq 1/2$ and $\phi_0(\lambda)=0$ when $|\lambda|\geq 1$, and $\phi_1(\lambda)=1-\phi_0$.
Then a direct computation shows that 
$$\left| I_0(x)\right| \lesssim_{\lambda_0} \int_{\lambda_0/2}^{|x|^{-1}}\lambda^{-1}d\lambda\lesssim_{\lambda_0}\left|\log|x| \right|.  $$
For $I_1(x)$, applying the integration by parts once, we have 
$$\left| I_1(x)\right| \lesssim_{\lambda_0}  |x|^{-1}\int_{|x|^{-1}}^{\infty}\lambda^{-2}d\lambda\lesssim 1.$$
If $|x|>1$, by using the integration by parts $N$ times to $I(x)$, then we have
\begin{equation*}
	I(x)=|x|^{-N}\int_{\lambda_0/2}^\infty \lambda^{b-1}d\lambda\lesssim_{\lambda_0} |x|^{-N}.
\end{equation*}
Combining these estimates, we complete the proof. 
\end{proof}
\begin{remark}
The above oscillatory integral \eqref{eq-osci-high} should be considered in the distributional sense, that is,  we should introduce a smooth cut-off function $\phi_0$ as before and estimate the integral 
\begin{equation*}
	I_L(x)=\int_0^\infty e^{i\lambda x}\lambda^{b}(1-\chi(\lambda))\phi_0(\lambda/L)d\lambda, \quad x,~b \in \mathbb{R},~ |x|\neq 0, 	
\end{equation*}
uniformly for $L>0$. We omit the discussion on it for convenience.
\end{remark}
At this point, we can prove the $L^p$-boundedness of $W_1^H$ in Theorem \ref{propo-high energy-n<2m-1-1}.

\begin{proof}[\bf{Proof of Theorem \ref{propo-high energy-n<2m-1-1}.}]
Using \eqref{eq-free kernel-F},  we can write the integral kernel $W_1^H(x,y)$ as follows:
$$W_1^H(x,y)=W_1^{H,+}(x,y)-W_1^{H,-}(x,y),$$
where
\begin{equation*}
\begin{split}
W_1^{H,\pm}(x,y)=\int_0^\infty\lambda^{n-2m}(1-\chi(\lambda))\Big(\int_{\mathbb{R}^n}e^{i\lambda|x-z|\pm i\lambda|z-y|}
\frac{F_n^+(\lambda|x-z|)V(z)F_n^\pm(\lambda|z-y|)}{|x-z|^{\frac{n-1}{2}}|z-y|^{\frac{n-1}{2}}}dz\Big)d\lambda.
\end{split}
\end{equation*}
By \eqref{eq-F decay-odd} and Lemma \ref{osci-high},  the oscillatory integral in $\lambda$ above satisfies
\begin{equation*}
\begin{aligned}
&\left| \int_0^\infty e^{i\lambda(|x-z|\pm |z-y|)}\lambda^{n-2m}(1-\chi(\lambda))F_n^+(\lambda|x-z|)F_n^\pm(\lambda|z-y|)d\lambda\right| \\
\lesssim& \min\Big\{\big|\log\big||x-z|\pm |z-y| \big|\big|,\,\big||x-z|\pm |z-y| \big|^{-n-1} \Big\}.	
\end{aligned}
\end{equation*}
Thus, for any $x\in \mathbb{R}^n$, we have  
\begin{equation*}
	\begin{aligned}
\int_{\mathbb{R}^n}|W_{1}^{H,\pm}(x,y)|dx&\lesssim \int_{\mathbb{R}^{2n}}\frac{\min\{\big| \log||x-z|\pm|z-y||\big| ,\,\big||x-z|\pm|z-y|\big|^{-n-1}\}|V(z)|}{|x-z|^\frac{n-1}{2}|y-z|^\frac{n-1}{2}}dxdz \\
&= \int_{\mathbb{R}^{n}} \int_{0}^\infty \frac{\min\{\big| \log\big|\rho\pm|z-y|\big|\big| ,\,\big|\rho\pm|z-y|\big|^{-n-1}\}|V(z)|}{|y-z|^\frac{n-1}{2}} \rho^\frac{n-1}{2} d\rho dz \\
&\lesssim\int_{\mathbb{R}^{n}} \langle z-y\rangle^\frac{n-1}{2} \frac{|V(z)|}{|y-z|^\frac{n-1}{2}}dz<\infty. 
	\end{aligned}
\end{equation*}
This implies that $W_{1}^{H,\pm}$ are bounded on $L^1(\mathbb{R}^n)$ by Shur's test lemma.
On the other hand, $W_{1}^{H,\pm}$ are also bounded on $L^{\infty}(\mathbb{R}^n)$ by duality arguments. Therefore, by the interpolation theory, $W_1^H$ is bounded on $L^p(\mathbb{R}^n)$ for all $1\le p\le\infty$. 
\end{proof}

Next, we show the $L^p$-boundedness of $W_2^H$.  To prove it, we need the following estimate for the resolvent in high energy part (see \cite[Lemma 3.9]{FSWY20}).
\begin{lemma}\label{lemma-high energy-second part-n<2m}
	Assume that $|V(x)|\lesssim \langle x\rangle^{-2(k+1)}$, then for any $\sigma>k+\frac{1}{2},$ $\partial^k_\lambda (R^\pm(\lambda^{2m}))$ is bounded from $L^{2}_\sigma(\mathbb{R}^n)$ to $L^{2}_{-\sigma}(\mathbb{R}^n).$ Furthermore, one has
	\begin{equation}\label{eq-resolvent high energy estimates}
		\Big\|\partial^k_\lambda (R^\pm(\lambda^{2m}))\Big\|_{L^{2}_\sigma(\mathbb{R}^n)\rightarrow L^{2}_{-\sigma}(\mathbb{R}^n)}\lesssim \lambda^{-(2m-1)}.
	\end{equation}
\end{lemma}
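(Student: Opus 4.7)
The plan is to derive the bound in two stages: first for the free resolvent $R_0^\pm(\lambda^{2m})$ by reduction to the classical second-order limiting absorption principle (LAP) for $-\Delta$, and then for the perturbed resolvent via the symmetric second resolvent identity \eqref{eq: reso-identity}.

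For the free estimate, I would use the decomposition \eqref{defR0},
\begin{equation*}
R_0^\pm(\lambda^{2m}) = \frac{1}{m\lambda^{2m-2}}\sum_{k\in I^\pm} e^{2ik\pi/m}\,R^\pm(-\Delta;\lambda_k^2),
\end{equation*}
and estimate each summand separately. For $k=1,\ldots,m-1$, the parameter $\lambda_k^2$ lies off the positive real axis, hence $R(-\Delta;\lambda_k^2)$ is jointly analytic in $\lambda$ and smoothing on weighted $L^2$-spaces on any set $\{\lambda\ge c>0\}$; derivatives of that summand contribute harmlessly. The only borderline terms correspond to $k=0$ (for $+$) and $k=m$ (for $-$), which are the genuine physical limiting resolvents $R^\pm(-\Delta;\lambda^2)$. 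For these I would invoke the Agmon--Kato--Kuroda bound
\begin{equation*}
\bigl\|\partial_\lambda^j R^\pm(-\Delta;\lambda^2)\bigr\|_{L^2_\sigma\to L^2_{-\sigma}} \lesssim \lambda^{-1},\qquad \sigma>j+\tfrac12,\ \lambda\ge 1,
\end{equation*}
and apply Leibniz to the prefactor $\lambda^{-(2m-2)}$. Since we are in the high-energy regime $\lambda\ge \lambda_0/2$, negative powers arising from differentiating the prefactor are absorbed, yielding
\begin{equation*}
\bigl\|\partial_\lambda^k R_0^\pm(\lambda^{2m})\bigr\|_{L^2_\sigma\to L^2_{-\sigma}} \lesssim \lambda^{-(2m-1)},\qquad \sigma>k+\tfrac12.
\end{equation*}

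Next, I would transfer the estimate to $R^\pm(\lambda^{2m})$ via the symmetric second resolvent identity
\begin{equation*}
R^\pm(\lambda^{2m}) = R_0^\pm(\lambda^{2m}) - R_0^\pm(\lambda^{2m})\,v\,(M^\pm(\lambda))^{-1}\,v\,R_0^\pm(\lambda^{2m}),
\end{equation*}
where $v=\sqrt{|V|}$ and $M^\pm(\lambda)=U+vR_0^\pm(\lambda^{2m})v$. Since the hypothesis $|V(x)|\lesssim\langle x\rangle^{-2(k+1)}$ gives $|v(x)|\lesssim\langle x\rangle^{-(k+1)}$, multiplication by $v$ maps $L^2_{-\sigma}$ continuously into $L^2_{k+1-\sigma}$. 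Composing with the free bound at level $\sigma=k+1>k+\tfrac12$ yields $\|vR_0^\pm(\lambda^{2m})v\|_{L^2\to L^2}\lesssim\lambda^{-(2m-1)}$, which in particular tends to zero as $\lambda\to\infty$. A Neumann series argument then shows that $(M^\pm(\lambda))^{-1}$ exists and is uniformly bounded on $L^2$ for large $\lambda$; absence of embedded positive eigenvalues of $H$, together with the continuity of $\lambda\mapsto M^\pm(\lambda)$ in $\mathbb{B}(L^2)$, patches this into a uniform bound on $[\lambda_0/2,\infty)$.

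For $k\ge 1$ derivatives, I would differentiate the identity by Leibniz, producing terms of the form
\begin{equation*}
\bigl(\partial_\lambda^{k_1} R_0^\pm\bigr)\,v\,\bigl(\partial_\lambda^{k_2}(M^\pm)^{-1}\bigr)\,v\,\bigl(\partial_\lambda^{k_3} R_0^\pm\bigr),\qquad k_1+k_2+k_3=k,
\end{equation*}
and recursively expand $\partial_\lambda(M^\pm)^{-1}=-(M^\pm)^{-1}(\partial_\lambda M^\pm)(M^\pm)^{-1}$ using $\partial_\lambda^j M^\pm = v(\partial_\lambda^j R_0^\pm)v$. Each factor of the form $v(\partial_\lambda^j R_0^\pm)v$ is bounded on $L^2$ by $\lambda^{-(2m-1)}$ by Step~1 at level $\sigma=k+1>j+\tfrac12$, and each $(M^\pm)^{-1}$ factor is uniformly bounded; keeping at most one of the $\lambda^{-(2m-1)}$ factors and absorbing the rest into constants (valid since $\lambda\ge\lambda_0/2$) produces the desired bound. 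The main obstacle is the uniform control of $(M^\pm(\lambda))^{-1}$ across the entire half-line $[\lambda_0/2,\infty)$: the large-$\lambda$ regime is handled by the Neumann series, but the intermediate range requires combining the absence of embedded eigenvalues with a compactness/continuity argument to rule out a blow-up of $\|(M^\pm(\lambda))^{-1}\|$ at any finite $\lambda>0$. Once this uniform bound is secured, the rest is bookkeeping of Leibniz terms.
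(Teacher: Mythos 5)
The paper does not actually prove this lemma: it is quoted verbatim from \cite{FSWY20} (Lemma 3.9 there), so there is no in-paper proof to compare against. Your proposal is correct and is essentially the standard argument behind that cited result: the free bound via the splitting \eqref{defR0}, where only the summands with $\lambda_k^2$ on the positive real axis are borderline and are handled by Agmon's weighted high-energy limiting absorption estimates with derivatives, and the perturbed bound via the symmetric resolvent identity, a Neumann series for $(M^\pm(\lambda))^{-1}$ at large $\lambda$, and continuity together with the absence of embedded positive eigenvalues on compact $\lambda$-intervals. Two small points to make explicit: one may assume $k+\tfrac12<\sigma\le k+1$ (the case of larger $\sigma$ follows from the embeddings $L^2_\sigma\hookrightarrow L^2_{k+1}$ and $L^2_{-(k+1)}\hookrightarrow L^2_{-\sigma}$), so that multiplication by $v$ really maps $L^2_{-\sigma}$ into $L^2$; and the estimate \eqref{eq-resolvent high energy estimates} is a high-energy statement, uniform only for $\lambda$ bounded away from zero, which is exactly how it is used in Section \ref{sec:high energy}.
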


\begin{proof}[\bf{Proof of Theorem \ref{thm-high energy-2}.}]
Recall that the operator $W_2^H$ defined by
\[W_2^H=\int_0^\infty\lambda^{2m-1}(1-\chi(\lambda))R_0^+(\lambda^{2m})VR^+(\lambda^{2m})V(R_0^+(\lambda^{2m})-R_0^-(\lambda^{2m})d\lambda.\]
Applying \eqref{eq-free kernel-F}, we write its integral kernel as
\begin{equation*}
\begin{aligned}
W_2^{H}(x,y)=&\frac{m}{\pi i}\int_0^\infty\lambda^{n-2m}(1-\chi(\lambda))\left(  e^{i\lambda(|x|+ |y|)}T^{H,+}(\lambda,x,y)-e^{i\lambda(|x|- |y|)}T^{H,-}(\lambda,x,y)\right)d\lambda \\
:=&W_2^{H,+}(x,y)-W_2^{H,-}(x,y),
\end{aligned}
\end{equation*}	
where 
$$T^{H,\pm}(\lambda,x,y)=\Big\langle R^+(\lambda^{2m})V\frac{e^{\pm i(|y-\cdot|-|y|)}F_n^\pm(\lambda|y-\cdot|)}{|y-\cdot|^\frac{n-1}{2}},\, V\frac{e^{- i(|x-\cdot|-|x|)}F_n^-(\lambda|x-\cdot|)}{|x-\cdot|^\frac{n-1}{2}} \Big\rangle_{L^2}.$$	
Using Lemma \ref{lemma-high energy-second part-n<2m},  a direct computation shows that for any $l=0,1,2,\cdots, \frac{n+3}{2}$, one has 
$$\Big| \partial_\lambda^lT^{H,\pm}(\lambda,x,y)\Big| \lesssim\lambda^{n-4m+1}\langle x\rangle^{-\frac{n-1}{2}}\langle y\rangle^{-\frac{n-1}{2}}, \quad \lambda\ge\lambda_0/2.$$
The validity of the estimate requires the boundedness of the $\frac{n+3}{2}$-order derivative of $R^+(\lambda^{2m})$, from which the decay condition of $V$ in this theorem is derived.	
Note that $n-4m+1 \le -2$ when $1\le n\le 2m-1$. Thus, when $||x|\pm |y||\le 1$, it follows immediately from the expressions of $W_2^{H,\pm}(x,y)$ that
\begin{equation*}
\big| W_2^{H,+}(x,y)\big|+ \big| W_2^{H,-}(x,y)\big|	\lesssim \langle x\rangle^{-\frac{n-1}{2}}\langle y\rangle^{-\frac{n-1}{2}}.	
\end{equation*}
When $||x|\pm |y||\ge 1$, applying the integration by parts $\frac{n+3}{2}$ times, we have 
\begin{equation*}
	\big| W_2^{H,+}(x,y)\big|+ \big| W_2^{H,-}(x,y)\big|	\lesssim\big||x|-|y| \big|^{-\frac{n+3}{2}}  \langle x\rangle^{-\frac{n-1}{2}}\langle y\rangle^{-\frac{n-1}{2}}.
\end{equation*}
These two estimates show that  $W_2^{H,\pm}(x,y)$ are admissible, thereby ensuring that $W_2^H$ is bounded on $L^p(\mathbb{R}^n)$ for all $1 \leq p \leq \infty$.
\end{proof}

     \section{Proof of Theorem \ref{thm-unbound}}\label{sec-unb}
In this section, we will prove Theorem \ref{thm-unbound}. Specially, we have that $W_\pm(H; (-\Delta)^m)$ are unbounded on $L^p(\mathbb{R}^n)$ for all $\frac{n}{n-2m+\mathbf{k}+k_c-1} < p \leq \infty$ if $k_c<\mathbf{k}\leq m_n$, and for all $\frac{2n}{n - 1}< p \leq \infty$ if zero is an eigenvalue of $H$ with a non-zero distributional solution of $H\phi = 0$ in $\bigcap_{s < -\frac{1}{2}}L^{2}_{s}(\R^n)\setminus L^2(\R^n)$. We derive this result using proof by contradiction. The key idea is to reduce the problem to proving the optimality of the following time-decay estimates for $e^{itH}P_{ac}(H)$ in the weighted $L^2$ space, where $P_{ac}(H)$ denotes the projection onto the absolutely continuous spectral subspace of $H$.  

Recall that for all odd $1\le n\le 4m-1$, 
$$m_n=\min\big\{m, \ 2m-\frac{n-1}{2} \big\}, \ \ \ k_c=\max\{m-\frac {n-1}{2}, 0\}.$$ Let
\begin{equation}\label{p_k}
p_{\k}=\begin{cases}
		\frac{n}{n-2m+\mathbf{k}+k_c-1},\, &\text{if}\ k_c<\k \le m_n,\\
		\frac{2n}{n-1},\, &\text{if}\  \k = m_n+1.
	\end{cases}\end{equation}
\begin{proposition}
Let $\sigma>\frac n2 +1$. Under the assumption of Theorem \ref{thm-unbound} for $k_c < \mathbf{k}\le m_n+1$,  there exists a large number $L>0$, a function $\psi\in L^{2}_{\sigma}(\R^n)$ and a constant $C>0$ such that 
	\begin{equation}\label{eq: time decay lower bound}
	\Big|\big\langle e^{i tH}P_{ac}(H)\psi,\, \psi \big\rangle \Big|  \ge C |t|^{-\frac{n}{m}(\frac 12-\frac{1}{p_{\k}})}, \ \ |t|> L.
\end{equation}
\end{proposition}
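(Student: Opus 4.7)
The plan is to extract the long-time asymptotic of $\langle e^{itH}P_{ac}(H)\psi,\psi\rangle$ from Stone's spectral formula
$$\langle e^{itH}P_{ac}(H)\psi,\psi\rangle=\frac{m}{\pi i}\int_0^\infty e^{it\lambda^{2m}}\lambda^{2m-1}\bigl\langle[R^+(\lambda^{2m})-R^-(\lambda^{2m})]\psi,\psi\bigr\rangle d\lambda,$$
and to read off the slowest-decaying contribution from the resolvent expansion of Theorem \ref{thm-M inverse-odd}. After splitting by the cutoff $\chi(\lambda)$ of the main theorems, the high-energy tail is $O(|t|^{-N})$ for every $N$ by iterated integration by parts, using the polynomial-in-$\lambda$ resolvent bound of Lemma \ref{lemma-high energy-second part-n<2m} and the hypothesis that $H$ has no positive embedded eigenvalues. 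Hence the decay rate is entirely governed by the low-energy part.

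For the low-energy part, inserting the symmetric second resolvent identity together with $(M^\pm(\lambda))^{-1}=\sum_{i,j\in J_\k}\lambda^{2m-n-i-j}Q_i(M_{i,j}^\pm+\Gamma_{i,j}^\pm(\lambda))Q_j$ and the refined description of $Q_jvR_0^\pm(\lambda^{2m})\psi$ from Lemma \ref{lem3.10} expresses the integrand as a finite sum, indexed by pairs $(i,j)\in J_\k\times J_\k$, of oscillatory terms with leading $\lambda$-power $\lambda^{\vartheta(i)+\vartheta(j)-i-j}$ times computable coefficients $c_{i,j}^\pm(\psi)$ that are bilinear in $\{M_{i,j}^\pm Q_j(vz^\beta)\}$ and in weighted moments of $\psi$ over $\{|y|>1\}$. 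I then single out a dominant index $j_\ast\in J_\k$ and construct the witness $\psi(y)=\eta(y)y^{\alpha_\ast}$ with $\eta\in C_c^\infty(\{|y|>2\})$ and $|\alpha_\ast|=\vartheta(j_\ast)$, choosing $\alpha_\ast$ and $\eta$ so that the weighted moment $\int\eta(y)y^{\alpha+\alpha_\ast}/|y|^{(n-1)/2+\vartheta(j_\ast)}\,dy$ appearing in \eqref{eq3.100001} is non-zero for some $|\alpha|=\vartheta(j_\ast)$. Non-triviality of $Q_{j_\ast}L^2$ is guaranteed for $k_c<\k\le m_n$ by the very definition of a $\k$-th kind resonance, and for $\k=m_n+1$ by the hypothesis that a distributional solution $\phi\in\bigcap_{s<-1/2}L^{2}_{s}\setminus L^2$ of $H\phi=0$ exists; together with the explicit invertibility of $M_{j_\ast,j_\ast}^\pm$ on $Q_{j_\ast}L^2$ asserted in Theorem \ref{thm-M inverse-odd}, this forces $c_{j_\ast,j_\ast}^\pm(\psi)\neq 0$. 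The resulting leading oscillatory integral $\int_0^\infty e^{it\lambda^{2m}}\lambda^{a_\ast}\chi(\lambda)\,d\lambda$ with $a_\ast:=2\vartheta(j_\ast)-2j_\ast$ is evaluated by the substitution $u=t\lambda^{2m}$ and dominated convergence to a non-zero Fresnel integral $\int_0^\infty e^{iu}u^{(a_\ast+1)/(2m)-1}\,du$, yielding an asymptotic $\sim C_0|t|^{-(a_\ast+1)/(2m)}$ with $C_0\neq 0$. A direct algebraic identification using \eqref{p_k} and $\vartheta(j_\ast)-j_\ast\in\{0,1/2\}$ shows $(a_\ast+1)/(2m)\le\frac{n}{m}(\tfrac 12-\tfrac{1}{p_\k})$, so for large $|t|$ the leading contribution is bounded below by $c|t|^{-n/m(1/2-1/p_\k)}$, proving \eqref{eq: time decay lower bound}.

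The main obstacle lies in certifying that the leading $(j_\ast,j_\ast)$ term is not cancelled by contributions from other index pairs $(i,j)$ sharing the same $\lambda$-exponent $\vartheta(i)+\vartheta(j)-i-j$, nor by the remainder pieces $\Gamma_{i,j}^\pm(\lambda)$, $\omega_{j,2}^\pm$ and $\omega_{j,3}^\pm$. This reduces to a finite-dimensional nondegeneracy question on $\bigoplus_{j\in J_\k}Q_jL^2$ and is handled by combining the cancellation relations \eqref{eq-cancellation-Q_j}, the block-diagonality identity \eqref{eq-mij=0}, and the explicit formulas for $M_{m-n/2,m-n/2}^\pm$ and $M_{2m-n/2,2m-n/2}^\pm$ from Theorem \ref{thm-M inverse-odd}; a perturbative choice of the multi-index $\alpha_\ast$ and the bump $\eta$ then forces the quadratic form $c_{j_\ast,j_\ast}^\pm(\psi)$ to dominate. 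The remainder pieces are controlled via Lemma \ref{oscillatory estimates} together with the quantitative bounds \eqref{equ3.47}, \eqref{equ2.100}, \eqref{eq3.1001}, \eqref{eq3.10011}, which convert their extra $\lambda$-decay into strictly faster $t$-decay than $|t|^{-(a_\ast+1)/(2m)}$, so the leading term cannot be absorbed into the error.
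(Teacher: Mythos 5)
Your overall strategy (Stone's formula, split by $\chi$, kill the high-energy and free parts, read the rate off the threshold expansion of $(M^\pm(\lambda))^{-1}$) is the same as the paper's, but the two steps that carry the whole weight of the lower bound are not correct as proposed. First, the nondegeneracy you invoke is the wrong one. Since the spectral measure involves $R^+(\lambda^{2m})-R^-(\lambda^{2m})$, the coefficient of the slowest $\lambda$-power is a pairing against the \emph{difference} $M^+_{j_{\k},j_{\k}}-M^-_{j_{\k},j_{\k}}$ (sandwiched by $b_0 G_{2m-n}vQ_{j_{\k}}$), and invertibility of each $M^\pm_{j_\ast,j_\ast}$ does not prevent this difference from vanishing. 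The paper's proof goes through the explicit relation $M^{+}_{j_{\k},j_{\k}}-M^{-}_{j_{\k},j_{\k}}=\big(e^{\frac{\pi i}{2m}(n-2m+2j_{\k})}-e^{-\frac{\pi i}{2m}(n-2m+2j_{\k})}\big)M_{j_{\k},j_{\k}}$ with $M_{j_{\k},j_{\k}}$ strictly negative definite (coming from $d^{-1}$ in the Feshbach step), together with the special witness $\psi=vU\psi^1_{j_{\k}}$ and the identity $b_0Q_{j_{\k}}vG_{2m-n}vU\psi^1_{j_{\k}}=-\psi^1_{j_{\k}}$ (from $Q_{j_{\k}}T_0=0$), which makes the quadratic form explicitly $\langle M_{j_{\k},j_{\k}}\psi^1_{j_{\k}},\psi^1_{j_{\k}}\rangle\neq 0$. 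Your bump-times-monomial witness with a moment condition does not ensure $Q_{j_\ast}vG_{2m-n}\psi\neq 0$, nor that the $\pm$ difference is nonzero in that direction; and the danger is real, since by \eqref{eq-mij=0} and $M^+_{2m-\frac n2,2m-\frac n2}=M^-_{2m-\frac n2,2m-\frac n2}$ the difference vanishes identically on the top block in the eigenvalue case, so choosing $j_\ast=\max J_{m_n+1}=2m-\frac n2$ gives a zero leading coefficient; the correct block there is $j=2m-\frac{n+1}{2}$, whose nontriviality is exactly the $p$-wave-type hypothesis.

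Second, your power counting is carried out in the wrong regime, and the resulting rate does not match the statement. For a fixed $\psi\in L^2_\sigma$ (in particular compactly supported) and $\lambda\to 0^+$ one is in the regime $\lambda\langle y\rangle\ll 1$ on the support of $\psi$, where the relevant input is the threshold expansion \eqref{eq-free expansion-odd-2} (the non-oscillatory $b_0|x-y|^{2m-n}$ term, i.e. $G_{2m-n}$), not the far-field pieces $\omega_{j,1}^\pm$ of Lemma \ref{lem3.10}, which encode the $|y|\to\infty$ behavior needed for the wave-operator kernel. Your exponent $a_\ast=2\vartheta(j_\ast)-2j_\ast\in\{0,1\}$ would give a leading decay $|t|^{-(a_\ast+1)/(2m)}$, which is generically strictly slower than the target $|t|^{-\frac{4m-n-2j_{\k}}{2m}}=|t|^{-\frac nm(\frac12-\frac1{p_{\k}})}$ and is incompatible with the known Kato--Jensen upper bounds, so the coefficient you compute must in fact cancel; the inequality $(a_\ast+1)/(2m)\le\frac nm(\frac12-\frac1{p_\k})$ you use to close the argument is a symptom of this mismatch rather than a proof. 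The paper instead isolates the term $b_0^2\lambda^{2m-n-2j_{\k}}G_{2m-n}vQ_{j_{\k}}(M^+_{j_{\k},j_{\k}}-M^-_{j_{\k},j_{\k}})Q_{j_{\k}}vG_{2m-n}$, shows the error $\Phi(\lambda)$ gains $\lambda^{1/2}$ hence $|t|^{-\frac1{4m}}$ extra decay, and evaluates the leading oscillatory integral by the stationary-phase asymptotics of Lemma \ref{lem: the lower bound of osci int} to obtain exactly the rate $\frac{4m-n-2j_{\k}}{2m}$. A smaller point: your claim that the high-energy part is $O(|t|^{-N})$ for every $N$ overreaches, since the number of integrations by parts is limited by the finite decay of $V$ (Lemma \ref{lemma-high energy-second part-n<2m}); the paper's $|t|^{-\frac{n+1}{2}}$ from $\frac{n+1}{2}$ integrations by parts is what is available, and it suffices.
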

\begin{proof}
	From Stone's formula, the propagator $e^{i tH}P_{ac}(H)$ can be represented as
	$$e^{i tH}P_{ac}(H)=\frac{m}{\pi i}\int_0^{\infty} e^{  it\lambda^{2m}}\Big(R^{+}(\lambda^{2m})-R^{-}(\lambda^{2m})\Big) \lambda^{2m-1}d\lambda.$$
	We can decompose $e^{i tH}P_{ac}(H)$ into two parts as
	$$\Omega^{L}=\frac{m}{\pi i}\int_0^{\infty} e^{it\lambda^{2m}}\Big(R^{+}(\lambda^{2m})-R^{-}(\lambda^{2m})\Big) \lambda^{2m-1}\chi(\lambda)d\lambda,$$
	and
	$$\Omega^{H}=\frac{m}{\pi i}\int_0^{\infty} e^{it\lambda^{2m}}\Big(R^{+}(\lambda^{2m})-R^{-}(\lambda^{2m})\Big) \lambda^{2m-1}(1-\chi)(\lambda)d\lambda,$$
	where $\chi(\lambda)$ is the same as that in \eqref{eq-wave operator-low-0}. By using the high-energy estimate for resolvent in \eqref{eq-resolvent high energy estimates},  when we perform integration by parts $\frac{n+1}{2}$ times on $\Omega^{H}$, we can obtain a time-decay estimate of $\Omega^{H}$  as $|t|^{-\frac{n+1}{2}}$ in $\mathbb{B}(L^{2}_{\sigma},\, L^{2}_{-\sigma})$. Clearly,  $\frac{n+1}{2}> \frac{n}{m}(\frac 12-\frac{1}{p_{\k}})$, so the contribution of high-energy part $\Omega^H$ becomes negligible for \eqref{eq: time decay lower bound} in large time $t$. Thus, we will focus on the low-energy part $\Omega^L$ below. By using the symmetric second resolvent identity in \eqref{eq: reso-identity},  one has that
	\begin{align*}
		\Omega&^{L}=e^{it(-\Delta)^{m}}\chi(|D|) \\
		&-\frac{m}{\pi i}\int_0^{\infty} e^{it\lambda^{2m}}\Big[R_0^{+}(\lambda^{2m})v\left(M^{+}(\lambda) \right)^{-1}vR_0^{+}(\lambda^{2m}) -R_0^{-}(\lambda^{2m}) v\left(M^{-}(\lambda) \right)^{-1}vR_0^{-}(\lambda^{2m})\Big] \lambda^{2m-1}\chi(\lambda)d\lambda,
	\end{align*}
	where $D=i\nabla$.
	It is well-known (see e.g. \cite{ZYF}) that 
	\begin{equation*}
		\left\|e^{it(-\Delta)^{m}}\chi(|D|)\right\|_{L^{p'}-L^p}\lesssim |t|^{-\frac{n}{m}(\frac {1}{2}-\frac{1}{p})}, \quad t\neq0, \ \ 2\le p\le \infty,
	\end{equation*} 
which is also negligible to the contribution of \eqref{eq: time decay lower bound} if  $p>p_{\k}$ and $t$ large. 
	Hence, it remains to study the contribution of the part associated with the operator 
	$$R_0^{+}(\lambda^{2m})v\left(M^{+}(\lambda) \right)^{-1}vR_0^{+}(\lambda^{2m}) -R_0^{-}(\lambda^{2m}) v\left(M^{-}(\lambda) \right)^{-1}vR_0^{-}(\lambda^{2m})$$
	in $\mathbb{B}(L^{2}_{\sigma},\, L^{2}_{-\sigma})$.

	\textbf{Case I:} \underline{Zero is the $\k$-th kind resonance of $H$ with $k_c<\k\le m_n$}.  
    
    Taking the expansions \eqref{eq-free expansion-odd-2} of $R_0^{\pm}(\lambda^{2m})$  and \eqref{eq-M expansion-odd} of  $M^{\pm}(\lambda)$, and using the cancellation property of $Q_j$ in \eqref{eq-cancellation-Q_j}, it follows that in $\mathbb{B}(L^{2}_{\sigma},\, L^{2}_{-\sigma})$,
	\begin{equation}\label{eq: expan for spectral measure}
		\begin{aligned}
			R_0^{+}(\lambda^{2m})v\left(M^{+}(\lambda) \right)^{-1}&vR_0^{+}(\lambda^{2m}) -R_0^{-}(\lambda^{2m}) v\left(M^{-}(\lambda) \right)^{-1}vR_0^{-}(\lambda^{2m}) \\
			&=b_0^2\, \lambda^{2m-n-2j_{\k}} \, G_{2m-n}vQ_{j_{\k}}\left( M^{+}_{j_{\k},j_{\k}}-M^{-}_{j_{\k},j_{\k}}\right) Q_{j_{\k}}vG_{2m-n}+\Phi(\lambda),	
		\end{aligned}
	\end{equation}
	where $j_{\k}:=\max J_{\k}=k_c+\k-1$ (see \eqref{MaxJ_k}), $G_{2m-n}$ is the operator with the integral kernel $|x-y|^{2m-n}$, and the operator funtion  $\Phi(\lambda)$ satisfies that 
	\begin{equation}\label{eq: est for Phi}
	\left\|\frac{d^l}{d\lambda^{l}}\Phi(\lambda)\right\|_{\mathbb{B}(L^{2}_{\sigma},\, L^{2}_{-\sigma})}\lesssim \lambda^ {2m-n-2j_{\k}+\frac{1}{2}-l},  
	\end{equation}
	for any $l=0,1,\cdots, \frac{n+3}{2}$ and $\lambda\in (0,\lambda_0)$.

Let's first consider the integral part of $\Omega^{L}$ related to $\Phi(\lambda)$ for all $k_c<\k\le m_n$, which is written as 
	$$ \frac{m}{\pi i}\int_0^{\infty} e^{it\lambda^{2m}}\lambda^{2m-1}\Phi(\lambda)\chi(\lambda)d\lambda. $$
	We decompose it into 
	$$\frac{m}{\pi i}\int_0^{\infty} e^{it\lambda^{2m}}\lambda^{2m-1}\Phi(\lambda)\chi(\lambda)\chi(\lambda/|t|^{\frac {1}{2m}})d\lambda+\frac{m}{\pi i}\int_0^{\infty} e^{it\lambda^{2m}}\lambda^{2m-1}\Phi(\lambda)\chi(\lambda)\big(1-\chi(\lambda/|t|^{\frac {1}{2m}})\big)d\lambda. $$
	By a direct calculation,  the first integral contributes the time decay as $|t|^{-(\frac{4m-n-2j_{\k}}{2m}+\frac{1}{4m})}$. And using integration by parts $\frac{n+3}{2}$ times, the second integral also contributes $|t|^{-(\frac{4m-n-2j_{\k}}{2m}+\frac{1}{4m})}$. Therefore, the part of $\Omega^{L}$ related to $\Phi(\lambda)$ has the time decay as $|t|^{-(\frac{4m-n-2j_{\k}}{2m}+\frac{1}{4m})}$. 

Note that $p_{\k}=\frac{n}{n-2m+\mathbf{k}+k_c-1}$ by  \eqref{p_k} for $k_c<\k\le m_n$, and since $j_{\k}=k_c+\k-1$,  one has 
$$\frac{4m-n-2j_{\k}}{2m}+\frac{1}{4m}=\frac{n}{m}\big(\frac 12-\frac{1}{p_{\k}}\big)+\frac{1}{4m}>\frac{n}{m}\big(\frac 12-\frac{1}{p_{\k}}\big),$$ 
which means that the part of $\Omega^{L}$ related to $\Phi(\lambda)$ is negligible for \eqref{eq: time decay lower bound} as $|t|$ is large.
	
	Next, we consider the principal term in \eqref{eq: expan for spectral measure}. Firstly, we need to show that the principal term $$b_0^2G_{2m-n}vQ_{j_{\k}}\left( M^{+}_{j_{\k},j_{\k}}-M^{-}_{j_{\k},j_{\k}}\right) Q_{j_{\k}}vG_{2m-n}$$ in \eqref{eq: expan for spectral measure} does not vanish in $\mathbb{B}(L^{2}_{\sigma},\, L^{2}_{-\sigma})$.
	Indeed, we first have $Q_{j_{\k}}\neq 0$ by Proposition \ref{prop-resonance-odd} if zero is the $\k$-th kind of resonance of $H$ with $k_c<\k\le m_n$, and  $Q_{j_{\k}}$ is a finite rank projection on $L^2$ by the definition \eqref{eq-Q_j-odd}. Thus, it can be written as 
	$$
	Q_{j_{\k}}f= \sum_{s = 1}^{\tau}\langle f,\, \psi_{j_{\k}}^s\rangle\psi_{j_{\k}}^s. 
	$$ 
	Here, the set $\{\psi_{j_{\k}}^s~|~s = 1,\cdots,\tau\}$ is the orthonormal basis of $Q_{j_{\k}}L^2$. Since $\k >k_c$, one has that $Q_{j_{\k}}T_0 = 0$ by the property in  \eqref{S_jT_0}, which implies  $Q_{j_{\k}}(U+b_0vG_{2m - n}v)U \psi_{j_{\k}}^s=0$. Hence, we have 
	$$
	b_0Q_{j_k}vG_{2m - n}vU \psi_{j_{\k}}^s= -\psi_{j_{\k}}^s.
	$$
	Note that $vU \psi_{j_{\k}}^s \in L^{2}_{\sigma}$ by the decay assumption \eqref{decayonV} on $V$. Thus, to prove the claim, it suffices to prove that $Q_{j_{\k}}\left( M^{+}_{j_{\k},j_{\k}}-M^{-}_{j_{\k},j_{\k}}\right)Q_{j_{\k}}$ does not vanish. From \eqref{eq: inverse of D^pm} in the proof of Theorem \ref{thm-M inverse-odd}, we have 
	$$
	M^{+}_{j_{\k},j_{\k}}-M^{-}_{j_{\k},j_{\k}}= \left( e^{\frac{\pi \mathrm{i}}{2m}(n - 2m + 2{j_{\k}})}-e^{-\frac{\pi \mathrm{i}}{2m}(n - 2m + 2{j_{\k}})}\right) M_{j_{\k},j_{\k}}.
	$$
	Here, $M_{j_{\k},j_{\k}}$ is the operator in the diagonal line of $d^{-1}$ given in \eqref{eq: struce of the inverse of D}. It is clear that 
	$$
	\left( e^{\frac{\pi \mathrm{i}}{2m}(n - 2m + 2{j_{\k}})}-e^{-\frac{\pi \mathrm{i}}{2m}(n - 2m + 2{j_{\k}})}\right) \neq 0.
	$$ 
	Because $d$ is rigidly negative definite on  $\bigoplus_{j\in J_\k''}Q_{j}L^2$, $M_{j_{\k},j_{\k}}$ is also rigidly negative definite on $Q_{j_{\k}}L^2$. As a result, $Q_{j_{\k}}\left(M^{+}_{j_{\k},j_{\k}}-M^{-}_{j_{\k},j_{\k}}\right)Q_{j_{\k}}$ also does not vanish. Moreover, let $\psi=vU\psi_{j_{\k}}^1 \in L^{2}_{\sigma}$. Then one has that
	\begin{equation}\label{eq: no vanish}
	\begin{aligned}
	\big\langle b_0^2 G_{2m-n}vQ_{j_{\k}}\left( M^{+}_{j_{\k},j_{\k}}-M^{-}_{j_{\k},j_{\k}}\right)& Q_{j_{\k}}vG_{2m-n} \psi, \,  \psi\big\rangle \\
	=&\left( e^{\frac{\pi \mathrm{i}}{2m}(n - 2m + 2{j_{\k}})}-e^{-\frac{\pi \mathrm{i}}{2m}(n - 2m + 2{j_{\k}})}\right) \big\langle M_{j_{\k},j_{\k}}\psi_{j_{\k}}^1,\, \psi_{j_{\k}}^1 \big\rangle  \neq 0 .
	\end{aligned}		
	\end{equation}

	Now we come back to consider the contribution of the part of $\Omega^{L}$ related to the principal term 
	$$b_0^2\, \lambda^{2m-n-2j_{\k}} \, G_{2m-n}vQ_{j_{\k}}\left( M^{+}_{j_{\k},j_{\k}}-M^{-}_{j_{\k},j_{\k}}\right) Q_{j_{\k}}vG_{2m-n}.$$
	This part can be written as 
	$$
	I_{M}(t)= \frac{m }{\pi i}\Big(\int_0^{\infty} e^{it\lambda^{2m}} \lambda^{4m-n-2j_{\k}-1} \, \chi(\lambda) d\lambda\Big) \cdot 	\big\langle b_0^2 G_{2m-n}vQ_{j_{\k}}\left( M^{+}_{j_{\k},j_{\k}}-M^{-}_{j_{\k},j_{\k}}\right) Q_{j_{\k}}vG_{2m-n} \psi, \,  \psi\big\rangle.
	$$
	To prove \eqref{eq: time decay lower bound}, it suffices to show that $I_{M}(t)$ satisfies \eqref{eq: time decay lower bound}. By using Lemma \ref{lem: the lower bound of osci int} stated later and the fact \eqref{eq: no vanish},   we have 
	$$\left|I_{M}(t)\right|\sim |t|^{-\frac{4m-n-2j_{\k}}{2m}}=|t|^{-\frac{n}{m}(\frac 12-\frac{1}{p_{\k}})},$$ 
	when $|t|$ is sufficiently large.
	Therefore, we conclude the estimate \eqref{eq: time decay lower bound} for the case of $k_c<\mathbf{k} \le m_n$. 

\textbf{Case II:} \underline{Zero  is the eigenvalue of $H$ (i.e. $\k= m_n+1$)}. 

By utilizing the expansions \eqref{eq-free expansion-odd-2} of $R_0^{\pm}(\lambda^{2m})$, \eqref{eq-M expansion-odd} of  $M^{\pm}(\lambda)$,  the cancellation property of $Q_j$ in \eqref{eq-cancellation-Q_j}, as well as the equalities \eqref{eq-mij=0}, then  
	\begin{equation}\label{eq: expan for spectral measure-1}
		\begin{aligned}
			&R_0^{+}(\lambda^{2m})v\left(M^{+}(\lambda) \right)^{-1}vR_0^{+}(\lambda^{2m}) -R_0^{-}(\lambda^{2m}) v\left(M^{-}(\lambda) \right)^{-1}vR_0^{-}(\lambda^{2m}) \\
			=&b_0^2\, \lambda^{1-2m} \, G_{2m-n}vQ_{2m-\frac{n+1}{2}}\left( M^{+}_{2m-\frac{n+1}{2},2m-\frac{n+1}{2}}-M^{-}_{2m-\frac{n+1}{2},2m-\frac{n+1}{2}}\right) Q_{2m-\frac{n+1}{2}}vG_{2m-n} \\
			+&b_0^2\, \lambda^{\frac12-2m} \, G_{2m-n}vQ_{2m-\frac{n+1}{2}}\left( \Gamma^{+}_{2m-\frac{n+1}{2},2m-\frac{n}{2}}(\lambda)-\Gamma^{-}_{2m-\frac{n+1}{2},2m-\frac{n}{2}}(\lambda)\right) Q_{2m-\frac{n}{2}}vG_{2m-n} \\
			+&b_0^2\, \lambda^{\frac12-2m} \, G_{2m-n}vQ_{2m-\frac{n}{2}}\left( \Gamma^{+}_{2m-\frac{n}{2},2m-\frac{n+1}{2}}(\lambda)-\Gamma^{-}_{2m-\frac{n}{2},2m-\frac{n+1}{2}}(\lambda)\right) Q_{2m-\frac{n+1}{2}}vG_{2m-n} \\
			+&b_0^2\, \lambda^{-2m} \, G_{2m-n}vQ_{2m-\frac{n}{2}}\left( \Gamma^{+}_{2m-\frac{n}{2},2m-\frac{n}{2}}(\lambda)-\Gamma^{-}_{2m-\frac{n}{2},2m-\frac{n}{2}}(\lambda)\right) Q_{2m-\frac{n}{2}}vG_{2m-n} \\
			+&\Phi(\lambda),	
		\end{aligned}
	\end{equation}
where the estimate \eqref{eq: est for Phi} for $\Phi(\lambda)$ still holds.
In the above identity, we have used the fact $$M^+_{2m-\frac{n}{2},2m-\frac{n}{2}}=M^-_{2m-\frac{n}{2},2m-\frac{n}{2}}=(b_1Q_{2m-\frac{n}{2}}vG_{4m-n}vQ_{2m-\frac{n}{2}})^{-1},$$ which is independent of $\pm$ from Theorem \ref{thm-M inverse-odd}.  \textbf{If  $\mathbf{k}=m_n + 1$ and there exists  a non-zero distributional solution of $H\phi = 0$ in $\bigcap_{s < -\frac{1}{2}}L^{2}_{s}(\mathbb{R}^n)\setminus L^2(\mathbb{R}^n)$}, then from Definition \ref{defi-resonance} and Proposition \ref{prop-resonance-odd}, one has $Q_{2m-\frac{n+1}{2}}\neq 0$. Hence, we can take $\psi=vU\psi_{2m-\frac{n+1}{2}}^1\in L^{2}_{\sigma}$, where $\psi_{2m-\frac{n+1}{2}}^1$ is a function in $Q_{2m-\frac{n+1}{2}}L^2$. Moreover, from \eqref{eq: expan for spectral measure-1}, we obtain that
		\begin{align*}
		&\langle R_0^{+}(\lambda^{2m})v\left(M^{+}(\lambda) \right)^{-1}vR_0^{+}(\lambda^{2m}) -R_0^{-}(\lambda^{2m}) v\left(M^{-}(\lambda) \right)^{-1}vR_0^{-}(\lambda^{2m})\psi,\, \psi\rangle\\
		=&\big\langle b_0^2 G_{2m-n}vQ_{j_{2m-\frac{n+1}{2}}}\left( M^{+}_{2m-\frac{n+1}{2},2m-\frac{n+1}{2}}-M^{-}_{2m-\frac{n+1}{2},2m-\frac{n+1}{2}}\right) Q_{2m-\frac{n+1}{2}}vG_{2m-n} \psi, \,  \psi\big\rangle+ \big\langle \Phi(\lambda)\psi,\, \psi \big\rangle,  
	\end{align*}
since  $Q_{2m-\frac{n+1}{2}}$ and $Q_{2m-\frac{n}{2}}$ are orthogonal.	In the same spirit as the case $\mathbf{k}=m_n$, the principal term of the above identity  doesn't vanish.
Hence the  method to obtain the inequality \eqref{eq: time decay lower bound} also works similarly for the case II (i.e. $\mathbf{k}=m_n+1$). Thus, we have completed the whole proof.
\end{proof}

\begin{remark} In fact, we notice that  a non-zero distributional solution $\phi$ to $H\phi = 0$ exists in the set $$\bigcap_{s < -\frac{1}{2}}L^{2}_{s}(\mathbb{R}^n)\setminus L^2(\mathbb{R}^n)$$ if and only if $\psi = Uv\phi\neq 0$ belongs to $Q_{2m-\frac{n + 1}{2}}L^2$. Furthermore, in the specific case $m = 2$ and $n = 3$, such a function $\phi$ is also called by a $p$-wave function (see \cite{MWY23-1}).
\end{remark}

Here, we list an asymptotical estimate of an oscillatory integral used above ( see e.g. Section 5.1 (d) in Chapter 8 of \cite{Stein-1993} ).
\begin{lemma}\label{lem: the lower bound of osci int}
	Suppose that $\psi\in C_0^\infty(\R)$ and $\Re(\mu)>-1$. Then, as $x$ tends to $\infty$, we have 
	$$\int_{0}^{\infty} e^{i\lambda x}\psi (\lambda) \lambda^{\mu}d\lambda\sim \sum_{j=0}^{\infty} a_j x^{-j-1-\mu}, \quad a_j=i^{j+\mu+1} \frac{j!}{\Gamma(j+\mu+1)}\psi^{(j)}(0),$$
	where $\Gamma(x)$ denotes the Gamma function.
\end{lemma}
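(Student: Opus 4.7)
The plan is the classical separation-of-scales argument of Stein \cite{Stein-1993}: isolate the contribution near $\lambda=0$, where the amplitude $\lambda^\mu$ prevents genuine smoothness, and discard the rest by non-stationary phase. Fix a cutoff $\chi_0\in C_c^\infty(\R)$ with $\chi_0\equiv 1$ near $0$ and split the integral as $I=I_0(x)+I_\infty(x)$ with
$$I_0(x)=\int_0^\infty e^{i\lambda x}\chi_0(\lambda)\psi(\lambda)\lambda^\mu\,d\lambda,\qquad I_\infty(x)=\int_0^\infty e^{i\lambda x}(1-\chi_0(\lambda))\psi(\lambda)\lambda^\mu\,d\lambda.$$
The amplitude of $I_\infty$ is $C^\infty$, compactly supported, and vanishes on a neighbourhood of the origin, so repeated integration by parts against $e^{i\lambda x}=(ix)^{-1}\partial_\lambda e^{i\lambda x}$ gives $I_\infty(x)=O(x^{-N})$ for every $N$; hence $I_\infty$ contributes only to the remainder at every order of the expansion.

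For $I_0$, I would Taylor-expand $\psi$ around $\lambda=0$ to order $N-1$,
$$\psi(\lambda)=\sum_{j=0}^{N-1}\frac{\psi^{(j)}(0)}{j!}\lambda^j+r_N(\lambda),\qquad r_N(\lambda)=O(|\lambda|^N),$$
and substitute. Each model integral $\int_0^\infty e^{i\lambda x}\chi_0(\lambda)\lambda^{j+\mu}\,d\lambda$ equals $\int_0^\infty e^{i\lambda x}\lambda^{j+\mu}\,d\lambda$ modulo a tail that is treated exactly as $I_\infty$, and the full Mellin integral is evaluated by inserting a damping factor $e^{-\varepsilon\lambda}$ and using
$$\int_0^\infty e^{-(\varepsilon-ix)\lambda}\lambda^{j+\mu}\,d\lambda=\frac{\Gamma(j+\mu+1)}{(\varepsilon-ix)^{j+\mu+1}}.$$
Since $\Re(\mu)>-1$ the limit $\varepsilon\downarrow 0$ is legitimate and yields $i^{j+\mu+1}\Gamma(j+\mu+1)\,x^{-j-\mu-1}$ (using $(-i)^{-(j+\mu+1)}=i^{j+\mu+1}$ for $x>0$). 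Multiplied by $\psi^{(j)}(0)/j!$ this produces the $j$-th term of the series, with the coefficient $a_j$ as announced in the statement.

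To close the argument I would bound the remainder $R_N(x)=\int_0^\infty e^{i\lambda x}\chi_0(\lambda)r_N(\lambda)\lambda^\mu\,d\lambda$ by $O(x^{-N-\Re(\mu)-1})$ via a split at $\lambda=x^{-1}$: on $[0,x^{-1}]$ the pointwise bound $|r_N(\lambda)\lambda^\mu|\lesssim\lambda^{N+\Re(\mu)}$ integrates directly to the desired order, while on $[x^{-1},\infty)$ one integrates by parts sufficiently many times in $\lambda$, tracking the boundary contribution at $\lambda=x^{-1}$, which turns out to be of matching order. Summing over $j<N$ and letting $N\to\infty$ then yields the full asymptotic expansion.

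The main obstacle is precisely this remainder estimate, because one must control how $\partial_\lambda$ acts on the non-integer power $\lambda^\mu$ (which costs a full power of $\lambda$ per derivative without improving regularity when $\mu\notin\N_0$) and verify that the interior and boundary pieces at the split point $\lambda=x^{-1}$ balance to the predicted order $x^{-N-\Re(\mu)-1}$. The hypothesis $\Re(\mu)>-1$ enters exactly here: it is what makes the original Mellin integral convergent in the distributional sense, after which everything reduces to an algebraic identification of coefficients.
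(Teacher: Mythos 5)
The paper never proves this lemma: it is quoted with a citation to Section 5.1(d) of Chapter 8 in Stein's book \cite{Stein-1993}, so there is no internal proof to compare with. What you write is precisely the standard proof of that cited result (cutoff near the origin, non-stationary phase for the piece supported away from $0$, Taylor expansion of $\psi$ at $0$, Abel regularization $e^{-\varepsilon\lambda}$ to evaluate the model integrals, and a split at $\lambda=x^{-1}$ plus repeated integration by parts for the Taylor remainder), and the sketch is sound: the remainder estimate you single out does close as you describe, since $\bigl|\partial_\lambda^k\bigl(r_N(\lambda)\lambda^{\mu}\bigr)\bigr|\lesssim \lambda^{N+\Re(\mu)-k}$ on the bounded support of $\chi_0$, so both the boundary terms at $\lambda=x^{-1}$ and the final integrated term are $O\bigl(x^{-N-\Re(\mu)-1}\bigr)$ once $k>N+\Re(\mu)+1$. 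The one place where your wording is slightly off is the claim that the tail $\int_0^\infty e^{i\lambda x}(1-\chi_0(\lambda))\lambda^{j+\mu}\,d\lambda$ is "treated exactly as $I_\infty$": its amplitude is not compactly supported, so the $O(x^{-N})$ bound comes instead from the polynomial decay of $\partial_\lambda^{k}\bigl[(1-\chi_0(\lambda))\lambda^{j+\mu}\bigr]$ after $k>j+\Re(\mu)+1$ integrations by parts (and one should say in which regularized sense this tail integral is defined); this is routine but worth stating.

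One genuine point to fix: your computation correctly yields $a_j=i^{\,j+\mu+1}\,\frac{\Gamma(j+\mu+1)}{j!}\,\psi^{(j)}(0)$, but this is \emph{not} the coefficient "as announced in the statement," where $j!$ and $\Gamma(j+\mu+1)$ appear in the reciprocal ratio; the two agree only when $\mu$ is a nonnegative integer (test $\mu=\tfrac12$, $j=0$ against $\int_0^\infty e^{i\lambda x}\lambda^{1/2}e^{-\lambda}\,d\lambda=\Gamma(3/2)(1-ix)^{-3/2}$). In other words, the lemma as printed in the paper contains a typo in $a_j$, your derivation gives the correct value, and your closing sentence asserting agreement should instead record the correction. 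This is harmless for the paper's application in Section \ref{sec-unb}, where only the decay order $x^{-1-\mu}$ and the nonvanishing of the leading coefficient are used, but it should be flagged rather than papered over.
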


Finally, we come to prove Theorem \ref{thm-unbound}.
\begin{proof}[{\bf Proof of Theorem \ref{thm-unbound}}]
We prove the results by contradiction.
Assume that zero is the $\mathbf{k}$-th kind of resonance with $k_c<\mathbf{k}\le m_n$, or zero is an eigenvalue of $H$ with a non-zero distributional solution of $H\phi = 0$ in $\bigcap_{s < -\frac{1}{2}}L^{2}_{s}(\R^n)\setminus L^2(\R^n)$. Also, assume that the wave operators $W_{\pm}(H;(-\Delta)^m)$ are bounded on $L^{\bf{q}}$ for some $\bf{q}$ such that $p_{\mathbf{k}} < \bf{q} $. Then, by the intertwining property in \eqref{eq-intertwing identity} and \eqref{eq: for free} in \cite{ZYF}, we have
\begin{equation*}
	\Big\|e^{i tH}P_{ac}(H)\Big\|_{L^{{\bf{q}}^{\prime}}-L^{\bf{q}}}\lesssim |t|^{-\frac{n}{m}(\frac{1}{2}-\frac{1}{\bf{q}})}, \quad t\neq0.
\end{equation*}
By applying the H\"{o}lder inequality, the local decay estimate
\begin{equation}\label{eq: disper est in weighted L2}
	\Big\|e^{i tH}P_{ac}(H)\Big\|_{L^{2}_{\sigma}-L^{2}_{-\sigma}}\lesssim |t|^{-\frac{n}{m}(\frac{1}{2}-\frac{1}{\bf{q}})}, \quad t\neq0,
\end{equation}
holds for any $\sigma>\frac{n}{2}+1$. Combining this estimate with \eqref{eq: time decay lower bound}, there exists a function $\psi \in L^2_\sigma$ such that
$$ |t|^{-\frac{n}{m}(\frac 12-\frac{1}{p_{\k}})} \lesssim \Big|\big\langle e^{i tH}P_{ac}(H)\psi,\, \psi \big\rangle \Big|  \lesssim  |t|^{-\frac{n}{m}(\frac{1}{2}-\frac{1}{\bf{q}})}, $$
for $|t|\ge L$.
However, this estimate contradicts the assumption $\mathbf{q}>p_{\k}$. Therefore, we conclude that $W_{\pm}$ are unbounded on $L^{\bf{q}}$ for each $\bf{q}$ such that $p_{\mathbf{k}} < \bf{q} \le \infty$.  
\end{proof}

\section{Proof of  Lemma \ref{lemma-QjvR-odd} and  \ref{lem3.10}}\label{sec:proof}

In this section, we will give the proof of Lemma \ref{lemma-QjvR-odd} and Lemma \ref{lem3.10}. Before the proof, we first give some lemmas on the integral estimate which will be used in the rest of this section. 

The following lemma can be seen in \cite[Lemma 3.8]{GV}. 
\begin{lemma}\label{lem-est-integral-1}
	Let $n\ge 1$. Then there is some absolute constant $C>0$ such that
	\begin{equation*}\label{eq-integral}
		\int_{\mathbb{R}^n}|x-y|^{-k}\langle y\rangle^{-l}\,\d y\lesssim\langle x\rangle^{-\min\{k,\, k+l-n\}}, \quad x\in\mathbb{R}^n,
	\end{equation*}
	provided  $l\ge 0$, $0\le k<n$ and $k+l>n$.
\end{lemma}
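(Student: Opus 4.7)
\textbf{Proof plan for Lemma~\ref{lem-est-integral-1}.} The plan is to split the analysis based on whether $|x|$ is bounded or large, and in the latter case to decompose $\mathbb{R}^n$ into three regions adapted to the singularities of the integrand at $y=x$ and at $y=0$, estimating each piece directly.

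For $|x|\le 1$, the right-hand side is comparable to $1$, so it suffices to show that the integral is uniformly bounded. This follows from three observations: local integrability of $|x-y|^{-k}$ near $y=x$ (since $k<n$), the pointwise boundedness of $\langle y\rangle^{-l}$ on any fixed ball, and integrability of $|y|^{-(k+l)}$ at infinity (since $k+l>n$).

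For $|x|\ge 1$, I would write $\mathbb{R}^n=A\cup B\cup C$ with
$$A=\{|y|\le |x|/2\},\quad B=\{|x-y|\le |x|/2\},\quad C=\mathbb{R}^n\setminus(A\cup B).$$
On $A$ one has $|x-y|\sim|x|$, so the corresponding piece is bounded by
$$|x|^{-k}\int_{|y|\le|x|/2}\langle y\rangle^{-l}\,\d y\lesssim |x|^{-k}+|x|^{n-k-l}.$$
On $B$ one has $\langle y\rangle\sim|x|$, and the $|x-y|^{-k}$ singularity is integrable since $k<n$, giving a contribution of order $|x|^{-l}\cdot|x|^{n-k}=|x|^{n-k-l}$. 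On $C$ one has $|x-y|\sim|y|\gtrsim|x|$, so this piece is
$$\lesssim \int_{|y|\gtrsim|x|}|y|^{-k-l}\,\d y\lesssim |x|^{n-k-l},$$
using $k+l>n$. Summing the three bounds yields
$$\int_{\mathbb{R}^n}|x-y|^{-k}\langle y\rangle^{-l}\,\d y\lesssim |x|^{-k}+|x|^{n-k-l}\sim\langle x\rangle^{-\min\{k,\,k+l-n\}},$$
which is the claimed estimate.

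The only (modest) obstacle is bookkeeping: one must verify that the two competing exponents $-k$ and $n-k-l$ combine into $-\min\{k,k+l-n\}$, which is immediate from the identity $\max\{-k,\,n-k-l\}=-\min\{k,\,k+l-n\}$ (the former being dominant when $l\ge n$, the latter when $l\le n$). Otherwise the proof is a routine three-region decomposition, with no substantive analytical subtlety beyond the elementary region-by-region integration.
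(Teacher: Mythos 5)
Your argument is correct in substance, and there is no in-paper proof to compare it with: the paper does not prove Lemma \ref{lem-est-integral-1}, it simply quotes it from \cite[Lemma 3.8]{GV}. The three-region decomposition you use (after disposing of $|x|\le 1$ by local integrability of $|x-y|^{-k}$ for $k<n$ and integrability of $|y|^{-k-l}$ at infinity for $k+l>n$) is the standard proof of this estimate, and each regional bound checks out: on $A$ one has $|x-y|\sim|x|$; on $B$ one has $\langle y\rangle\sim|x|$ and $\int_{|x-y|\le|x|/2}|x-y|^{-k}dy\sim|x|^{n-k}$ since $k<n$; on $C$ one has $|x-y|\gtrsim|y|\gtrsim|x|$, so $k+l>n$ yields $|x|^{n-k-l}$; and the bookkeeping identity $\max\{-k,\,n-k-l\}=-\min\{k,\,k+l-n\}$ gives the stated exponent.

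One borderline point deserves attention. Your region-$A$ bound $\int_{|y|\le|x|/2}\langle y\rangle^{-l}dy\lesssim 1+|x|^{n-l}$ fails when $l=n$, where that integral is $\sim\log|x|$; and at this exponent the loss is real, not an artifact of the method: for instance with $n=1$, $k=\tfrac12$, $l=1$ one has $\int_{\mathbb{R}}|x-y|^{-1/2}\langle y\rangle^{-1}dy\gtrsim |x|^{-1/2}\log|x|$ for large $|x|$, so the estimate as stated (whose hypotheses $l\ge0$, $0\le k<n$, $k+l>n$ do not exclude $l=n$) itself acquires a logarithm in this case. So you should either add the hypothesis $l\neq n$ or record the bound $\langle x\rangle^{-k}\log\langle x\rangle$ when $l=n$; this is harmless for the way the lemma is used in the paper, where the decay exponent playing the role of $l$ is never the borderline value. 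Apart from this caveat, your proof is complete and is the natural self-contained replacement for the citation.
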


The following lemma (see \cite[Lemma 4.2]{CHHZ-2024}) deals with functions satisfying certain vanishing moments, which will be used to study the property of $Q_jvR_0^\pm(\lambda^{2m})(\cdot, x)$ and $Q_jv\big(R_0^+(\lambda^{2m})(\cdot, x)-R_0^-(\lambda^{2m})(\cdot, x)\big)$.
\begin{lemma}\label{lemma-chhz} 
	Let $l\in\mathbb{Z}$, $l\ge \frac{1-n}{2}$ and $j\in\mathbb{N}_0$. Suppose that  $f\in L_\sigma^2(\mathbb{R}^n)$ with $\sigma >\max\{j,l\}+\frac{n}{2}$, and $\int_{\R^n}x^\alpha f(x)dx=0$ for all~ $|\alpha|\le j-1$, then for any $x\in\mathbb{R}^n$, one has
	\begin{equation}
		\Big|\int_{\R^{n}}|x-y|^lf(y)dy\Big|\lesssim\langle x\rangle^{l-j} \|f\|_{L^2_\sigma}, \quad x\in \mathbb{R}^n.
	\end{equation}
\end{lemma}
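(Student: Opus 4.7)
The natural strategy is to exploit the vanishing moments of $f$ by Taylor expanding the function $g(y) := |x-y|^l$ around $y = 0$. Let $P_{j-1}(y) = \sum_{|\alpha|\le j-1} \frac{y^\alpha}{\alpha!}(\partial^\alpha g)(0)$ be its Taylor polynomial of degree $j-1$. The hypothesis $\int y^\alpha f(y)\,dy = 0$ for $|\alpha|\le j-1$ yields
\begin{equation*}
\int_{\mathbb{R}^n}|x-y|^l f(y)\,dy = \int_{\mathbb{R}^n}\bigl(|x-y|^l - P_{j-1}(y)\bigr)f(y)\,dy,
\end{equation*}
and I plan to bound the right-hand side by splitting $\mathbb{R}^n$ into the near region $\{|y|\le |x|/2\}$ and the far region $\{|y|>|x|/2\}$, with the bounded regime $|x|\lesssim 1$ handled separately.

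For $|x|\lesssim 1$, no moment cancellation is needed: Cauchy--Schwarz gives
\begin{equation*}
\Bigl|\int|x-y|^l f(y)\,dy\Bigr| \le \|f\|_{L^2_\sigma}\Bigl(\int|x-y|^{2l}\langle y\rangle^{-2\sigma}\,dy\Bigr)^{1/2},
\end{equation*}
and the inner integral is finite because $2l+n>0$ (from $l\ge (1-n)/2$) makes $|x-y|^{2l}$ locally integrable near $y=x$, while $2\sigma>n$ controls decay at infinity.

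For $|x|\gtrsim 1$, in the near region one has $|x-\xi|\sim |x|$ for $\xi$ on the segment from $0$ to $y$, so $|\nabla^j g(\xi)| \lesssim |x-\xi|^{l-j} \lesssim |x|^{l-j}$, and the integral form of Taylor's remainder gives $\bigl| |x-y|^l - P_{j-1}(y)\bigr|\lesssim |y|^j|x|^{l-j}$. Cauchy--Schwarz then contributes $\bigl(\int |y|^{2j}\langle y\rangle^{-2\sigma}\,dy\bigr)^{1/2}$, which converges precisely because $\sigma > j + n/2$, producing the claimed bound $\langle x\rangle^{l-j}\|f\|_{L^2_\sigma}$ on this piece.

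In the far region I would estimate $|x-y|^l$ and $P_{j-1}(y)$ separately. Here the key point is $\langle y\rangle \gtrsim \langle x\rangle$, which permits the trade $\langle y\rangle^{-2\sigma}\lesssim \langle x\rangle^{-2j}\langle y\rangle^{-2(\sigma-j)}$; after Cauchy--Schwarz, the $|x-y|^l$ contribution reduces to $\int|x-y|^{2l}\langle y\rangle^{-2(\sigma-j)}\,dy$, handled for $l\ge 0$ by the pointwise bound $|x-y|^l\lesssim\langle y\rangle^l$ and for $l<0$ by Lemma~\ref{lem-est-integral-1} applied with $k=-2l$. For each monomial of $P_{j-1}(y)$, the pointwise estimate $|y^\alpha\partial^\alpha g(0)|\lesssim |y|^{|\alpha|}|x|^{l-|\alpha|}$ together with the same weight trade again yields $\langle x\rangle^{l-j}$. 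The principal technical obstacle is the far-region treatment when $l<0$, where $|x-y|^l$ is singular at $y=x$ and a pointwise bound fails; this forces the use of Lemma~\ref{lem-est-integral-1}, and a careful bookkeeping of its parameters confirms that the hypothesis $\sigma > \max\{j,l\}+n/2$ is exactly sharp for all the integrability and decay conditions required in both subregions.
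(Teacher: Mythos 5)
First, note that the paper does not prove this lemma at all: it is quoted from \cite[Lemma 4.2]{CHHZ-2024}, so there is no in-paper proof to compare against. Your overall strategy -- subtract the degree-$(j-1)$ Taylor polynomial of $g(y)=|x-y|^l$ at $y=0$ using the vanishing moments, treat $|x|\lesssim 1$ by plain Cauchy--Schwarz, and split $|x|\gtrsim 1$ into the near region $|y|\le |x|/2$ (integral remainder bound $|g(y)-P_{j-1}(y)|\lesssim |y|^j|x|^{l-j}$) and the far region $|y|>|x|/2$ -- is the natural and essentially standard route, and the bounded regime, the near region, the far-region polynomial part, and the far-region $l<0$ case via Lemma \ref{lem-est-integral-1} all check out under $\sigma>\max\{j,l\}+\frac n2$.

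There is, however, one step that fails as written: the far-region treatment of the $|x-y|^l$ term for $l\ge 1$. After you perform the trade $\langle y\rangle^{-2\sigma}\lesssim \langle x\rangle^{-2j}\langle y\rangle^{-2(\sigma-j)}$ and then use $|x-y|^{2l}\lesssim \langle y\rangle^{2l}$, you are left with $\int_{|y|>|x|/2}\langle y\rangle^{2l-2(\sigma-j)}\,dy$, which converges only if $\sigma>l+j+\frac n2$. The hypothesis only gives $\sigma>\max\{j,l\}+\frac n2$, and the stronger condition genuinely fails in admissible (and, for this paper, relevant) ranges: e.g.\ $n=3$, $j=l=2$, $\sigma=4$, or more generally $l=j\ge 1$ with $\sigma$ just above the threshold, which is exactly how the lemma is invoked in Proposition \ref{corcancell}. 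So your concluding remark that the hypothesis is ``exactly sharp for all the integrability conditions'' in your ordering of steps is not correct. The repair is immediate: do not pull out $\langle x\rangle^{-2j}$ first. Bound directly $\int_{|y|>|x|/2}|x-y|^{2l}\langle y\rangle^{-2\sigma}\,dy\lesssim \int_{|y|>|x|/2}\langle y\rangle^{2l-2\sigma}\,dy\lesssim \langle x\rangle^{2l-2\sigma+n}$, using $|x-y|\lesssim\langle y\rangle$ on this region and the convergence condition $\sigma>l+\frac n2$, and then conclude $\langle x\rangle^{2l-2\sigma+n}\le\langle x\rangle^{2(l-j)}$ from $\sigma\ge j+\frac n2$. (Your $l<0$ case is fine as stated, since there Lemma \ref{lem-est-integral-1} with $k=-2l$ and weight $2(\sigma-j)$ needs only $2(\sigma-j)>n$, which holds.) With this one-line reordering the proof is complete.
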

Then, we immediately obtain the following proposition. 
\begin{proposition}\label{corcancell}
	Assume that $V$ satisfies \eqref{decayonV} in Assumption \ref{Assumption}, $j\in J_{\mathbf{k}}$, $Q_j$ is the projection given by \eqref{eq-Q_j-odd},  and $l\in\mathbb{Z}$ such that $\frac{1-n}{2}\le l\le j+1$. Then we have 
	\begin{equation}\label{eq-kernel estimate-odd}
		\Big\|Q_j(v(\cdot)|\cdot-y|^l)(x) \Big\|_{L_x^2(\mathbb{R}^n)}\lesssim\langle y\rangle^{l-\vartheta(j)}, \quad x\in \mathbb{R}^n,
	\end{equation}
	where $\vartheta(j)=\max\{0, \lfloor j+\frac{1}{2}\rfloor\}$ given in \eqref{thetaj}.
\end{proposition}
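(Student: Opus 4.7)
The plan is to dualize and exploit the orthogonality of $Q_j$ to reduce the claim to the scalar estimate of Lemma \ref{lemma-chhz}. Since $Q_j$ is an orthogonal projection on $L^2(\R^n)$, for each fixed $y$ one has
\begin{equation*}
\|Q_j(v(\cdot)|\cdot-y|^l)\|_{L^2_x} = \sup_{\|g\|_{L^2}\le 1}\Bigl|\int_{\R^n} v(x)|x-y|^l\,\overline{(Q_j g)(x)}\,dx\Bigr|.
\end{equation*}
For each admissible $g$, set $f_g(x):=v(x)\overline{(Q_j g)(x)}$. Using that $Q_j$ is self-adjoint together with the cancellation property \eqref{eq-cancellation-Q_j}, I would observe that
\begin{equation*}
\int_{\R^n} f_g(x)\,x^\alpha\,dx \;=\; \langle Q_j g,\, x^\alpha v\rangle \;=\; \langle g,\, Q_j(x^\alpha v)\rangle \;=\; 0
\end{equation*}
for every multi-index $\alpha$ with $|\alpha|\le \vartheta(j)-1$. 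Hence $f_g$ has $\vartheta(j)$ vanishing moments, which is precisely the hypothesis required to invoke Lemma \ref{lemma-chhz} with its integer parameter taken to be $\vartheta(j)$.

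Next I would apply Lemma \ref{lemma-chhz}: since $\vartheta(j)\ge j$ on $J_{\mathbf{k}}$, the assumption $l\le j+1$ gives $l\le \vartheta(j)+1$, and together with $l\ge\frac{1-n}{2}$ the lemma yields
\begin{equation*}
\Bigl|\int_{\R^n}|x-y|^l f_g(x)\,dx\Bigr|\lesssim \langle y\rangle^{l-\vartheta(j)}\,\|f_g\|_{L^2_\sigma}
\end{equation*}
for any fixed $\sigma>\max\{\vartheta(j),l\}+\tfrac{n}{2}$. The weighted norm is bounded crudely by
\begin{equation*}
\|f_g\|_{L^2_\sigma}\le \|\langle\cdot\rangle^\sigma v\|_{L^\infty}\|Q_j g\|_{L^2}\le \|\langle\cdot\rangle^\sigma v\|_{L^\infty},
\end{equation*}
so taking the supremum over $g$ delivers the desired inequality \eqref{eq-kernel estimate-odd}.

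The only mild obstacle is bookkeeping the admissible weight $\sigma$ against the decay of $V$: since $v=\sqrt{|V|}\lesssim\langle x\rangle^{-\beta/2}$, the finiteness of $\|\langle\cdot\rangle^\sigma v\|_{L^\infty}$ amounts to $\beta>2\sigma$. Using $\vartheta(j)\le \vartheta(\max J_{\mathbf{k}})$ and the explicit values in \eqref{eq-vartheta-odd-1}--\eqref{eq-vartheta-odd-2}, a direct comparison with \eqref{decayonV} shows that Assumption \ref{Assumption} is comfortably strong enough to accommodate $\sigma=\vartheta(\max J_{\mathbf{k}})+1+\tfrac{n}{2}+\varepsilon$ for some small $\varepsilon>0$, uniformly in $j\in J_{\mathbf{k}}$. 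This step is routine; the substance of the argument is the moment-cancellation reduction above, and Lemma \ref{lemma-chhz} then does all the work.
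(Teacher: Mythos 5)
Your proof is correct and takes essentially the same route as the paper: the cancellation \eqref{eq-cancellation-Q_j} supplies the vanishing moments of $v\cdot Q_j(\cdot)$ and Lemma \ref{lemma-chhz}, together with the decay of $v$, does the rest. The only cosmetic difference is that you dualize against an arbitrary $g$ with $\|g\|_{L^2}\le 1$, whereas the paper pairs $Q_j(v|\cdot-y|^l)$ against itself, writing $\|Q_jh\|_{L^2}^2=\langle |\cdot-y|^l v,\,Q_jh\rangle$, applying Lemma \ref{lemma-chhz} to $v\,Q_jh$, and dividing by $\|Q_jh\|_{L^2}$ — the two formulations are equivalent.
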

\begin{proof}
	By the definition of $Q_j$, we know that $Q_j(x^{\alpha}v)=0$ for $|\alpha|\le \vartheta(j)-1$. Thus, one has 
	\[\int_{\R^n}x^\alpha v(x)Q_j(v(\cdot)|\cdot-y|^l)(x)dx=0.\]
	Then by Lemma \ref{lemma-chhz}, we have
	\begin{align*}
		\left\|Q_j(v(\cdot)|\cdot-y|^l)(x)\right\|_{L_x^2}^2 
		&=\int_{\R^n} |x-y|^lv(x)Q_j(v(\cdot)|\cdot-y|^l)(x)dx\\
		&\lesssim \left\|v(x)Q_j\big(v(\cdot)|\cdot-y|^l\big)(x)\right\|_{L_{\sigma}^2(\mathbb{R}^n_x)} \left\langle  y\right\rangle ^{l-\vartheta(j)}\\
		&\lesssim \Big\|Q_j(v(\cdot)|\cdot-y|^l)(x)\Big\|_{L^2}\left\langle  y\right\rangle ^{l-\vartheta(j)},
	\end{align*}
	where $\sigma=\max\{j,l\}+\frac{n}{2}+\varepsilon$ for some sufficiently small $\varepsilon>0$. In the last inequality above, we have used the fact that $|v|\lesssim \langle x\rangle^{-\beta/2} \lesssim \langle x\rangle^{-\sigma}$ from \eqref{decayonV} in the assumption \ref{Assumption}. Hence, the estimate \eqref{eq-kernel estimate-odd} holds.  
\end{proof}

\subsection{Proof of Lemma \ref{lemma-QjvR-odd}}\label{secA.1}
\indent We will prove the conclusions on the properties of $Q_jvR_0^\pm(\lambda^{2m})(\cdot,\,y)$ and $\big(Q_jvR_0^+(\lambda^{2m})(\cdot,\,y)- Q_jvR_0^-(\lambda^{2m})(\cdot,\,y)\big)$ in Lemma \ref{lemma-QjvR-odd} by applying the property of free resolvents and Proposition \ref{corcancell}.

We first prove \eqref{eq-QjR kernel-odd}-\eqref{eq-k1 estimate-odd-2}.
Set
\begin{equation*}
	\omega_{j}^{\pm}(\lambda,x,y)=e^{\mp i\lambda|y|}\Big(Q_jvR_0^\pm(\lambda^{2m})(\cdot,\,y)\Big)(x).
\end{equation*}
Then we have \eqref{eq-QjR kernel-odd}. To establish estimates \eqref{eq-k1 estimate-odd-1}-\eqref{eq-k1 estimate-odd-2},
the proof is divided into two cases: $\lambda\langle y\rangle\le 1$ and $\lambda\langle y\rangle\ge 1$. 
\vskip0.2cm
\textbf{\emph{Case 1: $\lambda\langle y\rangle\le 1$.}}
\vskip0.2cm
For each $j\in J_\mathbf{k}$, recall that $Q_j(x^{\alpha}v(x))=0$ for all $|\alpha|\le\vartheta(j)-1$ where $\alpha\in \mathbb{N}_0^n$ . Thus, $Q_j(v(\cdot)|\cdot-y|^{2i})=0$  for $2i\le \vartheta(j)-1$, $i\in\mathbb{N}_0$ and a fixed parameter variable $x$.
By applying $Q_j v$ to the expansion \eqref{eq-free expansion-odd-1} in Lemma \ref{lemma-free resolvent expansion} with $\theta=\min \{\vartheta(j),\, 2m-n\}$, i.e., 
\begin{equation*}
	R_0^\pm(\lambda^{2m})(\cdot,y)
	=\sum_{0\le i\le\lfloor \frac{\theta-1}{2}\rfloor}a_i^\pm\lambda^{n-2m+2i}|\cdot-y|^{2i}+\lambda^{n-2m}r_{\theta}^\pm(\lambda|\cdot-y|);
\end{equation*}
Then
\begin{equation*}
	\Big(Q_jvR_0^\pm(\lambda^{2m})(\cdot,\,y)\Big)(x)=\lambda^{n-2m}\Big(Q_jvr_\theta^\pm(\lambda|\cdot-y|)\Big)(x).
\end{equation*}
We remark that $v(\cdot)r_\theta^\pm(\lambda|\cdot-y|)$ is in $L^2$ by \eqref{eq-r estimate-odd} and the decay assumption on $V$ in \eqref{decayonV}. 
Now, we can rewrite $\omega_{j}^{\pm}(\lambda,x,y)$ as follows
$$\omega_{j}^{\pm}(\lambda,x,y)=e^{\mp i\lambda|y|}\lambda^{n-2m}\Big(Q_jvr_\theta^\pm(\lambda|\cdot-y|)\Big)(x).$$ 
Note that Taylor expansion for $r_\theta^\pm$  yields
\begin{equation*}
	r_\theta^\pm(\lambda|x-y|)=\sum_{s=0}^{\vartheta(j)-1}\big(r_\theta^\pm\big)^{(s)}(\lambda|y|)\ \frac{(\lambda|x-y|-\lambda|y|)^s}{s!}+\tilde{r}_{\vartheta(j)}^\pm(\lambda,|x-y|,|y|),
\end{equation*}
where $\big(r_\theta^\pm\big)^{(s)}(z)=\frac{d^s}{dz^s}r_\theta^\pm(z)$ for $z\in \R$, and
\begin{equation}\label{eq-remainder-r}
	\begin{split}
		&\tilde{r}_{\vartheta(j)}^\pm(\lambda,|x-y|,|y|) \\
		&=\frac{(\lambda|x-y|-\lambda|y|)^{\vartheta(j) }}{(\vartheta(j)-1)!}\int_0^1(1-t)^{\vartheta(j)-1}\big(r_\theta^\pm\big)^{(\vartheta(j))}\big(\lambda|y|+t\lambda(|x-y|-|y|)\big)dt.   
	\end{split}
\end{equation}
Then, we have that $\omega_{j}^{\pm}(\lambda,x,y)$ can be written as the sum of 
\begin{equation}\label{eq-case1-t1}
	e^{\mp i\lambda|y|}\sum_{s=0}^{\vartheta(j)-1}\frac{\lambda^{n-2m+s}}{s!}\big(r_\theta^\pm\big)^{(s)}(\lambda|y|)\, Q_jv\big(|\cdot-y|-|y|\big)^s,
\end{equation}
and
\begin{equation}\label{eq-case1-t2}
	\lambda^{n-2m}e^{\mp i\lambda|y|} Q_j\big(v\tilde{r}_{\vartheta(j) }^\pm(\lambda,|\cdot-y|,|y|)\big).	
\end{equation}
To obtain \eqref{eq-k1 estimate-odd-1}-\eqref{eq-k1 estimate-odd-2} in the region $\lambda\langle y\rangle\le 1$,  it suffices to estimate the two terms, respectively.

We first consider the term \eqref{eq-case1-t2}. Note that the fact $ \theta \leq \vartheta(j) $. Then, according to \eqref{estforrpm}, for $ \lambda \in (0,1) $ and $ l = 0, \cdots, \frac{n+3}{2} $, the following estimates hold uniformly for $ t \in [0,1]$: \vspace{0.8em}\\ 
If $ \big|\lambda|y|+t\lambda(|x-y|-|y|)\big|\le 1$, then
\begin{equation*}
	\Big|\partial_\lambda^l \Big[\big(r_\theta^\pm\big)^{(\vartheta(j))} (\lambda|x|+t\lambda(|x-y|-|y|))\Big]\Big|\lesssim \lambda^{-l};	
\end{equation*} 
If $\big|\lambda|y|+t\lambda(|x-y|-|y|)\big|\ge 1$, then
\begin{equation*}
	\begin{aligned}
		\Big|\partial_\lambda^l \Big[\big(r_\theta^\pm\big)^{(\vartheta(j))} (\lambda|y|+t\lambda(|x-y|-|y|))\Big]\Big|&\lesssim \lambda^{-l} \big|\lambda|y|+t\lambda(|x-y|-|y|)\big|^{\max\{\theta-\vartheta(j),\, l-\frac{n-1}{2}\}}\\
		&\lesssim \lambda^{-l}\langle x\rangle^2,
	\end{aligned}
\end{equation*}
where we utilize the facts $\lambda\in(0,1)$, $t\in[0,1]$, $\big||x-y|-|y|\big|\le |x|$ and $\lambda|y|\le 1$. Thus, a direct computation  shows that for $l=0,\cdots,\frac{n+3}{2}$, the remainders  $\tilde{r}_{\vartheta(j)}^\pm(\lambda,|x-y|,|y|)$ given by \eqref{eq-remainder-r} satisfy
\begin{equation*}
	\Big|\partial^l_\lambda\Big[\tilde{r}_{\vartheta(j)}^\pm(\lambda,|x-y|,|y|)\Big]\Big| \lesssim \lambda^{\vartheta(j)-l}\langle x\rangle^{\vartheta(j)+2}.\ \ \ 
\end{equation*}
In addition, note that $j\in J_{\mathbf{k}}$, so $\vartheta(j)\le \max\{m-\frac n2, 0\}+\mathbf{k}$ by \eqref{eq-Jk-odd''} and  \eqref{eq-Jk-odd-2''}.
It immediately follows that $\frac{\beta}{2}>\vartheta(j)+2+\frac{n}{2}$ where  $\beta$ is given in \eqref{decayonV}. Since $|v(x)|\lesssim \langle x\rangle^{-\frac \beta 2}$, $v(x)\langle x\rangle^{\vartheta(j)+2}\in L^2$. Thus, for $\lambda\langle y\rangle\le 1$,  $ 0<\lambda<1$ and $l=0, \cdots, \frac{n+3}{2}$,  Leibniz's rule yields that
\begin{equation}\label{eq3.901}
	\begin{aligned}
		\Big\|\partial^l_\lambda\Big[\lambda^{n-2m}e^{\mp i\lambda|y|}& Q_j\big(v\tilde{r}_{\vartheta(j) }^\pm(\lambda,|x-y|,|y|)\big)\Big]\Big\|_{L_x^2}\\
		\lesssim&\sum_{s=0}^l\big|\partial^s_\lambda \big(\lambda^{n-2m}e^{\mp i\lambda|y|}\big)\big| \,\Big\|Q_j\big(v \partial^{l-s}_\lambda\tilde{r}_{\vartheta(j) }^\pm(\lambda,|x-y|,|y|)\big) \Big\|_{L_x^2}\\
		\lesssim&\sum_{s=0}^l\big|\partial^s_\lambda \big(\lambda^{n-2m}e^{\mp i\lambda|y|}\big)\big| \,\Big\|v(y) \partial^{l-s}_\lambda\tilde{r}_{\vartheta(j)}^\pm(\lambda,|x-y|,|y|) \Big\|_{L_x^2}\\
		\lesssim &\lambda^{n-2m+\vartheta(j)-l} \ \Big\|v(x)\langle x\rangle^{\vartheta(j)+2}\Big\|_{L_x^2}\\
		\lesssim & \lambda^{n-2m+\vartheta(j)-l}\lesssim\lambda^{n-2m+\theta-l}\langle y\rangle^{\theta-\vartheta(j)}.
	\end{aligned}
\end{equation}

Next, we estimate the term \eqref{eq-case1-t1}.
Applying \eqref{estforrpm} again and $\lambda\langle y\rangle<1$, we have 
\begin{equation*}
	\Big|\partial^l_\lambda\Big[({r}_{\theta}^\pm)^{(s)}(\lambda|y|)\Big]\Big| \lesssim (\lambda|y|)^{\max\{\theta-l-s,\ 0\}}|y|^l\lesssim\lambda^{\theta-s-l}\langle y\rangle^{\theta-s}, \quad l=0,\cdots,\frac{n+3}{2}.
\end{equation*}
This, together with Proposition \ref{corcancell}, shows that  for all  $0\le s\le\vartheta(j)-1$, $\lambda\langle y\rangle\le 1$ and $ 0<\lambda<1$, the following holds:
\begin{equation}\label{eq3.902}
	\begin{aligned}
		\Big\|\partial^l_\lambda\Big[\lambda^{n-2m+s}&e^{\mp i\lambda|y|}\big(r_\theta^\pm\big)^{(s)}(\lambda|y|)\, Q_jv\big(|y-x|-|y|\big)^s\Big]\Big\|_{L_x^2}\\
		\lesssim& \sum_{k=0}^l \left|\partial^k_\lambda \left( \lambda^{n-2m+s}e^{\mp i\lambda|y|}\right)\right|\cdot \left|\partial^{l-k}_\lambda \big(r_\theta^\pm\big)^{(s)}(\lambda|y|) \right|\cdot \Big\|Q_jv(|\cdot-y|-|y|)^s(x)\Big\|_{L_x^2}\\
		\lesssim&  \lambda^{n-2m+\theta-l}\langle y\rangle^{\theta-s} \sum_{\rho=0}^{s}|y|^{s-\rho}\Big\| Q_jv(|\cdot-y|^\rho)(x)\Big\|_{L^2_x}\\
		\lesssim&  \lambda^{n-2m+\theta-l}\langle y\rangle^{\theta-s} \sum_{\rho=0}^{s}\langle y\rangle^{s-\rho}\langle y\rangle^{\rho-\vartheta(j)}\\
		\lesssim&\lambda^{n-2m+\theta-l}\langle y\rangle^{\theta-\vartheta(j)}.	
	\end{aligned}
\end{equation}
In the above inequality, $\rho\le \vartheta(j)-1\le j$ satisfies the condition in Proposition \ref{corcancell}.  

Therefore, combining \eqref{eq3.901} with \eqref{eq3.902},   we have that in the region $\lambda\langle y\rangle\le 1$,  
\begin{equation}\label{eq3.9031}
	\|\partial^l_\lambda \omega_{j}^{\pm}(\lambda,x,y)\|_{L_x^2}\lesssim \lambda^{n-2m+\theta-l}\langle y\rangle^{\theta-\vartheta(j)}, \quad l=0,\cdots,\frac{n+3}{2},
\end{equation}
which  implies that the estimates \eqref{eq-k1 estimate-odd-1}--\eqref{eq-k1 estimate-odd-2} hold in the region $\lambda\langle y\rangle\le 1$.

Indeed, note that $\theta=\min \{\vartheta(j),\, 2m-n\}$, then $n-2m+\theta=\min\{ n-2m+\vartheta(j),\ 0\}$ and $\theta\le \vartheta(j)$, hence it immediately follows from \eqref{eq3.9031} that
\begin{equation*}
	\|\partial^l_\lambda \omega_{j}^{\pm}(\lambda,x,y)\|_{L_x^2}\lesssim \lambda^{\min\{ n-2m+\vartheta(j),\ 0\}-l}, 
\end{equation*}
which is exactly the estimate \eqref{eq-k1 estimate-odd-1} in the region $\lambda\langle y\rangle\le 1$.

To obtain \eqref{eq-k1 estimate-odd-3} from the \eqref{eq3.9031} as $\lambda\langle y\rangle\le 1$,  we divide two cases $\vartheta(j)< 2m-n$ and $\vartheta(j)\ge 2m-n$ to discuss.    For the first case,  one has $\theta=\vartheta(j)$ and $\frac{n+1}{2}-2m+\vartheta(j)<0$ , then 
$$\lambda^{n-2m+\theta}\langle y\rangle^{\theta-\vartheta(j)}
= \lambda^{\frac{n+1}{2}-2m+\vartheta(j)}(\lambda\langle y\rangle)^{\frac{n-1}{2}} \langle y\rangle^{-\frac{n-1}{2}}\le \lambda^{\frac{n+1}{2}-2m+\vartheta(j)} \langle y\rangle^{-\frac{n-1}{2}}, $$
from which  and \eqref{eq3.9031} we obtain that when $\vartheta(j)< 2m-n$ and $\lambda\langle y\rangle\le 1$,
\begin{equation}\label{eq2.43}
	\|\partial^l_\lambda \omega_{j}^{\pm}(\lambda,x,y)\|_{L_x^2}\lesssim \lambda^{\min\{\frac{n+1}{2}-2m+\vartheta(j) ,\ 0\}-l} \langle y\rangle^{-\frac{n-1}{2}}, \ \ l=0,\cdots,\frac{n+3}{2}.
\end{equation}
When $\vartheta(j)\ge 2m-n$, one has $\theta=\min \{\vartheta(j),\, 2m-n\}=2m-n$.  
If $j< 2m-\frac{n}{2}$, then $\frac{n+1}{2}-2m+\vartheta(j)\le 0$, and 
\begin{equation*}
	\lambda^{n-2m+\theta}\langle y\rangle^{\theta-\vartheta(j)}=\langle y\rangle^{2m-n-\vartheta(j)}= \lambda^{\frac{n+1}{2}-2m+\vartheta(j)}( \lambda \langle y\rangle)^{-\frac{n+1}{2}+2m-\vartheta(j)}
	\langle y\rangle^{-\frac{n-1}{2}},
\end{equation*}
which bounded by  $\lambda^{\min\{\frac{n+1}{2}-2m+\vartheta(j) ,\ 0\}} \langle y\rangle^{-\frac{n-1}{2}}$ due to  $\lambda\langle y\rangle\le 1$.
Finally, if $j=2m-\frac{n}{2}$, then $\frac{n+1}{2}-2m+\vartheta(j)=1$ and 
\begin{equation*}
	\lambda^{n-2m+\theta}\langle y\rangle^{\theta-\vartheta(j)}=\lambda^0\langle y\rangle^{-\frac{n+1}{2}}
	\le\lambda^{\min\{\frac{n+1}{2}-2m+\vartheta(j) ,\ 0\}} \langle y\rangle^{-\frac{n-1}{2}}
\end{equation*}
These arguments, together with\eqref{eq3.9031},  deduce the \eqref {eq2.43} again when $\vartheta(j)> 2m-n$ and and $\lambda\langle y\rangle\le 1$.  Hence we have obtain the estimates \eqref{eq-k1 estimate-odd-3}  as $\lambda\langle y\rangle\le 1$. 

Finally, the estimates \eqref{eq-k1 estimate-odd-2} are just from the above discussions for the case $\vartheta(j)\ge 2m-n$ if $j\ge  \max\{2m-n,\, m-\frac n2\}$.

\vskip0.2cm
\textbf{\emph{Case 2 :  $\lambda\langle y\rangle\ge 1$.}} 
\vskip0.2cm
By noting \eqref{eq-free kernel-F}, we have that
\begin{equation*}
	\Big(Q_jvR_0^\pm(\lambda^{2m}(\cdot,y))\Big)(x)=Q_jv\Big(e^{\pm i\lambda|\cdot-y|}\frac{\lambda^{\frac{n+1}{2}-2m}}{|\cdot-y|^{\frac{n-1}{2}}}F_n^\pm(\lambda|\cdot-y|)\Big)(x).
\end{equation*}
Now, $\omega_{j}^{\pm}(\lambda,x,y)$ can be written as follows:
\begin{equation*}
	\omega_{j}^{\pm}(\lambda,x,y)= Q_jv\Big(e^{\pm i\lambda(|\cdot-y|-|y|)}\frac{\lambda^{\frac{n+1}{2}-2m}}{|\cdot-y|^{\frac{n-1}{2}}}F_n^\pm(\lambda|\cdot-y|)\Big)(x).
\end{equation*}
It is clear that 
$$\left|\partial^k_\lambda \Big( e^{\pm i\lambda(|\cdot-y|-|y|)}\Big)\right|\leq \langle\, \cdot\,\rangle^k, \quad \text{for}~ k\in \mathbb{N}_0.$$ 
Then, using \eqref{eq-F decay-odd} and Lemma \ref{lem-est-integral-1}, it is not difficult to find that when $|y|\le 1$,  for any $l=0,\cdots,\frac{n+3}{2}$ and each $N\in \mathbb{N}_0$, we have
\begin{equation}\label{eq3.903}
	\|\partial^l_\lambda \omega_{j}^{\pm}(\lambda,x,y)\|_{L_x^2}\lesssim \lambda^{\frac{n+1}{2}-2m-l}\left\|\frac{\langle x\rangle^lv(x)}{|x-y|^{\frac{n-1}{2}}} \right\|_{L_x^2}  \lesssim_N \lambda^{\frac{n+1}{2}-2m-l}\langle y\rangle^{-N}.
\end{equation}
Hence,  it follows that
estimates \eqref{eq-k1 estimate-odd-1}-\eqref{eq-k1 estimate-odd-2} hold in the case when $\lambda\langle y\rangle\ge 1$ and $|y|\le1.$ 

In what follows, we consider the case when $\lambda\langle y\rangle>1$ and $|y|\ge1$. Using the Taylor expansion for $G^\pm_n(z)=e^{\pm iz}F_n^\pm(z)$ of $\big(\vartheta(j)-1\big)$-th order for $z\in \R$, we have
\begin{equation*}
	G^\pm_n(\lambda|x-y|) =\sum_{s=0}^{\vartheta(j)-1}\big(G^\pm_n\big)^{(s)}(\lambda|y|)\frac{(\lambda|x-y|-\lambda|y|)^s}{s!}+g_{\vartheta(j)}^\pm(\lambda,|x-y|,|y|),
\end{equation*}
where
\begin{align*}
	g_{\vartheta(j)}^\pm(&\lambda,|x-y|,|y|)\\
	&=\frac{(\lambda|x-y|-\lambda|y|)^{\vartheta(j) }}{\big(\vartheta(j)-1\big)!}\int_0^1(1-t)^{\vartheta(j)-1}\big(G^\pm_n\big)^{({\vartheta(j)})}(\lambda|y|+t\lambda(|x-y|-|y|))dt.
\end{align*}
Now, it is obtained that $\omega_{j}^{\pm}(\lambda,x,y)$ is a sum of the following two terms:
\begin{equation}\label{eq-case2-t1}
	e^{\mp i\lambda|y|}\lambda^{\frac{n+1}{2}-2m} \sum_{s=0}^{\vartheta(j)-1}\lambda^s\big(G^\pm_n\big)^{(s)}(\lambda|y|) Q_jv\Big( \frac{(|\cdot-y|-|y|)^s}{s!|\cdot-y|^{\frac{n-1}{2}}}\Big),
\end{equation}
and
\begin{equation}\label{eq-case2-t2}
	e^{\mp i\lambda|y|}\lambda^{\frac{n+1}{2}-2m}Q_jv\Big(\frac{g_{\vartheta(j)}^\pm(\lambda,|\cdot-y|,|y|)}{|\cdot-y|^{\frac{n-1}{2}}}\Big).
\end{equation}
Moreover, from the definition of $G^\pm_n$ and  the Leibniz rule we have 
\begin{equation}\label{eq3.904}
	(G^\pm_n)^{(s)}(z)=\sum_{k=0}^s(\pm i)^k C_s^k e^{\pm iz}(F_n^\pm)^{(s-k)}(z), \quad z\in \R,~ s=0,\cdots, \vartheta(j),
\end{equation}
where the constant $C_s^k=\frac{s!}{k!(s-k!)}$.
To estimate $\omega_{j}^{\pm}(\lambda,x,y)$,  it suffices to estimate \eqref{eq-case2-t1} and \eqref{eq-case2-t2} at this point.

We first estimate the term \eqref{eq-case2-t2} in the region of $\lambda\langle y\rangle>1$ and $|y|\ge1$. Noting the expression \eqref{eq3.904}, one has 
{\small
	$$ e^{\mp i\lambda|y|}(G^\pm_n)^{(\vartheta(j))}\big(\lambda|y|+t\lambda(|x-y|-|y|)\big)=e^{\pm it\lambda(|x-y|-|y|)}\sum_{s=0}^{\vartheta(j)} (\pm i)^s C_{\vartheta(j)}^s (F_n^\pm)^{(\vartheta(j)-s)}\big(\lambda|y|+t\lambda(|x-y|-|y|)\big).$$
}
Hence, by the Leibniz rule, for any $l=0,\cdots,\frac{n+3}{2}$, $0<\lambda<1$ and $|y|\ge 1$, the following estimate
\begin{equation}\label{eq3.9010}
	\left| \partial_\lambda^l\Big[e^{\mp i\lambda|y|}(G^\pm_n)^{(\vartheta(j))}\big(\lambda|y|+t\lambda(|x-y|-|y|)\big)\Big]\right| \lesssim
	\lambda^{-l}\langle x\rangle^{\frac{n+3}{2}},
\end{equation}
holds uniformly for $t\in[0,\,1]$.
This implies 
$$\bigg| \partial_\lambda^l\Big[\lambda^{\frac{n+1}{2}-2m}e^{\mp i\lambda|y|} g_{\vartheta(j)}^\pm(\lambda,|x-y|,|y|)\Big]\bigg|\lesssim
\lambda^{\frac{n+1}{2}-2m+\vartheta(j)-l}\langle x\rangle^{\vartheta(j)+\frac{n+3}{2}}.$$
Therefore,  using Lemma \ref{lem-est-integral-1} and the decay assumption \eqref{decayonV}, for any $l=0,\cdots,\frac{n+3}{2}$ and $\lambda\in(0,1)$,  
\begin{equation}\label{eq3.905}
	\bigg\|\partial^l_\lambda\Big[e^{\mp i\lambda|y|}\lambda^{\frac{n+1}{2}-2m}\big(Q_jv\Big(\frac{g_{\vartheta(j)}^\pm(\lambda,|\cdot-y|,|y|)}{|\cdot-y|^{-\frac{n-1}{2}}}\Big)(x)\Big]\bigg\|_{L_x^2}\lesssim \lambda^{\frac{n+1}{2}-2m+\vartheta(j)-l}\langle y\rangle^{-\frac{n-1}{2}}.
\end{equation}

We next estimate the term \eqref{eq-case2-t1}.
Note that by \eqref{eq3.904}, one has
$$e^{\mp i\lambda|y|}(G^\pm_n)^{(s)}(\lambda|y|)=\sum_{k=0}^s (\pm i)^k C_{s}^k(F_n^\pm)^{(s-k)}(\lambda|y|).$$
And by the estimate \eqref{eq-F decay-odd} of $F^\pm_n$, for  $l=0,\cdots,\frac{n+3}{2}$, there exists
$$
\left| \partial_\lambda^l\Big[e^{\mp i\lambda|y|}(G^\pm_n)^{(s)}(\lambda|y|)\Big]\right| \lesssim \lambda^{-l}.
$$
Hence, for $0\le s \le \vartheta(j)-1$,  $|y|\ge 1$ , $\lambda\langle y\rangle\ge 1$ and $l=0,\cdots,\frac{n+3}{2}$,  Proposition \ref{corcancell} derives that
\begin{equation*}
	\begin{aligned}
		&\left\| \partial_\lambda^l\Big[\lambda^{\frac{n+1}{2}-2m+s} e^{\mp i\lambda|y|}\big(G^\pm_n\big)^{(s)}(\lambda|y|)\cdot Q_jv\Big(\frac{(|\cdot-y|-|y|)^s}{|\cdot-y|^{\frac{n-1}{2}}s!}\Big)(x)\Big]\right\|_{L_x^2}\\
		\lesssim& \left|\partial_\lambda^l\left( \lambda^{\frac{n+1}{2}-2m+s} e^{\mp i\lambda|y|}\big(G^\pm_n\big)^{(s)}(\lambda|y|)\right)\right|\cdot \left\|Q_jv\Big(\frac{(|\cdot-y|-|y|)^s}{|\cdot-y|^{\frac{n-1}{2}}}\Big)(x) \right\|_{L_x^2}\\
		\lesssim& \lambda^{\frac{n+1}{2}-2m+s-l}\langle y\rangle^{s-\frac{n-1}{2}-\vartheta(j)}\lesssim \lambda^{\frac{n+1}{2}-2m+\vartheta(j)-l}\langle y\rangle^{-\frac{n-1}{2}},
	\end{aligned}
\end{equation*}
where in the last line, we apply the facts $\lambda \langle y\rangle \ge 1$ and $s<\vartheta(j)$.
Combining this with\eqref{eq3.903} and \eqref{eq3.905}, one can find that for $\lambda\langle y\rangle \ge 1$ and $\lambda\in(0,1)$,
\begin{equation}\label{eq3.906}
	\big\|\partial^l_\lambda \omega_{j}^{\pm}(\lambda,x,y)\big\|_{L_x^2}\lesssim \lambda^{\frac{n+1}{2}-2m+\vartheta(j)-l}\langle y\rangle^{-\frac{n-1}{2}}, \quad l=0,\cdots,\frac{n+3}{2}.
\end{equation}
Thus, we obtain \eqref{eq-k1 estimate-odd-1}, \eqref{eq-k1 estimate-odd-3} and \eqref{eq-k1 estimate-odd-2} in the region $\lambda\langle y\rangle\ge 1$. 

\vskip0.3cm
To sum up, \eqref{eq-k1 estimate-odd-1}, \eqref{eq-k1 estimate-odd-3} and \eqref{eq-k1 estimate-odd-2} are derived from \eqref{eq3.9031} and \eqref{eq3.906}.
\vskip0.3cm
Now  we define the function $\phi_0 \in C^{\infty}(\R)$ such that $\phi_0(z)=1$ when $|z|\leq 1/2$ and $\phi_0(z)=0$ when $|z|\geq 1$. Let $\phi_1(z)=1-\phi_0$.
Note that by the expansion \eqref{eq-free expansion-odd-4} when $\theta=\vartheta(j)$ and the cancellation property of $Q_j$, one has 
\begin{equation*}
	\Big(Q_jv\big(R_0^+(\lambda^{2m})(\cdot,\,y)-R_0^-(\lambda^{2m})(\cdot,\,y)\big)\Big)(x)=\lambda^{n-2m}
	\Big(Q_jv\big(r_{\vartheta(j)}^+(\lambda|\cdot-y|)-r_{\vartheta(j)}^-(\lambda|\cdot-y|)\big)\Big)(x).
\end{equation*}
Then we define 
$$\widetilde{\omega}_{j}^{\pm}(\lambda,x,y)=\pm \left(e^{\mp i\lambda|y|} \phi_0(\lambda \langle y\rangle)\lambda^{n-2m}
\big(Q_jvr_{\vartheta(j)}^\pm(\lambda|\cdot-y|)\big)+\phi_1(\lambda \langle y\rangle){\omega}_{j}^{\pm}(\lambda,x,y)\right). $$
Therefore, by a similar analysis as before, we can obtain that \eqref{eq3.900}, \eqref{eq-k2-odd-1} and \eqref{eq-k2-odd-2} hold.

\subsection{Proof of Lemma \ref{lem3.10}}\label{secA.2}
Recall \eqref{eq-QjR kernel-odd} states that
\begin{equation*}
	\Big(Q_jvR_0^\pm(\lambda^{2m})(\cdot,\,y)\Big)(x)=e^{\pm i\lambda |y|}\omega_{j}^{\pm}(\lambda,x,y).
\end{equation*}
We first split $\omega_{j}^{\pm}(\lambda,x,y)$ to sum of the following three part:
\begin{equation*}
	\phi_0(\lambda\langle y\rangle)\omega_{j}^\pm(\lambda,x,y)+\mathbf{1}_{\{|y|\le 1\}}\phi_1(\lambda\langle y\rangle)\omega_{j}^\pm(\lambda,x,y)+\mathbf{1}_{\{|y|> 1\}}\phi_1(\lambda\langle y\rangle)\omega_{j}^\pm(\lambda,x,y).
\end{equation*}
By \eqref{eq3.9031} in the proof of Lemma \ref{lemma-QjvR-odd}, it follows that the function $\phi_0(\lambda\langle y\rangle)\omega_{j}^\pm(\lambda,x,y)$ satisfies \eqref{eq3.1001}, that is,
\begin{equation*}
	\Big\|\partial^l_\lambda \left( \phi_0(\lambda\langle y\rangle)\omega_{j}^\pm(\lambda,x,y)\right) \Big\|_{L_x^2}\lesssim \begin{cases}
		\lambda^{\frac{n-1}{2}-2m+\vartheta(j)+\varepsilon-l}\langle y\rangle^{-\frac{n+1}{2}+\varepsilon} \quad & j<2m-\frac n2~~ \text{and}~~ \varepsilon\in [0,1] ,\\	
		\lambda^{-l}\langle y\rangle^{-\frac{n+1}{2}}, \quad & j=2m-\frac n2.
	\end{cases}	
\end{equation*}
And by \eqref{eq3.906}, one has that $\phi_1(\lambda\langle y\rangle)\omega_{j}^\pm(\lambda,x,y)$ satisfies \eqref{eq3.10011} in the region where $|y|\le 1$, i.e., for any  $l=0,\cdots,\frac{n+3}{2}$,
\begin{equation*}
	\Big\|\partial^l_\lambda \Big( \mathbf{1}_{\{|y|\le 1\}}\phi_1(\lambda\langle y\rangle)\omega_{j}^\pm(\lambda,x,y)\Big) \Big\|_{L_x^2}\lesssim \lambda^{\frac{n+3}{2}-2m+\vartheta(j)}\langle y\rangle^{-\frac{n-1}{2}}.	
\end{equation*}

Therefore, we next concentrate on the analysis of $\mathbf{1}_{\{|y|> 1\}}\phi_1(\lambda\langle y\rangle)\omega^\pm_{j}(\lambda,x,y)$.
Recall that in the proof of Lemma \ref{lemma-QjvR-odd},
\begin{equation*}
	\begin{aligned}
		\omega^\pm_{j}(\lambda,x,y)=&\lambda^{\frac{n+1}{2}-2m}e ^{\mp i\lambda|y|}Q_jv\Big(\frac{e^{\pm i\lambda(|\cdot-y|)}}{|\cdot-y|^{\frac{n-1}{2}}}F_n^\pm(\lambda|\cdot-y|)\Big)(x).
	\end{aligned}
\end{equation*}
Using the Taylor expansion for $G_n^\pm=e^{\pm iz}F_n^\pm(z)$ of $\vartheta(j)$-th order, one has
\begin{equation}\label{Fexpansion}
	G^\pm_n(\lambda|x-y|) =\sum_{s=0}^{\vartheta(j)}\big(G^\pm_n\big)^{(s)}(\lambda|y|)\frac{(\lambda|x-y|-\lambda|y|)^s}{s!}+g_{\vartheta(j)+1}^\pm(\lambda,|x-y|,|y|),
\end{equation}
where
\begin{align*}
	g_{\vartheta(j)+1}^\pm(&\lambda,|x-y|,|y|)\\
	&=\frac{(\lambda|x-y|-\lambda|y|)^{\vartheta(j)+1 }}{\big(\vartheta(j)-1\big)!}\int_0^1(1-t)^{\vartheta(j)-1}\big(G^\pm_n\big)^{({\vartheta(j)}+1)}(\lambda|y|+t\lambda(|x-y|-|y|))dt.
\end{align*}
Similar to \eqref{eq3.904}, one has 
\begin{equation} \label{eqeq3.904-1}
	(G^\pm_n)^{(s)}(z)=\sum_{k=0}^s(\pm i)^k C_s^k e^{\pm iz}(F_n^\pm)^{(s-k)}(z), \quad z\in \R,~ s=0,\cdots, \vartheta(j)+1.
\end{equation}
Now, the function $\mathbf{1}_{\{|y|> 1\}}\phi_1(\lambda\langle y\rangle)\omega_{j}^{\pm}(\lambda,x,y)$ can be written as a sum of the following three terms:
\begin{equation*}
	\begin{split}
		&L_{j,0}^\pm=\mathbf{1}_{\{|y|> 1\}}\phi_1(\lambda\langle y\rangle)e^{\mp i\lambda|y|}\lambda^{\frac{n+1}{2}-2m} \sum_{s=0}^{\vartheta(j)-1}\lambda^s\big(G^\pm_n\big)^{(s)}(\lambda|y|) Q_jv\Big( \frac{(|\cdot-y|-|y|)^s}{s!|\cdot-y|^{\frac{n-1}{2}}}\Big),\\
		&L_{j,1}^\pm= \mathbf{1}_{\{|y|> 1\}}\phi_1(\lambda\langle y\rangle)e^{\mp i\lambda|y|}\lambda^{\frac{n+1}{2}-2m+\vartheta(j)} \big(G^\pm_n\big)^{(\vartheta(j))}(\lambda|y|) Q_jv\Big( \frac{(|\cdot-y|-|y|)^{\vartheta(j)}}{\vartheta(j)!|\cdot-y|^{\frac{n-1}{2}}}\Big),\\
		&L_{j,2}^{\pm}=\mathbf{1}_{\{|y|> 1\}}\phi_1(\lambda\langle y\rangle)e^{\mp i\lambda|y|}\lambda^{\frac{n+1}{2}-2m}Q_jv\Big(\frac{g_{\vartheta(j)+1}^\pm(\lambda,|\cdot-y|,|y|)}{|\cdot-y|^{\frac{n-1}{2}}}\Big).
	\end{split}
\end{equation*}

\underline{{\bf Step 1:  the analysis of $L_{j,0}^\pm(\lambda,x,y)$.}}

We first deal with the term $L_{j,0}^\pm(\lambda,x,y)$.
Similar to \eqref{eq3.905}, for all $0\le s \le \vartheta(j)-1$,  Proposition \ref{corcancell} indicates that if $l=0,\cdots,\frac{n+3}{2}$, in the region $\lambda\langle y\rangle \geq 1/2$, we have
\begin{equation}\label{eq-est-5.001}
	\begin{aligned}
		&\left\| \partial_\lambda^l\Big[\lambda^{\frac{n+1}{2}-2m+s}e^{\mp i\lambda|y|}\big(G^\pm_n\big)^{(s)}(\lambda|y|)\cdot Q_jv\Big(\frac{(|\cdot-y|-|y|)^s}{|\cdot-y|^{\frac{n-1}{2}}s!}\Big)(x)\Big]\right\|_{L_x^2}\\
		&\ \ \ \ \lesssim \lambda^{\frac{n+1}{2}-2m+s-l}\langle y\rangle^{s-\frac{n-1}{2}-\vartheta(j)} \lesssim
		\lambda^{\frac{n-1}{2}-2m+\vartheta(j)-l}\langle y\rangle^{-\frac{n+1}{2}}.	
	\end{aligned}
\end{equation} 
Thus, by \eqref{eq-est-5.001}, for $l=0,1,\cdots,\frac{n+3}{2}$, we have
\begin{equation}\label{eq3.1005}
	\left\|\partial_\lambda^lL_{j,0}^\pm(\lambda,x,y) \right\| \lesssim
	\lambda^{\frac{n-1}{2}-2m+\vartheta(j)-l}\langle y\rangle^{-\frac{n+1}{2}}.	
\end{equation}
This, combined with the fact $\lambda\langle y\rangle \ge 1/2$ in the support of $L_{j,0}^\pm$, shows that $L_{j,0}^\pm(\lambda,x,y)$ also satisfies \eqref{eq3.1001}.\\

\indent \underline{{\bf Step 2:  the analysis of $L_{j,1}^\pm(\lambda,x,y)$.}}

To proceed, by \eqref{eqeq3.904-1}  one has 
\begin{equation}
	\begin{aligned}
		e^{\mp i\lambda|y|}\big(G^\pm_n\big)^{(\vartheta(j))}(\lambda|y|)&=F_n^\pm(\lambda|y|)+\sum_{s=0}^{\vartheta(j)-1} (\pm i)^s C_{\vartheta(j)}^s(F_n^\pm)^{(\vartheta(j)-s)}(\lambda|y|).\\
	\end{aligned}
\end{equation}
By the definition of $F_n^\pm$ and \eqref{eq2.8}, we know that
$F_n^\pm(z)=A^{\pm}+\widetilde{F}_n^\pm(z),	$
where the constant $A^{\pm}=\lim\limits_{z\rightarrow+\infty}F_n^\pm(z)$, and 
$\widetilde{F}_n^\pm(z)$ satisfy that 
$$\Big|\frac{d^k}{dz^k}\widetilde{F}_n^\pm(z)\Big|\lesssim_k
\langle z\rangle^{-1-k}, \quad \text{for}~k\in \mathbb{N}_0.
$$
Let ${B}_n^\pm(z)=\widetilde{F}_n^\pm(z)+\sum_{s=0}^{\vartheta(j)-1} (\pm i)^s C_{\vartheta(j)}^s(F_n^\pm)^{(\vartheta(j)-s)}(z)$.
Then, by \eqref{eq-F decay-odd}, there exists 
$$\Big|\frac{d^k}{dz^k}{B}_n^\pm(z)\Big|\lesssim_k
\langle z\rangle^{-1-k}, \quad \text{for}~k\in \mathbb{N}_0.
$$
We denote
\begin{equation*}
	L_{j,1}^{\pm,0}(\lambda,x,y)=\frac{A^\pm}{\vartheta(j)!}\,\mathbf{1}_{\{|y|> 1\}}\phi_1(\lambda\langle y\rangle)\lambda ^{\frac{n+1}{2}-2m+\vartheta(j)}\, Q_jv\left( \frac{(|\cdot-y|-|y|)^{\vartheta(j)}}{|\cdot-y|^\frac{n-1}{2}}\right)(x),
\end{equation*} 
\begin{equation*}
	L_{j,1}^{\pm,1}(\lambda,x,y)=\mathbf{1}_{\{|y|> 1\}}\phi_1(\lambda\langle y\rangle)\lambda^{\frac{n+1}{2}-2m+\vartheta(j)}{B}^\pm_n(\lambda|y|) Q_jv\Big(\frac{(|\cdot-y|-|y|)^{\vartheta(j)}}{\vartheta(j)!|\cdot-y|^{\frac{n-1}{2}}}\Big)(x).
\end{equation*} 
It immediately follows from Proposition \ref{corcancell} that for $l=0,\cdots,\frac{n+3}{2}$, in the support of $L_{j,1}^{\pm,1}(\lambda,x,y)$,   there is 
\begin{equation}\label{eq3.1006}
	\begin{aligned}
		\left\| \partial_\lambda^lL_{j,1}^{\pm,1}(\lambda,x,y)\right\|_{L_x^2}
		\lesssim \lambda^{\frac{n-1}{2}-2m-\vartheta(j)-l}\langle y\rangle^{-\frac{n+1}{2}}.
	\end{aligned}
\end{equation}
Therefore, $L_{j,1}^{\pm,1}(\lambda,x,y)$ satisfies \eqref{eq3.1001} because $\lambda\langle y\rangle \ge 1/2$ in the support of $L_{j,1}^{\pm,1}(\lambda,x,y).$

We now come to consider the term $L_{j,1}^{\pm,0}(\lambda,x,y)$.
Applying the following algebra identity:
\begin{equation*}
	a^k-b^k=(a-b)(a^{k-1}+a^{k-2}b+\cdots+b^{k-1}), \quad \text{for}\ k\in\mathbb{N}_0,
\end{equation*}
we have 
\begin{equation*}
	\begin{aligned}	
		\frac{1}{|x-y|^\frac{n-1}{2}}=\Big(\frac{1}{|x-y|^\frac{n-1}{2}}-\frac{1}{|y|^\frac{n-1}{2}}\Big)+\frac{1}{|y|^\frac{n-1}{2}} 
		=-\sum_{l=0}^{\frac{n-1}{2}-1}\frac{|x-y|-|y|}{|x-y|^{\frac{n-1}{2}-l}|y|^{l+1}}+\frac{1}{|y|^\frac{n-1}{2}}.
	\end{aligned}
\end{equation*}
Similarly, we have  
\begin{equation*}
	\begin{aligned}
		\frac{1}{|y|^\frac{n-1}{2}}=&\frac{1}{|y|^\frac{n-1}{2}}\Big(1- \frac{(|y|+|x-y|)^{\vartheta(j)}}{2^{\vartheta(j)}|y|^{\vartheta(j)}}\Big)+\frac{(|y|+|x-y|)^{\vartheta(j)}}{2^{\vartheta(j)}|y|^{\frac{n-1}{2}+\vartheta(j)}}\\
		=&-\sum_{l=0}^{\vartheta(j)-1}\frac{(|x-y|-|y|)(|y|+|x-y|)^{l}}{2^{l+1}|y|^{\frac{n+1}{2}+l}}+\frac{(|y|+|x-y|)^{\vartheta(j)}}{2^{\vartheta(j)}|y|^{\frac{n-1}{2}+\vartheta(j)}},
	\end{aligned}
\end{equation*}
Set $$\tilde{L}_{j,1}^{\pm,0,0}(x,y)=-\sum_{l=0}^{\vartheta(j)-1}\frac{(|x-y|-|y|)^{\vartheta(j)+1}(|y|+|x-y|)^{l}}{2^{l+1}|y|^{\frac{n+1}{2}+l}}-\sum_{l=0}^{\frac{n-1}{2}-1}\frac{(|x-y|-|y|)^{\vartheta(j)+1}}{|x-y|^{\frac{n-1}{2}-l}|y|^{l+1}}.$$
and 
\begin{equation*}
	L_{j,1}^{\pm,0,0}(x,y)=\frac{A^\pm}{\vartheta(j)!}\,\mathbf{1}_{\{|y|> 1\}}\phi_1(\lambda\langle y\rangle)\lambda ^{\frac{n+1}{2}-2m+\vartheta(j)}\cdot Q_jv\left( \tilde{L}_{j,1}^{\pm,0,0}(\cdot,y)\right)(x).
\end{equation*}
Then,  we have 
\begin{equation*}
	\left|\tilde{L}_{j,1}^{\pm,0,0}(x,y) \right|\lesssim  |y|^{-\frac{n+1}{2}}\langle x\rangle^{2\vartheta(j)}+\sum_{l=0}^{\frac{n-1}{2}-1}\frac{\langle x\rangle^{\vartheta(j)+1}}{|x-y|^{\frac{n-1}{2}-l}|y|^{l+1}}.	
\end{equation*}
Note that $|v(x)|\lesssim\langle x\rangle^{-\beta/2}$ and  $\beta/2> 2\vartheta(j)+\frac{n}{2}$ for a fixed $\mathbf{k}$ by our assumption \eqref{decayonV} and the definitions \ref{eq-Jk-odd} and \eqref{eq-Jk-odd-2} of $J_{\k}$. A direct computation using Lemma \ref{lem-est-integral-1} shows that when $|y|>1$, one has
\begin{equation*}
	\left\| Q_jv\left( \tilde{L}_{j,1}^{\pm,0,0}(\cdot,y)\right)(x)\right\|_{L_x^2} \lesssim \left\|v(x) \tilde{L}_{j,1}^{\pm,0,0}(x,y)\right\|_{L_x^2}\lesssim \langle y\rangle^{-\frac{n+1}{2}}.
\end{equation*}
Thus,  for any $l=0,\cdots,\frac{n+3}{2}$, it follows that
\begin{equation}\label{eq3.1007}
	\left\|\partial_\lambda^lL_{j,1}^{\pm,0,0}(\lambda,x,y)\right\|_{L_x^2} \lesssim\lambda ^{\frac{n+1}{2}-2m+\vartheta(j)-l} \langle y\rangle^{-\frac{n+1}{2}},
\end{equation}
which shows that $L_{j,1}^{\pm,0,0}(\lambda,x,y)$ satisfies \eqref{eq3.10011}.

Next, we have 
\begin{equation*}
	\frac{(|x-y|^2-|y|^2)^{\vartheta(j)}}{2^{\vartheta(j)}|y|^{\frac{n-1}{2}+\vartheta(j)}}=\frac{(-2xy+|x|^2)^{\vartheta(j)}}{2^{\vartheta(j)}|y|^{\frac{n-1}{2}+\vartheta(j)}}.
\end{equation*}
The cancellation property of $Q_j$  that $Q_j(vx^\alpha)=0$ for $|\alpha|\le \vartheta(j)-1 $ and $\alpha\in {\mathbb{N}_0}^n$ shows that
{\small
	\begin{equation*}
		\begin{aligned}
			Q_j\Bigg( v(x)\frac{(-2xy+|x|^2)^{\vartheta(j)}}{2^{\vartheta(j)}|x|^{\frac{n-1}{2}+\vartheta(j)}}\Bigg) =&\sum_{|\alpha|=|\beta|=\vartheta(j)}\tilde{C}_{\alpha,\beta}\frac{y^\alpha Q_j(vz^\beta)(x)}{|y|^{\frac{n-1}{2}+\vartheta(j)}}+\sum_{\substack{|\alpha|<\vartheta(j)\\ |\alpha|+|\beta|=2\vartheta(j)}}\tilde{C}_{\alpha,\beta}\frac{y^\alpha Q_j(vz^\beta)(x)}{|y|^{\frac{n-1}{2}+\vartheta(j)}}\\
			=&\tilde{L}_{j,1}^{\pm,0,1}(x,y)+\tilde{L}_{j,1}^{\pm,0,2}(x,y),
		\end{aligned}
	\end{equation*}
}
where $Q_j$ acts on the functions of variable $z$.
Denote, respectively
\begin{equation*}
	\begin{aligned}
		L_{j,1}^{\pm,0,1}(\lambda,x,y)&=\frac{A^\pm}{\vartheta(j)!}\mathbf{1}_{\{|y|> 1\}}\phi_1(\lambda\langle y\rangle)\lambda ^{\frac{n+1}{2}-2m+\vartheta(j)}\cdot\tilde{L}_{j,1}^{\pm,0,1}(x,y), \\	
		L_{j,1}^{\pm,0,2}(\lambda,x,y)&=\frac{A^\pm}{\vartheta(j)!}\mathbf{1}_{\{|y|> 1\}}\phi_1(\lambda\langle y\rangle)\lambda ^{\frac{n+1}{2}-2m+\vartheta(j)}\cdot\tilde{L}_{j,1}^{\pm,0,2}(x,y).
	\end{aligned}
\end{equation*}
At this time, we have 
$$L_{j,1}^{\pm}=L_{j,1}^{\pm,0,0}+L_{j,1}^{\pm,0,1}+L_{j,1}^{\pm,0,2}+L_{j,1}^{\pm,1}.
$$
It is easy to see that, under the assumption \ref{decayonV},  for $|y|>1$ and $l=0,\cdots,\frac{n+3}{2}$, in the support of $L_{j,1}^{\pm,0,2}$, we have 
\begin{equation}\label{eq3.1008}
	\left\|\partial_\lambda^l  L_{j,1}^{\pm,0,2}(\lambda,x,y)  \right\|_{L^2_x} \lesssim
	\lambda^{\frac{n+1}{2}-2m+\vartheta(j)-l} \langle y\rangle^{-\frac{n+1}{2}}\lesssim \lambda^{\frac{n+3}{2}-2m+\vartheta(j)-l} \langle y\rangle^{-\frac{n-1}{2}}.
\end{equation}
which shows that $L_{j,1}^{\pm,0,2}(\lambda,x,y)$ satisfies \eqref{eq3.10011}.

Meanwhile, note that $\phi_1=1-\phi_0$. Thus, we have
\begin{equation*}
	L_{j,1}^{\pm,0,1}(\lambda,x,y)=L_{j,1}^{\pm,0,1,0}(\lambda,x,y)+L_{j,1}^{\pm,0,1,1}(\lambda,x,y),
\end{equation*}
where
\begin{equation}\label{w1from}
	\begin{cases}
		L_{j,1}^{\pm,0,1,0}(\lambda,x,y)&=\frac{A^\pm}{\vartheta(j)!}\mathbf{1}_{\{|y|> 1\}}\lambda ^{\frac{n+1}{2}-2m+\vartheta(j)}\cdot\tilde{L}_{j,1}^{\pm,0,1}(x,y),\\
		L_{j,1}^{\pm,0,1,1}(\lambda,x,y)&=-\frac{A^\pm}{\vartheta(j)!}\mathbf{1}_{\{|y|> 1\}}\phi_0(\lambda\langle y\rangle)\lambda ^{\frac{n+1}{2}-2m+\vartheta(j)}\cdot\tilde{L}_{j,1}^{\pm,0,1}(x,y).
	\end{cases}	
\end{equation}
Since
\begin{equation*}
	\left\| \tilde{L}_{j,1}^{\pm,0,1}(x,y)\right\|_{L^2_x} \lesssim \langle y\rangle^{-\frac{n+1}{2}}, \quad |y|>1,	
\end{equation*}
and the $\text{supp} L_{j,1}^{\pm,0,1,1}(\lambda,x,y)\subseteq\{(\lambda,x,y); \lambda\langle y\rangle \le 1 \}$, it follows that for $l=0,\cdots,\frac{n+3}{2}$,
\begin{equation}\label{eq3.10081}
	\left\|\partial_\lambda^l \left( L_{j,1}^{\pm,0,1,1}(\lambda,x,y)\right)  \right\|_{L^2_x} \lesssim
	\lambda^{\frac{n-1}{2}-2m+\vartheta(j)+\varepsilon-l}\langle y\rangle^{-\frac{n+1}{2}+\varepsilon}
\end{equation}
holds uniformly for $\varepsilon\in [0,1]$, which means that $L_{j,1}^{\pm,0,1,1}$ satisfies \eqref{eq3.1001}. 

In summary, we have that
$$L_{j,1}^\pm=L_{j,1}^{\pm,0}+L_{j,1}^{\pm,1}=L_{j,1}^{\pm,0,0}+L_{j,1}^{\pm,0,1,0}+L_{j,1}^{\pm,0,1,1}+L_{j,1}^{\pm,0,2}+L_{j,1}^{\pm,1},$$
where $L_{j,1}^{\pm,0,1,1}(\lambda,x,y)$, $L_{j,1}^{\pm,1}(\lambda,x,y)$  satisfy \eqref{eq3.1001}, and  $L_{j,1}^{\pm,0,0}(\lambda,x,y)$, $L_{j,1}^{\pm,0,2}(\lambda,x,y)$ satisfy \eqref{eq3.10011}.
\vskip0.2cm
\indent \underline{{\bf Step 3:  the analysis of $L_{j,2}^\pm(\lambda,x,y)$.}}

It is left to consider the term $L_{j,2}^\pm(\lambda,x,y)$.
For  $l=0,\cdots,\frac{n+3}{2}$, we have
\begin{equation}\label{eq3.10061}
	\begin{aligned}
		\left| \partial_\lambda^l\Big[(G^\pm_n)^{(\vartheta(j)+1)}\big(\lambda|y|+t\lambda(|x-y|-|y|)\big)\Big]\right| \lesssim
		\langle x\rangle^{\frac{n+3}{2}}.
	\end{aligned}
\end{equation}
This implies that for  $l=0,\cdots,\frac{n+3}{2}$ and $|y|>1$, one has
$$\left| \partial_\lambda^l\Big[\lambda^{\frac{n+1}{2}-2m}\phi_1(\lambda\langle y\rangle)\cdot  e^{\mp i\lambda|y|} g_{\vartheta(j)+1}^\pm(\lambda,|x-y|,|y|)]\right|\lesssim\lambda^{\frac{n+3}{2}-2m+\vartheta(j)-l}\langle x\rangle^{\vartheta(j)+\frac{n+3}{2}}.$$
Therefore,by the decay assumption \eqref{decayonV} and Lemma \ref{lem-est-integral-1},  for  $l=0,\cdots,\frac{n+3}{2}$ and $\lambda\in(0,1)$, the following holds,
\begin{equation}\label{eq.estforg}
	\Big\|\partial^l_\lambda L_{j,2}^\pm(\lambda,x,y)\Big\|_{L_x^2}
    \lesssim \lambda^{\frac{n+3}{2}-2m+\vartheta(j)-l}
    \left\| \frac{v(x)}{|x-y|^{\frac{n-1}{2}}}\right\|_{L_x^2} 
    \lesssim \lambda^{\frac{n+3}{2}-2m+\vartheta(j)-l}\langle y\rangle^{-\frac{n-1}{2}},
\end{equation}
which shows that $L_{j,2}^\pm(\lambda,x,y)$ satisfies \eqref{eq3.10011}.

To sum up, 
let 
$$\omega_{j,1}^{\pm}(\lambda,x,y)=L_{j,1}^{\pm,0,1,0}(\lambda,x,y),$$
$$\omega_{j,2}^{\pm}(\lambda,x,y)=\phi_0(\lambda\langle y\rangle)\omega_{j}^{\pm}+L_{j,0}^\pm+L_{j,1}^{\pm,0,1,1}+L_{j,1}^{\pm,1},$$
$$\omega_{j,3}^\pm(\lambda,x,y)=\mathbf{1}_{\{|y|\le 1\}}\phi_1(\lambda\langle x\rangle)\omega_{j}^{\pm}+L_{j,1}^{\pm,0,0}+L_{j,1}^{\pm,0,2}+L_{j,2}^\pm.$$
Then we have obtained \eqref{eq3.1000}, i.e., 
\begin{equation}\label{eq3.1000'}
	\Big(Q_jvR_0^\pm(\lambda^{2m}\Big)(x,y)=e^{\pm i\lambda|y|}\Big(\omega_{j,1}^\pm(\lambda,x,y)+\omega_{j,2}^\pm(\lambda,x,y)+\omega_{j,3}^\pm(\lambda,x,y)\Big).
\end{equation}
In addition, \eqref{eq3.100001} is derived from  \eqref{w1from}.
The estimate \eqref{eq3.1001} can be derived from \eqref{eq3.1005}, \eqref{eq3.1006} and \eqref{eq3.10081}.
The estimate \eqref{eq3.10011} can be derived from \eqref{eq3.1007}, \eqref{eq3.1008} and \eqref{eq.estforg}.

\begin{appendix}
\section{The asymptotic expansions of $(M^\pm(\lambda))^{-1}$}\label{sec:Minverse}
In this appendix, we outline the processes how to obtain the asymptotic expansions of $(M^\pm(\lambda))^{-1}$ as $\lambda$ near zero, i.e.  Theorem \ref{thm-M inverse-odd}, which actually was proved by Cheng et al \cite{CHHZ-2024}. It is noteworthy that the study of the asymptotic expansions for $(M^\pm(\lambda))^{-1}$ can be traced back to Jensen and Nenciu's work \cite{JN01}, where they firstly established a unified approach to study the asymptotic expansions of $(M^\pm(\lambda))^{-1}$ for the case when $m=1$. Based on the work \cite{JN01}, in the recent year, there exist many results about the asymptotic expansions of $(M^\pm(\lambda))^{-1}$ for $H=(-\Delta)^m+V$ with  $m\ge 2$, see \cite{Green-Toprak-4D,EGT21,FSWY20,SWY22,LSY23}.

Recall that
$$M^\pm(\lambda)=U+vR_0^\pm(\lambda^{2m})v,\,\,\, \lambda>0,$$
then by the expansions of $R_0^{\pm}(x,y)$  in Lemma \ref{lemma-free resolvent expansion},  $M^\pm(\lambda)$ can be expanded on $L^2(\mathbb{R}^n)$ as follows:
\begin{itemize}
\item If zero is a regular point (i.e. $\mathbf{k}=0$), taking $\theta=\max\{0,2m-n\}+1$ in \eqref{eq-free expansion-odd-2}, then
\begin{equation}\label{eq-M-regular}
\begin{aligned}
M^\pm(\lambda)&=\sum\limits_{0\le j\le \lfloor\frac{\theta-1}{2}\rfloor}a_j^\pm\lambda^{n-2m+2j}vG_{2j}v+T_0+\lambda^{n-2m}vr_{\theta}^\pm(\lambda)v,
\end{aligned}
\end{equation}
\item If zero is a $\mathbf{k}$-th resonance with $1\le\mathbf{k}\le m_n,$ taking $\theta=\max\{0,2m-n\}+2\mathbf{k}$ in \eqref{eq-free expansion-odd-2}, then
\begin{equation}\label{eq-M-resonance}
\begin{aligned}
M^\pm(\lambda)&=\sum\limits_{0\le j\le \lfloor\frac{\theta-1}{2}\rfloor}a_j^\pm\lambda^{n-2m+2j}vG_{2j}v+T_0+\lambda^{n-2m}vr_{\theta}^\pm(\lambda)v,
\end{aligned}
\end{equation}
\item If zero is an eigenvalue (i.e. $\mathbf{k}= m_n+1$), taking $\theta=4m-n+1$ in \eqref{eq-free expansion-odd-5}, then
\begin{equation}\label{eq-M-eigenvalue}
\begin{aligned}
M^\pm(\lambda)&=\sum\limits_{0\le j\le \lfloor\frac{\theta-1}{2}\rfloor}a_j^\pm\lambda^{n-2m+2j}vG_{2j}v+T_0+b_1\lambda^{2m}vG_{4m-n}v+\lambda^{n-2m}vr_{4m-n+1}^\pm(\lambda)v.
\end{aligned}
\end{equation}
\end{itemize}
In the above expansions, $G_{2j}\ (j\in J_{\mathbf{k}})$ are the operators which were defined in \eqref{eq-G-odd}, $$T_0=U+b_0vG_{2m-n}v,$$  
and $r_{\theta}^\pm(\lambda)$ are the operators with integral kernels $r_{\theta}^\pm(\lambda|x-y|)$. Indeed, by \emph{(\romannumeral2)} in  Assumption \ref{Assumption}, we have $\beta-2\theta>n$, thus each term except for $T_0$ in \eqref{eq-M-regular}, \eqref{eq-M-resonance} and \eqref{eq-M-eigenvalue} is a Hilbert-Schmidt operator.
Moreover, for any $l=0,1,\cdots,\frac{n+3}{2},$  by \eqref{eq-r estimate-odd}, one has
\begin{equation}\label{equ3.53}
	\|\partial_\lambda^lvr_{\theta}^\pm(\lambda)v\|_{L^2}\lesssim \lambda^{n-2m+\theta-l}.
\end{equation}
The following lemma (see \cite[Lemma 3.12]{JK}) is the main method which is used to analyze the asymptotic expansions of $(M^\pm(\lambda))^{-1}$ as $\lambda$ near zero.
\begin{lemma}\label{lemma-inverse}
Let $\lambda>0$ and $Q_j\ (j\in J_{\mathbf{k}})$ be the projection operators in \eqref{eq-Q_j-odd}. Define the operator $B=(\lambda^{-j}Q_j)_{j\in J_{\mathbf{k}}}:\,\bigoplus_{j\in J_{\mathbf{k}}}Q_jL^2\to L^2$ by
$$
Bf=\sum\limits_{j\in J_{\mathbf{k}}}\lambda^{-j}Q_jf_j,\quad f=\left(f_j\right)_{j\in J_{\mathbf{k}}}\in \bigoplus_{j\in J_{\mathbf{k}}}Q_jL^2.
$$
Let $B^*$ be the dual operator of $B$ and define $\mathbb{A^\pm}$ on $\bigoplus_{j\in J_{\mathbf{k}}}Q_jL^2$ by
\begin{equation}\label{equ3.54}
	\mathbb{A^\pm}=\lambda^{2m-n}B^* M^\pm(\lambda) B.
\end{equation}
Assume that $B$ is surjective and  $\mathbb{A^\pm}$ are invertible on $\bigoplus_{j\in J_{\mathbf{k}}}Q_jL^2$, then $M^\pm(\lambda)$ are invertible on $L^2$. Especially, one has 
\begin{equation}\label{equ3.54.000}
	(M^\pm(\lambda))^{-1}=\lambda^{2m-n}B(\mathbb{A^\pm})^{-1}B^*.
\end{equation}
\end{lemma}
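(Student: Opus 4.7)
The strategy is to show that under the stated hypotheses $B$ is in fact a bijection, not merely a surjection, and then to invert the defining identity $\mathbb{A}^\pm=\lambda^{2m-n}B^*M^\pm(\lambda)B$ by pure operator algebra.

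First I would record that by the definition \eqref{eq-Q_j-odd} the family $\{Q_j\}_{j\in J_\mathbf{k}}$ consists of mutually orthogonal projections: each $Q_j$ with $j>\min J_\mathbf{k}$ is a telescoping difference $S_{j'}-S_j$ of nested projections from the chain described just after \eqref{eq-S_j-odd-2}, and the remaining endpoint projection $I-S_{\min J_\mathbf{k}}$ lives on the complementary subspace. Consequently $\bigoplus_{j\in J_\mathbf{k}}Q_jL^2$ sits inside $L^2$ as an internal orthogonal sum. The surjectivity hypothesis on $B$ then forces $\sum_{j\in J_\mathbf{k}}Q_j=I$ on $L^2$, and the orthogonality of the summands delivers injectivity of $B$ for free: if $Bf=\sum_j\lambda^{-j}f_j=0$ with $f_j\in Q_jL^2$, applying $Q_k$ isolates $\lambda^{-k}f_k=0$. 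Thus $B$ is a bijection with the explicit two-sided inverse
\begin{equation*}
B^{-1}:L^2\longrightarrow\bigoplus_{j\in J_\mathbf{k}}Q_jL^2,\qquad g\longmapsto(\lambda^j Q_jg)_{j\in J_\mathbf{k}},
\end{equation*}
and dualizing yields that $B^*:g\mapsto(\lambda^{-j}Q_jg)_j$ is also bijective with $(B^*)^{-1}:(f_j)\mapsto\sum_j\lambda^j f_j$.

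Second, I would read the definition of $\mathbb{A}^\pm$ backwards: since $B$ and $B^*$ are invertible, the identity $\mathbb{A}^\pm=\lambda^{2m-n}B^*M^\pm(\lambda)B$ can be solved for $M^\pm(\lambda)$ to give
\begin{equation*}
M^\pm(\lambda)=\lambda^{n-2m}(B^*)^{-1}\mathbb{A}^\pm B^{-1}.
\end{equation*}
Because the three factors on the right are each invertible on their respective spaces (the middle one by hypothesis, the outer two by the previous step), $M^\pm(\lambda)$ is invertible on $L^2$ and taking inverses in reverse order produces the claimed formula
\begin{equation*}
\big(M^\pm(\lambda)\big)^{-1}=\lambda^{2m-n}B(\mathbb{A}^\pm)^{-1}B^*,
\end{equation*}
which is \eqref{equ3.54.000}. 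As a cross-check I would verify directly that multiplying this candidate inverse by $M^\pm(\lambda)$ on either side and using $B^*M^\pm(\lambda)B=\lambda^{n-2m}\mathbb{A}^\pm$ together with $B^{-1}B=I$ and $B^*(B^*)^{-1}=I$ collapses to the identity on $L^2$.

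The only substantive point in the argument is the passage from the stated surjectivity of $B$ to its bijectivity, which is where Proposition \ref{prop-resonance-odd} and the orthogonality built into the construction of $Q_j$ enter essentially. Everything after that is formal inversion, and in particular no Fredholm or compactness input is needed, so the potential worry that a left inverse of $M^\pm(\lambda)$ might fail to be a right inverse is entirely sidestepped.
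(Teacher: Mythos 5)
Your proof is correct. The paper itself gives no argument for this lemma — it is quoted with a citation to \cite[Lemma 3.12]{JK} — so your write-up supplies the missing (and entirely standard) justification: the $Q_j$ are mutually orthogonal because each is either the endpoint projection $I-S_{\min J_{\mathbf{k}}}$ or a difference $S_{j'}-S_j$ of consecutive members of the nested chain of orthogonal projections in \eqref{eq-S_j-odd}--\eqref{eq-S_j-odd-2}; hence $\operatorname{Ran}B=\sum_{j}Q_jL^2$, surjectivity gives $\sum_{j\in J_{\mathbf{k}}}Q_j=I$, orthogonality gives injectivity, and the explicit inverses of $B$ and $B^*$ reduce \eqref{equ3.54.000} to the formal identity $M^\pm(\lambda)=\lambda^{n-2m}(B^*)^{-1}\mathbb{A}^\pm B^{-1}$, a composition of bijections. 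Two small remarks. First, your closing sentence overstates the role of Proposition \ref{prop-resonance-odd}: it is not used inside the proof at all, since surjectivity of $B$ is a hypothesis of the lemma; that proposition only enters later, when the lemma is applied in the proof of Theorem \ref{thm-M inverse-odd}, to verify that the hypothesis holds for the given resonance type. Second, injectivity of $B$ follows from the orthogonality of the $Q_j$ alone (and $\lambda>0$), independently of surjectivity, so the logical structure is: orthogonality gives injectivity, surjectivity gives $\sum Q_j=I$ and hence the right-inverse identity $BB^{-1}=I$ on $L^2$; with that ordering made explicit your argument is complete and needs no Fredholm input, exactly as you say.
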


The following lemma contains the abstract form of the Feshbach formula (see \cite[Lemma 2.3]{JN01}) which will be used to study the inverse of the operator matrices $\mathbb{A^\pm}$.

\begin{lemma}\label{lemma-Feshbach formula}
Let $\mathbb{A^\pm}$ be an operator matrix on $\mathcal{H}=\mathcal{H}_1\oplus\mathcal{H}_2:$
\begin{equation}
\mathbb{A^\pm}=\begin{pmatrix}
a_{11} & a_{12}\\
a_{21} & a_{22}
\end{pmatrix},\ \ \ a_{ij}:\ \mathcal{H}_j\rightarrow\mathcal{H}_i,
\end{equation}
where $a_{11},a_{22}$ are closed and $a_{12},a_{21}$ are bounded. Assume that $a_{11}$ has a bounded inverse, then $\mathbb{A^\pm}$ has a bounded inverse if and only if the operator
\begin{equation}
d:= a_{22}-a_{21}a_{11}^{-1}a_{12}
\end{equation}
has a bounded inverse.  Furthermore, if $d$ has a bounded inverse, we then have 
\begin{equation}\label{eq-inverse-feshbach}
\left(\mathbb{A^\pm} \right)^{-1}=
\begin{pmatrix}
a_{11}^{-1}a_{12}d^{-1}a_{21}a_{11}^{-1}+a_{11}^{-1} & -a_{11}^{-1}a_{12}d^{-1}\\
-d^{-1}a_{21}a_{11}^{-1} & d^{-1}
\end{pmatrix}.
\end{equation}
\end{lemma}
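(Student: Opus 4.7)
The plan is to establish the Feshbach formula by a purely algebraic block LDU factorisation together with an explicit inversion of each factor. First I would write down the identity
\begin{equation*}
\mathbb{A}^\pm=\begin{pmatrix} I & 0\\ a_{21}a_{11}^{-1} & I\end{pmatrix}\begin{pmatrix} a_{11} & 0\\ 0 & d\end{pmatrix}\begin{pmatrix} I & a_{11}^{-1}a_{12}\\ 0 & I\end{pmatrix},
\end{equation*}
which one verifies by multiplying out the right-hand side and using the definition $d=a_{22}-a_{21}a_{11}^{-1}a_{12}$. Because $a_{11}^{-1}$ is bounded and $a_{12},a_{21}$ are bounded, the composites $a_{21}a_{11}^{-1}$ and $a_{11}^{-1}a_{12}$ are bounded operators between the appropriate subspaces, so the two outer unipotent triangular matrices are bounded on $\mathcal{H}_1\oplus\mathcal{H}_2$ with bounded two-sided inverses obtained simply by flipping the sign of the off-diagonal block.

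From this factorisation the equivalence statement is immediate: $\mathbb{A}^\pm$ has a bounded inverse if and only if the diagonal middle factor does, and, since $a_{11}^{-1}$ already exists, this occurs exactly when $d$ has a bounded inverse. One minor bookkeeping check is that $d$, being the sum of the closed operator $a_{22}$ and the bounded operator $-a_{21}a_{11}^{-1}a_{12}$, is itself closed on the domain of $a_{22}$, so the phrase ``$d$ has a bounded inverse'' is unambiguous in the usual sense of closed operators on Hilbert space.

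To obtain the explicit formula \eqref{eq-inverse-feshbach}, I would then invert the three factors in reverse order, which gives
\begin{equation*}
(\mathbb{A}^\pm)^{-1}=\begin{pmatrix} I & -a_{11}^{-1}a_{12}\\ 0 & I\end{pmatrix}\begin{pmatrix} a_{11}^{-1} & 0\\ 0 & d^{-1}\end{pmatrix}\begin{pmatrix} I & 0\\ -a_{21}a_{11}^{-1} & I\end{pmatrix},
\end{equation*}
and multiplying out this triple product block by block reproduces exactly the four entries in \eqref{eq-inverse-feshbach}. There is no genuine analytic obstacle here: the argument is formally identical to the finite-dimensional Schur complement computation, the only subtlety being the domain bookkeeping, which the hypotheses on $a_{11}^{-1}$, $a_{12}$, $a_{21}$ handle automatically. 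Accordingly, I expect the proof to be a short verification rather than a substantial argument; the ``hard part'', if any, is simply to be careful that the stated identity holds on a dense common domain before extending it to the claimed bounded operator on all of $\mathcal{H}_1\oplus\mathcal{H}_2$.
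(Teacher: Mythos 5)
Your proposal is correct. Note that the paper does not actually prove this lemma: it is quoted verbatim from Jensen--Nenciu \cite[Lemma 2.3]{JN01}, so there is no in-paper argument to compare against. Your block LDU (Schur complement) factorisation
$\mathbb{A}^\pm = L\,\mathrm{diag}(a_{11},d)\,U$ with unipotent, boundedly invertible outer factors is the standard route to the Feshbach formula, and it does yield both the equivalence (boundedly invertible $\mathbb{A}^\pm$ iff boundedly invertible $d$, given $a_{11}^{-1}\in\mathbb{B}(\mathcal{H}_1)$) and the explicit inverse \eqref{eq-inverse-feshbach} by inverting the three factors. Your domain remarks are exactly the right bookkeeping: on $D(a_{11})\oplus D(a_{22})$ the factorisation reproduces $\mathbb{A}^\pm$ identically, and $d=a_{22}-a_{21}a_{11}^{-1}a_{12}$ is closed as a bounded perturbation of the closed operator $a_{22}$; in the paper's application all blocks are in fact bounded, so even this subtlety is harmless.
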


Now we turn to prove Theorem \ref{thm-M inverse-odd}.
\begin{proof}[{\bf Proof of Theorem \ref{thm-M inverse-odd}:}]
By Proposition \ref{prop-resonance-odd},  we know that $\{Q_j\}_{j\in J_{\mathbf{k}}}$ is a direct sum decomposition of $L^2$ ( i.e. $\sum_{j\in J_{\k}} Q_jL^2=L^2$ ) when zero is the $\mathbf{k}$-th kind resonance, which implies that the operator $B=(\lambda^{-j}Q_j)_{j\in J_{\k}}$ is a bijection.
Then, by Lemma \ref{lemma-inverse}, to study the inverse of $M^\pm(\lambda)$ on $L^2$, it suffices to analyze the inverse of the operator matrices $\mathbb{A^\pm}=\lambda^{2m-n}B^* M^\pm(\lambda) B$ on  $\bigoplus_{j\in J_{\k}}Q_jL^2$. In what follows, we give the proof case by case.

\underline{{\textbf{Case~1: Regular Case (i.e. $ \mathbf{k}=0$).}}}

Since $J_{\mathbf{0}}=\{m-\frac{n}{2}\}$ when $2m+1\le n\le 4m-1$, the analysis is easier than the case when $1\le n\le 2m-1$. Thus, we only analyze the inverse of $\mathbb{A^\pm}$ when $1\le n\le 2m-1$.

By \eqref{equ3.54} and \eqref{eq-M-regular}, using the orthogonal properties of $Q_j$, we obtain that $\mathbb{A^{\pm}}:=\Big(a_{i,j}^\pm(\lambda)\Big)_{i,j\in J_{\k}}$ satisfies
\begin{equation}
\begin{split}
 a_{i,j}^\pm(\lambda)&=\sum\limits_{l_0\le l\le \tilde{m}_n}a_j^\pm\lambda^{2l-i-j}Q_ivG_{2l}vQ_j+\lambda^{2m-n-i-j}Q_{i}T_0Q_j+\lambda^{2m-n-i-j}Q_{i}vr_{\theta}^\pm(\lambda)vQ_{j},
 \end{split}
 \end{equation}
 where $\tilde{m}_n=\max\{0,m-\frac{n-1}{2}\}$ and $l_0=\lfloor\frac{\vartheta(i)+\vartheta(j)+1}{2}\rfloor$. Thus, we can write $\mathbb{A^\pm}$ in the following form
\begin{equation}\label{eq-D-Regular}
	\mathbb{A^{\pm}}=
		D^{\pm}+(r_{i,j}^\pm(\lambda))_{i,j\in J_{\mathbf{0}}},
\end{equation}
where  $D^\pm=(d_{i,j}^\pm)_{i,j\in J_{\mathbf{0}}}$ is given by
\begin{equation}\label{equ3.64}
	d_{i,j}^\pm=\begin{cases}
		a_{\frac{i+j}{2}}^\pm Q_ivG_{i+j}vQ_j,\quad&\mbox{if}\,\,i+j\,\,\mbox{is even},\\
		Q_iT_0Q_j,&\mbox{if}\,\,i+j=2m-n,\\
		0,&\mbox{else}.
	\end{cases}
\end{equation}
Then by \eqref{equ3.53} and \eqref{equ3.64}, it follows that for any $l=0,1,\cdots,\frac{n+3}{2},$ the terms $r_{i,j}^\pm(\lambda)$ satisfy
\begin{equation}\label{eq-rij-regular}
	|\partial_\lambda^lr_{i,j}^\pm(\lambda)|\lesssim
	\begin{cases}
	\lambda^{\max\{1,\, n-2m\}-l}, &\text{if}\,\,i=j=m-\frac{n}{2},\\
\lambda^{\frac12-l},  &\text{else}.
	\end{cases}
\end{equation}
Now the main difficulty is to show that the vector-valued operator $D^{\pm}$ is invertible. 

Define $D=(d_{i,j})_{i,j\in J_{\mathbf{0}}}$ by
\begin{equation*}
	d_{i,j}=\begin{cases}
		(-\i)^{i+j}(-1)^ja_{\frac{i+j}{2}} Q_ivG_{i+j}vQ_j,&\mbox{if}\,\,i+j\,\,\mbox{is even},\\
		(-\i)^{i+j}(-1)^jQ_iT_0Q_j,&\mbox{if}\,\,i+j=2m-n,\\
		0,&\mbox{else},
	\end{cases}
\end{equation*}
where $a_j=e^{\frac{+\pi \i}{m}(n-2m+2j)} a_j^+= a_j=e^{\frac{-\pi \i}{m}(n-2m+2j)} a_j^- $ (See Lemma 2.3 in  \cite{CHHZ-2024}). Then we have
\begin{equation*}\label{equ3.58}
	d_{i,j}=(-\i)^{i+j}(-1)^j e^{\frac{\pm \i\pi}{2m}(n-2m+i+j)}d_{i,j}^\pm,
\end{equation*}
so it follows  that
\begin{equation}\label{eq-D-regular}
	D=U_0^\pm D^\pm U_0^\pm U_1,
\end{equation}
where $U_0^\pm=\text{diag}\{e^{\pm\frac{\i\pi}{2m}(j+\frac{n}{2}-m)}(-\i)^jQ_j\}_{j\in J_{\mathbf{0}}}$, $U_1=\text{diag}\{(-1)^jQ_j\}_{j\in J_{\mathbf{0}}}$.

Now, we can rewrite $D$ as follows:
$$D=\begin{pmatrix}
	\textbf{D}_{00} & 0 \\[0.2cm]
	0 & Q_{m-\frac{n}{2}}T_0Q_{m-\frac{n}{2}} \\[0.2cm]
\end{pmatrix},$$
where $\textbf{D}_{00}=(d_{i,j})_{i,j\in J_{\mathbf{0}}\setminus\{m-\frac{n}{2}\} }.$ By the fact that $\textbf{D}_{00}$ is strictly positive (see Lemma 2.6 in \cite{CHHZ-2024}) and $Q_{m-\frac{n}{2}}T_0Q_{m-\frac{n}{2}}$ is invertible when zero is a regular point, we know that $D$ is invertible on $\bigoplus_{j\in J_{\mathbf{0}}}Q_jL^2$, which implies that $D^\pm$ are invertible on $\bigoplus_{j\in J_{\mathbf{0}}}Q_jL^2$. Moreover, by \eqref{eq-D-regular} we have
$$(D^\pm)^{-1}=U_0^\mp U_1D^{-1}U_0^\mp:=(M_{i,j}^\pm)_{i,j\in J_{\mathbf{0}}}.$$
Thus, by Neumann series expansion, there exists some small $\lambda_0\in(0,1)$ such that
\begin{equation*}\label{eq-A inverse-regular}
	(\mathbb{A^{\pm}})^{-1}=\big(D^\pm+(r_{i,j}^\pm(\lambda))\big)^{-1}=(D^\pm)^{-1}+\Gamma^{\pm}(\lambda),
\end{equation*}
where $\Gamma^{\pm}(\lambda)=(D^\pm)^{-1}\sum\limits_{l=1}^\infty\big(-(r_{ij}^\pm(\lambda))(D^\pm)^{-1}\big)^l$.
This, together with \eqref{equ3.54.000} and \eqref{eq-rij-regular}, yields
\begin{equation}\label{eq-M inverse-regular}
		\big(M^\pm(\lambda)\big)^{-1}=\lambda^{2m-n}B(\mathbb{A^{\pm}})^{-1}C
		=\sum\limits_{i,j\in J_{\mathbf{0}}}\lambda^{2m-n-i-j}Q_j\Big(M_{i,j}^\pm+\Gamma_{i,j}^\pm(\lambda)\Big)Q_j,
\end{equation}
where $\Gamma_{ij}^\pm(\lambda)$ satisfies the first estimate of \eqref{equ3.47}.

\underline{{\textbf{Case~2: Resonance Case (i.e. $ 1\le \mathbf{k}\le m_n$).}}}

By the orthogonal properties of $Q_j$, we obtain from \eqref{equ3.54} and \eqref{eq-M-resonance} that $\mathbb{A^{\pm}}:=\Big(a_{i,j}^\pm(\lambda)\Big)_{i,j\in J_{\k}}$ satisfies
\begin{equation}\label{equ3.53.111}
\begin{split}
 a_{i,j}^\pm(\lambda)&=\sum\limits_{l_0\le l\le \tilde{m}_n+\k}a_j^\pm\lambda^{2l-i-j}Q_ivG_{2l}vQ_j+\lambda^{2m-n-i-j}Q_{i}T_0Q_j+\lambda^{2m-n-i-j}Q_{i}vr_{\theta}^\pm(\lambda)vQ_{j},
 \end{split}
 \end{equation}
 where $\tilde{m}_n=\max\{0,m-\frac{n-1}{2}\}$ and $l_0=\lfloor\frac{\vartheta(i)+\vartheta(j)+1}{2}\rfloor$. Thus, we can write $\mathbb{A^\pm}$ in the following form
\begin{equation}\label{equ3.63}
	\mathbb{A^{\pm}}=
		D^{\pm}+(r_{i,j}^\pm(\lambda))_{i,j\in J_{\k}},
\end{equation}
where  $D^\pm=(d_{i,j}^\pm)_{i,j\in J_{\k}}$ and $d_{i,j}$ is given by \eqref{equ3.64}. Moreover, for any $l=0,1,\cdots,\frac{n+3}{2},$ the terms $r_{i,j}^\pm(\lambda)$ satisfy
\begin{equation}\label{equ3.65.55}
	|\partial_\lambda^lr_{i,j}^\pm(\lambda)|\lesssim
	\begin{cases}
	\lambda^{\max\{1,\, n-2m\}-l}, &\text{if}\,\,i=j=m-\frac{n}{2},\\
	\lambda^{1-l},   &\text{if}\,\,i=j=2m-\frac{n}{2},\\
\lambda^{\frac12-l},  &\text{else}.
	\end{cases}
\end{equation}
Now we turn to show that the operator $D^{\pm}$ is invertible. Since they have quite different structures in dimensions $n<2m$ and $n>2m$,
thus, we divide the proof into $1\le n\le 2m-1$ and $2m+1\le n\le 4m-1$ respectively.

When $1\le n\le 2m-1$, we similarly define $D=(d_{i,j})_{i,j\in J_{\k}}$ by
\begin{equation}
	d_{i,j}=\begin{cases}
		(-\i)^{i+j}(-1)^ja_{\frac{i+j}{2}} Q_ivG_{i+j}vQ_j,&\mbox{if}\,\,i+j\,\,\mbox{is even},\\
		(-\i)^{i+j}(-1)^jQ_iT_0Q_j,&\mbox{if}\,\,i+j=2m-n,\\
		0,&\mbox{else},
	\end{cases}
\end{equation}
 where $a_j=e^{\frac{+\pi \i}{2m}(n-2m+2j)} a_j^+=e^{\frac{-\pi \i}{2m}(n-2m+2j)} a_j^- $ (See Lemma 2.3 in  \cite{CHHZ-2024}).
Then it follows  that
\begin{equation}\label{equ3.64'}
	D=U_0^\pm D^\pm U_0^\pm U_1,
\end{equation}
where $U_0^\pm=\text{diag}\{e^{\pm\frac{\i\pi}{2m}(j+\frac{n}{2}-m)}(-\i)^jQ_j\}_{j\in J_{\k}}$, $U_1=\text{diag}\{(-1)^jQ_j\}_{j\in J_{\k}}$.

And we can rewrite $D$ in the following form:
$$D=\begin{pmatrix}
	\textbf{D}_{00} & 0 & D_{01}\\[0.2cm]
	0 & Q_{m-\frac{n}{2}}T_0Q_{m-\frac{n}{2}} & 0\\[0.2cm]
	D_{10} & 0 & \mathbf{D}_{11}
\end{pmatrix},$$
where $D_{10}=D_{01}^*$, and $\textbf{D}_{00}=(d_{i,j})_{i,j\in J'_{\k} }, \textbf{D}_{11}=(d_{i,j})_{i,j\in J''_{\k} }$ with 
\begin{equation*}\label{equ3.12.11}
	J'_{\k}=\{j\in J_{\k};\,j<\mbox{$m-\frac{n}{2}$}\},\quad	J''_{\k}=\{j\in J_{\k};\,\mbox{$m-\frac{n}{2}$}<j<2m-\mbox{$\frac n2$}\}.
\end{equation*}
Particularly, it is known that $\textbf{D}_{00}$ is strictly positive and $\textbf{D}_{11}$ is strictly negative, see Lemma 2.6 in \cite{CHHZ-2024}.

By the definition of $Q_{m-\frac{n}{2}}$, it is known that $Q_{m-\frac{n}{2}}T_0Q_{m-\frac{n}{2}}$ is invertible on $Q_{m-\frac{n}{2}}L^2$.
Then, by Lemma \ref{lemma-Feshbach formula} , $D$ is invertible on $\bigoplus_{j\in J_{\k}}Q_jL^2$ if and only if
\begin{equation}\label{equ3.66}
	d:=\textbf{D}_{11}-D_{01}^\ast \textbf{D}_{00}^{-1}D_{01},
\end{equation}
is invertible on $\bigoplus_{j\in J_\k''}Q_{j}L^2$.
Now we decompose
$Q_j=Q_{j,1}+Q_{j,2}$,
where
\begin{equation*}\label{equ3.67}
	Q_{j,2}L^2=Q_jL^2\bigcap \{x^\alpha v;\,\,|\alpha|\le j\}^\perp.
\end{equation*}
Since $Q:=\text{diag}\{Q_{j}\}_{j\in J_\k''}$ is the identity operator on $\bigoplus_{j\in J_{\k}''}Q_{j}L^2$, and
$Q_ivG_{i+j}vQ_jQ_{j,2}=0$ for $i\in J_\k', j\in J_\k''$,
we obtain
\begin{equation}\label{equ3.66'}
		d=Q\mathbf{D}_{11}Q-D_{01}^\ast \mathbf{D}_{00}^{-1}D_{01}
		=Q'\mathbf{D}_{11}Q'-D_{01}^\ast \mathbf{D}_{00}^{-1}D_{01},
\end{equation}
where $Q'=\text{diag}\{Q_{j,1}\}_{j\in J_\k''}$.
It suffices to prove that $d$ is injective on $\bigoplus_{j\in J_\k''}Q_{j}L^2$. Assume that $f=(f_j)_{j\in J_\k''}\in  \bigoplus_{j\in J_\k''}Q_{j}L^2$ and
\begin{equation}\label{equ3.68.eq}
	0=\langle df,f\rangle=\langle Q'\mathbf{D}_{11}Q'f,f\rangle-\langle D_{01}^\ast \mathbf{D}_{00}^{-1}D_{01}f,f\rangle.
\end{equation}
Note the fact that  $\textbf{D}_{00}$ is strictly positive, $\textbf{D}_{11}$ is strictly negative (see Lemma 2.6 in [4]). We have
\begin{equation}\label{equ3.69}
	Q'f=0\quad\mbox{ and }\quad D_{01}f=0.
\end{equation}
First, $Q'f=0$ implies $f_j\in Q_{j,2}L^2$, i.e.,  $f_j\in \{x^\alpha v;\,\,|\alpha|\le j\}^\perp$, $j\in J_\k''$. Second, observe that
\begin{equation*}\label{equ3.70}
	D_{01}f=\begin{pmatrix}
		(-\i)^{2m-n}(-1)^{m-\frac{n+1}{2}+\k}Q_{m-\frac{n-1}{2}-\k}T_0Q_{m-\frac{n+1}{2}+\k}f_{m-\frac{n+1}{2}+\k}\\[0.2cm]
		\vdots\\[0.2cm]
		(-\i)^{2m-n}(-1)^{m-\frac{n-1}{2}}Q_{m-\frac{n+1}{2}}T_0Q_{m-\frac{n-1}{2}}f_{m-\frac{n-1}{2}}
	\end{pmatrix},
\end{equation*}
thus $D_{01}f=0$ and the fact that $f_j\in Q_{j}L^2\subset S_{m-\frac{n}{2}}L^2$ imply
\begin{align*}\label{equ.Qj}
		Q_{2m-n-j}T_0f_{j}=0.
\end{align*}
Since  $Q_{j}L^2\subset S_{j-1}L^2$, by definition we have  $S_{2m-n-j}T_0f_j=0$. Hence $S_{2m-n-j-1}T_0f_j=0$.
Combining the above, we have $f_j\in S_{j}L^2$.
On the other hand, since $f_j\in Q_{j}L^2$, it follows that $f_j\in (S_{j}L^2)^\bot$. We conclude that
$f_j\equiv0$ for all $j\in J_\k''$ (i.e., $f\equiv0$)
provided \eqref{equ3.68.eq} holds. Therefore we have proved that $d$ is invertible as well as rigidly negative definite on  $\bigoplus_{j\in J_\k''}Q_{j}L^2$, and that $D^\pm$ are invertible on $\bigoplus_{j\in J_{\k}}Q_jL^2$. Moreover, by \eqref{eq-inverse-feshbach} and \eqref{equ3.64'} we have
\begin{equation}\label{eq: inverse of D^pm}
(D^\pm)^{-1}=U_0^\pm U_1D^{-1}U_0^\pm:=(M_{i,j}^\pm)_{i,j\in J_{\k}},
\end{equation}
where 
\begin{equation}\label{eq: struce of the inverse of D}
D^{-1}=
\begin{pmatrix}
\textbf{D}_{00}^{-1}D_{01}d^{-1}D_{10}\textbf{D}_{00}^{-1}+\textbf{D}_{00}^{-1} & -\textbf{D}_{00}^{-1}D_{01}d^{-1}\\
-d^{-1}D_{10}\textbf{D}_{00}^{-1} & d^{-1}
\end{pmatrix}.
\end{equation}
Thus, by Neumann series expansion, there exists some small $\lambda_0\in(0,1)$ such that
\begin{equation*}\label{equ3.71}
	(\mathbb{A^{\pm}})^{-1}=\big(D^\pm+(r_{i,j}^\pm(\lambda))\big)^{-1}=(D^\pm)^{-1}+\Gamma^{\pm}(\lambda),
\end{equation*}
where $\Gamma^{\pm}(\lambda)=(D^\pm)^{-1}\sum\limits_{l=1}^\infty\big(-(r_{ij}^\pm(\lambda))(D^\pm)^{-1}\big)^l$.
This, together with \eqref{equ3.54.000} and \eqref{equ3.65.55}, yields
\begin{equation}\label{equ3.72}
		\big(M^\pm(\lambda)\big)^{-1}=\lambda^{2m-n}B(\mathbb{A^{\pm}})^{-1}C
		=\sum\limits_{i,j\in J_{\k}}\lambda^{2m-n-i-j}Q_j\Big(M_{i,j}^\pm+\Gamma_{i,j}^\pm(\lambda)\Big)Q_j,
\end{equation}
where $\Gamma_{ij}^\pm(\lambda)$ satisfies the first estimate of \eqref{equ3.47}.

When $2m+1\le n\le 4m-1$, we notice that the relation \eqref{equ3.64'} still holds, however, $D$ has the following form:
\begin{equation*}\label{equ3.82}
	\begin{split}
		D&=\begin{pmatrix}
			Q_{m-\frac{n}{2}}T_0Q_{m-\frac{n}{2}} & 0\\
			0 & \mathbf{D}_{11}
		\end{pmatrix},
	\end{split}
\end{equation*}
where $\textbf{D}_{11}=(d_{i,j})_{i,j\in J_{\k''}}$ is defined by
\begin{equation*}
	d_{i,j}=\begin{cases}
		(-\i)^{i+j}(-1)^ja_{\frac{i+j}{2}} Q_ivG_{i+j}vQ_j,\quad& \mbox{if}\,\, i+j\, \,\mbox{is even},\\
		0,&\mbox{if}\,\, i+j\,\, \mbox{is odd}.
	\end{cases}
\end{equation*}
Since $\mathbf{D}_{11}$ is strictly negative  on $\bigoplus_{j\in J_{\k}''}Q_{j}L^2$ (see Lemma 2.6 in [4]), which implies that $\mathbf{D}_{11}$ is invertible on $\bigoplus_{j\in J_{\k}''}Q_{j}L^2$. Since  $Q_{m-\frac{n}{2}}T_0Q_{m-\frac{n}{2}}$ is invertible on $Q_{m-\frac{n}{2}}L^2$, we obtain the invertibility of  $D$. By \eqref{equ3.64'}, $D^{\pm}$ is invertible, then  Neumann series expansion yields \eqref{equ3.72}

\underline{{\textbf{Case~3: Eigenvalue Case (i.e. $ \mathbf{k}=m_n+1$).}}}

Similarly, by the orthogonal properties of $Q_j$, in this case, we obtain from \eqref{equ3.54} and \eqref{eq-M-eigenvalue} that $\mathbb{A^{\pm}}:=\Big(a_{i,j}^\pm(\lambda)\Big)_{i,j\in J_{m_n+1}}$ with $a_{i,j}^\pm(\lambda)$ satisfies
\begin{equation}
\begin{split}
a_{i,j}^\pm(\lambda)=\sum\limits_{l_0\le l\le 2m-\frac{n+1}{2}}a_j^\pm\lambda^{2l-i-j}Q_ivG_{2l}vQ_j&+\lambda^{2m-n-i-j}Q_{i}T_0Q_j+b_1\lambda^{2m-n-i-j}Q_{i}vG_{2m-n}vQ_{j}\\
& +\lambda^{2m-n-i-j}Q_{i}vr_{4m-n+1}^\pm(\lambda)vQ_{j},
 \end{split}
 \end{equation}
 where $l_0=\lfloor\frac{\vartheta(i)+\vartheta(j)+1}{2}\rfloor$. Thus, we can write $\mathbb{A^\pm}$ in the following form
\begin{equation}\label{eq-A-eigenvalue}
	\mathbb{A^{\pm}}=
		\text{diag}(D^{\pm}, Q_{2m-\frac{n}{2}}vG_{4m-n}vQ_{2m-\frac{n}{2}})+(r_{i,j}^\pm(\lambda))_{i,j\in J_{m_n+1}},
\end{equation}
where  $D^\pm=(d_{i,j}^\pm)_{i,j\in J_{m_n+1}}$ is given in \eqref{equ3.64}, and the terms $r_{i,j}^\pm(\lambda)$ satisfy the same estimates as in \eqref{equ3.65.55}. 
By the similar process as in the case of $0\le\mathbf{k}\le m_n$, we know that $D^\pm$ is invertible, and the invertibility of $Q_{2m-\frac{n}{2}}vG_{4m-n}vQ_{2m-\frac{n}{2}}$ can be obtained by the same arguments in \cite{JN01,SWY22}, thus $\mathbb{A^{\pm}}$ is invertible. Then  Neumann series expansion and Lemma \ref{lemma-inverse} yields \eqref{equ3.72}.
Thus, the proof of Theorem \ref{thm-M inverse-odd} is completed.
\end{proof}

\end{appendix}

\noindent{\bf Acknowledgments:} 
Avy Soffer is partially supported by NSF-DMS (No. 2205931) and the Simon’s Foundation (No. 395767).  
Zhao Wu and Xiaohua Yao are partially supported by NSFC (No. 12171182, 42450275).


\end{document}